\newtheorem{thm}{Theorem}[subsection] 
\newtheorem{lemma}[thm]{Lemma}
\newtheorem{prop}[thm]{Proposition}
\newtheorem{fact}[thm]{Fact}
\newtheorem{cor}[thm]{Corollary}
\theoremstyle{definition}
\newtheorem{remark}[thm]{Remark}
\newtheorem{example}[thm]{Example}
\newtheorem{defn}[thm]{Definition}
\renewenvironment{proof}{{\flushleft \it Proof.}}{\hfill $\square$ \vspace{2mm}}
\DeclareMathOperator{\Gr}{Gr}
\DeclareMathOperator{\IG}{IG}
\DeclareMathOperator{\GL}{GL}
\DeclareMathOperator{\PGL}{PGL}
\DeclareMathOperator{\SL}{SL}
\DeclareMathOperator{\Sp}{Sp}
\DeclareMathOperator{\codim}{codim}
\newcommand{\Z}{{\mathbb Z}}
\newcommand{\Q}{{\mathbb Q}}
\newcommand{\cO}{{\mathcal O}}
\newcommand{\id}{\text{id}}
\newcommand{\pic}[2]{\includegraphics[scale=#1]{#2}}
\newcommand{\ignore}[1]{}
\def \ker {{{\rm Ker}}}
\def \kk {{{\mathsf{k}}}}
\def \odelta {{{\mathring{\Delta}}}}
\def\pic{{\rm Pic}}
\def\PL{{\rm PL}}
\def\LL{{\rm L}}
\def \co {{{\mathcal O}}}
\def\N{{\mathbb N}}
\def \oB {{{\overline{B}}}}
\def \divi {{{\rm Div}}}
\def \spec {{{\rm Spec}}}
\def \cox {{{{\mathcal C}_Y(X)}}}
\def \cV {{{\mathcal{V}}}}
\def \X {{{\Lambda}}}
\def \a {{\alpha}}
\def \deltat {{{\widetilde{\delta}}}}
\def \omt {{{\widetilde{\omega}}}}
\def \U {{{\mathcal U}}}
\def\aut{{\rm Aut}}
 \def\CC{\mathbb C} 
\newcommand{\G}{\mathbb{G}}
\def \Xt {{\widetilde{X}}}
\def \xt {{\widetilde{x}}}
\newcommand{\p}{\mathbb{P}}
\renewcommand{\a}{{\alpha}}
\newcommand{\rk}{{\rm rk}}
\def \div {{{\textrm{div}}}}
\newcommand{\scal}[1]{\langle #1 \rangle}
\newcommand{\Ct}{\widetilde{C}}
\newcommand{\Dt}{\widetilde{D}}
\def \cX {{{\mathcal{X}}}}
\def \D {{\mathcal D}}
\def \cC {{\mathcal C}}
\DeclareMathOperator{\chara}{{char}}
\newcommand{\g}{\mathfrak g}
\newcommand{\gp}{\mathfrak p}
\newcommand{\gh}{{{{\mathfrak h}}}}
\newcommand{\gt}{\mathfrak t}
\newcommand{\gr}{\mathfrak r}
\begin{document}

\title{Sanya lectures : geometry \\
  of spherical varieties}

\date{March 29, 2017}

\author{Nicolas Perrin}
\address{Laboratoire de Math\'ematiques de Versailles, UVSQ, CNRS, Universit\'e Paris-Saclay, 78035 Versailles, France.} 
\email{nicolas.perrin@uvsq.fr}

\subjclass[2010]{Primary  14M27; Secondary  14-02, 14L30}

\begin{abstract}
These are expanded notes from lectures on the geometry of spherical varieties given in Sanya. We review some aspects of the geometry of spherical varieties. We first describe the structure of $B$-orbits. Using the local structure theorems, we describe the Picard group and the group of Weyl divisors and give some necessary conditions for smoothness. We later on consider $B$-stable curves and describe in details the structure of the Chow group of curves as well as the pairing between curves and divisors. Building on these results we give an explicit $B$-stable canonical divisor on any spherical variety.
\end{abstract}

\maketitle

\vskip -0.6 cm

\markboth{N.~PERRIN}
{\uppercase{Sanya lectures on the geometry of spherical varieties}}

\setcounter{tocdepth}{1}
\tableofcontents

\vskip -1.8 cm

\


\section*{Introduction}

We consider algebraic varieties over an algebraically closed field $\kk$ and denote by $G$ a linear algebraic group over $\kk$. From the point of view of equivariant birational geometry of $G$-varieties -- varieties endowed with an action of a reductive group $G$ -- spherical varieties correspond to one of the easiest possible situation. Indeed, a normal $G$-variety $X$ is spherical if and only if $\kk(X)^B = \kk$ where $B$ is a Borel subgroup of $G$ (Theorem \ref{theo-char}). This point of view is the starting point of the Luna-Vust Theory of embeedings described in Jacopo Gandini's lectures \cite{jacopo} (see also \cite{LV} and \cite{knop}). For this reason, spherical varieties have especially nice geometric properties that we review in these lectures.

There are many well known examples of spherical varieties. Let us cite the most famous ones: rational projective homogeneous spaces, toric varieties and symmetric spaces. One of the goal of a geometric study of spherical varieties is to extend as much as possible the classical geometric results known for the above three classes of spherical varieties to the general case. In particular, one would like to answer the following questions:
\begin{itemize}
\item What can be said on $B$-orbit closures compared to the very well understood situation of Schubert varieties in rational projective homogeneous spaces?
\item Can we describe, as in the toric case, the Picard group via convex geometry?
\item What can be said on the Chow groups especially on the duality between curves and divisors ?
\item Can we compute a canonical divisor ?
\end{itemize}
We will partially answer the above questions, reviewing results of many authors on spherical varieties.

\subsection*{Convention, notation and prerequisites}

In Section \ref{B-orbits}, we assume $\chara(\kk) \geq 0$ and specify when the assumption $\chara(\kk) = 0$ is needed. From Section \ref{sec-lst} on, we assume $\chara(\kk) = 0$.

All the groups we shall consider will be linear algebraic groups. We denote by $\Gamma$ such a group and use $G$ for reductive groups. We will denote by $R(\Gamma)$ and $R_u(\Gamma)$ the radical and the unipotent radical of a group $\Gamma$. The group of characters of $\Gamma$ is denoted by $\mathcal{X}(\Gamma)$. We denote by $\G_a$ and $\G_m$ the additive and multiplicative one dimensional group respectively. If $\Gamma$ acts on a variety $X$ and $x \in X$, we write $\Gamma_x$ for the stabiliser of $x$ in $\Gamma$.

We will denote by $T$ a maximal torus of $G$ and $B$ a Borel subgroup containing $T$. We denote by $R$ the root system of $(G,T)$, by $R^+$, respectively $R^-$, the sets of positive, respectively negative roots. For $P$ a parabolic subgroup of $G$ containing $B$, we denote by $P^-$ the opposite subgroup with respect to $T$. 

We denote by $W$ the Weyl group of $G$ and by $W_P$ the Weyl group of a parabolic subgroup $P \supset B$. We denote by $U$ the maximal unipotent subgroup of $B$. We refer to \cite{borelb} for results on algebraic groups. We denote by ${\rm Lie}(G)$ the Lie algebra of $G$, we shall also use the gothic letter $\g$. We write $\g_\a$ for the root space associated to the root $\a$ and $U_\a$ for the one-dimensional unipotent subgroup of $G$ with ${\rm Lie}(U_\a) = \g_\a$. 

We will also assume some familiarities with basics on algebraic geometry and use \cite{hartshorne-book} as reference. A variety is an irreducible and reduced scheme of finite type over $\kk$.

We tried to keep these notes as independent as possible from the theory of embeddings considered in \cite{jacopo}. This was of course only possible up to a certain point. In particular, in Section \ref{B-orbits} and Section \ref{sec-lst} we do not use the colored cones and colored fans but assume at a few places some familiarities with toric varieties (see for example \cite{fulton-toric}). In Section \ref{sec-pic} however, the description of the Picard group relies on the colored fans and we refer to \cite[Section 7]{jacopo} for definitions. In the last two sections, we tried to avoid as much as possible the use of Luna-Vust theory of embeddings but had to use it as several places. We then gave references and tried to use \cite{jacopo} when this was possible.

\subsection*{Acknowledgements} I first thank Michel Brion and Baohua Fu for the kind invitation to give lectures on the geometry of spherical varieties in Sanya. I thank Jacopo Gandini for many helpful discussions on our lectures during the two weeks of the conference. I also thank all the participants especially Johannes Hofscheier and Dmitry Timashev for the many questions and discussions during and after the talks. This led to many improvements and expansions of the first version of these notes. Finally I thank the referee for his comments and corrections.

\section{$B$-orbits}
\label{B-orbits}

In this section, we define the complexity of a $G$-variety. Spherical varieties are $G$-varieties of vanishing complexity. We then prove that they have finitely many $B$-orbits and study the inclusion graph of $B$-orbits.

\subsection{Reminders on $\Gamma$-varieties}

In this subsection, we gather results and definitions that we shall use in these notes.

\begin{defn}
  Let $\Gamma$ be a linear algebraic group.

  1. A $\Gamma$-variety is a variety endowed with an (algebraic) action of $\Gamma$.

  2. A $\Gamma$-variety is homogeneous if is has a unique $\Gamma$-orbit.

  3. A $\Gamma$-variety is quasi-homogeneous if is has a dense $\Gamma$-orbit.
\end{defn}

\begin{example}
  1. The grassmannian variety $\Gr(p,n)$ of linear subspaces of dimension $p$ in $\kk^n$ is a homogeneous $\Gamma$-variety with $\Gamma = \GL_n(\kk)$.

2. The projective space $\p^n$ is a homogeneous $\GL_{n+1}(\kk)$-variety but also a quasi-homogeneous $(\G_m)^n$-variety where $(\G_m)^n$ is the subtorus of diagonal matrices with determinant $1$ in $\GL_{n+1}(\kk)$.
\end{example}

Recall the following results of Sumihiro \cite{sumihiro,sumihiro2} enabling in many cases to assume that a $\Gamma$-variety is quasi-projective. Let me mention the very recent preprint by Brion \cite{brion-sumihiro} where the second result below is generalised to $\Gamma$-varieties defined over any field and with $\Gamma$ algebraic by not necessarily linear.

\begin{thm}[Equivariant Chow-Lemma]
  \label{thm-chow-lemma}
  Let $\Gamma$ be a connected linear algebraic group and $X$ a $\Gamma$-variety. There exists a quasi-projective $\Gamma$-variety $\Xt$ and a birational $\Gamma$-equivariant projective surjective morphism $f : \Xt \to X$.
\end{thm}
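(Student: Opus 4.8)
The plan is to run the classical proof of Chow's lemma (as in \cite{hartshorne-book}) and to upgrade each step so that it becomes $\Gamma$-equivariant, the only genuinely new input being Sumihiro's linearization theorem \cite{sumihiro,sumihiro2}. First I would reduce to the case where $X$ is normal: the normalization $\nu \colon X^\nu \to X$ is finite and birational, and, since $\Gamma$ is connected and smooth (so that $\Gamma \times X^\nu$ is the normalization of $\Gamma \times X$), the action lifts uniquely by the universal property to make $\nu$ a $\Gamma$-equivariant morphism. As a finite morphism, $\nu$ is projective, so it suffices to produce a quasi-projective $\Gamma$-variety $\Xt$ together with a projective birational surjective $\Gamma$-equivariant morphism $\Xt \to X^\nu$, and then compose with $\nu$.

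Assume then that $X$ is normal. Here I would invoke Sumihiro's first theorem: a normal $\Gamma$-variety with $\Gamma$ connected linear algebraic is covered by $\Gamma$-stable quasi-projective open subsets, each admitting a $\Gamma$-equivariant locally closed immersion into the projectivization $\bP(V_i)$ of a finite-dimensional $\Gamma$-module $V_i$. Since $X$ is of finite type, hence quasi-compact, finitely many such opens $U_1, \dots, U_n$ suffice. Let $U = \bigcap_i U_i$, a dense $\Gamma$-stable open, and let $\overline{U_i}$ denote the closure of $U_i$ in $\bP(V_i)$; because the $\Gamma$-action on $\bP(V_i)$ is algebraic and $U_i$ is $\Gamma$-stable, $\overline{U_i}$ is a $\Gamma$-stable projective variety. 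Set $P = \prod_i \overline{U_i}$ with the diagonal action, a projective $\Gamma$-variety, and let $\phi \colon U \to X \times P$ be the $\Gamma$-equivariant morphism whose first component is the inclusion $U \hookrightarrow X$ and whose $i$-th remaining component is $U \hookrightarrow U_i \hookrightarrow \overline{U_i}$. I would then define $\Xt$ to be the closure of $\phi(U)$ in $X \times P$, with reduced structure; it is $\Gamma$-stable since $\phi$ is equivariant and the Zariski closure of a $\Gamma$-stable subset is again $\Gamma$-stable.

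It then remains to verify the three required properties of the two projections $p \colon \Xt \to X$ and $q \colon \Xt \to P$, and here equivariance plays no role: the arguments are verbatim those of the non-equivariant Chow lemma. The morphism $p$ is projective, being the restriction to the closed subvariety $\Xt$ of the projection $X \times P \to X$, which is projective as $P$ is projective. It restricts to an isomorphism over $U$, since $\phi(U)$ is the graph of the morphism $U \to P$ and hence closed in $U \times P$, giving $p^{-1}(U) = \phi(U) \cong U$; thus $p$ is birational. Being proper with image containing the dense set $U$, its image is closed and dense, so $p$ is surjective. Finally, writing $W_i = p^{-1}(U_i)$, one shows that over $U_i$ the $X$-coordinate is recovered from the $\overline{U_i}$-coordinate (the relation holds on the dense open $U$ and persists by separatedness), so that $q$ restricts to a locally closed immersion on each $W_i$; as the $W_i$ cover $\Xt$, this exhibits $\Xt$ as a locally closed subvariety of the projective variety $P$, proving that $\Xt$ is quasi-projective.

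I expect the main obstacle to be the equivariant input used in the second paragraph, namely Sumihiro's covering and linearization theorem: everything else is formal, but the existence of a finite cover by $\Gamma$-stable quasi-projective opens admitting $\Gamma$-equivariant projective embeddings is the substantive geometric content, and it is precisely what forces the reduction to the normal case. Once this is granted, the passage from the classical construction to the equivariant one reduces entirely to the bookkeeping observation that closures, finite products with diagonal actions, and graph-closures of $\Gamma$-equivariant morphisms remain $\Gamma$-stable with $\Gamma$-equivariant structure maps.
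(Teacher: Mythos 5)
The paper does not prove this statement; it is quoted directly from Sumihiro \cite{sumihiro,sumihiro2}, and your argument is essentially Sumihiro's own: normalize first, then run the classical graph-closure proof of Chow's lemma equivariantly using the $\Gamma$-stable quasi-projective cover and the equivariant embeddings into the $\p(V_i)$. Your proposal is correct, and you rightly identify the one non-formal point: the preliminary reduction to normal $X$ is indispensable, since Sumihiro's covering and linearization theorems fail for non-normal varieties, and the equivariant lift of the action to the normalization (via the universal property of normalization applied to the dominant action map from the normal variety $\Gamma\times X^{\nu}$) is what makes that reduction legitimate.
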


\begin{thm}
\label{sumihiro} Let $\Gamma$ be a connected linear algebraic group and $X$ a $\Gamma$-variety. Let $Y$ be a $\Gamma$-orbit.

1. There exists a quasi-projective $\Gamma$-invariant open subset of
$X$ containing $Y$.

2. If $X$ is quasi-projective, there exists a finite dimensional $\Gamma$-module $V$ together with an equivariant embedding $X\to\p(V)$.
\end{thm}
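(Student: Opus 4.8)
The plan is to derive both assertions from one essential tool, the \emph{linearization lemma}: for $\Gamma$ connected linear and $X$ a \emph{normal} $\Gamma$-variety, every line bundle on $X$ has a positive tensor power admitting a $\Gamma$-linearization. Its proof rests on the structure of $\Gamma$ (finiteness of ${\rm Pic}(\Gamma)$ and Rosenlicht's description of units on $\Gamma$ as constants times characters) together with the analysis of ${\rm Pic}(\Gamma \times X)$. A second, softer, ingredient is that for a $\Gamma$-linearized line bundle $L$ the induced action on $H^0(X,L)$ is \emph{rational}, i.e. every section lies in a finite-dimensional $\Gamma$-stable subspace. The normality needed for the lemma is not restrictive in the intended applications (spherical varieties are normal), and for part 1 it is genuinely reached by a reduction: the normalization $\nu:\wt X\to X$ is finite, surjective and $\Gamma$-equivariant, and a $\Gamma$-stable quasi-projective open $\wt V\supseteq\nu^{-1}(Y)$ in $\wt X$ descends to the sought neighbourhood of $Y$. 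Indeed $W:=X\ssm\nu(\wt X\ssm\wt V)$ is open ($\nu$ being closed), $\Gamma$-stable, contains $Y$, and has $\nu^{-1}(W)\subseteq\wt V$ quasi-projective, so $W$ is quasi-projective by descent of ampleness along the finite surjective morphism $\nu$ (Chevalley).

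I would prove part 2 first, as it isolates the technique. Fixing a locally closed embedding $X\hookrightarrow\p^N$, put $L=\cO_X(1)$, a very ample line bundle, and replace $L$ by a positive power, still very ample, that carries a $\Gamma$-linearization (linearization lemma). Then $H^0(X,L)$ is a rational $\Gamma$-module. Choose finitely many sections $s_0,\dots,s_r\in H^0(X,L)$ defining a locally closed immersion $X\hookrightarrow\p^r$; by rationality they lie in a finite-dimensional $\Gamma$-stable subspace $V^\ast\subseteq H^0(X,L)$. The resulting morphism $X\to\p(V)$ is $\Gamma$-equivariant, and it is a locally closed immersion because the embedding sections $s_0,\dots,s_r$ occur among its coordinates. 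This produces the required finite-dimensional $\Gamma$-module $V$.

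For part 1, now with $X$ normal, I would start from $y\in Y$ and an affine open $U\ni y$. Since $\Gamma U$ is a $\Gamma$-stable open containing $Y$, I may assume $X=\Gamma U$, so that $X$ is covered by finitely many translates $g_1U,\dots,g_mU$. Normality of $X$ and affineness of $U$ force the boundary $Z=X\ssm U$ to have pure codimension one; let $D$ be the associated reduced effective divisor. The crux is to promote $D$ to a $\Gamma$-linearized line bundle $L$ equipped with a section $s$ whose nonvanishing locus $X_s$ is exactly the affine set $U$. Granting this, the $\Gamma$-translates of $s$ are sections of $L$ with affine nonvanishing loci $g_1U,\dots,g_mU$ covering $X$; by the standard criterion (a line bundle possessing finitely many sections whose affine nonvanishing loci cover the variety is ample) this makes $L$ ample and $X$ quasi-projective.

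The main obstacle is concentrated in two linked points: the linearization lemma itself, and the promotion of the Weil divisor $D$ to a genuine $\Gamma$-linearized line bundle. This is precisely where normality and the finiteness of ${\rm Pic}(\Gamma)$ are indispensable — one must first pass to a multiple of $D$ that is Cartier, and then verify the cocycle condition realizing the linearization, a verification controlled by the fact that every invertible function on $\Gamma$ is a constant times a character. Once this lemma is available, the reductions, the covering by affine translates, the ampleness criterion, and the extraction of a finite-dimensional $\Gamma$-module of sections in part 2 are all formal; see Sumihiro \cite{sumihiro}.
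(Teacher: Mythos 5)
The paper does not prove Theorem \ref{sumihiro}; it is quoted from Sumihiro's papers, so the comparison is with the standard proof in the literature. Your overall architecture (linearization of a power of a line bundle using finiteness of ${\rm Pic}(\Gamma)$ and the description of units on $\Gamma$, rationality of the $\Gamma$-action on sections, covering by affine translates plus the ampleness criterion) is the right one, and your treatment of part 2 is essentially complete once normality is granted.

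Two steps, however, do not work as written. First, in part 1 you propose to ``pass to a multiple of $D$ that is Cartier''. A Weil divisor on a normal variety need not be $\Q$-Cartier (e.g.\ the cone over a ruling of $\p^1\times\p^1$ inside the affine cone over the Segre quadric), so there is no a priori reason that any multiple of the boundary divisor $D=X\ssm U$ is Cartier, and hence no linearized line bundle to produce at this stage. This is exactly where Sumihiro's argument has its real content: one works with the reflexive sheaves $\cO_X(nD)$, i.e.\ with spaces of rational functions with poles bounded by $nD$, shows that the $\Gamma$-translates of finitely many generators of $\kk[U]$ lie in a single finite-dimensional $\Gamma$-stable space of such functions (normality is used to control the polar divisors $g^{-1}D$ as $g$ varies), and only then extracts a linearized ample line bundle. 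Calling the remaining work ``formal'' once the linearization lemma is available understates this step. Second, your reduction of part 1 to the normal case rests on ``descent of quasi-projectivity along finite surjective morphisms (Chevalley)''. Chevalley's theorem concerns affineness; quasi-projectivity does not descend: there exist complete non-projective (necessarily non-normal) surfaces whose normalization is projective, obtained by pinching together two disjoint rational curves of different degrees in a projective surface. What is true is that $L$ is ample when $\nu^*L$ is, but an ample bundle upstairs need not be a pullback. In fairness, this reduction is only needed because the statement as printed omits the normality hypothesis that Sumihiro's theorem actually requires; without it, part 2 is false (the nodal cubic with its $\G_m$-action admits no linearizable ample line bundle, hence no equivariant embedding into any $\p(V)$), so the intended reading is that $X$ is normal and your normalization step can simply be dropped.
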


In the last result, we cannot replace \emph{quasi-projective} by \emph{affine}. Indeed, in general there is no $\Gamma$-stable affine covering: take for $X$ any projective homogeneous variety, e.g. $X = \p^n$ and $\Gamma = \GL_{n+1}(\kk)$. This is however possible if we replace $\Gamma$-stability by $B$-stability with $B$ a Borel subgroup of $\Gamma$.

\begin{prop}
  \label{prop-lift}
Assume $\textrm{char}(\kk) = 0$. Let $X$ be a $G$-variety and $Y$ any $G$-stable closed
subvariety. Then there exists an open $B$-stable affine open subset
$X_0$ of $X$ such that
\begin{itemize}
\item[1.] $X_0 \cap Y \neq \emptyset$
\item[2.] the restriction map $\kk[X_0]^{(B)} \to \kk[X_0 \cap
  Y]^{(B)}$ is surjective.
\end{itemize}
\end{prop}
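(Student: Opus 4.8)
The plan is to carry out all the representation-theoretic work upstairs, on $G$-stable affine cones, and to localize at a single $B$-eigenvector only at the very end. First I would reduce to the quasi-projective case: choosing a $G$-orbit $O \subseteq Y$ and applying Theorem \ref{sumihiro}.1 produces a quasi-projective $G$-stable open $X' \ni O$; replacing $X$ by $X'$ and $Y$ by $Y \cap X'$ is harmless, since a $B$-stable affine open of $X'$ is automatically open in $X$ and meets $Y$ in the same set. So I may assume $X$ is quasi-projective, and Theorem \ref{sumihiro}.2 then gives a $G$-equivariant embedding $X \hookrightarrow \p(V)$. Let $\wb X \subseteq \p(V)$ be the closure (a projective $G$-variety), $\wb Y$ the closure of $Y$, and let $\widehat{X}, \widehat{Y} \subseteq V$ be the associated affine cones, with (saturated, radical) homogeneous coordinate rings $A = \kk[\widehat{X}]$ and $\bar A = \kk[\widehat{Y}] = A/I(\widehat{Y})$. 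These are graded $G$-modules and $A \twoheadrightarrow \bar A$ is a graded $G$-equivariant surjection.

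Next I would construct the eigenvector $f$ cutting out the chart. Let $Z = \wb X \setminus X$ be the (closed, $G$-stable) boundary, so that $Y = \wb Y \setminus Z$ and hence $\wb Y \not\subseteq Z$ because $Y \neq \emptyset$. Consider the graded $G$-stable ideal $J = I(\widehat{Z}) \subseteq A$ (with the convention $J = A$ if $Z = \emptyset$); its image in $\bar A$ is a nonzero graded $G$-submodule precisely because $\wb Y \not\subseteq Z$. Here I use $\chara(\kk)=0$: the group $G$ is linearly reductive, so complete reducibility splits every surjection $G$-equivariantly and taking $U$-invariants is exact on rational $G$-modules. Thus the nonzero $G$-module $\mathrm{image}(J)$ contains a nonzero $U$-invariant, and after decomposing into $T$-weight spaces I may take a genuine $B$-eigenvector $\bar f$, homogeneous of some degree $d > 0$. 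Lifting $\bar f$ through the $G$-equivariant surjection $J \twoheadrightarrow \mathrm{image}(J)$ (again using exactness of $U$-invariants, weight by weight) yields a homogeneous $B$-eigenvector $f \in J$ of the same degree and weight. By construction $f$ vanishes on $Z$ but not on $\wb Y$.

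I would then set $X_0 = \wb X \cap \{f \neq 0\} = D_+(f) \cap \wb X$. Since $\{f \neq 0\}$ is the complement of a hypersurface in $\p(V)$ it is affine, and $\wb X \cap \{f \neq 0\}$ is closed in it, hence affine; as $f$ vanishes on $Z$ we have $X_0 \subseteq X$, and $X_0$ is $B$-stable because $f$ is a $B$-eigenvector. Moreover $X_0 \cap Y = \{f \neq 0\} \cap \wb Y \neq \emptyset$ since $\bar f \neq 0$, which is (1). For (2), note that $\kk[X_0] = (A_f)_0$ and $\kk[X_0 \cap Y] = (\bar A_{\bar f})_0$, with the restriction map induced by $A \to \bar A$. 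Any $B$-semiinvariant of $\kk[X_0 \cap Y]$ has the form $\bar g/\bar f^{\,m}$ with $\bar g$ homogeneous of degree $md$; since $\bar f$ is a $B$-eigenvector, $\bar g = (\bar g/\bar f^{\,m})\,\bar f^{\,m}$ is itself a $B$-semiinvariant in $\bar A$. The $B$-semiinvariants are exactly the $T$-eigenvectors in $A^U$ and $\bar A^U$, so the surjection $A^U \twoheadrightarrow \bar A^U$, being $T$-equivariant, is surjective on each weight space; I may therefore lift $\bar g$ to a $B$-semiinvariant $g \in A$ of the same degree and weight, and then $g/f^{\,m} \in \kk[X_0]^{(B)}$ maps to $\bar g/\bar f^{\,m}$. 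This gives the required surjectivity.

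I expect the main obstacle to be exactly the point that $X_0$ is only $B$-stable, so $\kk[X_0]$ is a module over the non-reductive group $B$, and complete reducibility cannot be applied to $\kk[X_0] \to \kk[X_0 \cap Y]$ directly. The device that circumvents this is to keep every reductivity argument on the $G$-stable cone algebras $A \to \bar A$ and to localize only at the single $B$-eigenvector $f$: because $f$ is a $B$-eigenvector, the degree-$0$ $B$-semiinvariants on $D_+(f)$ are precisely the ratios $g/f^{\,m}$ of homogeneous $B$-semiinvariants, which reduces statement (2) to the global surjectivity $A^{(B)} \twoheadrightarrow \bar A^{(B)}$. A secondary technical point is to arrange $X_0$ affine and contained in $X$ rather than merely in $\wb X$; this is what forces $f$ to vanish on the boundary $Z$, and the existence of such an $f$ that is still non-vanishing on $\wb Y$ is guaranteed precisely by $Y \neq \emptyset$.
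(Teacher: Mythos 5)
Your proposal is correct and follows essentially the same route as the paper: embed equivariantly in $\p(V)$ via Sumihiro, pass to the affine cones, use linear reductivity (exactness of $U$-invariants in characteristic $0$) to produce and lift a homogeneous $B$-eigenvector $f$ cutting out the chart, and then reduce surjectivity on the localized degree-$0$ semiinvariants to the global surjectivity $\kk[\widehat{X}]^{(B)} \to \kk[\widehat{Y}]^{(B)}$ by clearing denominators with powers of $f$. Your one refinement — producing $f$ directly inside the ideal of the full boundary $\wb{X}\setminus X$ rather than lifting an eigenfunction from $\kk[\widehat{Y}]$ — is welcome, since it is exactly what guarantees $D_+(f)\cap\wb{X}\subseteq X$ and hence that $X_0$ is affine.
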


\begin{proof}
  Using Sumihiro's Theorem, we may assume that $X$ is equivariantly embedded in  $\p(V)$ where $V$ is a finite dimensional $G$-module. Let $\bar X$ and $\bar Y$ be the closures of $X$ and $Y$ in $\p(V)$, let $\partial X = \bar X \setminus X$ and let $\widehat{X}$, $\widehat{\partial X}$ and $\widehat{Y}$ the cones in $V$ over $X$, $\partial X$ and $Y$.
  
  Note that $\widehat{Y} \not\subset \widehat{\partial X}$ and choose a homogeneous $B$-eigenfunction $f' \in \kk[\widehat{Y}]^{(B)}$ vanishing on $\widehat{\partial X}$ but not on $\widehat{Y}$. Now, since $\chara(\kk) = 0$, representation theory tells us that the surjective map $\kk[\widehat{X}] \to \kk[\widehat{Y}]$ induces a surjective map $\kk[\widehat{X}]^{(B)} \to \kk[\widehat{Y}]^{(B)}$.
  
  Choose $f$ such that $f|_{\widehat{Y}} = f'$ and set $X_0 = D_X(f) = \{x \in X \ | \ f(x) \neq 0 \}$. Then $X_0 \subset X$ is affine $B$-stable and meets $Y$.

  Now let $\phi \in \kk[X_0 \cap Y]^{(B)}$ homogeneous. There exists $m \geq 0$ such that $\phi {f'}^m \in \kk[\widehat{Y}]^{(B)}$. By the above surjectivity, we get $\psi \in \kk[\widehat{X}]^{(B)}$ with $\psi|_{\widehat{Y}} = \phi{f'}^m$. Then $(\psi f^{-m}) \in \kk[X_0]^{(B)}$ with $(\psi f^{-m})|_{\widehat{Y}} = \phi$.
\end{proof}

\begin{remark}
  The assumption on the characteristic is not really restrictive. Indeed, in positive characteristic, we have a similar result but we need to use $p$-th powers to lift functions. More precisely, the second condition has to be replaced by
  \begin{itemize}
  \item[2'.] $\forall f \in \kk[X_0 \cap Y]^{(B)}$, $\exists N \in \N$ and $\exists f' \in \kk[X_0]^{(B)}$ with $f'\vert_{X_0 \cap Y} = f^{p^N}$.
  \end{itemize}
  The above proof works using the following (see \cite[Theorems 1.3 and 2.1]{Gro1}):

  \begin{thm}
    Let $X$ be an affine $G$-variety, $Y \subset X$ a closed $G$-stable subset and $f \in \kk[Y]^{(B)}$. Then there exists $N \in \N$ and $f' \in \kk[X]^{(B)}$ with $f'\vert_{Y} = f^{p^N}$.
  \end{thm}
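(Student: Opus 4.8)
The plan is to reduce the statement to a question about the $G$-action on the coordinate ring and then exploit that, in characteristic $p$, raising to a $p$-th power interacts very favourably with the infinitesimal action of $G$. Write $\pi \colon \kk[X]\twoheadrightarrow \kk[Y]$ for the restriction map; it is a surjective $G$-equivariant homomorphism of $\kk$-algebras and $I := \ker\pi$ is a $G$-stable ideal. Since $\kk[X]$ is a rational $G$-module, it is in particular a rational $T$-module, hence decomposes into $T$-weight spaces, and $\pi$ is graded for this decomposition. A $B$-eigenfunction $f$ is a $T$-eigenvector, of weight $\lambda$ say, which is fixed by $U = R_u(B)$ (every character of $B$ is trivial on $U$). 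Because tori are linearly reductive in every characteristic, $\pi$ is surjective on each $T$-weight space; I would therefore first choose a $T$-eigenvector $g\in\kk[X]$ of weight $\lambda$ with $\pi(g)=f$. Thus one may assume from the start that the lift is a genuine $T$-eigenvector, and the whole problem is to correct $g$, after raising to a suitable $p$-power, so that it becomes $U$-invariant.

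For the infinitesimal core I would use two elementary observations. First, $G$ acts on the commutative algebra $\kk[X]$ by algebra automorphisms, so $\Lie(G)$ acts by derivations; since any derivation $D$ satisfies $D(a^p)=p\,a^{p-1}D(a)=0$, every $p$-th power is annihilated by $\Lie(U)$. More precisely, for a one-parameter unipotent subgroup $U_\a$ with parameter $t$, if $u_\a(t)\cdot g=\sum_i a_i t^i$ then $u_\a(t)\cdot g^{p^N}=(u_\a(t)\cdot g)^{p^N}=\sum_i a_i^{p^N}t^{ip^N}$, which modulo $t^{p^N}$ equals $g^{p^N}$. Hence $g^{p^N}$ is invariant under the $N$-th Frobenius kernel $U_N$ of $U$, for every $N$. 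So $g^{p^N}$ is a $T$-eigenvector of weight $p^N\lambda$, lifting $f^{p^N}$, and $U_N$-invariant: raising to the power $p^N$ buys invariance under the $N$-th infinitesimal neighbourhood of the identity for free.

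It remains to pass from $U_N$-invariance to honest $U$-invariance, and this is the heart of the matter. The obstruction has a clean description: since $f$ is $U$-fixed, for every $u\in U$ one has $\pi(u\cdot g-g)=u\cdot f-f=0$, so $z(u):=u\cdot g-g$ takes values in $I$ and defines a class in $H^1(U,I)$, whose vanishing is exactly what is needed to correct $g$ to a $U$-invariant lift of $f$. Because $\kk[X]$ is commutative of characteristic $p$, the cocycle attached to $g^{p^N}$ is $u\mapsto (u\cdot g-g)^{p^N}=z(u)^{p^N}$; that is, raising to the $p^N$-th power applies the Frobenius twist to the obstruction class. The content of the theorem is that this twisted class becomes a coboundary once $N$ is large enough, so that a suitable element $m\in I$ of weight $p^N\lambda$ corrects $g^{p^N}$ to the desired $B$-eigenfunction $f'=g^{p^N}-m$ (indeed then $u\cdot f'-f'=z(u)^{p^N}-(u\cdot m-m)=0$ and $\pi(f')=f^{p^N}$).

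I expect this last upgrade to be the main obstacle. Unlike in characteristic $0$, where complete reducibility of $G$ makes $\pi$ split and the correction is immediate (this is precisely the step used in Proposition \ref{prop-lift}), here one must control the Frobenius-twisted obstruction and, crucially, bound $N$ uniformly: working inside a finite-dimensional $B$-submodule of $\kk[X]$ containing $g$ limits the weights that can occur and is what should force the process to terminate. Establishing that the twisted class dies after finitely many Frobenius steps is exactly the delicate input supplied by Grosshans' Theorems 1.3 and 2.1, and it is on this point that I would rely rather than attempt an \emph{ad hoc} argument.
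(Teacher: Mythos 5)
First, a remark on the comparison you were asked to make: the paper does not prove this statement at all --- it is quoted verbatim from Grosshans \cite{Gro1} as the positive-characteristic input needed to adapt Proposition \ref{prop-lift}. So the only question is whether your argument stands on its own, and it does not: after a correct and genuinely useful setup, the one hard step is deferred back to Grosshans, which is circular since the statement to be proved \emph{is} Grosshans' theorem. Concretely, your reductions are fine: linear reductivity of $T$ lets you lift $f$ to a $T$-eigenvector $g$ of the same weight $\lambda$; the identity $u\cdot g^{p^N}-g^{p^N}=(u\cdot g-g)^{p^N}$ in a commutative $\F_p$-algebra shows that the obstruction cocycle $z\in Z^1(U,I)$ is replaced by its $p^N$-th power; and $[z^{p^N}]$ is a well-defined ($p^N$-semilinear) image of $[z]$ in $H^1(U,I)$. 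But the entire content of the theorem is that this class vanishes for $N\gg 0$, and at exactly that point you write that you ``would rely on Grosshans rather than attempt an ad hoc argument.'' That is the gap.

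Moreover, the framework you have set up cannot be closed as it stands, because it only uses the $B$-action. Frobenius powers do \emph{not} kill $H^1(U,M)$ for unipotent $U$ in general: already $H^1(\G_a,\kk)$ is spanned by the additive characters $t\mapsto ct^{p^i}$, and the $p$-power map sends $ct^{p^i}$ to $c^pt^{p^{i+1}}$, permuting rather than annihilating these classes. The vanishing you need must therefore exploit the $G$-stability of $I$ and the full reductive group action, not just $T$ and $U$. Indeed the statement is false for merely $B$-stable $Y$: take $G=\SL_2$ acting on $X=\kk^2$, $Y$ the highest-weight line $V(y)$ and $f=x\vert_Y$; then $f$ is a $B$-eigenfunction on $Y$, but $\kk[X]^U=\kk[y]$ restricts to constants on $Y$, so no $p$-power of $f$ lifts. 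Any complete proof has to make the passage from $U$-cohomology to a statement controlled by $G$ (Grosshans does this via the transfer principle $A^U\simeq(A\otimes\kk[G/U])^G$ together with properties of induced modules and their $\Z$-forms), and nothing in your write-up indicates how the $G$-action enters. So the proposal is an accurate map of where the difficulty lies, but not a proof.
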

\end{remark}

Let us finally recall a result of Rosenlicht \cite{rosentlicht}.

\begin{thm}
  Let $\Gamma$ be alinear algebraic group and $X$ a $\Gamma$-variety. There exists a non empty open $\Gamma$-stable subset $X_0$ and a morphism $\pi : X_0 \to X'_0$ such that
  \begin{itemize}
  \item[1.] The fibers of $\pi$ are the $\Gamma$-orbits in $X_0$.
  \item[2.] The morphism $\pi$ induces an isomorphism $\kk(X_0') \simeq \kk(X_0)^\Gamma = \kk(X)^\Gamma$.
  \end{itemize}
\end{thm}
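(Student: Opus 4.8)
The plan is to realise $\pi$ as the rational map attached to the field of invariant functions, and then to shrink $X$ until its fibers become single orbits. The first point is that $K := \kk(X)^\Gamma$ is finitely generated over $\kk$: it is an intermediate field of the finitely generated extension $\kk(X)/\kk$, and any intermediate field of such an extension is again finitely generated. Choosing a model, I obtain an irreducible variety $X'$ with $\kk(X') \cong K$ together with the dominant rational map $\pi : X \dashrightarrow X'$ induced by the inclusion $K \hookrightarrow \kk(X)$. Every function pulled back from $X'$ is $\Gamma$-invariant, so $\pi$ is a $\Gamma$-invariant rational map; consequently its domain of definition is $\Gamma$-stable, since for each $g \in \Gamma$ the rational maps $x \mapsto \pi(gx)$ and $\pi$ coincide and therefore have the same domain. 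Using the equivariant Chow Lemma (Theorem \ref{thm-chow-lemma}) I may assume $X$ quasi-projective, the statement being birational in nature. Part (2) is then automatic: restricting to a dense open $\Gamma$-stable $X_0$ changes neither $\kk(X_0) = \kk(X)$ nor its invariants, and $\kk(X_0') = \kk(X') = K = \kk(X)^\Gamma$.

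Since $\pi$ is $\Gamma$-invariant, each of its fibers is a union of $\Gamma$-orbits, so part (1) amounts to showing that after a further shrinking every fiber is a \emph{single} orbit. The function $x \mapsto \dim \Gamma_x$ is upper semicontinuous and $\Gamma$-invariant, so the locus where the orbit dimension $\dim \Gamma x = \dim \Gamma - \dim \Gamma_x$ attains its maximal value $d$ is a dense open $\Gamma$-stable subset; replacing $X$ by it, I may assume that all orbits have the same dimension $d$. Every fiber contains an orbit, so the generic fiber has dimension at least $d$, whence $\dim X' = \dim X - \dim(\text{generic fiber}) \le \dim X - d$ for free.

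The main obstacle is the reverse inequality $\dim X' \ge \dim X - d$, equivalently the assertion that invariant functions separate the orbits in general position; this is Rosenlicht's key lemma and carries essentially all the difficulty. The approach I would take is to analyse the orbit-relation morphism $\Phi : \Gamma \times X \to X \times_{X'} X$, $(g,x) \mapsto (x, gx)$, whose image is the graph of the orbit equivalence relation inside the fibered square of $\pi$; the content is that this image is dense, i.e. that two points of a general fiber of $\pi$ are actually $\Gamma$-conjugate. Equivalently, one produces over a dense open of $X'$ a rational cross-section $S \subset X$ meeting the general orbit in a finite set, so that $K \hookrightarrow \kk(S)$ forces $\mathrm{trdeg}_{\kk} K \ge \dim S = \dim X - d$. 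Either route yields $\dim X' = \dim X - d$, hence that the generic fiber $X_\eta$ of $\pi$ has dimension $d$; since $K$ is algebraically closed in $\kk(X)$ when $\Gamma$ is connected (any $f$ algebraic over $K$ has finite $\Gamma$-orbit, hence is fixed), $X_\eta$ is geometrically irreducible, and an irreducible union of $d$-dimensional orbits of dimension $d$ is a single orbit; the disconnected case follows by tracking the action of the finite group $\Gamma/\Gamma^\circ$ on the components.

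Finally I would propagate this from the generic fiber to an open neighbourhood. Given that the fibers of $\pi$ are unions of $d$-dimensional orbits, a fiber is a single orbit precisely when it is irreducible of dimension $d$: if such a fiber $F$ contained a proper orbit, its dense orbit $O$ would have $\dim O = \dim F = d$ and $F \ssm O$ would be a union of $d$-dimensional orbits of dimension $< d$, which is absurd. By Chevalley's theorem, the locus of $v \in X'$ over which the fiber is irreducible of dimension $d$ is constructible and contains the generic point, hence contains a dense open $V \subseteq X'$. Setting $X_0 = \pi^{-1}(V)$, which is $\Gamma$-stable because $\pi$ is $\Gamma$-invariant, and $X_0' = V$, the fibers of $\pi|_{X_0}$ are exactly the $\Gamma$-orbits in $X_0$, giving part (1); part (2) was already established. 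I expect the separation step of the third paragraph to be the genuine crux, the remaining arguments being standard semicontinuity and spreading-out.
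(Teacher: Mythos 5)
The paper itself offers no proof of this statement: it is recalled from Rosenlicht's paper with a bare citation, so there is nothing in the text to compare your argument against, and your proposal has to stand on its own. Its overall architecture is the standard one for Rosenlicht's theorem, and the peripheral steps are all correct: finite generation of $K=\kk(X)^\Gamma$ as an intermediate field, the choice of a model $X'$ and the $\Gamma$-stability of the domain of definition of $\pi$, the reduction to the locus where the orbit dimension is maximal, the inequality $\dim X'\le\dim X-d$, the algebraic closedness of $K$ in $\kk(X)$ for connected $\Gamma$, the observation that an irreducible fiber which is a union of $d$-dimensional orbits is a single orbit, and the spreading-out from the generic fiber by constructibility. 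The treatment of disconnected $\Gamma$ is only gestured at, but that is a minor finite-group bookkeeping issue.

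The genuine gap is exactly where you place it. The reverse inequality $\dim X'\ge\dim X-d$ --- equivalently, that invariant rational functions separate orbits in general position, equivalently that the image of $\Phi:\Gamma\times X\to X\times_{X'}X$, $(g,x)\mapsto(x,gx)$, is dense --- is not an auxiliary lemma but the entire content of the theorem: every other step in your argument is formal. As written, ``the content is that this image is dense'' merely restates what must be proved, and the alternative route via a rational cross-section $S$ is invoked without any construction of $S$. So the proposal is a correct reduction of the theorem to its crux together with an honest placeholder, not a proof. To close it one must actually establish the separation statement, e.g.\ by Rosenlicht's inductive argument (use one nonconstant invariant to pass to a $\Gamma$-stable subvariety of smaller dimension, the base case being $\kk(X)^\Gamma=\kk$), or by analysing the map $\Gamma\times X\to X\times X$, $(g,x)\mapsto(gx,x)$: its image is constructible, its fibers over the diagonal have dimension $\dim\Gamma_x$, so the closure $Z$ of the image has dimension $2\dim X-d$; one then shows that the rational quotient of $X$ by the equivalence relation $Z$ has function field exactly $K$ and transcendence degree $\dim X-d$. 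Without one of these arguments the proof is incomplete.
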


\subsection{Complexity and rank of $G$-varieties}

Let $\Gamma$ be an algebraic group, $G$ is a reductive algebraic group and $T \subset B \subset G$ is a Borel subgroup containing a maximal torus $T$.

\begin{defn}
  Let $X$ be a $B$-variety, the weight lattice of $X$ is the subgroup $\Lambda(X) \subset \cX(B)$ of weights of $B$ occuring in $\kk(X)$. This is a free abelian group and its rank $\rk(X)$ is the rank of $X$.
\end{defn}

  \begin{prop}
    Let $Y \subset X$ an inclusion of $B$-varieties. Then $\rk(Y) \leq \rk(X)$.
  \end{prop}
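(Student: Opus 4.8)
The plan is to deduce the inequality from the sharper \emph{lattice inclusion} $\Lambda(Y) \subseteq \Lambda(X)$ inside $\cX(B)$: once every weight of $B$ occurring in $\kk(Y)$ is seen to occur already in $\kk(X)$, the bound $\rk(Y) \le \rk(X)$ is the trivial fact that a subgroup of a free abelian group has no larger rank. Since $\Lambda$ depends only on the function field together with its $B$-action, I am free throughout to shrink $X$ and $Y$ to convenient $B$-stable dense open subsets, and in particular to pass to an affine model.

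First I would reduce to such a model. Replacing $Y$ by its closure — again a $B$-subvariety, $B$ being connected, and carrying the same function field — I may assume $Y$ is closed in $X$. Then, invoking Theorem \ref{sumihiro} together with the existence of $B$-stable affine open subsets noted just after it, I choose a $B$-stable affine open $X_0 \subseteq X$ with $X_0 \cap Y \ne \emptyset$. Now $X_0 \cap Y$ is closed and $B$-stable in the affine $B$-variety $X_0$ and dense in $Y$, and the equalities $\rk(X_0) = \rk(X)$, $\rk(X_0 \cap Y) = \rk(Y)$ let me assume henceforth that $X$ is affine and $Y \subseteq X$ is closed and $B$-stable.

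In this situation the restriction map $\kk[X] \to \kk[Y]$ is a $B$-equivariant surjection of coordinate rings, and the whole matter reduces to \emph{lifting eigenfunctions}. Given $\phi \in \kk(Y)^{(B)}$ of weight $\mu$, I first write it as a ratio of weight vectors lying in $\kk[Y]$: this is legitimate because $\kk[Y]$ is a domain graded by the character lattice, so any homogeneous element of its fraction field is a quotient of homogeneous ones, and it therefore suffices to treat genuine regular eigenfunctions on $Y$. But $\kk[X] \to \kk[Y]$ is equivariant for the maximal torus $T \subseteq B$, and $T$ is linearly reductive; hence the surjection is surjective on each $T$-weight space. Consequently every weight occurring in $\kk[Y]$ occurs in $\kk[X]$, which yields $\Lambda(Y) \subseteq \Lambda(X)$ and the proposition.

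The step I expect to be the real obstacle is exactly this lifting. Two points need care. The use of $\chara(\kk) = 0$ is essential to the reductivity input; in positive characteristic one can only lift a $p$-th power of a given eigenfunction, as in the remark following Proposition \ref{prop-lift}, but this is harmless here, since replacing a weight $\mu$ by the positive multiple $p^N \mu$ does not change the rank of the subgroup it helps generate. More delicate is that $Y$ is assumed only $B$-stable, so the finer lifting of $B$-eigenfunctions provided by Proposition \ref{prop-lift} (which requires $Y$ to be $G$-stable) does not apply on the nose; the argument must instead be carried out at the level of the torus $T$, using linear reductivity to hit the weight spaces of $\kk[Y]$ by those of $\kk[X]$. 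Once this weight-space surjectivity is secured, the remaining comparison of lattices is purely formal.
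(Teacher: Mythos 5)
There is a genuine gap at the last implication of your argument. The lattice $\Lambda(X)$ is generated by the weights of $B$-eigenfunctions in $\kk(X)$, i.e.\ of $U$-invariant $T$-eigenvectors; it is \emph{not} the group generated by the $T$-weights occurring in $\kk[X_0]$, and the latter can be strictly larger. Take $G=\SL_2(\kk)$ and $X=\p^1=G/B$: any $f\in\kk(X)^{(B)}$ has a $B$-stable divisor, hence a divisor supported on the unique $B$-fixed point, hence trivial by degree reasons, so $\Lambda(X)=0$; yet the $B$-stable affine chart $X_0\simeq\A^1$ has $\kk[X_0]=\kk[t]$ with $T$ acting on $t$ through $-\a$, so the differences of $T$-weights of $\kk[X_0]$ generate $\Z\a\neq 0$. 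Thus the statement you actually establish — that every $T$-weight of $\kk[X_0\cap Y]$ occurs in $\kk[X_0]$, which is correct by linear reductivity of $T$ — does not yield $\Lambda(Y)\subseteq\Lambda(X)$: the chain breaks because a difference of $T$-weights of $\kk[X_0]$ need not lie in $\Lambda(X)$. The same ambiguity infects the step where you write $\phi\in\kk(Y)^{(B)}$ as a ratio of ``weight vectors'' in $\kk[Y]$: if these are mere $T$-eigenvectors the above objection applies, and if they are genuine $B$-eigenvectors then the lifting fails, since a surjection of $B$-modules is not surjective on $B$-eigenvectors even in characteristic $0$ — taking $U$-invariants is not exact (for $V$ the standard two-dimensional module of $B\subset\SL_2(\kk)$, the quotient $V\to V/V^U$ kills the unique $B$-eigenline and produces a $B$-eigenline of a different weight).

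The $U$-invariance is exactly the content that cannot be obtained from $T$ alone, and it is what Proposition \ref{prop-lift} supplies: the restriction $\kk[X_0]^{(B)}\to\kk[X_0\cap Y]^{(B)}$ is surjective, proved via complete reducibility of $G$-modules applied to the affine cones $\widehat{X}\to\widehat{Y}$ (Grosshans' theorem in positive characteristic), not via reductivity of $T$. The paper's proof is precisely this: choose $X_0$ as in Proposition \ref{prop-lift}, write every weight of $\Lambda(Y)$ as a difference of weights of $B$-eigenfunctions in $\kk[X_0\cap Y]$, and lift those eigenfunctions. Your worry that Proposition \ref{prop-lift} is stated only for $G$-stable $Y$ is legitimate — the paper glosses over this, and one must either reduce to the $G$-stable case (e.g.\ by the contracted-product argument used later for the complexity inequality) or observe that the proposition is only ever applied to $B$-stable subvarieties of $G$-varieties — but replacing the $B$-eigenfunction lifting by surjectivity on $T$-weight spaces discards the invariance that defines $\Lambda$, so the argument does not close as written.
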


  \begin{proof}
    Let $X_0$ be as in Proposition \ref{prop-lift}. Any weight of $\Lambda(X)$ is the difference of weights of $\kk[X_0]$. The same is true for $Y$. The results follows from the surjection $\kk[X_0] \to \kk[X_0 \cap Y]$.
  \end{proof}

  \begin{defn}
The complexity $c(X)$ of a $\Gamma$-variety $X$ is the minimal codimension
of a $\Gamma$-orbit: $c_\Gamma(X) = \min\{\codim(Y) \ | \ Y \subset X \textrm{ is
  a $\Gamma$-orbit} \}$. If the group $\Gamma$ is clear from the context, we write $c_\Gamma(X) = c(X)$.
\end{defn}

\begin{prop}
  Let $X$ be a $\Gamma$-variety, then $c_\Gamma(X) = \operatorname{Trdeg}(\kk(X)^\Gamma)$.
\end{prop}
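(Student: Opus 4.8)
The plan is to deduce the equality directly from Rosenlicht's theorem recalled just above, combined with the semicontinuity of orbit dimensions. First I would apply that theorem to obtain a non-empty $\Gamma$-stable open subset $X_0 \subseteq X$ together with a morphism $\pi : X_0 \to X'_0$ whose fibers are exactly the $\Gamma$-orbits in $X_0$ and which induces an isomorphism $\kk(X'_0) \simeq \kk(X_0)^\Gamma = \kk(X)^\Gamma$. Since $X'_0$ is a variety, the latter isomorphism gives $\dim X'_0 = \operatorname{Trdeg}(\kk(X'_0)) = \operatorname{Trdeg}(\kk(X)^\Gamma)$, so it suffices to show that the minimal codimension of a $\Gamma$-orbit equals $\dim X'_0$.

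Next I would reduce the computation of $c_\Gamma(X)$ to a generic orbit. By definition $c_\Gamma(X) = \dim X - d$, where $d$ is the maximal dimension of a $\Gamma$-orbit. Because the stabiliser dimension $x \mapsto \dim \Gamma_x$ is upper semicontinuous, the orbit dimension $x \mapsto \dim \Gamma x$ is lower semicontinuous, so the maximal value $d$ is attained on a dense open subset of $X$. This dense open subset meets $X_0$, and hence $d$ is already attained by orbits contained in $X_0$.

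For orbits inside $X_0$ the dimension can be read off from $\pi$: as $\pi$ is dominant with irreducible target, the theorem on the dimension of fibers shows that a general fiber has dimension $\dim X_0 - \dim X'_0 = \dim X - \dim X'_0$, and by the lower semicontinuity above a general fiber is precisely a maximal-dimensional orbit. Therefore $d = \dim X - \dim X'_0$, and consequently $c_\Gamma(X) = \dim X - d = \dim X'_0 = \operatorname{Trdeg}(\kk(X)^\Gamma)$, as claimed.

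The point that must be handled with care is the interplay between the two notions of ``generic'' at work here: I need to ensure that a general fiber of $\pi$ (which computes $\dim X - \dim X'_0$ via the fiber-dimension theorem) is the same thing as an orbit of maximal dimension $d$ (which computes the complexity). Both are dense open conditions on the irreducible variety $X_0$, so their intersection is non-empty, and this is exactly what forces the two dimension counts to agree. The remaining ingredients — the identity $\dim X'_0 = \operatorname{Trdeg}(\kk(X'_0))$ for a variety and the fiber-dimension theorem for a dominant morphism — are standard.
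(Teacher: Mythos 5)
Your proposal is correct and follows essentially the same route as the paper: apply Rosenlicht's theorem to get $\pi : X_0 \to X_0'$, use lower semicontinuity of orbit dimension to identify the maximal orbit dimension with the general fiber dimension of $\pi$, and conclude via $\dim X_0' = \operatorname{Trdeg}(\kk(X)^\Gamma)$. The paper's proof is just a more condensed version of the same argument.
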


\begin{proof}
  Let $\pi : X_0 \to X_0'$ be as in Rosenlicht's
  Theorem. Since the dimension of $\Gamma$-orbits is lower semi-continuous
  the maximal dimension of an orbit is the dimension of the fibers of
  $\pi$ thus $c_\Gamma(X) = \dim X_0' = \textrm{Trdeg}(\kk(X'_0)) = \textrm{Trdeg}(\kk(X_0)^\Gamma) = \textrm{Trdeg}(\kk(X)^\Gamma)$.
\end{proof}

\begin{prop}
Let $X$ be a $G$-variety, then $c_U(X) = c_B(X) + \rk(X)$. 
\end{prop}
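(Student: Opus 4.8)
The plan is to reduce everything to the equality $c_\Gamma(X) = \operatorname{Trdeg}(\kk(X)^\Gamma)$ proved in the preceding proposition. Applying it to $\Gamma = U$ and to $\Gamma = B$, the assertion becomes
\[
\operatorname{Trdeg}\big(\kk(X)^U\big) - \operatorname{Trdeg}\big(\kk(X)^B\big) = \rk(X).
\]
First I would exploit the semidirect product decomposition $B = T \ltimes U$. Since every character of $B$ is trivial on $U$, a rational function is a $B$-eigenfunction exactly when it is $U$-invariant and a $T$-eigenfunction; consequently $\kk(X)^B = (\kk(X)^U)^T$, and the weight lattice $\Lambda(X)$ coincides with the group of $T$-weights occurring in $K := \kk(X)^U$. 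Thus it suffices to show that passing to $T$-invariants drops the transcendence degree of $K$ by exactly $\rk \Lambda(X)$.

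Next I would produce a geometric model for the residual $T$-action. By Rosenlicht's theorem applied to $U$, the field $K = \kk(X)^U$ is the function field of a variety, hence finitely generated over $\kk$; as $T = B/U$ acts on $K$, I may choose an irreducible $T$-variety $Z$ with $\kk(Z) = K$, so that $\kk(Z)^T = K^T = \kk(X)^B$. Applying the identity $c_T(Z) = \operatorname{Trdeg}(\kk(Z)^T)$ (equivalently, Rosenlicht once more) gives
\[
\operatorname{Trdeg}(K) - \operatorname{Trdeg}(K^T) = \dim Z - c_T(Z) = \dim(T\cdot z)
\]
for $z \in Z$ generic, the last equality because the maximal orbit dimension is attained on a dense open set by lower semicontinuity. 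Hence the left-hand side equals the dimension of a generic $T$-orbit, and it remains to identify this dimension with $\rk \Lambda(X)$.

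Finally I would compare the generic orbit $T\cdot z \cong T/T_0$, where $T_0$ is the generic stabiliser, with the weight lattice. Restricting any $T$-eigenfunction to $T\cdot z$ shows that its weight is trivial on $T_0$, whence $\Lambda(X) \subseteq \cX(T/T_0)$ and $\rk \Lambda(X) \le \dim(T/T_0)$. The reverse inclusion is the crux: I would use that a torsor under the diagonalisable group $T/T_0$ is locally trivial (Hilbert~90), so that birationally $Z \cong (Z/T) \times (T/T_0)$; the factor $\kk(T/T_0) = \kk[\cX(T/T_0)]$ then realises every character of $T/T_0$ as the weight of an eigenfunction, giving $\Lambda(X) = \cX(T/T_0)$ and hence $\rk \Lambda(X) = \dim(T/T_0) = \dim(T\cdot z)$. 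Combining the three steps yields $c_U(X) - c_B(X) = \rk(X)$. The step I expect to be the real obstacle is precisely this last identification — proving that \emph{all} characters trivial on the generic stabiliser actually occur as weights in $\kk(X)$ — which is exactly where one needs the local triviality of torus torsors rather than a naive eigenspace decomposition of $K$, the latter being unavailable since $T$ need not act locally finitely on the field $K$.
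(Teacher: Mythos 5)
Your proof is correct and takes essentially the same route as the paper's: pass to the Rosenlicht quotient by $U$, let $T = B/U$ act on the result, and identify the dimension of a generic $T$-orbit with $\rk\Lambda(X)$. The only difference is that the paper asserts without proof that ``the dimension of its orbits is the rank of its weight lattice,'' whereas you supply a correct justification via the generic stabiliser $T_0$ and the generic triviality of $T/T_0$-torsors (Hilbert 90).
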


\begin{proof}
Replacing $X$ wih an open $G$-stable subset and using Rosenlicht's Theorem, we have morphisms $X \to X' \to X''$ with fiber the $U$-orbits and the $T$-orbits so the fibers of the composition are the $B$-orbits. We have $\dim(X') = c_U(X)$ and $\dim(X'') = c_B(X)$. Now $T$ acts on $X'$ and the dimension of its orbits is the rank of its weight lattice in $\kk(X')$ which is also $\rk(X)$. So the fibers of the second map $X' \to X''$ are of dimension $\rk(X)$ and we get $c_U(X) = c_B(X) + \rk(X)$.
\end{proof}

\begin{example}
\label{exam-Mn}
Quasi-homogeneous $G$-varieties do not always admit finitely many
$G$-orbits. For example, consider $X = \p(M_2(\kk))$ the projective
space over the space of square matrices of size $2$. The group
$G = \SL_2(\kk)$ acts by left multiplication and has a dense orbit: the
locus where the determinant is not vanishing therefore $c_G(X) = 0$.
However, for $v \in \kk^2$ with $v \neq 0$, the variety $Y_v = \{ [M] \in X \ | \ v \in \ker M \}$ is stable under the $G$-action so that we must have infinitely many $G$-orbits. 

Let $B$ be the Borel subgroup of upper triangular matrices. The
$B$-complexity is $c_B(X) = 1$. Indeed, the open subset of rank $2$
matrices is covered by a 1-dimensional family of $2$-dimensional $B$-orbits:
$$X_{[v]} = \{ [M] \in X \ | \ [Mv] \textrm{ is $B$-stable} \},$$
with $v \in \kk^2$ and $[v] \in \p^1$ its class. We also have the equalities $c_U(X) = 2$ and $\rk(X) = 1$.
\end{example}

As the above example suggests, in order to get a nice behaviour with
respect to the number of orbits, one needs to consider the
$B$-complexity for $B$ a Borel subgroup of $G$.

\begin{defn}
A spherical variety is a normal $G$-variety with $c_B(X) = 0$.
\end{defn}

\begin{remark}
1. A $G$-variety is spherical iff it is normal with a dense $B$-orbit.

2. For a spherical variety, the rank is the minimal codimension of a $U$-orbit. 
\end{remark}

\begin{thm}
A spherical $G$-variety $X$ has finitely many $B$-orbits.
\end{thm}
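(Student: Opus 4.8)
The plan is to argue by Noetherian induction, reducing the statement to a single inheritance property of sphericality. Since $B$ is connected, every irreducible component of a closed $B$-stable subset is again $B$-stable, so the natural first attempt is to prove: \emph{every irreducible $B$-stable closed subvariety $Y\subseteq X$ satisfies $c_B(Y)=0$}, i.e.\ carries a dense $B$-orbit. Granting this, one inducts on $\dim X$: the dense $B$-orbit $\cO$ exists because $c_B(X)=0$; its complement $X\ssm\cO$ is a proper closed $B$-stable subset, hence a finite union of irreducible $B$-stable subvarieties of strictly smaller dimension, each with $c_B=0$ by the inheritance property and therefore with finitely many $B$-orbits by induction. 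Summing up would give finitely many $B$-orbits on $X$.

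The heart of the matter is thus the inheritance of $c_B=0$, and for this I would use Proposition~\ref{prop-lift}. Recall $c_B(Y)=\operatorname{Trdeg}(\kk(Y)^B)$, so since $\kk$ is algebraically closed, $c_B(Y)=0$ is equivalent to $\kk(Y)^B=\kk$. A $B$-invariant rational function on $Y$ can be written, on a suitable affine chart, as a ratio $f/g$ of two $B$-eigenfunctions of the same weight. Choosing $X_0$ as in Proposition~\ref{prop-lift} (so $X_0\cap Y\neq\emptyset$ and $\kk[X_0]^{(B)}\to\kk[X_0\cap Y]^{(B)}$ is surjective), I would lift $f$ and $g$ to $B$-eigenfunctions $\wt f,\wt g\in\kk[X_0]^{(B)}$ of the same respective weights. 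Then $\wt f/\wt g$ is a $B$-invariant rational function on $X$, hence lies in $\kk(X)^B=\kk$ because $X$ is spherical; so $\wt f=c\,\wt g$ for some scalar $c$, and restricting to $Y$ gives $f=c\,g$, i.e.\ $f/g=c\in\kk$. This forces $\kk(Y)^B=\kk$, as wanted.

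The main obstacle is that Proposition~\ref{prop-lift}, and specifically the surjectivity on $B$-eigenfunctions, requires $Y$ to be $G$-stable: that step rests on the reductivity of $G$ (a surjection of $G$-modules remains surjective on $B$-eigenspaces), which can fail for a merely $B$-stable $Y$. The components of $X\ssm\cO$ produced in the first paragraph are only $B$-stable, so the argument does not close as stated. To repair it I would first run the induction with $G$-stable subvarieties only: since $c_G(X)\le c_B(X)=0$ there is a dense $G$-orbit, the boundary components are $G$-stable, the lifting argument of the previous paragraph applies verbatim, and one obtains that $X$ has \emph{finitely many $G$-orbits}, each with $c_B=0$. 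It then remains to treat a single spherical homogeneous space $G/H$, where no proper $G$-stable subvariety is available; I expect this to be the genuinely hard case. Here I would exploit the symmetry $B\backslash G/H\simeq H\backslash G/B$ to reduce to the finiteness of $H$-orbits on the complete flag variety $G/B$ (on which $H$ acts with a dense orbit), and establish the latter by a separate argument using completeness — or, alternatively, invoke the local structure theorem of the later sections, which settles the homogeneous case directly.
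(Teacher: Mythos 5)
You have correctly identified both the inductive skeleton (a dense $B$-orbit whose boundary components inherit $c_B=0$) and the precise obstruction: Proposition \ref{prop-lift} only applies to $G$-stable closed subvarieties, so the lifting of $B$-eigenfunctions does not directly handle the merely $B$-stable boundary components. Your first repair step --- running the induction over $G$-stable subvarieties to conclude that $X$ has finitely many $G$-orbits, each with $c_B=0$ --- is sound and matches the first half of the paper's argument. But the proof then stops exactly where the real work begins: the homogeneous case $G/H$ is asserted rather than proved. Neither of your two suggested outs closes it. Completeness of $G/B$ by itself gives nothing (a one-dimensional torus acting on $\p^2$ already has infinitely many orbits), and ``finiteness of $H$-orbits on $G/B$ when $H$ acts with a dense orbit'' is, after the symmetry $B\backslash G/H\simeq H\backslash G/B$, precisely the Brion--Vinberg theorem you are trying to prove; it is a restatement, not a reduction. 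Invoking the local structure theorem would moreover be circular in this text: the proof of Theorem \ref{thm-stru} already uses that spherical varieties have finitely many $B$-orbits, and it is only available in characteristic zero, whereas the finiteness theorem is meant to hold in all characteristics.

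The missing idea, which is the heart of the paper's proof, is to establish the stronger inheritance statement $c_B(Y)\le c_B(X)$ for \emph{every} closed $B$-stable subvariety $Y$, by interpolating through minimal parabolic subgroups. Since $G$ is generated by the minimal parabolics $P\supset B$, it suffices to show $c_B(Y)\le c_B(PY)$ for each such $P$; iterating gives $c_B(Y)\le c_B(GY)$, and $GY$ is closed and $G$-stable, so your lifting argument applies to it. The inequality $c_B(Y)\le c_B(PY)$ comes from the proper, generically finite map $\pi: P\times^B Y\to PY$ (which gives $c_B(P\times^B Y)=c_B(PY)$ when $PY\ne Y$), combined with the open $B$-stable embedding $B\times^{B_p}Y\hookrightarrow P\times^B Y$ for $p\in P\setminus B$ and $B_p=B\cap pBp^{-1}$: every $B$-orbit of $B\times^{B_p}Y$ has the form $B\times^{B_p}Z$ for $Z$ a $B_p$-orbit of $Y$, whence $c_B(B\times^{B_p}Y)=c_{B_p}(Y)\ge c_B(Y)$. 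With this lemma the homogeneous case never needs separate treatment, and the Noetherian induction you set up goes through over all $B$-stable boundary components at once.
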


This theorem is a direct consequence of the following proposition.

\begin{prop}
Let $X$ be a $G$-variety and $Y$ be a closed $B$-stable subvariety,
then $c_B(Y) \leq c_B(X)$.
\end{prop}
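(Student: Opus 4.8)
The plan is to reduce everything to a transcendence degree comparison. Since $c_B(Y) = \operatorname{Trdeg}(\kk(Y)^B)$ and $c_B(X) = \operatorname{Trdeg}(\kk(X)^B)$, it suffices to produce, from a transcendence basis of $\kk(Y)^B$, an algebraically independent family in $\kk(X)^B$. The mechanism I would use is restriction: if a family in $\kk(X)^B$ restricts to an algebraically independent family on $Y$, then it was already algebraically independent on $X$, since any polynomial relation on $X$ restricts to one on $Y$. So the whole problem becomes the lifting of enough $B$-invariant rational functions from $Y$ to $X$.

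To make this concrete I would choose a $B$-stable affine open $X_0 \subseteq X$ meeting $Y$, so that the restriction $\kk[X_0] \to \kk[X_0 \cap Y]$ is a $B$-equivariant surjection. Using that for the solvable group $B$ acting on the irreducible affine variety $X_0 \cap Y$ every $B$-invariant rational function is a quotient $a/b$ of two $B$-eigenfunctions of the \emph{same} weight (clear denominators $B$-equivariantly and use that a nonzero $B$-stable ideal contains an eigenfunction), I would take algebraically independent $h_1, \dots, h_r \in \kk(Y)^B$ with $r = c_B(Y)$, write $h_i = a_i/b_i$ with $a_i, b_i \in \kk[X_0 \cap Y]^{(B)}$ of a common weight $\lambda_i$, and lift $a_i, b_i$ to $B$-eigenfunctions $A_i, B_i \in \kk[X_0]^{(B)}$ of weight $\lambda_i$ (the restriction map being $B$-equivariant and weight-graded, it lifts weight by weight). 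Then $H_i := A_i/B_i \in \kk(X)^B$ restricts to $h_i$ on $X_0 \cap Y$ because $B_i|_Y = b_i \neq 0$, and the $H_i$ are algebraically independent, giving $c_B(Y) = r \le \operatorname{Trdeg}(\kk(X)^B) = c_B(X)$.

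The main obstacle is exactly the lifting of the $B$-eigenfunctions. When $Y$ is $G$-stable this is Proposition \ref{prop-lift}: embedding $X$ in $\p(V)$ for a $G$-module $V$, the coordinate ring of the affine cone is a rational $G$-module, so complete reducibility in characteristic zero makes the restriction surjective on semi-invariants. For a merely $B$-stable $Y$ this can fail, since the weights $\lambda_i$ need not be dominant whereas the $B$-semi-invariants of a $G$-stable cone carry only dominant weights, so there may be nothing to lift to. I expect to circumvent this by a reduction: first pass to $Z = \overline{G \cdot Y}$, a closed $G$-stable subvariety with $Y \subseteq Z \subseteq X$, for which the $G$-stable case gives $c_B(Z) \le c_B(X)$; it then remains to prove $c_B(Y) \le c_B(Z)$, where now $G \cdot Y$ is dense in $Z$. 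For this last step I would exploit the associated bundle $a : G \times^{B} Y \to Z$ together with the identification $\kk(G \times^{B} Y)^G \cong \kk(Y)^B$, which computes $c_B(Y)$ $G$-equivariantly on a space mapping to $Z$. Making this final comparison run in the correct direction is, I expect, the genuinely delicate point of the argument.
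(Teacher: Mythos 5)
Your first half --- the case where $Y$ is $G$-stable, handled by choosing a $B$-stable affine open $X_0$ meeting $Y$ via Proposition \ref{prop-lift}, writing each element of $\kk(Y)^B$ as a ratio of $B$-eigenfunctions of a common weight and lifting them to $X_0$ --- is exactly the paper's argument and is correct. The gap is the second half, which you explicitly leave open: the inequality $c_B(Y) \le c_B(\overline{GY})$. This is the genuinely hard step, and it is not a formality: Example \ref{exam-Mn2} exhibits a closed $B$-stable inclusion $Y \subset Z$ with $Z$ only $B$-stable for which $c_B(Y) > c_B(Z)$, so the $G$-action must enter in an essential way precisely here. Moreover, the tool you propose --- the bundle $a : G\times^B Y \to Z = \overline{GY}$ with $\kk(G\times^B Y)^G \cong \kk(Y)^B$ --- naturally produces the inequality in the wrong direction: $a$ is proper and surjective, so pulling back $G$-invariants gives an injection $\kk(Z)^G \hookrightarrow \kk(G\times^B Y)^G$, i.e.\ $c_G(Z) \le c_B(Y)$, which is useless for the desired bound. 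To reverse it you would need $a$ to be generically finite, and it is not: $\dim G\times^B Y = \dim Y + \dim G/B$ generally exceeds $\dim Z$.

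The paper closes exactly this gap by working one minimal parabolic at a time. Since $G$ is generated by the minimal parabolics $P \supset B$, it suffices to prove $c_B(Y) \le c_B(PY)$ for each such $P$. Now $\dim P\times^B Y = \dim Y + 1$, so when $PY \ne Y$ the proper surjection $P\times^B Y \to PY$ \emph{is} generically finite and hence $c_B(P\times^B Y) = c_B(PY)$. One then exhibits, for $p \in P\setminus B$ and $B_p = B\cap pBp^{-1}$, the open $B$-stable subset $B\times^{B_p} Y \subset P\times^B Y$ (the preimage of the dense cell $BpB/B$), whose $B$-orbits are the $B\times^{B_p} O$ for $O$ a $B_p$-orbit of $Y$; this yields $c_B(PY) = c_B(P\times^B Y) \ge c_B(B\times^{B_p} Y) = c_{B_p}(Y) \ge c_B(Y)$. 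Some argument of this kind --- replacing your single global reduction by an induction over minimal parabolics where generic finiteness actually holds --- is needed to complete the proof.
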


\begin{proof}
Write $c$ for $c_B$. Let $Y \subset X$ be  closed and $B$-stable. We prove $c(Y) \leq c(X)$.

We start with a $G$-orbit $Y$. Let $X_0$ be an open affine
$B$-stable subvariety of $X$ with $X_0 \cap Y \neq \emptyset$ and a
such that the restriction map
$$\kk[X_0]^{(B)} \to \kk[X_0 \cap Y]^{(B)}$$
is surjective. Such an open subset exists by Proposition
\ref{prop-lift} (in positive characteristic, replace in what follows $u'$ and $v'$ by $p$-th powers). Now let $f\in \kk(Y)^B$, we can write $f=u/v$ with
$u,v\in \kk[X_0\cap Y]^{(B)}$ having the same weight. There exist
$u',v'\in \mathsf{k}[X_0]^{(B)}$ such that $u'\vert_Y=u$ and
$v'\vert_Y=v$. We get $(u'/v')\vert_Y=f$. It follows that the
transcendence degree of $\mathsf{k}(X)^B$ is bigger than or equal to the
transcendence degree of $\mathsf{k}(Y)^B$ so $c(X) \geq c(Y)$.

\smallskip

Now let $Y$ be any closed $B$-stable subset. We prove that $c(Y) \leq
c(G Y)$ and conclude by the previous argument. Recall
that $G$ is generated by the minimal parabolic subgroups strictly
containing $B$. We therefore only need to prove that $c(Y) \leq c(P
Y)$ for any minimal parabolic subgroup $P$.

Consider the contracted product $P \times^B Y$ defined as the quotient
of $P \times Y$ by the action of $B$ defined by $b \cdot (p,y) =
(pb,b^{-1}y)$. The projection on the first factor induces a morphism $P
\times^B Y \to P/B$ which is $P$-equivariant, locally trivial for the
Zariski tolopogy (it is trivial on the open subset $(U^- \cap P)B/B$
where $U^-$ is the unipotent radical of $B^-$) with fiber isomorphic to
$Y$. In particular $\dim P \times^B Y = \dim Y + 1$. 

The map $\pi : P\times^BY\to PY, [p,y] \mapsto py$ is surjective. It
is also proper since it can be viewed as the restriction of the
projection $P \times^B X \to X$. In particular $PY$ is closed and $\dim
PY \leq \dim Y + 1$. Assume that $PY \neq Y$ (otherwise we trivially
get $c(Y) \leq c(PY)$). Then the map $\pi$ is generically finite so
$c(P \times^B Y) = c(PY)$.

Now let $p \in P \setminus B$, the orbit $BpB/B$ is dense in $P/B$ and
if we set $B_p = B \cap pBp^{-1}$, we have an isomorphism $BpB/B \simeq
B/B_p$. Consider the contracted product $B \times^{B_p} Y$. We have an
embedding $B \times^{B_P} Y \to P \times^B Y$ defined by $[b,y] \mapsto [bp,y]$. Its image is $p^{-1}(BpB/B)$ therefore $B$-invariant and open. In particular $c(P \times^B Y) \geq c(B \times^{B_p}
Y)$. But any $B$-orbit in $B \times^{B_p} Y$ is of the form $B
\times^{B_p} Z$ for $Z$ a $B_p$-orbit in $Y$. In particular $c(B
\times^{B_p} Y) = c_{B_p}(Y)$ where we write $c_{B_p}(Y)$ for the minimal
codimension of a $B_p$ orbit in $Y$. In particular $c_{B_p}(Y) \geq
c(Y)$ so that we get 
$$c(PY) \geq c(P \times^B Y) \geq c(B \times^{B_p} Y) = c_{B_p}(Y) \geq
c(Y).$$
This completes the proof.
\end{proof}

\begin{example}
\label{exam-Mn2}
The assumption that $X$ is a $G$-variety is important. Recall Example
\ref{exam-Mn} where $X = \p(M_2(\kk))$ is the projective space over
the vector space of $2 \times 2$ matrices. For $v \in \kk^2$ with $v \neq 0$, let $Z =
\overline{X_{[v]}}$ be the closure of the $B$-orbit 
$$X_{[v]} = \{ [M] \in X \ | \ [Mv] \textrm{ is $B$-stable} \}.$$
Then $Z$ contains the subvariety
$$Y = \left\{ \left[ \left( \begin{array}{cc} 
a & b \\
0 & 0 \\
  \end{array} \right) \right] \in X \ \big| \ a,b \in \kk \right\}$$
whose points are fixed by $B$. In particular, we have $c_B(Z) = 0$ and
$c_B(Y) = 1$ so $c_B(Y) > c_B(Z)$ even if $Y \subset Z$. 
\end{example}

As a consequence we obtain equivalent definitions of
spherical varieties.

\begin{thm}
\label{theo-char}
Let $X$ be a normal $G$-variety. The following are equivalent
\begin{itemize}
\item[1.] $X$ is spherical.
\item[2.] $X$ has finitely many $B$-orbits.
\item[3.] $\mathsf{k}(X)^B=\mathsf{k}$.
\end{itemize}
\end{thm}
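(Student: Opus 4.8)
The plan is to prove the theorem by closing a cycle of implications, using the two propositions already established: the identity $c_\Gamma(X) = \operatorname{Trdeg}(\kk(X)^\Gamma)$ and the finiteness theorem for spherical varieties (itself a consequence of the monotonicity $c_B(Y) \leq c_B(X)$ for closed $B$-stable $Y \subset X$). I would organize the argument as $(1) \Leftrightarrow (3)$, then $(1) \Rightarrow (2)$, then $(2) \Rightarrow (1)$. Throughout, normality is part of the standing hypothesis, so that condition never needs to be re-verified.

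For $(1) \Leftrightarrow (3)$ I would simply unwind the definitions. By definition $X$ is spherical exactly when $c_B(X) = 0$, and by the proposition computing complexity as a transcendence degree this is equivalent to $\operatorname{Trdeg}(\kk(X)^B) = 0$. Since $\kk(X)^B$ is a field extension of the algebraically closed field $\kk$, transcendence degree zero forces $\kk(X)^B$ to be algebraic over $\kk$, hence equal to $\kk$; conversely $\kk(X)^B = \kk$ gives transcendence degree zero. So $(1)$ and $(3)$ are the same condition read through the complexity dictionary.

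For $(1) \Rightarrow (2)$ I would invoke verbatim the theorem already proved above, namely that a spherical $G$-variety has finitely many $B$-orbits. This is the one substantive input, and I expect it to carry all the real weight; but it has already been discharged via the contracted-product induction in the preceding proposition, so at this point it is a citation rather than new work. For $(2) \Rightarrow (1)$ I would argue that finitely many orbits forces a dense one: writing $X = \bigcup_i O_i$ as the finite union of its $B$-orbits gives $X = \bigcup_i \overline{O_i}$, and since $X$ is irreducible and this is a finite union of closed subsets, $X = \overline{O_i}$ for some $i$. A $B$-orbit is locally closed, hence open in its closure, so $O_i$ is open and dense in $X$; thus $\codim O_i = 0$ and $c_B(X) = 0$. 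Together with the standing normality hypothesis this says $X$ is spherical, completing the cycle $(1) \Rightarrow (2) \Rightarrow (1) \Leftrightarrow (3)$.

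The genuine obstacle is the implication $(1) \Rightarrow (2)$, the finiteness of $B$-orbits, but this has already been settled earlier in the text; the remaining implications are formal consequences of the complexity–transcendence-degree correspondence and the irreducibility of $X$.
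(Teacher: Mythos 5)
Your proposal is correct and follows essentially the same route as the paper: citing the finiteness theorem for $(1)\Rightarrow(2)$, using the complexity–transcendence-degree identity for the equivalence with $(3)$, and observing that finitely many orbits forces a dense one. The only cosmetic difference is that you close the cycle via $(2)\Rightarrow(1)$ (dense orbit has codimension zero) where the paper goes $(2)\Rightarrow(3)$ (a $B$-invariant function is constant on the dense orbit); both hinge on the same observation and are equally valid.
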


\begin{proof}
  (1. $\Rightarrow$ 2.) Follows from the previous theorem. 

(2. $\Rightarrow$ 3.) Any function $f \in \kk(X)^B$ is constant on $B$-orbits. Since $X$ has finitely many $B$-orbits, there must be a dense orbit thus $f$ is constant on $X$.

(3. $\Rightarrow$ 1.) We know that $c_B(X) = \textrm{Trdeg}(\kk(X)^B) =
  \textrm{Trdeg}(\kk) = 0$.
\end{proof}

We also mention the following equivalent definition of spherical
varieties which is more intrisic since it does not use Borel subgroups
but we need instead to consider all birational models. We refer to
\cite[Theorem 2.1.2]{survey} for a proof and to \cite[Definition-Theorem 25.1]{timashev} or \cite[Section 2]{jacopo} for more equivalent definitions of spherical varieties. 

\begin{thm}
A normal $G$-variety is spherical if and only if all its
$G$-equivariant birational models have finitely many $G$-orbits.
\end{thm}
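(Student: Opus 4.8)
The plan is to deduce the statement from the birational invariance of complexity together with the characterisation already obtained in Theorem \ref{theo-char}. The crucial observation is that, by the formula $c_B(X) = \operatorname{Trdeg}(\kk(X)^B)$, the $B$-complexity depends only on the $G$-field $\kk(X)$, and hence is the same for all $G$-equivariant birational models of $X$; the same holds for the $G$-complexity $c_G(X) = \operatorname{Trdeg}(\kk(X)^G)$. These two invariances will do most of the work.

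For the direct implication, I would assume $X$ spherical, i.e. $c_B(X) = 0$. If $X'$ is any ($G$-equivariant, normal) birational model then $c_B(X') = c_B(X) = 0$, so $X'$ is again spherical and, by Theorem \ref{theo-char}, has finitely many $B$-orbits. Since every $G$-orbit is $B$-stable it is a union of $B$-orbits, and distinct $G$-orbits consume disjoint families of $B$-orbits; hence finitely many $B$-orbits force finitely many $G$-orbits. Thus every model $X'$ has finitely many $G$-orbits.

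For the converse I would argue by contraposition: assuming $X$ is not spherical, i.e. $c_B(X) > 0$, I would exhibit a single model with infinitely many $G$-orbits. If already $c_G(X) > 0$, then Rosenlicht's Theorem applied to the $G$-action gives an open $G$-stable $X_0$ with a quotient $\pi \colon X_0 \to X_0'$ whose fibres are the $G$-orbits and with $\dim X_0' = \operatorname{Trdeg}(\kk(X)^G) = c_G(X) > 0$, so $X$ itself already has infinitely many $G$-orbits. The remaining, and genuinely delicate, case is $c_G(X) = 0 < c_B(X)$: here every model has a dense $G$-orbit, so after replacing $X$ by that dense orbit we may assume $X = G/H$ is homogeneous with a single $G$-orbit, and one must manufacture an equivariant embedding whose boundary carries a positive-dimensional family of $G$-orbits.

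This last step is the main obstacle, and is exactly where the Luna--Vust theory of embeddings (and the cited \cite[Theorem 2.1.2]{survey}) enters: using that $\kk(G/H)^B$ has positive transcendence degree one constructs, via a suitably chosen $G$-invariant valuation, a normal $G$-embedding $G/H \hookrightarrow X'$ whose boundary contains a positive-dimensional family of $G$-orbits, equivalently a $G$-stable subvariety of positive $G$-complexity (so that Rosenlicht applied on the boundary yields infinitely many orbits). The illustrative case is $X = \p(M_2(\kk))$ of Example \ref{exam-Mn}, where $c_G = 0$ but $c_B = 1$ and the rank-one locus is visibly a $\p^1$-family of $G$-orbits. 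I expect the careful choice of the valuation and the control of the complexity of the boundary to be the only subtle points; everything else follows formally from the birational invariance of complexity and Theorem \ref{theo-char}.
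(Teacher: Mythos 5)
Your forward implication is correct and complete: since $c_B(X)=\operatorname{Trdeg}(\kk(X)^B)$ depends only on the $G$-field $\kk(X)$, it vanishes on every equivariant birational model, each model therefore has finitely many $B$-orbits by Theorem \ref{theo-char}, and finitely many $B$-orbits force finitely many $G$-orbits. The reduction of the converse to the homogeneous case $X=G/H$ with $c_G=0<c_B$ (via Rosenlicht when $c_G>0$) is also correct. Note that the paper itself gives no proof of this theorem and simply refers to \cite[Theorem 2.1.2]{survey}, so up to this point you have written down more than the text does.

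The genuine gap is the remaining step, which is the entire content of the theorem: given $G/H$ with $c_B(G/H)>0$, one must actually construct a normal equivariant embedding with infinitely many $G$-orbits. This is a theorem of Akhiezer, and it does not fall out of Luna--Vust generalities in the way your sketch suggests. In particular, ``a suitably chosen $G$-invariant valuation'' only produces a single $G$-stable boundary divisor $D$; to conclude you would still have to show that $D$ itself has positive $G$-complexity, i.e.\ that the residue field of the valuation has nonconstant $G$-invariants, and nothing in your argument controls this. (A priori the boundary of an embedding of a non-spherical $G/H$ could consist of finitely many orbits --- ruling this out for at least one model is exactly the hard part.) The actual construction exploits the positive-dimensional family of generic $B$-orbit closures guaranteed by $c_B>0$ to manufacture a model whose boundary carries a positive-dimensional family of $G$-orbits, as in Example \ref{exam-Mn} where the rank-one locus is a $\p^1$-family of orbits; carrying this degeneration out in general is real work, not a formal consequence of the invariance of complexity. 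As written, your converse direction restates the claim and points to the same reference the paper cites, so the proof is incomplete precisely at its crux.
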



\subsection{Structure of $B$-orbits}

The set of $B$-orbits in a spherical variety beeing finite, it is a useful combinatorial invariant. 

\begin{defn}
The set of $B$-orbits in a $G$-variety $X$ will be denoted by $B(X)$. The set of $B$-orbit closures will be denoted by $\oB(X)$. 
\end{defn}

\begin{remark}
There is a bijection between $B(X)$ and $\oB(X)$ given by $Y \mapsto \overline{Y}$, the reverse inclusion is given by taking the dense $B$-orbit.
\end{remark}

\begin{defn}
The sets $B(X)$ and $\oB(X)$ are posets for the Bruhat order $\leq$ defined by $Y \leq Z$ if $Y \subset \overline{Z}$.
\end{defn}

\begin{example}
For rational homogeneous spaces, for example $G/B$, the above Bruhat order coincides with the classical Bruhat order on the Weyl group $W$ of $G$: the covering relations for this order are of the form $u \leq v$ if there exists a reflection $t$ with $v = tu$ and $\ell(v) > \ell(u)$.
\end{example}

One can define another order on $B(X)$ and $\oB(X)$: the weak Bruhat order.

\begin{defn}
  The weak Bruhat order is defined by its covering relations. For a pair $(P,Y)$ with $P$ a minimal parabolic subgroup containing $B$ and $Y \in \oB(X)$ such that $Y \subsetneq PY$, we say that $P$ raises $Y$ and write $Y\prec PY$. These are the covering relations of the weak Bruhat order.
\end{defn}

\begin{example}
  Again this generalises the weak Bruhat order on rational homogeneous spaces: the covering relations for this order are of the form $u \leq v$ if there exists a simple reflection $s$ with $v = su$ and $\ell(v) > \ell(u)$.
\end{example}

For a given covering relation $Y\prec PY$, consider the morphism $\pi:P\times^BY\to PY$. It is a proper morphism. Denote by $\cO$ and $\cO'$ the dense $B$-orbits in $Y$ and $Y'=PY$. In the next proposition, the symbol $\cO''$ stands for another $B$-orbit.

\begin{prop}
\label{prop-UNT}
If $X$ is spherical, then one of the following occurs:
\begin{itemize}
\item Type {\rm (U)}: $P\cO=\cO\cup \cO'$ and $\pi$ is birational.
\item Type {\rm (N)}: $P\cO=\cO\cup \cO'$ and $\pi$ has degree 2.
\item Type {\rm (T)}: $P\cO=\cO\cup \cO'\cup \cO''$ 
with $\dim \cO''=\dim \cO$ and $\pi$ is birational.
\end{itemize}
In type {\rm (U)}, we have $\rk(\cO') = \rk(\cO)$, while in type {\rm (N)} and {\rm (T)}, we have $\rk(\cO') = \rk(\cO)+1 = \rk(\cO'')+1$.
\end{prop}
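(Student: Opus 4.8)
The plan is to analyze everything inside the single $P$-orbit $P\cO$. Fix $x$ in the dense $B$-orbit $\cO$ of $Y$ and set $H = B_x$ and $P_x = \stab_P(x)$, so that $H = B \cap P_x$. Since $B \subseteq P$ we have $P\cO = PBx = Px$, a single $P$-orbit; moreover $\cO'$ is dense in $PY = \wb{P\cO}$, so $Px$ is a homogeneous spherical $P$-variety. The identification $\cO \cong B/H$ gives $P \times^B \cO \cong P/H$, under which $\pi$ becomes the natural projection $P/H \to P/P_x = Px$, $gH \mapsto gx$, and $\cO$ is the $B$-orbit $B/H$ of the base point in $Px$. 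As $Y \prec PY$ is a covering relation, $\dim Px = \dim \cO + 1$, hence $\dim P_x = \dim H$ and $P_x^\circ = H^\circ$; since $\chara(\kk)=0$ the map $\pi$ is then generically finite and separable of degree $[P_x:H]$.

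First I would reduce to semisimple rank one. Writing $P = L \ltimes R_u(P)$ with $L$ the Levi containing $T$, the inclusion $R_u(P) \subseteq U \subseteq B$ implies that the $B$-orbit decomposition and the weight lattices of $Px$ are governed by the action of $L$, whose derived group is isomorphic to $\SL_2$ or $\PGL_2$ (semisimple rank one, with simple root the one defining $P$). This reduces the statement to the classification of rank-one homogeneous spherical spaces for a group of semisimple rank one: up to the central torus these are $\p^1$, $\SL_2/N_1$ and $\SL_2/T_1$, with $T_1$ a maximal torus, $N_1 = N_{\SL_2}(T_1)$, together with their $\PGL_2$-analogues. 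Matching $Px = P/P_x$ to one of these three local models yields exactly the three configurations: two $B$-orbits with $\pi$ birational (model $\p^1$, type (U)); two $B$-orbits with $\pi$ of degree $[N_1:T_1] = 2$ (model $\SL_2/N_1$, type (N)); and three $B$-orbits $\cO, \cO', \cO''$ with $\dim\cO'' = \dim\cO$ and $\pi$ birational (model $\SL_2/T_1$, type (T)). In particular $[P_x:H] \in \{1,2\}$, the value $2$ being forced by the order-two Weyl group of the rank-one Levi and occurring precisely in type (N).

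For the rank relations I would compare generic $U$-orbit dimensions. Since for a spherical orbit the rank equals its dimension minus the maximal dimension of a $U$-orbit, and since $\dim\cO' = \dim\cO + 1$, we obtain
$$\rk(\cO') - \rk(\cO) = 1 - \delta,$$
where $\delta$ is the increase of the maximal $U$-orbit dimension in passing from $\cO$ to $\cO'$. In type (U) the extra $\p^1$-direction is a unipotent (root-subgroup) direction, so $\delta = 1$ and $\rk(\cO') = \rk(\cO)$; in types (N) and (T) the extra direction is toral, so $\delta = 0$ and $\rk(\cO') = \rk(\cO) + 1$, while the symmetry $x \leftrightarrow sx$ in the $\SL_2/T_1$ model gives $\rk(\cO'') = \rk(\cO)$ in type (T). These three behaviours can be read off directly from the three local models.

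The hard part will be the reduction to rank one together with the classification step: one must justify that the local picture of a covering relation is faithfully captured by a single $\SL_2$- or $\PGL_2$-homogeneous space, and that no model other than the three listed can occur. Once this is in place, the orbit counts, the degree of $\pi$, and the rank jumps follow by inspection of $\p^1$, $\SL_2/N_1$ and $\SL_2/T_1$.
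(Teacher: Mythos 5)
Your proposal follows essentially the same strategy as the paper: fix $x$ in the dense $B$-orbit, observe that $P\cO=P/P_x$ is a single $P$-orbit of dimension $\dim\cO+1$, compute the degree of $\pi$ as the index of $B_x$ in $P_x$ over a general point of $\cO'$, and read off the orbit count and the rank jump from a rank-one classification. The one place where you diverge --- and which you rightly flag as the hard remaining part --- is the reduction step. As literally stated, ``matching $P/P_x$ to $\p^1$, $\SL_2/N_1$ or $\SL_2/T_1$'' cannot work: those models have dimension at most $2$, while $P/P_x$ has dimension $\dim\cO+1$ and carries a possibly large unipotent part coming from $R_u(P)\not\subset P_x$; only the \emph{$B$-orbit structure} of $P/P_x$ is captured by a rank-one model. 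The paper makes this precise by a dual device: the $B$-orbits in $P\cO$ are in bijection with the $P_x$-orbits in $P/B\simeq\p^1$ via $P_x[g]\leftrightarrow Bg^{-1}x$, and the action of $P_x$ on $P/B$ factors through $\Aut(P/B)\simeq\PGL_2(\kk)$, so everything reduces to classifying the positive-dimensional subgroups of $\PGL_2(\kk)$ with a dense orbit on $\p^1$ (conjugates of $T_0$, $N_0$ or $ZU_0$, the case of $\PGL_2(\kk)$ itself being excluded because $P$ raises $Y$). This automatically disposes of $R_u(P)$ and of the central torus of the Levi, and is exactly the justification missing from your reduction; the classification it invokes is equivalent to your list of spherical subgroups of $\SL_2$. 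Your degree computation ($[P_x:B_x]$, equal to $2$ precisely in the $N_0$-case) and your rank argument (whether the extra direction in $\cO'$ is unipotent or toral, i.e.\ comparing the part of $T$ in the generic stabiliser) then coincide with the paper's.
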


\begin{proof}
  If $\cO$ is the dense $B$-orbit in $Y$ and for $x$ in $\cO$, write $P_x$ for the stabiliser of $x$ in $P$. The $B$-orbits in $P\cO$ are in bijection with the $P_x$-orbits in $P/B$ and the correspondence is given as follows:
  $$P_x [g] \leftrightarrow B g^{-1} x.$$
  Now let $H$ be the image of $P_x$ in $\aut(P/B) \simeq \PGL_2(\kk)$. Since $c(P\cO) = 0$ we have finitely many such orbits thus $H$ has a dense orbit in $P/B$ and $\dim H \geq 1$. Consider the following subgroups of $\PGL_2(\kk)$: $T_0$ is the maximal torus of diagonal matrices, $U_0$ is the maximal unipotent subgroup of upper triangular matrices with $1$ on the diagonal, $B_0 = T_0U_0$ and $N_0$ is the normaliser of $T_0$. It is an easy exercice to prove that any positive dimensional subgroup of $\PGL_2(\kk)$ is conjugated to either $\PGL_2(\kk)$ itself, $T_0$, $N_0$ or $ZU_0$ with $Z$ a subgroup of $T_0$.

  Note that the first case is not possible: since $P$ raises $Y$, $P\cO$ contains at least two $B$-orbits $\cO$ and $\cO'$ while $H \simeq \PGL_2(\kk)$ has a unique orbit on $P/B\simeq \p^1$. We are left with the last three cases which we call type (T), type (N) and type (U) respectively.

  In type (U), the group $U_0$ has two orbits on $P/B$ and $ZU_0 \subset B$, therefore we have only two orbits $\cO$ and $\cO'$. In type (N) as well, $H \simeq N_0$ has only two orbits therefore there are two orbits while in type (T) there are three $T_0$ orbits therefore three $B$-orbits in $P\cO$.

  Now we compute the degree of the map $\pi$ (note that $\pi$ has to be generically finite). For this it is enough to compute $P_y$ for a general $y \in \cO'$ \emph{i.e.} for $y = p_0x$ with $p_0 \in P$ general. One easily checks that the fiber $\pi^{-1}(y)$ over $y$ is given by $\{[p_0g,x] \in P \times^B Y \ | \ g \in P_x \}$ and this set is isomorphic to $p_0 \cdot (P_x/B_x)$. Furthermore the stabilisers are as follows: 
$$(P_x)_{[g]} = P_{[g]} \cap P_x = g(P_{[1]} \cap P_{g^{-1}x})g^{-1} =
  g(B \cap P_{g^{-1}x})g^{-1} = gB_{g^{-1}x}g^{-1}.$$ 
This implies that the closed $B$-orbit $B \cdot x$ in $P\cO$ is in
correspondence with the closed $P_x$-orbit $P_x/(P_x)_{[1]} = P_x/B_x$
in $P/B$. In particular, $P_x/B_x$ is a closed $P_x$-orbit in
$P/B$. In type (U) there is a unique such orbit having one element, in
type (T) there are two such orbits both with one element while in type
(N) there is a unique such orbit with two elements. 

We are left with the rank computation. We need to determine the part of $T$ contained in the stabiliser. The stabiliser of $x$ is $B \cap P_x$ while the stabiliser of a general element is $B \cap g^{-1}P_x g$. This stabiliser is conjugated to the stabiliser of the action of $P_x$ on $[g] \in P/B$ which equals $gBg^{-1} \cap P_x$. We therefore need to compute in each case the part of $T$ contained in the stabiliser. We get that in type (U), the stabilisers of the closed and open orbits contain the same part of $T$, while in type (N) and (T), the intersection of the stabiliser of the open orbit with $T$ is of dimension one less than the intersection of the stabiliser of the closed orbits with $T$. This concludes the
proof. 
\end{proof}
 
\begin{defn}
The graph $\Gamma(X)$ has $B(X)$ as set of vertices and has a
$P$-edge of type (U), (T) or (N) for each covering relation
$Y\preccurlyeq PY$ of this type.
\end{defn}

\begin{example}
For projective rational homogeneous spaces, the graph $\Gamma(X)$ is
connected, has only edges of type (U) and has only one maximal and one
minimal element. This graph is called the Bruhat graph or the Hasse
diagram.

More precisely, if $P \subset G$ is a parabolic subgroup. Denote $W,W_P$ the Weyl groups of $G$ and $P$ and let $W^P \subset W$ be the set of minimal length representatives of $W/W_P$. Then the Bruhat decomposition induces a bijection between $B(G/P)$ and $W^P$, there is an edge between $u$ and $v$ if and only if $v = su$ and $\ell(v) > \ell(u)$ with $s$ a simple reflection. All edges are of type (U).
\end{example}

\begin{example}
  \label{exam-sl2}
  Let $G = \SL_2(\kk)$. We describe here most of the spherical $\SL_2(\kk)$-varieties. It is easy to check that a spherical subgroup $H$ is conjugated to a subgroup of the following form: $\SL_2(\kk)$, $B_0$ a Borel subgroup of $G$, $T_0$ a maximal torus, $N_0 = N_G(T_0)$ or $ZU_0$ with $U_0$ a maximal unipotent subgroup and $Z$ a subgroup of $T_0$. We refer to \cite[Examples 3.4, 6.10, 8.19 and 8.20]{jacopo} for more details.
  \begin{itemize}
  \item[1.] If $H = B_0$, then $G/H \simeq \p^1$ is complete and this is the unique embedding. There is a unique $B_0$-fixed point raised with type (U) to $G/H$.
  \item[2.] If $H = T_0$, then $G/H$ has a unique non trivial embedding given by $\p^1 \times \p^1$. There are two $B_0$-stable not $G$-stable divisors $\p^1 \times \{0\}$ and $\{0\} \times \p^1$. Both are raised with type (T).
  \item[3.] If $H = N_0$, then $G/H$ has a unique non trivial embedding given by $\p^2$. There is a unique $B_0$-stable not $G$-stable divisor given by a line which is raised with type (N).
  \item[4.] If $H = ZU_0$ and $Z$ is finite of order $n$, then there is a unique toroidal complete embedding: the ruled rational surface $p : \p_{\p^1}(\cO_{\p^1} \oplus \cO_{\p^1}(n)) \to \p^1$. There are three $B_0$-stable not $G$-stable divisors: two sections of the map $p$ and a fiber of $p$. All of them are raised with type (U).    
  \end{itemize}
\end{example}

\begin{example}
  \label{ex-ind}
  Let $G = \GL_3(\kk)$ and $H$ be the subgroup of matrices of one of the following forms:
  $$\left(\begin{array}{ccc}
  \star & 0 & * \\
  0 & \star & * \\
  0 & 0 & \star \\
  \end{array}\right) \textrm{ or }
  \left(\begin{array}{ccc}
  0 & \star & * \\
  \star & 0 & * \\
  0 & 0 & \star \\
  \end{array}\right)$$
  where $*$ represents an arbitrary element in $\kk$ and $\star$ represents an arbitrary element in $\kk^\times$. Let $X = G/H$, it is a $G$-spherical variety of dimension $4$ and one can check that $\Gamma(X)$ has the following form:
  
  \centerline{
\begin{tikzpicture}
  \node (Y5) at (10,9)  {$X$};
  \node (Y4) at (11,8)  {$Y_4$};
  \node (Y3) at (9,8)  {$Y_3$};
  \node (Y1) at (9,7) {$Y_1$};
  \node (Y2) at (11,7)  {$Y_2$};
  \node (Y) at (10,6)  {$Y$};
  \draw[double] (Y) -- node {\tiny{2}} (Y2);
  \draw[double] (Y5) -- node {\tiny{1}} (Y3);
  \draw (Y1) -- node {\tiny{2}} (Y3);
    \draw (Y2) -- node {\tiny{1}} (Y4);
  \draw (Y4) -- node {\tiny{2}} (Y5);
  \draw (Y) -- node {\tiny{1}} (Y1);
  \node (0) at (2,8.2) {};
  \node (1) at (3,7.5)  {};
  \node (2) at (4,7.5) {};
  \node[anchor=west] (3) at (4,7.5)  {\small{$P_1$-edges of type (U)}};
\node (4) at (3,7)  {};
  \node (5) at (4,7) {};
  \node[anchor=west] (6) at (4,7)  {\small{$P_2$-edges of type (U)}};
  \node (7) at (3,6.5)  {};
  \node (8) at (4,6.5) {};
  \node[anchor=west] (9) at (4,6.5)  {\small{$P_2$-edges of type (N)}};
    \draw[double] (7) -- node {\tiny{2}} (8);
    \draw (4) -- node {\tiny{2}} (5);
    \draw (1) -- node {\tiny{1}} (2);
    \node[anchor=west] (10) at (4,8)  {\small{$P_1$-edges of type (N)}};
  \node (11) at (3,8)  {};
  \node (12) at (4,8) {};
    \draw[double] (11) -- node {\tiny{1}} (12);
  \end{tikzpicture}}

\centerline{Graph $\Gamma(X)$}

\vskip 0.2 cm

The above example is obtained via a more general process called parabolic induction (see Subsection \ref{subsection-par-ind}).
\end{example}

\begin{cor}
\label{cor-TUN}
Let $Y \in \oB(X)$ and $P_{\a_1},\cdots,P_{\a_\ell}$ be a sequence of minimal parabolic subgroups. For $i \in [0,\ell]$, set $Y_i = P_{\a_i}\cdots P_{\a_1}Y$. Assume that $P_{\a_i}$ raises $Y_{i-1}$ to $Y_i$ for all $i$. Let $\ell_N$ and $\ell_T$ be the number of edges of type {\rm (N)} and {\rm (T)}.

1. Then the product $w = s_{\a_\ell} \cdots s_{\a_1}$ is reduced (of length $\ell$).

2. We have $\rk(Y_\ell) - \rk(Y) = \ell_N + \ell_T$.

3. The map $\overline{BwB} \times^B Y \to Y_\ell$ is surjective generalically finite of degree $2^{\ell_N}$.
\end{cor}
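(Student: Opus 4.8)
The plan is to run all three assertions off one central dimension count: each raising step increases dimension by exactly $1$, so that $\dim Y_\ell = \dim Y + \ell$. Indeed, for each $i$ the raising map $\pi_i : P_{\a_i}\times^B Y_{i-1}\to Y_i$ is a surjective proper morphism from a space of dimension $\dim Y_{i-1}+1$ (it fibers over $P_{\a_i}/B\cong\p^1$ with fibre $Y_{i-1}$), and by Proposition \ref{prop-UNT} it is generically finite (birational in types (U),(T), of degree $2$ in type (N)); hence $\dim Y_i = \dim Y_{i-1}+1$. Part 2 is then immediate and independent of the rest: by the last sentence of Proposition \ref{prop-UNT} the rank is unchanged by a type (U) raising and increased by exactly $1$ by a type (N) or (T) raising, so summing telescopically over $i$ gives $\rk(Y_\ell)-\rk(Y) = \ell_N+\ell_T$.

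For Part 1, I would compare $\dim Y_\ell$ with the length of $w$. Recall the standard fact that for any word the product of minimal parabolics satisfies $P_{\a_\ell}\cdots P_{\a_1} = \overline{BvB}$, where $v$ is the Demazure product of $s_{\a_\ell},\dots,s_{\a_1}$, with $\ell(v)\le \ell$ and equality if and only if the word is reduced, i.e.\ $v = w$. Since $Y_\ell = P_{\a_\ell}\cdots P_{\a_1}\, Y = \overline{BvB}\cdot Y$ is the image of the surjection $\overline{BvB}\times^B Y\to Y_\ell$, and $\dim(\overline{BvB}\times^B Y) = \dim Y + \ell(v)$ (the right $B$-action on $\overline{BvB}$ being free and $\dim\overline{BvB} = \dim B + \ell(v)$), we get $\dim Y + \ell = \dim Y_\ell \le \dim Y + \ell(v) \le \dim Y + \ell$. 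Hence $\ell(v)=\ell$, the word is reduced and $v = w$; in particular $\overline{BwB} = P_{\a_\ell}\cdots P_{\a_1}$.

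For Part 3, surjectivity of $\mu : \overline{BwB}\times^B Y\to Y_\ell$ is exactly the identity $\overline{BwB}\cdot Y = Y_\ell$ just obtained, and generic finiteness follows since source and target both have dimension $\dim Y+\ell$. To compute the degree I would use the Bott--Samelson tower $Z_\ell = P_{\a_\ell}\times^B\cdots\times^B P_{\a_1}\times^B Y$ with its multiplication map $m_\ell : Z_\ell\to Y_\ell$. Writing $Z_i = P_{\a_i}\times^B Z_{i-1}$ and $m_i$ for the corresponding multiplication, $m_i$ factors as $Z_i \xrightarrow{\id\times^B m_{i-1}} P_{\a_i}\times^B Y_{i-1}\xrightarrow{\pi_i} Y_i$; the first map has the same generic fibre as $m_{i-1}$, so $\deg m_i = \deg\pi_i\cdot \deg m_{i-1}$. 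By Proposition \ref{prop-UNT}, $\deg\pi_i = 2$ exactly in type (N) and $1$ otherwise, whence $\deg m_\ell = 2^{\ell_N}$ by induction. Finally, since $w$ is reduced the Bott--Samelson multiplication $P_{\a_\ell}\times^B\cdots\times^B P_{\a_1}\to\overline{BwB}$ is birational onto the Schubert variety, so the induced map $Z_\ell\to \overline{BwB}\times^B Y$ is birational and composes with $\mu$ to give $m_\ell$; therefore $\deg\mu = \deg m_\ell = 2^{\ell_N}$.

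The main obstacle is the careful bookkeeping of degrees across the contracted products and the interlocking of Parts 1 and 3: the degree statement for $\mu$ presupposes the identification $\overline{BwB}=P_{\a_\ell}\cdots P_{\a_1}$, which is precisely the reducedness established in Part 1, and transferring the tower computation to $\overline{BwB}\times^B Y$ relies on the birationality of the Bott--Samelson resolution for a reduced word. One should also check that each $\pi_i$ (and hence $m_i$) really is generically finite onto a variety of the expected dimension at every stage, which is guaranteed because the raising hypothesis keeps us in the generically finite cases of Proposition \ref{prop-UNT}.
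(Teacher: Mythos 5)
Your proposal is correct and follows essentially the same route as the paper: the dimension count $\dim Y_\ell = \dim Y + \ell$ forces reducedness (the paper phrases this as density of $Bs_{\a_\ell}B\cdots Bs_{\a_1}Y$ in $Y_\ell$, you via the Demazure product --- the same underlying fact), part 2 is the same telescoping of the rank statement in Proposition \ref{prop-UNT}, and part 3 is the same Bott--Samelson tower computation, multiplying the degrees of the raising maps and factoring generically through $\overline{BwB}\times^B Y$. Your write-up merely makes explicit some bookkeeping the paper leaves implicit.
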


\begin{proof}
  1. Note that $Bs_i Y_{i-1}$ is dense in $Y_i$, for all $i$. In particular $B s_{\a_\ell} B \cdots B s_{\a_1} Y$ is dense in $Y_\ell$. Since $\dim Y_\ell = \dim Y + \ell$ we must have $\dim(B s_{\a_\ell} B \cdots B s_{\a_1}B) = \ell$ proving the result.

  2. This directly follows from the fact that the only edges raising the rank (by one) are those of type (T) and (N).

  3. On the one hand, since the expression $w = s_{\a_\ell} \cdots s_{\a_1}$ is reduced, the morphism $P_{\a_\ell} \times^B \cdots \times^B P_{\a_1} \to B w B$ is birational, thus the map $\overline{BwB} \times^B Y \to X$ has image $Y_\ell$. On the other hand, the raising maps $P_{\a_i} \times^B (P_{\a_{i-1}} \cdots P_{\a_1}Y) = P_{\a_i} \times^B Y_{i-1} \to Y_i = P_{\a_{i}} \cdots P_{\a_1}Y$ are generically finite of degree $2$ if and only if the corresponding edge is of type (N) (and otherwise birational). Now the composition of these maps is $P_{\a_\ell} \times^B \cdots \times^B P_{\a_1} \times^B Y \to Y_\ell$ and has degree $2^{\ell_N}$. But this map generically factors through $\overline{BwB} \times^B Y \to Y_\ell$ proving the result.  
\end{proof}

\subsection{Normality criteria}
Some geometric properties of the orbit closures can be detected on $\Gamma(X)$.

\begin{prop}
\label{coro-non-normal} Assume that $P_1$ and $P_2$ raise $Y$ to
$Y_1$ and $Y_2$ with type {\rm (U)} or {\rm (T)} and {\rm (N)} respectively and that
$P_2$ raises $Y_1$ to $Y_3$ with type {\rm (U)} or {\rm (T)}, then $Y_3$ is not
normal along $Y_2$.

\centerline{\begin{tikzpicture}
   \node (Y3) at (9,8)  {$Y_3$};
  \node (Y1) at (9,7) {$Y_1$};
  \node (Y2) at (11,7)  {$Y_2$};
  \node (Y) at (10,6)  {$Y$};
    \draw[double] (Y) -- node {\tiny{2}} (Y2);
    \draw (Y1) -- node {\tiny{2}} (Y3);
    \draw (Y) -- node {\tiny{1}} (Y1);
  \node (0) at (2,8.2) {};
  \node (1) at (3,7.5)  {};
  \node (2) at (4,7.5) {};
  \node[anchor=west] (3) at (4,7.5)  {\small{$P_1$-edges of type (U) or (T)}};
\node (4) at (3,7)  {};
  \node (5) at (4,7) {};
  \node[anchor=west] (6) at (4,7)  {\small{$P_2$-edges of type (U) or (T)}};
  \node (7) at (3,6.5)  {};
  \node (8) at (4,6.5) {};
  \node[anchor=west] (9) at (4,6.5)  {\small{$P_2$-edges of type (N)}};
    \draw[double] (7) -- node {\tiny{2}} (8);
    \draw (4) -- node {\tiny{2}} (5);
    \draw (1) -- node {\tiny{1}} (2);
\end{tikzpicture}}

\centerline{Graph $\Gamma(X)$}
\end{prop}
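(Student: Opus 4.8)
The plan is to exhibit $Y_2$ as the image of a prime divisor under a birational morphism onto $Y_3$, and then to play the degree $2$ of the type (N) edge against normality. First I would set $\tilde Z = P_2 \times^B Y_1$ and consider the contraction $\pi \colon \tilde Z \to Y_3 = P_2 Y_1$, $[p,y]\mapsto py$. Because $P_2$ raises $Y_1$ to $Y_3$ with type (U) or (T), Proposition \ref{prop-UNT} guarantees that $\pi$ is birational, so $\kk(\tilde Z)=\kk(Y_3)=:K$. Since $P_1$ raises $Y$ to $Y_1$, the closed subvariety $Y$ is a $B$-stable prime divisor of $Y_1$; hence $\tilde D := P_2 \times^B Y$ is a $P_2$-stable prime divisor of $\tilde Z$, and the restriction $\pi|_{\tilde D}\colon \tilde D \to P_2 Y = Y_2$ is exactly the type (N) raising map of the edge $Y \prec Y_2$. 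A dimension count ($\dim Y_2 = \dim Y+1 = \dim Y_3 - 1$) shows that $Y_2$ is a prime divisor of $Y_3$, so that normality of $Y_3$ along $Y_2$ is a statement about the generic point $\eta$ of $Y_2$.

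The point I would extract from type (N) is that $\pi|_{\tilde D}$ has degree $2$: as a generically finite dominant morphism of irreducible varieties of equal dimension it induces a degree $2$ extension $[\kk(\tilde D):\kk(Y_2)]=2$. Writing $\xi$ for the generic point of $\tilde D$, one has $\pi(\xi)=\eta$, and the residue field extension $\kk(\xi)/\kk(\eta)$ induced by $\pi$ coincides with $\kk(\tilde D)/\kk(Y_2)$, hence is of degree $2$. The whole proof then reduces to showing that this degree $2$ cannot coexist with normality of $Y_3$ along $Y_2$.

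To get the contradiction, suppose $Y_3$ is normal along $Y_2$, so that $\mathcal{O} := \mathcal{O}_{Y_3,\eta}$ is a one-dimensional normal local domain, i.e.\ a DVR, with fraction field $K$ and residue field $\kk(Y_2)$. As $\pi$ is birational, the local homomorphism $\mathcal{O}\to A := \mathcal{O}_{\tilde Z,\xi}$ is simply an inclusion of local subrings of $K$, with $A$ a one-dimensional local domain of fraction field $K$. Let $\tilde A$ be the integral closure of $A$ in $K$; its localisations at the maximal ideals are DVRs $\mathcal{O}_{w_1},\dots,\mathcal{O}_{w_r}$ of $K$, the branches of $\tilde Z$ along $\tilde D$. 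Each $\mathcal{O}_{w_j}$ contains $A$, hence contains the DVR $\mathcal{O}$; but a valuation ring of $K$ containing a rank one valuation ring and different from $K$ must equal it, so $\mathcal{O}_{w_j}=\mathcal{O}$ for all $j$. Thus $\tilde A=\mathcal{O}$, and from $\mathcal{O}\subseteq A \subseteq \tilde A = \mathcal{O}$ we get $A=\mathcal{O}$; in particular $\kk(\xi)=A/\mathfrak m_A = \mathcal{O}/\mathfrak m = \kk(\eta)$, contradicting $[\kk(\xi):\kk(\eta)]=2$. Therefore $Y_3$ is not normal along $Y_2$.

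The substance is entirely in the last paragraph, which is the general fact that a birational morphism cannot raise the residue degree along a prime divisor lying over a prime divisor of a normal target; this is the step where care is needed. I would emphasise that the argument does \emph{not} require $\tilde Z$ (equivalently $Y_1$) to be normal: the passage through the integral closure $\tilde A$ absorbs the possibly non-normal branches of $\tilde Z$ along $\tilde D$ for free, so the criterion applies with no normality assumption on the intermediate orbit closures.
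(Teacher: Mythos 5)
Your proof is correct and follows essentially the same route as the paper: both consider the birational contraction $P_2\times^B Y_1 \to Y_3$ together with its degree-$2$ restriction $P_2\times^B Y \to Y_2$ over the divisor $Y_2$, and derive the contradiction with normality from these two facts. The paper finishes in one line by invoking Zariski's Main Theorem (disconnected generic fibres over $Y_2$ versus normality of the target), whereas you unpack that step into an explicit local argument at the generic point of $Y_2$ — a DVR admits no proper local overring of its fraction field dominating it, so birationality forces residue degree $1$, contradicting the residue degree $2$ coming from the type (N) edge — which is a correct and self-contained substitute.
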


\begin{proof}
The morphism $P_2\times^BY_1\to Y_3$ is birational while its
restriction $P_2\times^BY\to Y_2$ has non connected fibres. Zariski
Main Theorem gives the conclusion.
\end{proof}

\begin{example}
  \label{exam-non-normal}
  Recall Example \ref{ex-ind}. By the above result the orbit closure $Y_3$ is not normal and singular along $Y_2$.
\end{example}

As Proposition \ref{coro-non-normal} shows, in some cases, the
existence of edges of type (N) is the graph $\Gamma(X)$ prevents some
orbit closures from being normal. If there is no edge of type (N),
Brion in \cite{brion-multi-free} proved the following general result.

\begin{defn}
A $B$-orbit $\cO \in B(X)$ is called multiplicity-free if there is no
edge of type (N) in the full subgraph of $\Gamma(X)$ with vertices
the elements $\cO'\succcurlyeq \cO$. In this case, the orbit closure $Y = \overline{\cO}$ is also called multiplicity free.
\end{defn}

\begin{thm}
\label{thm-normal} Let $Y\in \oB(X)$ be multiplicity-free and assume
that for $Y'\succcurlyeq Y$, the variety $Y'$ contains a $G$-orbit
if and only if $Y'=GY$. If $GY$ is normal, Cohen-Macaulay or has a
rational resolution, then so does $Y$.
\end{thm}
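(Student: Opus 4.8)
The plan is to deduce the properties of $Y$ from those of $GY$ by realising a resolution of $Y$ as the restriction of a $G$-equivariant resolution of $GY$, the whole point being that the multiplicity-free hypothesis forces this restriction to remain a \emph{rational} resolution. First I would reduce the global combinatorics to a controlled chain. By multiplicity-freeness the full subgraph of $\Gamma(X)$ on the vertices $\succcurlyeq Y$ has no edge of type (N); choosing a maximal chain $Y = Y_0 \prec Y_1 \prec \cdots \prec Y_\ell = GY$, every raising is of type (U) or (T) and hence birational, so by Corollary \ref{cor-TUN} (with $\ell_N = 0$) the induced map $\overline{BwB}\times^B Y \to GY$ is proper and \emph{birational}. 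The hypothesis that no $Y' \succcurlyeq Y$ other than $GY$ contains a $G$-orbit guarantees that $GY$ is the unique $G$-orbit closure lying above $Y$, so this single $G$-variety controls the entire chain and I never have to pass through an intermediate closed $G$-orbit.

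Next, working in characteristic $0$, I would fix a $G$-equivariant resolution $\rho\colon \wt V \to GY$; since (under the rational, hence normal, hypothesis) $GY$ is spherical I may take it toroidal, so that $\rho$ is a rational resolution, i.e. $R\rho_*\cO_{\wt V} = \cO_{GY}$, and chosen fine enough that the strict transform of $Y$ is smooth. Let $\cO \subset Y$ be the dense $B$-orbit and set $\wt Y = \overline{\rho^{-1}(\cO)}$, a $B$-stable smooth subvariety; then $\rho_Y := \rho|_{\wt Y}\colon \wt Y \to Y$ is proper, and it is birational because multiplicity-freeness forces the degree to be $2^{\ell_N} = 1$, exactly as in Corollary \ref{cor-TUN}. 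Thus $\rho_Y$ is a resolution of $Y$, and it remains only to prove that it is a rational one.

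The heart of the argument, and the step I expect to be hardest, is the vanishing $R^i\rho_{Y*}\cO_{\wt Y} = 0$ for $i>0$ together with $\rho_{Y*}\cO_{\wt Y} = \cO_Y$. I would check this locally on a $B$-stable affine chart meeting $Y$ as produced by Proposition \ref{prop-lift}, reducing the statement to the reducedness and connectedness of the scheme-theoretic fibres of $\rho$ over $Y$. This is precisely where the combinatorial condition is converted into geometry: an edge of type (N) would contribute a factor $2$ to the fibre, the very mechanism producing the non-connected fibres and non-normality in Proposition \ref{coro-non-normal}, whereas in the multiplicity-free case all raisings above $Y$ are of type (U) or (T) and the fibres stay reduced and connected, yielding $R\rho_{Y*}\cO_{\wt Y} = \cO_Y$.

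Hence $Y$ has a rational resolution, so in particular it is normal and Cohen--Macaulay. The statements under the weaker hypotheses that $GY$ is merely normal, respectively only Cohen--Macaulay, follow from the same local picture by reading off Serre's conditions $(R_1)+(S_2)$, respectively the Cohen--Macaulay property, on the chart of Proposition \ref{prop-lift}, once more using multiplicity-freeness to control the equations cutting out $Y$ inside $GY$. The genuine obstacle throughout is the single implication linking the combinatorial input (no type-(N) edge above $Y$) to the cohomological output (reduced fibres and the vanishing of $R^i\rho_{Y*}\cO_{\wt Y}$), together with the technical point of choosing the equivariant resolution of $GY$ fine enough that the strict transform of $Y$ is already smooth.
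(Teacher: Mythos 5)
The paper itself gives no proof of this theorem: it is quoted from Brion's \emph{Multiplicity-free subvarieties of flag varieties} \cite{brion-multi-free}, so your attempt can only be measured against that argument and against its own internal coherence. Measured that way, there are two genuine gaps. First, the resolution you work with does not exist as described. A $G$-equivariant (toroidal) resolution $\rho\colon \wt V \to GY$ is constructed by subdividing the colored fan and is an isomorphism over the open $G$-orbit $\Omega$ of $GY$; since the dense $B$-orbit $\cO$ of $Y$ lies in $\Omega$, the strict transform $\wt Y=\overline{\rho^{-1}(\cO)}$ contains an open set isomorphic to $Y\cap\Omega$, and there is no reason whatsoever for it to be smooth -- $Y$ may be singular precisely where $\rho$ changes nothing. "Chosen fine enough that the strict transform of $Y$ is smooth" is not a technical point to be deferred; it is an unsupported assumption, and without it $\rho_Y\colon\wt Y\to Y$ is not a resolution and the whole strategy collapses. (The invocation of $2^{\ell_N}=1$ to get birationality of $\rho_Y$ is also misplaced: that count governs the map $\overline{BwB}\times^B Y\to GY$, which you set up and then never use, not the restriction of $\rho$.)

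Second, even granting a resolution, the cohomological step is not proved: reducedness and connectedness of the fibres of $\rho_Y$ gives at most $\rho_{Y*}\cO_{\wt Y}=\cO_Y$ (and only once normality of $Y$, part of what is to be shown, is available for Zariski's main theorem); it says nothing about the vanishing of $R^i\rho_{Y*}\cO_{\wt Y}$ for $i>0$, which is the actual content of "rational resolution". Your text asserts that the absence of type (N) edges "yields $R\rho_{Y*}\cO_{\wt Y}=\cO_Y$", but no mechanism is offered, and the hypothesis that $GY$ is the only $Y'\succcurlyeq Y$ containing a $G$-orbit plays no role in your argument at all. Brion's actual proof goes one covering relation at a time down a chain $Y=Y_0\prec\cdots\prec Y_\ell=GY$: at each step $P\times^B Y_{i-1}$ is simultaneously a proper \emph{birational} modification of $Y_i$ (this is where multiplicity-freeness enters, excluding degree $2$) and a Zariski-locally trivial $\p^1$-fibration all of whose fibres are isomorphic to $Y_{i-1}$ and are cut out by a globally generated pencil; a Bertini-type descent lemma for normality, Cohen--Macaulayness and rational resolutions along such base-point-free pencils then transfers the property from $Y_i$ to a general -- hence to every -- fibre. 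That inductive descent along the fibration structure is the idea missing from your proposal.
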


\begin{remark}
If $\chara{\kk} = 0$, the variety $GY$ is always normal,
Cohen-Macaulay and has a rational resolution (see \cite[Corollary 2.3.4]{survey}).
\end{remark}

\begin{example}
Let $X = G/P$ with $P$ a parabolic subgroup of $G$. The $B$-orbits in $X$ are the Schubert varieties $BwP/P$ for $w \in W/W_P$ and one easily checks that the graph $\Gamma(X)$ has only edges of type (U). In particular, since $G/P$ is smooth and since Schubert varieties contain no $G$-orbit, we get that Schubert varieties are normal with rational singularities.
\end{example}


\subsection{Parabolic induction}
\label{subsection-par-ind}

In this subsection we explain a general process to deduce the graph $\Gamma(X)$ from the graph of a smaller spherical variety $Z$.

Let $P$ be a parabolic subgroup of $G$ and $L$ be its Levi subgroup containing $T$. Let $Z$ be a spherical $L$-variety and define $X = G \times^P Z$ where $P$ acts on $Z$ via its projection onto $L= P/R_u(P)$. Denote by $W,W_P$ the Weyl groups of $G$ and $P$ and by $W^P \subset W$ the set of minimal length representatives of $W/W^P$. Recall that the graph $\Gamma(G/P)$ has $W^P$ as vertices and only edges of type (U).

\begin{thm}
  Let $X$ be as above.

  1. The variety $X$ is $G$-spherical.

  2. The map $\oB(Z) \times W^P \to \oB(X)$ defined by $(Y,w) \mapsto \overline{Bw \cdot Y}$ is bijective.

  3. Let $\a \in S$ and $\beta = w^{-1}(\a)$. Then $P_\a$ raises $(w,Y)$ if and only if the following alternative occurs:
    \begin{itemize}
  \item[(a)] $\beta$ is positive but not a simple root of $P$.
  \item[(b)] $\beta$ or $-\beta$ is a simple root of $P$ and $(P_\beta \cap L)$ raises $Y$ to $Y'$.
      \end{itemize}
  In case {\rm (a)}, we have an edge between $(w,Y)$ and $(s_\a w,Y)$ of type {\rm (U)}. In case {\rm (b)}, we have an edge between $(w,Y)$ and $(w,Y')$ of the same type as the one between $Y$ and $Y'$.
\end{thm}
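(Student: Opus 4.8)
The plan is to study everything through the $G$-equivariant, locally trivial fibration $\pi_X : X = G\times^P Z \to G/P$ with fibre $Z$, reducing each assertion to the interplay between the Schubert cells $BwP/P$ of the base and the $B_L$-orbits in the fibre, where $B_L = B\cap L$ is a Borel subgroup of $L$. For part~1 I would exhibit a dense $B^-$-orbit (equivalently a dense $B$-orbit) directly. Over the open cell $\Omega = R_u(P^-)\cdot eP\subset G/P$ the bundle trivialises, $\pi_X^{-1}(\Omega)\cong R_u(P^-)\times Z$, and the stabiliser of $eP$ in $B^-$ is exactly the Borel $B_L^- = B^-\cap L$ of $L$. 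Since $Z$ is $L$-spherical, $B_L^-$ has a dense orbit $\cO\subset Z$, so $B^-\cdot[e,z_0]$ for $z_0\in\cO$ has dimension $\dim R_u(P)+\dim Z = \dim X$ and is therefore dense; normality of $X$ follows from normality of $Z$ and local triviality, so $X$ is spherical.

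For part~2 I would use the standard fact (already invoked in the proof of Proposition~\ref{prop-UNT}) that a $B$-equivariant fibration over a single $B$-orbit $B/B'$ has its total-space $B$-orbits in bijection with the $B'$-orbits in the fibre. Applied cell by cell, the $B$-orbits of $X$ lying over $BwP/P$ correspond to the orbits of $B_{\dot wP} = B\cap\dot wP\dot w^{-1}$ acting on $Z$ through $b\mapsto\overline{\dot w^{-1}b\dot w}\in L$. The key computation is that for $w\in W^P$ the image of this group in $L$ is exactly $B_L$: its roots are $w^{-1}(R^+)\cap R_L$, and the minimality of $w$ (that is, $w(\gamma)>0$ for every simple root $\gamma$ of $L$) forces this set to equal $R_L^+$. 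Hence for every $w$ these orbits are precisely the elements of $B(Z)$, and passing to closures yields the bijection $\oB(Z)\times W^P\to\oB(X)$, $(Y,w)\mapsto\overline{Bw\cdot Y}$.

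For part~3 the main tool is the single identity $u_{-\alpha}(t)\cdot[\dot w,z_0] = [\dot w\,u_{-\beta}(ct),z_0]$, coming from $\dot w^{-1}U_{-\alpha}\dot w = U_{-\beta}$ with $\beta=w^{-1}(\alpha)$; since $P_\alpha=\langle B,U_{-\alpha}\rangle$ and the orbit $\cO=B\dot w\cdot\cO_Y$ is $B$-stable, this identity controls $P_\alpha\cO$. A preliminary observation is that for $w\in W^P$ one has $\beta\in R_L\Rightarrow\beta\in\pm\{\text{simple roots of }L\}$: writing $\alpha=w(\beta)$ as a non-negative combination of the positive roots $w(\gamma_i)$ and comparing heights forces $\beta$ to be simple. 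Splitting $R$ according to whether $-\beta$ is a root of $L$, of $R_u(P)$, or of $R_u(P^-)$ then gives three cases. If $\beta>0$ and $\beta\notin R_L$, then $U_{-\beta}\subset R_u(P^-)$ moves the base point off its cell while fixing the fibre coordinate, giving a type~(U) edge to $(s_\alpha w,Y)$ (matching the type~(U) raising in $G/P$, with the rank unchanged). If $\beta<0$ and $\beta\notin R_L$, then $U_{-\beta}\subset R_u(P)$ lies in the kernel of $P\to L$, so $u_{-\alpha}(t)$ fixes $[\dot w,z_0]$ and $P_\alpha$ does not raise. If $\beta\in\pm\{\text{simple roots of }L\}$, then $U_{-\beta}\subset L$ and $P_\alpha$ acts on the fibre exactly through the minimal parabolic $P_\beta\cap L=\langle B_L,U_{-\beta}\rangle$ of $L$, so $P_\alpha$ raises $(w,Y)$ if and only if $(P_\beta\cap L)$ raises $Y$, landing at $(w,Y')$.

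The main obstacle will be establishing that the edge type is preserved in the fibre case~(b). Here I would argue that conjugation by $\dot w$ carries $U_{\pm\alpha}$ to $U_{\pm\beta}$ and normalises $T$, hence identifies the rank-one group $\aut(P_\alpha/B)\simeq\PGL_2$ governing the edge at $(w,Y)$ with the $\PGL_2$ governing the edge $Y\prec(P_\beta\cap L)Y$ inside $Z$. Since the trichotomy (U)/(N)/(T) of Proposition~\ref{prop-UNT} is read off precisely from the image of the relevant stabiliser in this $\PGL_2$, the two edges necessarily have the same type. The remaining care is in case~(a), where one must check that the raising map is genuinely birational, i.e.\ that the $\G_a=U_{-\beta}$ sweep is generically injective; this is exactly the statement that the corresponding edge in $G/P$ is of type~(U), which holds since all edges of $\Gamma(G/P)$ are of type~(U).
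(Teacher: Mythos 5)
Your overall strategy is the same as the paper's: reduce everything to the fibration $G\times^P Z\to G/P$, use the Bruhat decomposition and the characterisation of $w\in W^P$ (your computation that the image of $B\cap \dot wP\dot w^{-1}$ in $L$ is $B_L$ is exactly the paper's condition $w(B\cap L)\subset Bw$), and then in part 3 split according to whether $\beta=w^{-1}(\a)$ lies outside $R_L$ or is $\pm$ a simple root of $L$. Parts 1 and 2 are fine (the paper deduces sphericity from finiteness of orbits rather than exhibiting the dense orbit over the open cell, but that is a cosmetic difference), the preliminary height argument showing $\beta\in R_L\Rightarrow\beta$ is $\pm$ a simple root of $L$ is correct and needed, and your case analysis in part 3 correctly identifies when $P_\a$ raises.

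The one genuine gap is the last step of case (b), where you assert that conjugation by $\dot w$ ``identifies the image of the relevant stabiliser'' in the two copies of $\PGL_2$. What conjugation by $\dot w$ actually gives you is $(P_\a)_{[\dot w,z_0]}=P_\a\cap \dot wP_{z_0}\dot w^{-1}$, i.e.\ after conjugation the group $\dot w^{-1}P_\a\dot w\cap P_{z_0}$; this contains $(P_\beta\cap L)_{z_0}=L_\beta\cap L_{z_0}$ but is a priori strictly larger (it also contains elements of $R_u(P)$ and of $\dot w^{-1}U\dot w$ lying in $P_{z_0}$), so the equality of the two images in $\PGL_2$ is not automatic and this is precisely where the content lies. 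You can close the gap in two ways: either note that you have already shown the $B$-orbits of $P_\a\cdot[\dot w,z_0]$ biject with the $B_L$-orbits of $L_\beta\cdot z_0$, so the two stabiliser images $H\supset H'$ have the same number of orbits on $\p^1$, and a short check of the subgroup lattice of $\PGL_2(\kk)$ (a proper overgroup of $T_0$ with three orbits is $T_0$; an overgroup of $N_0$ is $N_0$ or $\PGL_2$; an overgroup of $ZU_0$ containing $U_0$ cannot be of type (T) or (N)) forces $H$ and $H'$ to have the same type; or do what the paper's ``direct proof'' does, namely exhibit the isomorphism $P_\a\times^B BwY\simeq BwL_\beta\times^{(B\cap L)}Y$ and observe that the raising map is obtained from $L_\beta\times^{(B\cap L)}Y\to L_\beta Y$ by multiplication with $Bw$, so its degree and fibre structure are unchanged. (The paper also gives a second, purely combinatorial proof via the rank and degree bookkeeping of Corollary \ref{cor-TUN} along two chains of raisings from $Y$ to $BwY'$.) The analogous point in case (a) is handled cleanly by the cartesian square over $P_\a wP/P$, which is essentially what your ``fixing the fibre coordinate'' argument amounts to.
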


\begin{proof}
  Let $B \subset P$ be a Borel subgroup. We have $B = R_u(P)(B \cap L)$ and $B \cap L$ is a Borel subgroup of $L$.
  Recall the characterisation: $w \in W^P \Leftrightarrow w(B \cap L) \subset Bw$.
  
  Let $w \in W^P$ and $z \in Z$. Let $[w,z] \in X$ be the class of $(w,z)$.
    By the above consideration, we have $B \cdot [w,z] = \{ [bw,b'\cdot z] \ | \ b \in B \textrm{ and } b' \in B \cap L\} = B \cdot [w,z']$ for all $z' \in (B \cap L) \cdot z$. 
    In particular this $B$-orbit only depends on the $(B \cap L)$-orbit of $z$. By the Bruhat decomposition $G = \coprod_{w \in W^P} BwP$, any $B$-orbit in $X$ is of the form $B \cdot [w,z]$. This proves statements 1. and 2. 

  3. First note that if $P_\a$ raises $B \cdot [w,z]$ to $B \cdot [w',z']$, then $BwP/P$ and $Bw'P/P$ are contained in $P_\a w P/P$ and the latter is dense in $P_\a w P/P$. In particular $w' \in \{ w , s_\a w \}$ and $w' \geq w$.
  
  Assume that $BwP/P$ is not dense in $P_\a wP/P$. this is equivalent to $w < w' = s_\a w \in W^P$. This is then equivalent to $w^{-1}(\a)$ being positive but not being a simple root of $P$. Applying the above we get $P_\a \cdot [w,z] = \{ [pw,b'\cdot z] \ | \ p \in P_\a \textrm{ and } b' \in B \cap L\}$. In particular the fibers of the morphisms $P_\a \cdot [w,z] \to P_\a wP/P$ and $B \cdot [w,z] \to BwP/P$
  are isomorphic to $(B \cap L) \cdot z$. We thus have a cartesian diagram
  $$\xymatrix{P_\a \times^B B \cdot [w,z] \ar[r] \ar[d] & P_\a \cdot [w,z] \ar[d] \\
    P_\a \times^B BwP/P \ar[r] & P_\a w P/P.}$$
  In particular in case (a), the edge has the type of an edge in $G/P$ and is therefore of type (U).

  Assume now that $BwP/P$ is dense in $P_\a wP/P$, thus $w' = w$. Note that if $s_\a w \in W^P$, then we are in the previous situation (with $w$ and $s_\a w$ exchanged) and $P_\a$ is not raising $B \cdot [w,z]$. We may therefore assume $s_\a w \not \in W^P$ which is equivalent to $w^{-1}(\a)$ or $-w^{-1}(\a)$ being a simple root of $P$. Set $\beta = w^{-1}(\a)$ and assume $\beta > 0$, the other situation is symmetric. Then $BwB$ is contained in the closure of $Bs_\a B w B$. Let $L_\beta$ be the minimal parabolic of $L$ associated to $\beta$, and set $Y = \overline{(B \cap L) \cdot z}$. Identifying $Z$ with $P \times^P Z$, we have
  $$P_\a \cdot [w,z] = \overline{B s_\a B w Y} = \overline{B s_\a w Y}  = \overline{B w s_\beta Y} = \overline{B w B s_\beta Y} = \overline{B w L_\beta Y}.$$
  In particular, $P_\a$ raises $(w,Y)$ if and only if $L_\beta$ raises $Y$ say to $Y'$. Now we are left to prove that both edges are in of the same type.

  We give two proofs. First a \emph{direct proof}: one easily checks the isomorphisms $BwL_\beta \times^{(B\cap L)} Y \simeq P_\a w (B \cap L) \times^{(B \cap L)} Y \simeq P_\a \times^B BwY$, the last one given by $[pw,y] \leftrightarrow [p,wy]$. Therefore the map $\pi : BwL_\beta \times^{(B \cap L)} P_\a \times^B BwY \to BwL_\beta$ is obtained by multiplication with $Bw$ from $L_\beta \times^{(B \cap L)} Y \to L_\beta Y$ proving the result.

  Otherwise, remark that there are two chains of raising from $Y$ to $BwY'$. The first chain raises $Y$ to $BwY$ with only edges of type (U) by the above and a last edge raising to $BwY'$ of the type we want to find. The second chain starts with the raising from $Y$ to $Y'$ of a given type (U), (N) or (T) and a chain of edges raising from $Y'$ to $BwY'$ all of type (U) again. If the edge between $Y$ and $Y'$ is of type (U) the rank of $BwY'$ must be the same as the rank of $Y$ and the edge between $BwY$ and $BwY'$ must also be of type (U). If not, then the edge is of type (T) or (N). But the degree of the map $Bs_\a wB \times^B Y \to BwY'$ is $2^a$ where $a$ is the number of edges of type (N) on any path from $Y$ to $BwY'$ (Corollary \ref{cor-TUN}). Therefore, the two unknown edges are either both of type (N) or both of type (T).
  \end{proof}

\begin{example}
  \label{ex-ind-2}
Recall the notation of Example \ref{ex-ind}. Consider the group $G = \GL_3(\kk)$ and the subgroup $H$ of matrices of one of the following forms:
  $$\left(\begin{array}{ccc}
  \star & 0 & * \\
  0 & \star & * \\
  0 & 0 & \star \\
  \end{array}\right) \textrm{ or }
  \left(\begin{array}{ccc}
  0 & \star & * \\
  \star & 0 & * \\
  0 & 0 & \star \\
  \end{array}\right)$$
    and let $P$ be the parabolic subgroup of matrices of the form
    $$\left(\begin{array}{ccc}
  a & b & * \\
  c & d & * \\
  0 & 0 & \star \\
  \end{array}\right),$$
  here $* \in \kk$, $\star \in \kk^\times$ and $ad - bc \in \kk^\times$. The quotient $P/H$ is isomorphic to $Z = \SL_2(\kk)/N$ where $N$ is the normaliser of the maximal torus in $\SL_2(\kk)$. The variety $X = G/H$ is obtained by parabolic induction from $Z$: we have $X  \simeq G \times^P P/H \simeq G \times^P Z$. Its graph $\Gamma(X)$ can therefore be obtained from the graphs of $G/P \simeq \p^2$ and $Z = \SL_2(\kk)/N$. These later graphs are as follows:

    \vskip 0.1 cm
  
  \centerline{
    \begin{tikzpicture}
  \node (A) at (-1,8) {$\bullet$};
  \node (B) at (-2,8) {$\bullet$};
  \node (AB) at (-1.5,7.4) {Graph $\Gamma(Z)$};
  \draw[double] (A) -- node {\tiny{1}} (B);
  \node (C) at (1,8) {$\bullet$};
  \node (D) at (2,8) {$\bullet$};
  \node (E) at (3,8) {$\bullet$};
    \node (AB) at (2,7.4) {Graph $\Gamma(G/P)$};
  \draw (C) -- node {\tiny{2}} (D);
  \draw (D) -- node {\tiny{1}} (E);
  \node[anchor=west] (3) at (6,7.5)  {\small{edges of type (U)}};
  \node (1) at (5,7.5)  {};
  \node (2) at (6,7.5) {};
  \draw (1) -- node {} (2);
  \node[anchor=west] (10) at (6,8)  {\small{edges of type (N)}};
  \node (11) at (5,8)  {};
  \node (12) at (6,8) {};
  \draw[double] (11) -- node {} (12);
  \end{tikzpicture}}

  \vskip 0.1 cm

It is now easy to deduce the graph $\Gamma(X)$ as claimed in Example \ref{ex-ind}:
  
  \centerline{
\begin{tikzpicture}
  \node (Y5) at (10,9)  {$X$};
  \node (Y4) at (11,8)  {$Y_4$};
  \node (Y3) at (9,8)  {$Y_3$};
  \node (Y1) at (9,7) {$Y_1$};
  \node (Y2) at (11,7)  {$Y_2$};
  \node (Y) at (10,6)  {$Y$};
  \draw[double] (Y) -- node {\tiny{1}} (Y2);
  \draw[double] (Y5) -- node {\tiny{2}} (Y3);
  \draw (Y1) -- node {\tiny{1}} (Y3);
    \draw (Y2) -- node {\tiny{2}} (Y4);
  \draw (Y4) -- node {\tiny{1}} (Y5);
  \draw (Y) -- node {\tiny{2}} (Y1);
  \node (0) at (2,8.2) {};
  \node (1) at (3,7.5)  {};
  \node (2) at (4,7.5) {};
  \node[anchor=west] (3) at (4,7.5)  {\small{$P_1$-edges of type (U)}};
\node (4) at (3,7)  {};
  \node (5) at (4,7) {};
  \node[anchor=west] (6) at (4,7)  {\small{$P_2$-edges of type (U)}};
  \node (7) at (3,6.5)  {};
  \node (8) at (4,6.5) {};
  \node[anchor=west] (9) at (4,6.5)  {\small{$P_2$-edges of type (N)}};
    \draw[double] (7) -- node {\tiny{2}} (8);
    \draw (4) -- node {\tiny{2}} (5);
    \draw (1) -- node {\tiny{1}} (2);
    \node[anchor=west] (10) at (4,8)  {\small{$P_1$-edges of type (N)}};
  \node (11) at (3,8)  {};
  \node (12) at (4,8) {};
    \draw[double] (11) -- node {\tiny{1}} (12);
  \end{tikzpicture}}

\centerline{Graph $\Gamma(X)$}

\end{example}

\subsection{Complements}

There are many more results on the structure of $B$-orbits. In particular, Richardson and Springer give in \cite{RS1,RS2} a combinatorial description of the graph $\Gamma(X)$ for $X$ a symmetric. There are very few other descriptions of these graphs, see for example \cite{ressayre-minimal} and \cite{GP}. See also \cite{piotr} and \cite{bender} for results on the normality of $B$-orbit closures.

We describe here more structure on the set $B(X)$, namely, the Weyl group $W$ acts on $B(X)$. For $\a$ a simple root and $\cO \in B(X)$, let $s_\a$ be the corresponding simple reflection and $P= P_\a$ the corresponding minimal parabolic subgroup. Define an action of $s_\a$ as follows:
\begin{itemize}
\item If $P \cdot \cO = \cO$, then $s_\a \star \cO = \cO$;
\item If $P$ raises $\cO$ with type (U), then $P \cdot \cO = \cO \cup \cO'$. We set $s_\a \cdot \cO = \cO'$ and $s_\a \star \cO' = \cO$;
\item If $P$ raises $\cO$ with type (T), then $P \cdot \cO = \cO \cup \cO' \cup \cO''$ with $\cO'$ dense in $P \cdot \cO$. We set $s_\a \cdot \cO = \cO''$, $s_\a \star \cO'' = \cO$ and $s_\a \star \cO' = \cO'$;
\item If $P$ raises $\cO$ with type (N), then $P \cdot \cO = \cO \cup \cO'$. We set $s_\a \star \cO = \cO$ and $s_\a \star \cO' = \cO'$.
\end{itemize}
Knop \cite{K2} proves that this extends to an action of the full Weyl group $W$ of $G$.

\begin{thm}
The above action of $s_\a$ extends to an action of $W$ on $B(X)$.
\end{thm}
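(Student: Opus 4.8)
The plan is to invoke the Coxeter presentation of $W$. A $W$-action on the set $B(X)$ is the same datum as an assignment, to each simple root $\a \in S$, of a bijection $s_\a\star$ of $B(X)$ such that the $s_\a\star$ are involutions and satisfy the braid relations
$\underbrace{s_\a\star\, s_\be\star \cdots}_{m_{\a\be}} = \underbrace{s_\be\star\, s_\a\star \cdots}_{m_{\a\be}}$,
where $m_{\a\be}$ is the order of $s_\a s_\be$ in $W$; indeed a homomorphism $W \to \mathrm{Sym}(B(X))$ is exactly such a family, by the universal property of the presentation $W = \langle s_\a \mid s_\a^2,\ (s_\a s_\be)^{m_{\a\be}}\rangle$. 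That each $s_\a\star$ is a well-defined involution is immediate from Proposition \ref{prop-UNT}: for any $\cO$, the set $P_\a\cO$ decomposes into $B$-orbits in exactly one of the four listed shapes, in each of which the prescribed rule permutes those orbits by an involution, and every orbit lies in a unique saturation $P_\a\cO$. So the entire content is the braid relations.

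To prove the braid relation for a fixed pair $\a,\be$, I would first reduce to semisimple rank $2$. Let $Q \supseteq B$ be the parabolic generated by $B$, $P_\a$ and $P_\be$, with Levi $L$ of semisimple rank $2$. The operators $s_\a\star$ and $s_\be\star$ move a $B$-orbit $\cO$ only within $P_\a\cO$ and $P_\be\cO$, hence within $Q\cO$; consequently, for each $Q$-orbit $\cC$ they preserve the finite set $B(\cC)$ of $B$-orbits contained in $\cC$ (finiteness by Theorem \ref{theo-char}). Both sides of the braid word thus induce permutations of each $B(\cC)$, and it suffices to check equality there. The combinatorics of $B(\cC)$ with its $(\a,\be)$-edge types is governed by the $L$-action, and using the parabolic induction Theorem together with its preservation of edge types (case (b)), or its production of type (U) edges (case (a)), one reduces to the situation where $G$ is semisimple of rank $2$ and $X$ has a dense $G$-orbit, so that $\Gamma(\cC)$ is a rank-$\le 2$ local model.

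This leaves the rank-$2$ verification, where $m_{\a\be}\in\{2,3,4,6\}$ according as $\{\a,\be\}$ spans a root system of type $A_1\times A_1$, $A_2$, $B_2$ or $G_2$. For $A_1\times A_1$ the minimal parabolics commute and one checks directly that $s_\a\star$ and $s_\be\star$ commute. In the remaining types one enumerates the possible graphs $\Gamma(\cC)$ — equivalently, classifies the rank-$\le 2$ spherical homogeneous spaces for the relevant group — and verifies that the two braid words act identically on $B(\cC)$; here the path-independence invariants of Corollary \ref{cor-TUN} (the rank jump $\ell_N+\ell_T$ and the degree $2^{\ell_N}$, both depending only on the target orbit) are the main bookkeeping tools and severely restrict the admissible configurations. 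This rank-$2$ braid check is the crux and the main obstacle, as it is exactly where the geometric trichotomy of Proposition \ref{prop-UNT} must be fed into a finite but delicate case analysis. I would follow Knop \cite{K2}, whose argument obtains the relation more conceptually, through the cotangent bundle and the behaviour of generic isotropy under the rank-$2$ parabolic, thereby avoiding a raw classification; but the reduction above isolates precisely the statement that must be established.
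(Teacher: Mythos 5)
The paper itself gives no proof of this theorem: it is stated with only the citation to Knop \cite{K2}, so there is no in-text argument to measure yours against. Judged on its own terms, your proposal correctly isolates what has to be checked (that each $s_\a\star$ is a well-defined involution, which is indeed immediate from Proposition \ref{prop-UNT}, and the braid relations), and the localisation of the braid relation to the set of $B$-orbits inside a single $Q$-orbit, with $Q$ the parabolic generated by $P_\a$ and $P_\be$, is sound: since $B\subset Q$ and $P_\a\subset Q$, one has $P_\a\cO\subset Q\cdot x$ for $x\in\cO$, so both operators preserve $B(Q\cdot x)$.

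There are, however, two genuine gaps. First, the passage from a $Q$-orbit to a semisimple rank-$2$ model is not delivered by the parabolic induction theorem: that theorem concerns $G\times^P Z$ where $R_u(P)$ acts on $Z$ trivially, whereas a $Q$-orbit $Q/Q_x$ of a spherical variety is in general not of this form (one need not have $R_u(Q)\subset Q_x$), so the edge types of $\Gamma(X)$ restricted to $B(Q\cdot x)$ are not formally ``governed by the $L$-action'' in the sense of that theorem; this reduction needs its own argument. Second, and more seriously, the rank-$2$ verification of the braid relations --- which you yourself identify as the entire content of the statement --- is not carried out but deferred to Knop. The invariants of Corollary \ref{cor-TUN} (rank jump and degree) constrain the permutations but do not determine them, and the $B_2$ and $G_2$ cases require either an actual case analysis of the admissible rank-$2$ configurations or Knop's structural argument. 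As written, the proposal is an accurate road map to the known proof rather than a proof; since the paper supplies no argument either, the honest conclusion is that both simply rest on \cite{K2}.
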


We finish with the following result.

\begin{prop}
Spherical varieties are rational.
\end{prop}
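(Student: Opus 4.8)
The plan is to reduce the statement to the rationality of a single homogeneous space under the Borel subgroup, and then to establish that homogeneous spaces of connected solvable groups are rational. By Theorem~\ref{theo-char} (or the Remark following the definition of spherical varieties) a spherical $X$ has a dense $B$-orbit $\cO$. Fixing $x\in\cO$, the orbit $\cO$ is open in $X$, so $\kk(X)=\kk(\cO)$, and the orbit map identifies $\cO$ with $B/B_x$ (in characteristic $0$; in positive characteristic $\cO$ is at worst a purely inseparable cover of $B/B_x$, to which one reduces). Since rationality is a birational invariant, it suffices to prove that $B/B_x$ is rational.

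Because $B=T\ltimes U$ is a connected solvable linear algebraic group, I would prove the general claim: for any connected solvable $H$ and closed subgroup $K$, the quotient $H/K$ is rational, by induction on $\dim H$, the case $\dim H=0$ being trivial. For the inductive step, pick a one-dimensional closed connected normal subgroup $N\triangleleft H$ with $N\cong\G_a$ or $N\cong\G_m$ (such an $N$ exists in any positive-dimensional connected solvable group: take a minimal term of the derived series and apply Lie--Kolchin on its vector part, or note that its subtori are centralised by the connected group $H$). Set $\bar H=H/N$ and let $\bar K$ be the image of $K$. The morphism $p\colon H/K\to H/NK\cong\bar H/\bar K$ is the quotient of $H/K$ by the left $N$-action; its base is rational by the induction hypothesis, and every fibre is isomorphic to $N/(N\cap K)$, which is a point, $\A^1\cong\G_a$, or $\G_m$, hence rational. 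If $p$ is birationally trivial, then $H/K$ is birational to $(\bar H/\bar K)\times N/(N\cap K)$ and is therefore rational, completing the induction and the proof.

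The crux is the birational triviality of $p$, i.e. producing a rational section: a priori $p$ is only a fibration with one-dimensional rational fibres. Here I would invoke that $\G_a$ and $\G_m$ are \emph{special} groups in the sense of Serre, so that the associated $N$-bundle is Zariski-locally trivial; combined with the fact that all fibres of $p$ are isomorphic to the fixed curve $N/(N\cap K)$, this yields a (Zariski-local, hence rational) section and the desired triviality. The remaining technical points are the separability of the orbit map in the first step, which is automatic in characteristic $0$ and in positive characteristic is handled via the $p$-power lifting discussed after Proposition~\ref{prop-lift}, and the existence of $N$; both are standard for connected solvable groups.
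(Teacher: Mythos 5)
Your proposal is correct and follows essentially the same route as the paper: reduce to the rationality of $B/K$ for $B$ connected solvable, induct via a one-dimensional connected normal subgroup, and invoke the specialness (in Serre's sense) of the one-dimensional fibre group to get Zariski-local, hence birational, triviality of the resulting fibration. The only cosmetic difference is that the paper inducts on the pair $(\dim B,\dim B/K)$ and chooses the normal subgroup inside the centre of the unipotent radical, whereas you induct on $\dim H$ alone, which works just as well.
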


\begin{proof}
Let $G/H$ be a spherical homogeneous space and let $B$ be a Borel
subgroup such that $B/B\cap H$ is dense in $G/H$. It is enough to
prove that a quotient $B/K$ is 
rational for any connected solvable group $B$ and any subgroup
scheme $K$. We proceed by induction on $(\dim B,\dim B/K)$ with the
lexicographical order. Pick $Z$ a connected one dimensional normal
subgroup of $B$. Such a group always exists. Indeed, if the unipotent
radical of $B$ is trivial then $B$ is a torus and pick for $Z$ any one
dimensional subgroup. Otherwise, pick for $Z$ a one dimensional
subgroup of the center of the unipotent radical $U$ of $B$ (this
center is non trivial and such a subgroup exists for example by
\cite[Theorem 10.6(2)]{borelb}). If $Z\subset K$ then quotienting by
$Z$ we conclude by induction on $\dim B$. Otherwise, we have a
fibration $B/K\to B/KZ$ obtained after quotienting by the action of
$Z$ via left multiplication (recall that $Z$ is normal and that $KZ$
is the subgroup scheme generated by $K$ and $Z$). The fiber of this fibration
is $Z/(Z\cap K)$ which is a connected group of dimension 1. Since any
connected solvable group is special (see \cite[Proposition 14]{serre})
the fibration $B/K\to B/KZ$ is locally trivial and since $B/KZ$ is rational
(by induction on $\dim B/K$) the result follows.
\end{proof}

\section{Local structure theorems}
\label{sec-lst}

From now on, we assume $\chara(\kk) = 0$.
We start with a structure theorem for $G$-modules and deduce a structure theorem for $G$-varieties. Later on we apply these results to spherical varieties.

\subsection{Local structure for $G$-varieties}
\label{section-struct}

Let $V$ be a $G$-module and let $Y$ be a closed orbit of $\p(V)$. The stabiliser of any point in $Y$ is a parabolic subgroup of $G$ because $Y$ is projective. Furthermore, there exists an element $y\in Y$ such that $By$ is open and dense in $Y$. Recall the following facts.

\begin{fact}
  Let $v\in V$ such that $[v]=y$.
  
1. There exists a $B$-eigenvector $\eta\in (V^\vee)^{(B)}$ such that
  $\scal{\eta,v}=1$. 

2. $G_y$ and $G_\eta$ are opposite parabolic subgroups and we have $B \cdot y= G_\eta \cdot y$.
\end{fact}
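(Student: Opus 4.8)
The plan is to take $v$ to be a lowest weight vector and to let $\eta$ be the dual highest weight covector; everything then follows from root data. First I would record that, since $By$ is open and dense in the closed $G$-orbit $Y = G/G_y$, we may take $y$ to be the unique $T$-fixed point of the open $B$-orbit, so that $G_y \supseteq B^-$ and the $G_y$-stable line $\kk v$ is a lowest weight line; writing $\la$ for the $T$-weight of $v$, this means $\g_{-\a} v = 0$ for every $\a \in R^+$. To produce $\eta$ I would pass to the $U$-coinvariants $V_U = V/\mathfrak{u}V$, where $\mathfrak{u} = \Lie(U)$. Decomposing $V = \bigoplus_i V_i$ into simple $G$-modules (complete reducibility holds since $\chara(\kk) = 0$) and writing $v = \sum_i v_i$, each nonzero $v_i$ is a lowest weight vector of $V_i$ and hence maps to a generator of the one-dimensional space $(V_i)_U$; thus the image of $v$ in $V_U$ is nonzero of weight $\la$. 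Since $(V^\vee)^U = (V_U)^\vee$, I may pick $\eta$ in the weight $-\la$ line of $(V^\vee)^U$ with $\scal{\eta, v} = 1$. Being $U$-fixed and of $T$-weight $-\la$, this $\eta$ is a $B$-eigenvector, which proves statement 1.

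For statement 2, both stabilisers are parabolic: $G_y \supseteq B^-$ by construction, and $G_\eta \supseteq B$ because $\eta$ is a $B$-eigenvector. To see they are opposite I would compute their Levi factors by means of $\mathfrak{sl}_2$-strings. Fix $\a \in R^+$ with triple $(e_\a, \a^\vee, f_\a)$. As $f_\a v = 0$, the vector $v$ is a lowest weight vector for this $\mathfrak{sl}_2$, so $e_\a v = 0$ if and only if $\scal{\la, \a^\vee} = 0$; dually, $\eta$ is a highest weight vector of $\mathfrak{sl}_2$-weight $-\scal{\la, \a^\vee}$, so $f_\a \eta = 0$ if and only if $\scal{\la, \a^\vee} = 0$. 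Since the positive Levi roots of $G_y$ are exactly the $\a \in R^+$ with $\g_\a v = 0$ and those of $G_\eta$ the $\a \in R^+$ with $\g_{-\a} \eta = 0$, both Levi root systems equal $\{\a \in R^+ : \scal{\la, \a^\vee} = 0\}$. Hence $G_\eta$ and $G_y$ are the standard and anti-standard parabolics attached to the same subset of simple roots, i.e. they are opposite; write $G_\eta = P$, $G_y = P^-$ and $L = P \cap P^-$.

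It then remains to identify the orbits. Since $L \subseteq P^- = G_y$ fixes $y$ and $R_u(P)$ is normal in $P$, I get $G_\eta \cdot y = P \cdot y = L\, R_u(P) \cdot y = R_u(P) \cdot y$; and as $R_u(P) \subseteq U \subseteq B$ while $T$ and $U \cap L$ fix $y$, the same computation gives $B \cdot y = R_u(P) \cdot y$. Therefore $B \cdot y = G_\eta \cdot y$, completing statement 2.

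The main obstacle is the first step: one must be sure that the chosen $y$ makes $v$ a genuine lowest weight vector (equivalently $G_y \supseteq B^-$) and that its image in the coinvariants $V_U$ is nonzero, where the hypothesis $\chara(\kk) = 0$ enters through complete reducibility and the computation of $(V_i)_U$. Once $v$ is known to be a lowest weight vector, the construction of $\eta$, the $\mathfrak{sl}_2$ identification of the two Levi factors, and the orbit computation are all routine bookkeeping with roots, so I expect no further difficulty.
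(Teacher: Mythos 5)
Your proof is correct. Note that the paper states this as a \emph{Fact} without proof (it is recalled as standard background for the local structure theorem), so there is no in-text argument to compare against; your route via the lowest-weight vector and the $U$-coinvariants $V_U=V/\mathfrak{u}V$ is the standard one and all the key verifications are in place: $\overline{v}\neq 0$ in $V_U$ by the weight argument, $(V^\vee)^U=(V_U)^\vee$ in characteristic zero, and the $\mathfrak{sl}_2$-string computation identifying both Levi root systems with $\{\alpha\in R^+ : \scal{\lambda}{\alpha^\vee}=0\}$. Two small points you leave implicit but which are harmless: the reduction to the $T$-fixed point of the open $B$-orbit is legitimate because any other admissible $y$ is $B$-conjugate to it and conjugation by $b\in B\subset G_\eta$ preserves both the opposedness of the parabolics and the two orbits in question; and $G_\eta$ must be read as the stabiliser of the line $\kk\eta$ (equivalently of $[\eta]\in\p(V^\vee)$), as the stabiliser of the vector itself would only be the kernel of a character of that parabolic. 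Also, the weight-$(-\lambda)$ subspace of $(V^\vee)^U$ need not be a line when $V$ has multiplicities, but this does not affect your construction of $\eta$.
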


Set $P = G_\eta$ and $L = P \cap G_y$.
The group $L$ is reductive and is a maximal reductive subgroup of both $P$ and $G_y$. Denote by $R_u(P)$ the unipotent radical of $P$, we have $P=LR_u(P)$. 
The subset $\p(V)_\eta$ where $\eta$ does not vanish is open $P$-stable  and contains $P \cdot y$.

\begin{prop}
\label{theo-proj}
  There exists a closed $L$-subvariety $S$ of $\p(V)_\eta$ containing $y$ such that the morphism
$$R_u(P)\times S\to\p(V)_\eta$$
defined by $(p,x)\mapsto px$ is a $P$-equivariant isomorphism.
\end{prop}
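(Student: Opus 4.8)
The plan is to trivialize the projective picture into affine-linear algebra on $\p(V)_\eta$ and to produce $S$ as an explicit $L$-stable affine subspace transverse to the $R_u(P)$-orbit of $y$. First I would replace $\p(V)_\eta$ by the affine hyperplane $H = \{ w \in V : \langle \eta, w\rangle = 1\}$ via the map $[w]\mapsto w/\langle\eta,w\rangle$. This is a $P$-equivariant isomorphism because $P = G_\eta$ fixes $\eta$, hence preserves the value $\langle\eta,\cdot\rangle$ and so stabilizes $H$; note $H$ is an affine space modelled on $\ker\eta$ with distinguished point $v$. Since $L\subset G_\eta$ fixes $\eta$, $L\subset G_y$ stabilizes the line $\kk v$, and $\langle\eta,v\rangle = 1$, the group $L$ in fact fixes $v$. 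Thus $P = L\ltimes R_u(P)$ acts on $H$ with $L$ acting linearly about $v$ and $R_u(P)$ by affine transformations.

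Next I would construct $S$. The tangent space $T_y H = \ker\eta$ is an $L$-module (as $L$ fixes $y$), and $T_y(R_u(P)\cdot y) = \Lie(R_u(P))\cdot v$ is an $L$-submodule of it. Because $\chara\kk = 0$ and $L$ is reductive, complete reducibility yields an $L$-stable complement $M$, so that $\ker\eta = \Lie(R_u(P))\cdot v \oplus M$; I set $S = v + M$, a closed $L$-stable affine subspace of $\p(V)_\eta$ containing $y$. I then equip $R_u(P)\times S$ with the $P$-action coming from $P = L\ltimes R_u(P)$, namely $R_u(P)$ acting by left translation on the first factor and $\ell\in L$ acting by $(p,s)\mapsto (\ell p\ell^{-1},\ell s)$ (legitimate since $LS = S$). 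With this action the multiplication map $\Phi(p,s)=p\cdot s$ is $P$-equivariant \emph{by construction}, so the whole content is reduced to showing that $\Phi$ is an isomorphism of varieties.

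The differential of $\Phi$ at $(e,y)$ sends $(\xi,m)\mapsto \xi\cdot v + m$, and this is an isomorphism: the orbit map $R_u(P)\to R_u(P)\cdot y$ is an immersion at $e$, since $\Lie(R_u(P))\cap\Lie(G_y) = \Lie(R_u(P))\cap\Lie(P^-) = 0$, and $M$ was chosen as a complement, while $\dim R_u(P) + \dim S = \dim\ker\eta = \dim\p(V)_\eta$. So $\Phi$ is étale at $(e,y)$, and by $R_u(P)$-equivariance it is étale at every $(p,y)$.

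The hard part, and the step I expect to carry all the genuine work, is upgrading this infinitesimal isomorphism to a global one. The plan is to show that $\Phi$ is injective and étale, whence (in characteristic zero, injective being radicial) it is an open immersion, and then that it is surjective. To organize both facts I would fix a cocharacter $\lambda$ with $P = P(\lambda)$, $L = Z_G(\lambda)$ and $R_u(P)$ the attracting unipotent subgroup; this grades $\ker\eta = \bigoplus_n W_n$ with $\Lie(R_u(P))$ of strictly positive weight and $\lambda$ fixing $v$, and comparing the extreme weight components of an equation $p_1 s_1 = p_2 s_2$ forces $p_1 = p_2$ and $s_1 = s_2$. Surjectivity then follows since the image is open, $R_u(P)$-stable and of full dimension; alternatively one may invoke Zariski's Main Theorem for the injective birational morphism $\Phi$ onto the smooth (hence normal) affine space $\p(V)_\eta$, together with an Ax--Grothendieck/dimension argument for surjectivity. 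The construction of $S$ and the tangent computation are formal; it is this passage from local to global that requires the grading furnished by $\lambda$.
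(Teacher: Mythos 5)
Your construction of $S$ and the computation of $d_{(e,y)}\Phi$ coincide with the paper's, and your proposed endgame (injective $\Rightarrow$ dominant $\Rightarrow$ birational $\Rightarrow$ open immersion by Zariski's Main Theorem, then surjectivity because an injective morphism, equivalently an open immersion, of $\A^N$ into $\A^N$ is onto) is a legitimate alternative to the paper's, which instead shows that the non-\'etale locus and the complement of the image are closed $T$-stable subsets avoiding the unique closed $T$-orbit of $\p(V)_\eta$, hence empty. But there is a genuine gap at the one step you yourself identify as carrying all the work: injectivity via ``comparing extreme weight components''. Write $p=\exp(\xi)$ with $\xi=\sum_{n\geq N}\xi_n$ graded by $\mathrm{Ad}\circ\lambda$ and $s=v+m$ with $m=\sum_k m_k\in M$ graded by the $\lambda$-weights of $\ker\eta$ measured relative to that of $v$. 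The cross terms $\xi_n\cdot m_k$ appearing in $p(v+m)$ have no reason to lie in either summand of $\ker\eta=\Lie(R_u(P))v\oplus M$, so the degree-by-degree comparison only closes up if every weight occurring in $M$ is strictly positive: then the first nontrivial relation in degree $N$ reads $m'_N-m_N=\xi_N\cdot v\in M\cap\Lie(R_u(P))v=0$, which kills $\xi_N$ since $R_u(P)$ acts freely on $v$. That positivity holds exactly when the minimal $\lambda$-weight space of $V$ is the line $\kk v$. This is true for $V$ simple (that weight space is the irreducible $L$-submodule generated by $v$, and $L\subset G_y$ preserves $\kk v$), but false for general $V$: other isotypic summands of $V$ contribute to $\ker\eta$ weights less than or equal to that of $v$, and your induction then has no anchor. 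The paper's proof opens precisely with the reduction you omit: the $G$-equivariant linear projection of $V$ onto $\langle G\cdot v\rangle$ along $\langle G\cdot\eta\rangle^\perp$ gives a Cartesian square reducing everything to the simple case. You need this reduction (or an equivalent device) before the weight argument can start, and you should then write the induction out, since the one-line claim hides exactly the place where it could fail.

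Two smaller points. First, $G_\eta$ must be read as the stabiliser of the line $[\eta]$ (otherwise it is not a parabolic subgroup), so $L$ fixes $v$ only for the twisted action $\ell\cdot_H w=\ell w/\langle\eta,\ell w\rangle$ on $H$, not for the linear one; this is harmless but should be said, as it shifts all the weights on $\ker\eta$ by the $\lambda$-weight of $v$. Second, $R_u(P)$-equivariance only gives \'etaleness along $R_u(P)\times\{y\}$, not on all of $R_u(P)\times S$; your ZMT route needs only \'etaleness at one point, so this does not hurt you, but the phrase ``injective and \'etale'' suggests more than you have established.
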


\begin{proof}
  We first reduce to the case of simple modules.
  Denote by $\scal{G \cdot v}$ and $\scal{G \cdot \eta}$ the $G$-submodules of $V$ and $V^\vee$ spanned by $v$ and $\eta$. Note that $\scal{G \cdot v}$ is simple while $\scal{G \cdot \eta}$ is isomorphic to its dual.
    The orthogonal $\scal{G \cdot \eta}^\perp$ is therefore of codimension $\dim \scal{G \cdot v}$ in $V$ and in direct sum with $\scal{G \cdot v}$. We thus get a decomposition
$V=\scal{G \cdot v}\oplus\scal{G \cdot \eta}^\perp.$
  The projection $p$ from $\scal{G \cdot \eta}^\perp$ onto $\p\scal{G \cdot v}$ defines a rational $G$-equivariant morphism 
$p:\p(V)_\eta\to\p\scal{G \cdot v}.$
This morphism restricts to the identity on $Y$ since $Y \subset \p\scal{G \cdot v}$. If the statement is true for $\scal{G \cdot v}$, then there exists $S$ as above and we get the Cartesian diagram
$$\xymatrix{R_u(P)\times p^{-1}(S)\ar[d]_p\ar[r] & \p(V)_\eta\ar[d]\\
R_u(P)\times S\ar[r] & \p\scal{G \cdot v}.\\}$$
Since the bottom horizontal arrow is an isomorphism, the same is true for the top horizontal arrow and the result follows.
 
We are left to prove the result for $V$ simple. Let $T_v=T_v(G \cdot v)$ be the tangent space of $G \cdot v$ at $v$. We consider $T_v$ as a vector subspace of $V$. Since $v$ is a $G_y$-eigenvector, the group $G_y$ acts on $T_v$. Since the weight of $v$ as an eigenvector is non trivial (otherwise $V$ would be trivial), the space $T_v$ contains the line $\kk v$.

The space $T_v$ is thus a sub-$L$-representation of $V$ and since $L$ is reductive there is a decomposition $$V=T_v\oplus E$$
with $E$ a representation of $L$. Define $S=\p(\kk v \oplus E)_\eta$. This is a closed subvariety of $\p(V)_\eta$ which is stable under $L$ and contains $y$. Note that $S$ is isomorphic to the affine space $y+E$ and that $S$ meets $Y$ tranversaly in $y$: indeed, the tangent spaces of $Y$ and $S$ at $y$ are $T_v/\kk v$ and $E$ which are supplementary in $V/\kk v$.
Also note that the variety $\p(V)_\eta$ has a unique closed $T$-orbit: the fixed point $y$, since 
$V$ is simple with lowest weight $\lambda_v$.

Consider $R_u(P)\times S$ as a $T$-variety via the action $t\cdot(p,z)=(tpt^{-1},t\cdot z)$. We also have that 
$R_u(P)\times S$ has a unique closed $T$-orbit: the fixed point $(e,y)$, since it has lowest weight for the $T$-action.

Consider the multiplication morphism $m:R_u(P)\times S\to \p(V)_\eta$. We want to prove that this morphism is an isomorphism. We first claim, that 
the differential $d_{(e,y)}m$ is injective.

Let $\gr_u(P)$ be the Lie algebra of $R_u(P)$. We know that $Py=R_u(P)Ly=R_u(P)y$ is open in $Y=G \cdot y$ therefore the morphism $R_u(P)\times \kk v\to Y$ is dominant and its tangent map $\gr_u(P)\times \kk v\to T_yY$ is surjective. 
We get the equality $T_yY=\gr_u(P)v/\kk v$. The same argument gives $T_v(G \cdot v)=\gr_u(P)v+ \kk v$. 

Furthermore, since $\eta$ is fixed by $R_u(P)$, we have $\scal{\eta,pv}=\scal{p^{-1}\eta,v}=\scal{\eta,v}=1$ for all $p\in R_u(P)$ and therefore $\eta$ is constant on $R_u(P)y$. This implies by derivation that $\eta$ vanishes on $\gr_u(P)v$. In particular $\gr_u(P)v$ and $\kk v$ are complement and  $T_v(G \cdot v)=\gr_u(P)v\oplus \kk v$. This also implies the equality $T_vV = \kk v \oplus \gr_u(P)v \oplus E$.

These equalities lead to the identifications of $S$ and $\p(V)_\eta$
with the affine spaces $v+E$ and $v+(\gr_u(P)v\oplus E)$. The morphism
$m$ is given by $m((p,(v+x))=p\cdot(v+x)$. We may now compute the
differential: $d_{(e,v)}m(\xi,x)=v+\xi\cdot v+x$ for $\xi\in\gr_u(P)$ and
$x\in E$. Indeed, the first two terms come from the differentiation of
the action of $R_u(P)$ on $v$ while the second term comes from the
differential of the action on $E$ which is linear.

We are left to prove that the map $\gr_u(P)\to\gr_u(P)v$ given by the
action on $v$ is injective. This is true since the intersection of
$R_u(P)$ with the stabiliser $G_y$ of $y$ is trivial thus $R_u(P)$
acts freely on $y$ and $v$ thus by differentiation the same is true on
the Lie algebra level.

Let $Z$ be the locus in $R_u(P)\times S$ where the differential of $m$
is not surjective. This is a closed subset of $R_u(P)\times S$. If $Z$
is non empty, then it contains a closed $T$-orbit which has to be
$(e,y)$, a contradiction. The morphism $m:R_u(P)\times S\to
\p(V)_\eta$ is therefore open. 

Let $Z$ be the complement of the image, then $Z$ is closed and
$T$-stable. If it is non empty, then it contains a closed $T$-orbit
which has to be $y$, a contradiction. Thus $m$ is surjective. 

Thus $m$ is a covering but since both varieties are affine spaces
which are simply connected, the map $m$ is an isomorphism.
\end{proof}

\begin{example}
  Consider $V = M_n(\kk)$ and $G = \GL_n(\kk) \times \GL_n(\kk)$ acting via $(P,Q)\cdot M = PMQ^{-1}$. Consider $Y$ the set of rank $1$ matrices. It is the only closed subvariety stable by $G$ and let $y = [M] \in Y$ with
  $$M = \left(\begin{array}{cccc}
    1 & 0 & \cdots & 0 \\
    0 & 0 & \ddots & \vdots \\
    \vdots & \ddots & \ddots & 0 \\
    0 & \cdots & 0 & 0 \\
  \end{array}
  \right)$$
  and $\eta$ the linear form defined by $\eta(a_{i,j}) = a_{1,1}$. If $P$ is the stabiliser of $\eta$, we have
  $$P = \left\{\left(\left(\begin{array}{cc}
    A & 0 \\
    C & D \\
  \end{array}\right),\left(\begin{array}{cc}
    A' & B' \\
    0 & D' \\
  \end{array}\right)\right) \ \big| \ A,A' \in M_1(\kk), D,D' \in M_{n-1}(\kk) \right\}$$
  $$G_y = \left\{\left(\left(\begin{array}{cc}
    A & B \\
    0 & D \\
  \end{array}\right),\left(\begin{array}{cc}
    A' & 0 \\
    C' & D' \\
  \end{array}\right)\right) \ \big| \ A,A' \in M_1(\kk), D,D' \in M_{n-1}(\kk) \right\}.$$
  Setting
  $$S = \left\{\left(\begin{array}{cc}
    1 & 0 \\
    0 & M \\
  \end{array}
  \right) \ \big| \ M \in M_{n-1}(\kk) \right\} \simeq M_{n-1}(\kk),$$
  we have the isomorphism $R_u(P) \times S \to \p(V)_\eta$ given by the action
  $$\left(\left(\begin{array}{cc}
    1 & 0 \\
    C & 1 \\
  \end{array}\right),\left(\begin{array}{cc}
    1 & B' \\
    0 & 1 \\
  \end{array}\right),\left(\begin{array}{cc}
    1 & 0 \\
    0 & M \\
  \end{array}\right)\right) \mapsto \left(\begin{array}{cc}
    1 & -B' \\
    C' & M - C'B' \\
  \end{array}\right).$$
\end{example}

\begin{remark}
  The above result enables to replace
  locally the study of quasi-projective $G$-varieties to quasi-affine
  $G$-varieties and of projective $G$-varieties to affine
  $G$-varieties.
\end{remark}

\subsection{Local structure for spherical varieties}

For a spherical variety $X$, we describe the local
structure not only along projective orbits but along any $G$-orbit $Y$. Recall that the set of $B$-stable prime divisors is finite. We define
$$\begin{array}{l}
  \D(X) = \{ D \subset X \textrm{ $B$-stable prime divisor} \}, \\
  \Delta(X) = \{ D \in \D(X) \ | \ D \textrm{ is not $G$-stable} \}, \\
  \D_Y(X) = \{ D \in \D(X) \ | \ Y \subset D \} \textrm{ and } \\
  \Delta_Y(X) = \{ D \in \Delta(X) \ | \ Y \subset D \} = \Delta(X) \cap \D_Y(X). \\
\end{array}$$
Finally denote the set of $B$-stable prime divisors containing no $G$-orbit by
$$\odelta(X) = \Delta(X) \setminus \bigcup_Y \Delta_Y(X) = \D(X) \setminus \bigcup_Y \D_Y(X)$$
where $Y$ runs in the set of $G$-orbits in $X$. Recall (for example from \cite[Section 5 and Theorem 5.2]{jacopo}) the definition of the $G$- and $B$-charts $X_{Y,G}$ and $X_{Y,B}$:
$$X_{Y,B}= 
X\setminus\bigcup_{D\in \D(X)\setminus \D_Y(X)}D 
\textrm{ and } X_{Y,G} = GX_{Y,B} = \{x\in X \ | \ 
\overline{Gx}\supset Y\}.$$
A $G$-spherical variety is called {\bf simple} if it has a unique closed $G$-orbit. Note that $X_{Y,G}$ is a simple spherical variety with unique closed orbit $Y$. Note also that $X_{Y,B}$ is the minimal B-stable affine open subset of X which intersects Y. Denote by $P$ the stabiliser of $X_{Y,B}$ \emph{i.e.} the set
of elements $g\in G$ with $g\cdot X_{Y,B}=X_{Y,B}$. This is a
parabolic subgroup containing $B$.

\begin{lemma}
\label{lemm-cartier}
Let $\mathcal{D}$ be the complement of $X_{Y,B}$ in $X_{Y,G}$. Then $\cO_{X_{Y,G}}(\mathcal{D})$ is Cartier and globally generated. The same is true for any of the sheaves associated to an irreducible component of $\mathcal{D}$ 
\end{lemma}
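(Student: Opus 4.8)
The plan is to reduce to a single color and then exploit the affineness of $X_{Y,B}$ together with the discreteness of the divisor class group of $X_{Y,G}$.

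First I would identify the components of $\mathcal D$. Observe that every $G$-stable prime divisor of $X_{Y,G}$ contains $Y$: if $x\in X_{Y,G}$ lies on a $G$-stable divisor $E$, then $Y\subset\overline{Gx}\subset E$ by the very definition of $X_{Y,G}$. Hence the divisors removed from $X_{Y,G}$ to form $X_{Y,B}$ are all colors (non-$G$-stable $B$-prime divisors) not containing $Y$; write $\mathcal D=D_1+\cdots+D_r$ with each $D_i$ such a color, so that $\mathcal D\cap X_{Y,B}=\emptyset$. Since $\cO_{X_{Y,G}}(\mathcal D)=\bigotimes_i\cO_{X_{Y,G}}(D_i)$ and tensor products of Cartier (resp. globally generated) sheaves are again Cartier (resp. globally generated), it is enough to prove the statement for a single effective $B$-stable divisor $E$ supported on $\mathcal D$ with $E\cap X_{Y,B}=\emptyset$; I will carry this out uniformly, taking $E=D_i$ or $E=\mathcal D$.

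The key input is that $\mathrm{Cl}(X_{Y,G})$ is finitely generated, hence \emph{discrete}. The open $B$-orbit $\mathcal O_0$ of $X_{Y,G}$ is a homogeneous space $B/K$ under the connected solvable group $B$; since $B$ is, as a variety, a product of copies of $\G_a$ and $\G_m$, one has $\mathrm{Pic}(B)=0$, whence $\mathrm{Pic}(B/K)\cong\cX(K)/\cX(B)|_K$ is finitely generated, and $\mathrm{Cl}(\mathcal O_0)=\mathrm{Pic}(\mathcal O_0)$ as $\mathcal O_0$ is smooth. Removing the finitely many $B$-stable prime divisors from $X_{Y,G}$ leaves an open set $U\supseteq\mathcal O_0$ with $\mathrm{Cl}(U)=\mathrm{Cl}(\mathcal O_0)$ (they differ in codimension $\ge 2$), and the excision sequence $\bigoplus_D\Z\,[D]\to\mathrm{Cl}(X_{Y,G})\to\mathrm{Cl}(U)\to 0$ shows $\mathrm{Cl}(X_{Y,G})$ is finitely generated. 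Consequently the algebraic family $g\mapsto[gE]\in\mathrm{Cl}(X_{Y,G})$ is a map from the connected group $G$ into a discrete group, so it is constant equal to $[E]$; thus for every $g\in G$ there is $\phi_g\in\kk(X_{Y,G})^\times$ with $gE=E+\div(\phi_g)$.

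Now Cartier-ness and global generation both follow from $G$-translates. Since $X_{Y,G}=G\cdot X_{Y,B}$, the open affine sets $gX_{Y,B}$ cover $X_{Y,G}$, and $gE\cap gX_{Y,B}=g(E\cap X_{Y,B})=\emptyset$. Restricting $gE=E+\div(\phi_g)$ to $gX_{Y,B}$ gives $E|_{gX_{Y,B}}=\div(\phi_g^{-1})|_{gX_{Y,B}}$, so $\phi_g^{-1}$ (regular there, its divisor being the effective $E|_{gX_{Y,B}}$) is a local equation for $E$ on $gX_{Y,B}$; as these cover $X_{Y,G}$, the divisor $E$ is Cartier. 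For global generation, note $\div(\phi_g)+E=gE\ge 0$, so $\phi_g$ is a global section of $\cO_{X_{Y,G}}(E)$ whose zero locus is $\mathrm{Supp}(gE)$, which misses $gX_{Y,B}$; since every point of $X_{Y,G}$ lies in some $gX_{Y,B}$, these sections generate $\cO_{X_{Y,G}}(E)$ everywhere. Taking $E=\mathcal D$ and $E=D_i$ yields the lemma. The one substantive point, and the main obstacle, is the discreteness of $\mathrm{Cl}(X_{Y,G})$: once the family $\{[gE]\}$ is known to be constant, the rest is formal. One may alternatively invoke the finite generation of class groups of spherical varieties from Section \ref{sec-pic} instead of the self-contained argument via $\mathrm{Pic}(B)=0$; a minor check not to overlook is that $X_{Y,B}$, and hence each $gX_{Y,B}$, is genuinely affine, which is exactly what is recalled just before the statement.
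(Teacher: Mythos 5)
Your proof is correct, and it takes a genuinely different route from the paper's. The paper replaces $G$ by a finite cover so that $\cO_{X_{Y,G}}(\mathcal{D})$ is $G$-linearised, observes that the non-Cartier locus and the non-globally-generated locus are then closed and $G$-stable, hence would have to contain the unique closed orbit $Y$ -- which is impossible because $\cO_{X_{Y,G}}(\mathcal{D})$ is trivial on $X_{Y,B}$ and $X_{Y,B}$ meets $Y$. You instead prove directly that $gE$ is linearly equivalent to $E$ for every $g\in G$ and every effective $B$-stable $E$ supported on $\mathcal{D}$, and then extract local equations and generating sections of $\cO(E)$ on the open cover $\{gX_{Y,B}\}_{g\in G}$. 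Both arguments rest on the same geometric input ($X_{Y,G}=G\cdot X_{Y,B}$ with $X_{Y,B}$ open and disjoint from $\mathcal{D}$); only the transport mechanism differs. Your route avoids linearisation and the finite cover, and in fact never uses that $Y$ is the unique closed orbit of $X_{Y,G}$; the paper's route avoids any discussion of ${\rm Cl}(X_{Y,G})$ and is shorter. The one step you should shore up is the constancy of $g\mapsto [gE]$: ``an algebraic family in a discrete group is constant'' presupposes that this map is locally constant (equivalently, that its fibres are constructible), which is not automatic for Weil divisor classes on a non-proper variety. The clean statement is the standard fact that for a connected \emph{linear} algebraic group acting on a normal variety one has $[gE]=[E]$ for all $g$; it is proved by restricting the divisor $\overline{\{(g,x)\mid x\in gE\}}\subset G\times X_{Y,G}$ to $\Omega\times X_{Y,G}$ for $\Omega\simeq\mathbb{A}^n\times\G_m^k$ a big cell of $G$ and using ${\rm Cl}(\Omega\times X_{Y,G})\simeq{\rm Cl}(X_{Y,G})$, after which the finite generation of the class group (which you establish correctly) is not even needed. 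With that justification supplied or cited, your proof is complete.
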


\begin{proof}
  Modulo replacing $G$ by a finite cover, we may assume that $\cO_{X_{G,Y}}(\mathcal{D})$ is $G$-linearised. In particular, the non Cartier locus and the locus where it is non globally generated are $G$-stable. If these loci are non empty, they must contain the only closed $G$-orbit: $Y$. But $\cO_{X_{G,Y}}(\mathcal{D})$ is locally free and globally generated outside $\mathcal{D}$ and therefore on an open subset of $Y$ proving the assertion. The same argument works for any irreducible component of $\mathcal{D}$.
\end{proof}

\begin{thm}
\label{thm-stru}
Keep notation as above.

1. There exists a Levi subgroup $L$ of $P$ and a closed subvariety $S$ of $X_{Y,B}$ with:
\begin{itemize}
\item[(a)] The variety $S$ is stable under $L$;
\item[(b)] The map $R_u(P)\times S\to X_{Y,B}$ defined by $(p,x)\mapsto
p\cdot x$ is a $P$-isomorphism.
\end{itemize}

2. The variety $S$ is affine $L$-spherical and
$S\cap Y$ is a $L$-orbit with isotropy subgroup $L_y$ containing $(L,L)$ the derived subgroup of $L$ for any $y \in S\cap Y$. The subgroup $L_y$ is independent of $y$. Denote it by $L_Y$.

3. There exists a closed $L_Y$-stable subvariety $S_Y \subset S$
containing an $L_Y$-fixed point such that the morphism 
$$L\times^{L_Y}S_Y\to S \textrm{ defined by } [l,y] \mapsto l\cdot y$$
is aa $L$-equivariant isomorphism. The variety $S_Y$ is affine $L_Y$-spherical
of rank $\rk(X)-\rk(Y)$. 
\end{thm}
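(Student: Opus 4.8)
The plan is to prove the three parts in order, leveraging the projective local structure theorem (Proposition \ref{theo-proj}) as the engine for part~1, and then analyzing the slice $S$ intrinsically for parts~2 and~3.

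\textbf{Part 1.} First I would reduce to a projective situation. Since $X_{Y,G}$ is a simple spherical variety, Lemma \ref{lemm-cartier} gives that the boundary divisor $\mathcal{D} = X_{Y,G}\setminus X_{Y,B}$ carries a globally generated Cartier line bundle $\mathcal{L} = \cO_{X_{Y,G}}(\mathcal{D})$. After replacing $G$ by a finite cover, $\mathcal{L}$ is $G$-linearized, so its space of sections $V = H^0(X_{Y,G},\mathcal{L})^\vee$ is a $G$-module and we obtain a $G$-equivariant morphism $\phi : X_{Y,G} \to \p(V)$ with $\phi^{-1}(\p(V)_\eta) = X_{Y,B}$ for a suitable $B$-eigenform $\eta$ cutting out $\mathcal{D}$. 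The closed orbit $Y$ maps into a closed orbit of $\p(V)$, so Proposition \ref{theo-proj} produces a Levi $L$ of $P$ and an $L$-stable slice $S_0 \subset \p(V)_\eta$ with $R_u(P)\times S_0 \xrightarrow{\sim} \p(V)_\eta$. Pulling this product decomposition back along $\phi$ and setting $S = \phi^{-1}(S_0)\cap X_{Y,B}$, the freeness of the $R_u(P)$-action (which holds because $R_u(P)$ meets the stabilizer of $y$ trivially) upgrades the isomorphism on the base to a $P$-equivariant isomorphism $R_u(P)\times S \xrightarrow{\sim} X_{Y,B}$; this gives (a) and (b).

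\textbf{Part 2.} The slice $S$ is $L$-stable and affine (being closed in the affine $X_{Y,B}$). To see $S$ is $L$-spherical, I would transport the dense $B$-orbit: $B = R_u(P)(B\cap L)$ and $X_{Y,B}$ has a dense $B$-orbit, so under the product decomposition the $(B\cap L)$-orbit is dense in $S$, whence $c_{B\cap L}(S)=0$. For the orbit structure, the decomposition identifies $Y\cap X_{Y,B}$ with $R_u(P)\times(S\cap Y)$; since $Y$ is a single $G$-orbit and $R_u(P)$ accounts for the $P/L$ directions, $S\cap Y$ must be a single $L$-orbit. The stabilizer computation is where I expect to identify $(L,L)\subseteq L_y$: because $Y$ is the \emph{closed} $G$-orbit and every point of $Y$ has stabilizer a parabolic containing a conjugate of $P^-$, the semisimple part of $L$ fixes the whole slice direction transverse to $Y$, forcing $(L,L)$ into $L_y$. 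Independence of $L_y$ from the chosen $y\in S\cap Y$ follows because the $L$-orbit $S\cap Y$ has all stabilizers conjugate, and conjugation by $L$ normalizes $(L,L)$ while acting on the quotient torus $L/(L,L)$ trivially on stabilizers of a single $L$-orbit inside an $L$-spherical variety.

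\textbf{Part 3.} Here I would apply the \emph{same} local-structure machinery one level down, now to the affine $L$-spherical variety $S$ along its closed $L$-orbit $S\cap Y$, whose generic stabilizer is $L_Y \supseteq (L,L)$. Since $L_Y$ contains $(L,L)$, the relevant "parabolic" inside $L$ is all of $L$ and its unipotent radical is trivial, so the local structure theorem degenerates into an honest fibration $L\times^{L_Y}S_Y \xrightarrow{\sim} S$, where $S_Y$ is the fiber over the $L_Y$-fixed point. Concretely, $S_Y$ is cut out as an $L_Y$-stable transverse slice to the orbit $S\cap Y$ through a fixed point; it is affine and closed in $S$. Its sphericity and rank I would extract from the rank additivity: applying the Proposition that $\rk(\cdot)$ is monotone together with the fibration structures, the ranks satisfy $\rk(X)=\rk(S)$ and $\rk(S)=\rk(S\cap Y)+\rk(S_Y)=\rk(Y)+\rk(S_Y)$, giving $\rk(S_Y)=\rk(X)-\rk(Y)$.

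\textbf{Main obstacle.} The delicate point is \emph{Part 2}, specifically proving $(L,L)\subseteq L_y$ and the independence of $L_y$: this requires relating the closedness of the $G$-orbit $Y$ to the triviality of the $L$-action on the transverse directions, and is exactly where the hypothesis that $Y$ is a closed $G$-orbit (rather than an arbitrary orbit) is essential.
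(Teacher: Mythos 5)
Part 1 of your proposal is essentially the paper's argument, with two small caveats: you should take for $V^\vee$ the finite-dimensional $G$-submodule of $H^0(X_{Y,G},\cO(\mathcal{D}))$ generated by the canonical section $\eta$ (the full space of sections need not be finite dimensional, and Proposition \ref{theo-proj} is stated for a $G$-module), and the upgrade from $R_u(P)\times S_0\simeq\p(V)_\eta$ to $R_u(P)\times S\simeq X_{Y,B}$ is pure base change along the Cartesian square over $\varphi$ --- no freeness of the $R_u(P)$-action is needed or used. The genuine gap is in your Part 2, at exactly the point you flag as delicate. Your justification of $(L,L)\subseteq L_y$ rests on the claim that every point of the closed orbit $Y$ has parabolic stabilizer (containing a conjugate of $P^-$). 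This is false in general: the theorem is stated for an arbitrary $G$-orbit $Y$ of a spherical variety, and the closed orbit of the simple variety $X_{Y,G}$ need not be complete (think of an affine spherical variety whose closed orbit is $G/H$ with $H$ reductive). Even where it holds, the inference ``the semisimple part of $L$ fixes the transverse directions'' is precisely the assertion to be proved, not a consequence of closedness. The paper's route is different and purely group-theoretic: the product decomposition identifies $Y\cap X_{Y,B}$ with $R_u(P)\times(S\cap Y)$, and since $Y\cap X_{Y,B}$ is simultaneously a $P$-orbit and a $B$-orbit (it is the open $B$-orbit of the spherical homogeneous space $Y$), the slice $S\cap Y$ is both an $L$-orbit and a $(B\cap L)$-orbit; hence $(B\cap L)L_y=L$, and Lemma \ref{lemm-derive} (if $G=HB$ with $B$ a Borel subgroup then $H\supset(G,G)$) yields $(L,L)\subseteq L_y$. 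Some substitute for this step is required; your argument for the independence of $L_y$ in $y$ is fine once $(L,L)\subseteq L_y$ is established.

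Part 3 is also under-specified. Applying the local structure theorem ``one level down'' degenerates to a tautology when the relevant parabolic is all of $L$ (trivial unipotent radical), so it does not by itself produce the fibration $L\times^{L_Y}S_Y\to S$. What is actually needed is an $L$-equivariant retraction $\psi\colon S\to S\cap Y\simeq L/L_Y$. The paper constructs it by choosing a basis $\chi_1,\dots,\chi_n$ of the character group of the torus $L/L_Y$, extending these to $(B\cap L)$-eigenfunctions on the affine variety $S$ (possible because $S\cap Y$ is closed in $S$), observing that these functions are nowhere vanishing since they do not vanish on the unique closed orbit, and setting $S_Y=\psi^{-1}(e)$; bijectivity of $L\times^{L_Y}S_Y\to S$ together with normality of $S$ then upgrades the map to an isomorphism. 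Your rank computation at the end is correct once this construction is in place.
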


\begin{proof}
  Since $X_{Y,B}$ is contained in $X_{Y,G}$ we may assume that $X = X_{Y,G}$ is
  simple.

1. We want to apply Proposition
  \ref{theo-proj}. Let $\mathcal{D}$ be the complement of $X_{Y,B}$ in
  $X$. We know that ${\mathcal{D}}$ is Cartier and globally generated
  therefore there exists  a canonical section $\eta$ of the line
  bundle $\co_X({\mathcal{D}})$ given by 
   $\co_X\stackrel{\eta}{\to}\co_X({\mathcal{D}})$. This section is an element of
   $H^0(X,\co_X({\mathcal{D}}))$. We may assume, replacing $G$ by a
   covering that ${\mathcal{D}}$ is $G$-linearised thus $G$ acts on
   $H^0(X,\co_X({\mathcal{D}}))$. Let $V^\vee$ be the $G$-submodule spanned
   by $\eta$. We have a $G$-equivariant morphism (this is indeed a
   morphism since ${\mathcal{D}}$ is globally generated):
$$\varphi:X\to \p(V)$$ 
defined by $x\mapsto[\sigma\mapsto\sigma(x)]$ for $\sigma\in
V^\vee\subset H^0(X,\co_X({\mathcal{D}}))$. By definition of $\eta$, we
have $X_{Y,B}\to\p(V)_\eta$. Note also that since $P$ stabilises
$X_{Y,B}$ it also stabilises ${\mathcal{D}}$ and thus $\eta$ is a
$P$-eigenfunction therefore $P$ also stabilises $[\eta]$ and
$\p(V)_\eta$. The map $X_{Y,B}\to \p(V)_\eta$ is therefore
$P$-equivariant. 

Since $V$ is simple, the intersection of all translates $gH_\eta$ of
the vanising divisor $H_\eta$ of $\eta$ is empty. Therefore if $Z$ is
a closed orbit in $\p(V)$, it will not be contained in $H_\eta$.
Choose $z$ in the dense $P$-orbit of $Z$, then $B^- \cdot z$ is dense in $Z$.
We may apply Proposition
\ref{theo-proj} to get a closed $L$-stable subvariety $S'$ of
$\p(V)_\eta$ such that the morphism $R_u(P)\times S'\to \p(V)_\eta$ is
a $P$-equivariant isomorphism. In particular $S'$ meets the image of
$X_{Y,B}$. Let $S=\varphi^{-1}(S')$. This is a closed $L$-stable
subvariety of $X_{Y,B}$ and we have a Cartesian diagram 
$$\xymatrix{R_u(P)\times S\ar[r]\ar[d]_{\id\times\varphi} & X_{Y,B}
  \ar[d]_{\varphi} \\
R_u(P)\times S'\ar[r] & \p(V)_\eta.}$$
This proves that the top map is an isomorphism.

2. We have finitely many $B$-orbits in $X_{Y,B}$ since $X$ is
spherical thus $B$ also has finitely many orbits in $R_u(P)\times
S$. Recall that $P=R_u(P)L$ thus $B=R_u(P)(L\cap B)$ and $L\cap
B$ is a Borel subgroup of $L$. Recall also that the action of
$P=R_u(P)L$ on $R_u(P)\times S$ is given by
$ul\cdot(u',x)=(ulu'l^{-1},l\cdot x)$ thus $B\cap L$ must have
finitely many orbits in $S$. Since $X_{Y,B}$ is normal as an open subset of $X$, the variety $S$ is also normal thus $S$
is $L$-spherical. It is an affine variety as closed subvariety of the affine variety $X_{Y,B}$ (see \cite[Theorem 5.2]{jacopo}). The isomorphism from part 1 of the theorem induces an isomorphism
$R_u(P)\times (S\cap Y)\to Y\cap X_{Y,B}$. The
right hand side is a $P$-orbit and a $B$-orbit, thus $S\cap
Y$ is an $L$-orbit and a $(B\cap L)$-orbit as well. 

Let $y\in S\cap Y$. We have $S\cap Y = L \cdot y = (B \cap L) \cdot y$ thus $(B\cap L)L_y = L$. 

\begin{lemma}
\label{lemm-derive}
  Let $H$ be a closed subgroup of a connected reductive group $G$ such
  that $G=HB$ for $B$ a Borel subgroup of $G$, then $H$ contains
  $(G,G)$.
\end{lemma}

\begin{proof}
  Since $(G,G)$ is connected, we may assume $H$ to be connected.
  First assume $G$ to be semisimple. Since $G = HB$, we have $G/B = H/(B\cap H)$. On the one hand this implies $\rk(G) = \rk(\pic(G/B)) = \rk(\pic(H/(H\cap B))) \leq \rk(H)$.
    On the other hand $\dim U_G = \dim G/B = \dim H/(H \cap B) \leq \dim U_H$ where $U_G$ and $U_H$ are maximal unipotent subgroups of $G$ and $H$. We deduce $\dim H \geq \dim G$ and $H = G$.

  For a general $G$, let $\pi:G\to G' = G/R(G)$ be the quotient of $G$ by its radical.
    We have $G' = H'B'$ with $H' = \pi(H)$ and $B' = \pi(B)$. Since $G'$ is semisimple, we deduce $G' = (G',G') \subset H'$. Thus $\pi\vert_H$ is surjective and $\pi((H,H)) = (G',G') = G'$. Since $(H,H) \cap R(G) \subset (G,G) \cap R(G)$ is finite, we get $\dim (H,H) = \dim G' = \dim (G,G)$ thus $(G,G) = (H,H) \subset H$.
\end{proof}

We deduce that $L_y$ contains $(L,L)$. This implies that $L_y$ does
not depend on $y$. Indeed, for $l \in L$, then $L_{l\cdot y} = lL_yl^{-1}$. For any $h\in L_y$ we have $lhl^{-1}h^{-1}\in (L,L)\subset L_y$ thus $lhl^{-1}\in L_y$ and $L_{l\cdot y}\subset L_y$ and by symmetry $L_{l\cdot y} = L_y$.
Denote by $L_Y$ this stabilisor. We have $(L,L)\subset L_Y$ thus
$L/L_Y$ is a quotient of $L/(L,L)$ which is a torus.

\vskip 0.1 cm

3. The orbit $S\cap Y = L \cdot y$ is thus isomorphic to the torus
$L/L_Y$. Let $(\chi_1,\cdots,\chi_n)$ a basis of the group of
characters of $L/L_Y$. Since $S\cap Y$ is closed in $S$ which is
affine, we can extend these functions to functions $(f_1,\cdots,f_n)$
on $S$. We may furthermore assume that these functions are $(L\cap
B)$-eigenfunctions of weights $(\chi_1,\cdots,\chi_n)$. These
functions do 
not vanish on the closed orbit $S\cap Y$ in $S$ thus they do not
vanish at all on $S$. These functions therefore define an
$L$-equivariant morphism $\psi:S\to (\G_m)^n\simeq L/L_Y$. Let $S_Y$ be the
fiber over the identity element of this morphism. The natural map
defined by the action: $L\times S_Y\to S$ factors through
$L\times^{L_Y}S_Y\to S$. This map is bijective. Indeed, if $s\in S$,
then there exists $l\in L$ such that $\bar l=\psi(s)$ and if $l'$
satisfies the same condition, then $l'=lh$ with $h\in L_Y$. Define
$s\mapsto [l,l^{-1}s] \in L \times^{L_Y} S_Y$. This is well
defined and an inverse map. But since $S$ is normal, this morphism
must be an isomorphism. 

Finally $\rk(X) - \rk(Y) = \rk(S) - \rk(S \cap Y) = \rk(S) -
\dim(L/L_Y) = \rk(S_Y)$. 
\end{proof}

\subsection{$G$-stable subvarieties}


We use the above result to prove that $G$-stable subvarieties in a spherical
variety are again spherical. We start with classical results on affine
$G$-varieties, see \cite[Theorem 9.4]{Gro2}

\begin{fact}
Let $X$ be an affine $G$-variety, then $\kk[X]^U$ is finitely generated.
\end{fact}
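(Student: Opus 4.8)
The plan is to reduce the finite generation of $\kk[X]^U$ to the classical finite generation of invariants under the reductive group $G$, following the transfer principle of Grosshans. The two inputs are: (i) the algebra $\kk[G/U] = \kk[G]^U$ of right-$U$-invariant functions is finitely generated, and (ii) there is a natural isomorphism $\kk[X]^U \cong (\kk[X] \otimes \kk[G/U])^G$ for the diagonal $G$-action. Granting these, since $X$ is affine the algebra $\kk[X]$ is finitely generated, so $\kk[X] \otimes \kk[G/U]$ is a finitely generated $\kk$-algebra carrying a rational $G$-action; as $G$ is reductive, Hilbert's finiteness theorem gives that $(\kk[X] \otimes \kk[G/U])^G$ is finitely generated, and the isomorphism transports this conclusion to $\kk[X]^U$.

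For input (i), I would use the decomposition of $\kk[G/U]$ as a module under left translation by $G$: it is multiplicity free and decomposes as $\bigoplus_\lambda V(\lambda)$, the sum over dominant weights, where $V(\lambda)$ is the irreducible representation of highest weight $\lambda$ and each summand is spanned by the right-$U$-invariant (that is, highest-weight) matrix coefficients of $V(\lambda)$. The product $V(\lambda) \cdot V(\mu)$ always contains the Cartan component $V(\lambda+\mu)$, so the subalgebra generated by $\bigoplus_i V(\omega_i)$, with $\omega_i$ running over the fundamental weights, already contains every $V(\lambda)$ because each dominant weight is a nonnegative integral combination of the $\omega_i$. As $\bigoplus_i V(\omega_i)$ is finite dimensional, $\kk[G/U]$ is finitely generated.

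For input (ii), I would exploit two commuting actions on $\kk[G]$. Consider $X \times G$ with $G$ acting by $g \cdot (x,g') = (g x, g g')$; the $G$-invariant morphism $(x,g') \mapsto {g'}^{-1} x$ identifies the quotient with $X$ and yields an isomorphism $(\kk[X] \otimes \kk[G])^G \cong \kk[X]$ sending $a$ to the function $F_a(x,g') = a({g'}^{-1} x)$. Right translation by $U$ on the second factor commutes with this left $G$-action, and a direct computation gives $(F_a \cdot u)(x,g') = a(u^{-1}{g'}^{-1} x) = F_{u \cdot a}(x,g')$, so right-$U$-invariance of $F_a$ matches left-$U$-invariance of $a$. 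Taking $U$-invariants on the $\kk[G]$ factor replaces it by $\kk[G/U]$, and since the two invariant-taking operations commute we obtain
\[
(\kk[X] \otimes \kk[G/U])^G \cong \big((\kk[X] \otimes \kk[G])^G\big)^{U} \cong \kk[X]^U,
\]
which is the desired isomorphism.

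The main obstacle is really input (i): everything downstream is formal once $\kk[G/U]$ is known to be finitely generated, and this is precisely where the specific structure of $G$ reductive and its maximal unipotent subgroup $U$ is used; indeed, for a general unipotent subgroup such finiteness can fail, by Nagata's counterexamples to Hilbert's fourteenth problem. The Cartan-component argument is thus the crux. Some care is also needed in (ii) to verify that the right-$U$ and left-$G$ invariant functors commute and that one works throughout with rational representations, so that the Reynolds operator and the exactness of $G$-invariants are available.
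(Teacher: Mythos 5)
Your proof is correct. Note that the paper itself gives no argument for this Fact: it is quoted directly from Grosshans \cite[Theorem 9.4]{Gro2}, and what you have written is essentially the standard proof from that source --- the transfer principle $\kk[X]^U \cong (\kk[X]\otimes\kk[G/U])^G$, finite generation of $\kk[G/U]$ via Cartan multiplication, and Hilbert's finiteness theorem for invariants of the reductive group $G$ acting rationally on a finitely generated algebra (available here since $\chara(\kk)=0$, so the Reynolds operator exists; in positive characteristic one would invoke geometric reductivity instead). The one point to polish is in your input (i): for a general connected reductive $G$ the monoid of dominant characters $\cX(T)^+$ need not be freely generated by fundamental weights --- that is the semisimple simply connected case --- but it is the intersection of the lattice $\cX(T)$ with a rational polyhedral cone, hence finitely generated by Gordan's lemma, and generating the algebra by $\bigoplus_i V(\lambda_i)$ over such a finite generating set $\lambda_1,\dots,\lambda_r$ makes the Cartan-component argument go through verbatim. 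This is a cosmetic repair, not a gap.
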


\begin{defn}
For an affine $G$-variety $X$, define the quotient $\pi:X\to X/\!/U$ as the morphism induced by the inclusion
$\kk[X]^U\to \kk[X]$. 
\end{defn}

Many of the properties of $X$ can be detected on $X/\! /U$. 

\begin{prop}
\label{prop-k(X)}
Let $X$ be an irreducible affine $G$-variety, then $X$ is normal if and only $X/\! /U$ is normal.
\end{prop}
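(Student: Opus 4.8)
The statement is an equivalence whose two implications have very different characters, so I would treat them separately. Throughout write $A = \kk[X]$ and $K = \kk(X)$, so that $X/\!/U = \Spec A^U$ and $\operatorname{Frac}(A^U)$ sits naturally as a subfield of $K^U \subset K$.

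For the direction ``$X$ normal $\Rightarrow X/\!/U$ normal'' the plan is a one-line integral-closure argument. Assume $A$ is integrally closed in $K$ and let $b \in \operatorname{Frac}(A^U)$ be integral over $A^U$. Then $b$ is a fortiori integral over the larger ring $A$, hence $b \in A$ by normality of $X$; and since $b \in \operatorname{Frac}(A^U) \subseteq K^U$ it is $U$-invariant, so $b \in A \cap K^U = A^U$. Thus $A^U$ is integrally closed in its fraction field. Note this direction uses nothing about $U$ being unipotent.

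The reverse implication is the substance of the proof. Assume $X/\!/U$ normal; I want $X$ normal. Let $\nu : \Xt \to X$ be the normalization. In characteristic $0$ (our standing assumption) $A$ is excellent, so $\nu$ is finite and birational, and since $G$ is connected the action lifts to the normal affine $G$-variety $\Xt$ with $\kk[\Xt] = \tilde A$ the integral closure of $A$ in $K$. Applying $(-)^U$ yields a morphism $\Xt/\!/U \to X/\!/U$, and the plan is to show it is finite and birational: birationality should come from Rosenlicht's theorem, which identifies both $\operatorname{Frac}(A^U)$ and $\operatorname{Frac}(\tilde A^U)$ with $K^U$ (as $\nu$ is birational, $\kk(\Xt) = K$), while finiteness should follow from the finite generation of $U$-invariants recalled above together with integrality of $\tilde A^U$ over $A^U$. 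A finite birational morphism onto a normal target is an isomorphism (Zariski's Main Theorem), so this would force $\tilde A^U = A^U$. To pass from there to $\tilde A = A$ I would exploit that $U$ is unipotent: the cokernel $\tilde A/A$ is a rational $U$-module, and a nonzero rational representation of a unipotent group has a nonzero fixed vector (Lie--Kolchin), so $\tilde A \neq A$ ought to manifest itself as a discrepancy at the level of $U$-invariants, contradicting $\tilde A^U = A^U$.

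The hard part will be exactly this last implication. Taking $U$-invariants of $0 \to A \to \tilde A \to \tilde A/A \to 0$ is only left exact, so an invariant class in $\tilde A/A$ need not lift to an invariant in $\tilde A$, and one must rule out this $H^1(U,A)$-type obstruction before equality of the invariant rings can be propagated back to the full coordinate rings. I expect this to be handled by localizing at the height-one primes of $A$ — where the non-normal locus, being closed and $G$-stable, is detected — and invoking a local slice for the induced $\G_a$-action to reduce to a one-dimensional computation; equivalently, one checks Serre's criterion $R_1 + S_2$ directly. Establishing rigorously that $\tilde A^U$ is module-finite and birational over $A^U$, and that equality of $U$-invariants really forces equality of the coordinate rings, is where the genuine content lies.
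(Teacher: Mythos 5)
Your easy direction coincides with the paper's, and your overall strategy for the converse (normalize, descend the $G$-action, compare $U$-invariants, then propagate back) is the right one. But both places you flag as ``where the genuine content lies'' are left genuinely open, and in each case a specific idea is missing. For the module-finiteness of $\widetilde{A}^U$ over $A^U$, the tool is the conductor ideal $I=\{f\in A \mid f\widetilde{A}\subset A\}$: it is a nonzero $G$-stable (hence $U$-stable) ideal of $A$, so by Kolchin's fixed-point theorem for unipotent groups it contains a nonzero $U$-invariant $f$. Then $f\widetilde{A}^U\subset A^U$ is an ideal of the Noetherian ring $A^U$ (Noetherian by Grosshans' finite generation of $\kk[X]^U$), so $\widetilde{A}^U\cong f\widetilde{A}^U$ is a finite $A^U$-module, hence integral over $A^U$; since it lies in $\operatorname{Frac}(A^U)=\kk(X)^U$ and $A^U$ is integrally closed there, $\widetilde{A}^U=A^U$ follows at once, with no appeal to Zariski's Main Theorem. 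Your route takes ``integrality of $\widetilde{A}^U$ over $A^U$'' as an input, but that is precisely what needs proving: an element of $\widetilde{A}^U$ satisfies a monic equation with coefficients in $A$, not in $A^U$, so integrality over $A^U$ is not automatic. (Also, the identification $\operatorname{Frac}(A^U)=\kk(X)^U$ is not quite Rosenlicht either; it again uses Kolchin to clear denominators $U$-equivariantly.)

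The second gap is the more serious one, because the repair you sketch would not work. You correctly observe that $(-)^U$ is only left exact, so the nonzero $U$-invariant guaranteed by Kolchin in $\widetilde{A}/A$ need not lift to $\widetilde{A}$. The resolution is not a localization at height-one primes, a $\G_a$-slice, or Serre's criterion: it is that $A\subset\widetilde{A}$ is an inclusion of rational modules for the \emph{reductive} group $G$, and in characteristic zero such modules are completely reducible. Hence $\widetilde{A}=A\oplus C$ as $G$-modules, and if $C\neq 0$ then $C$ contains an irreducible $G$-submodule, which has a nonzero highest-weight vector, so $\widetilde{A}^U=A^U\oplus C^U\supsetneq A^U$, a contradiction. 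This is exactly where the standing hypothesis $\chara(\kk)=0$ and the reductivity of $G$ enter the paper's proof (``the $U$-invariants determine the module''): the semisimplicity that kills your $H^1(U,A)$-type obstruction lives at the level of $G$-modules, not $U$-modules, and no amount of work with the unipotent group alone will produce it.
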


\begin{proof}
One easily checks that the field of fractions of $\kk[X]^U$ is $\kk(X)^U$ (use that $U$-modules always have invariants). 
If $X$ is normal then so is $X/\! /U$. Indeed $\kk[X]$ is integrally
closed in $\kk(X)$ and thus $\kk[X]^U$ is integrally closed in $\kk(X)^U$.
Conversely, suppose that $X/\! / U$ is normal. Let $\pi:X'\to X$ be
the normalisation of $X$. We define a $G$-action on $X'$. The morphism $G\times X\to X$ induces a morphism $G\times X'\to
G\times X\to X$ and since $G\times X'$ is normal it factors through $X'$
so we have a commutative diagram:
$$\xymatrix{G\times X'\ar[r]\ar[d] & X' \ar[d] \\
G\times X \ar[r] & X.\\}$$
Because this is an action on an open subset (where $\pi$ is an
isomorphism) and the varieties are normal, this is an action.
Thus we also have
a quotient $X'/\! /U$ and a commutative diagram
$$\xymatrix{X'\ar[r]^{\pi}\ar[d] & X\ar[d]\\
X'/\! /U\ar[r]^{\bar \pi} & X/\! /U\\}$$
with $X'/\! /U$ and $X/\! /U$ normal varieties with $\kk(X')^U=\kk(X)^U$
so $\kk[X']^U$ and $\kk[X]^U$ have the same field of fractions. 
The algebra $\kk[X']$ is
the integral closure of $\kk[X]$ in $\kk(X)$ thus ideal 
$$I=\{f\in \kk[X]\ /\ f \kk[X']\subset \kk[X]\}.$$
is non trivial. It is stable under the action of $G$ and of $U$ so it
contains a $U$-invariant non trivial element $f\in \kk[X]^U$. We get 
$f \kk[X']^U\subset \kk[X]^U$. The
subspace $f \kk[X']^U$ is thus an ideal of $\kk[X]^U$ and thus a finite
$\kk[X]^U$-module. Therefore $\kk[X']^U$ is also a finite $\kk[X]^U$-module
but since $X/\! /U$ is normal we get $\kk[X']^U=\kk[X]^U$. Finally since
the $U$-invariants determine the module (note that we use here the assumption $\chara(\kk) = 0$ via representation theory) we get $\kk[X]=\kk[X']$.
\end{proof}

Recall that a toric variety is a spherical $T$-variety where $T$ is a torus.

\begin{cor}
  Let $X$ be an irreducible affine $G$-variety. Then $X$ is spherical if and only if $X/\! /U$ is a toric variety.
\end{cor}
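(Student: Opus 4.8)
The plan is to translate the sphericity of $X$ and the torality of $X/\!/U$ into matching statements about normality and about an invariant subfield, and then to combine the two results already at our disposal, namely Proposition \ref{prop-k(X)} and Theorem \ref{theo-char}. Throughout I would set $Y = X/\!/U = \Spec \kk[X]^U$. By the preceding Fact the algebra $\kk[X]^U$ is finitely generated, and it is a domain as a subring of $\kk[X]$, so $Y$ is an irreducible affine variety. Since $T \subset B$ normalises $U = R_u(B)$, the torus $T$ preserves $\kk[X]^U$, hence acts on $Y$; thus $Y$ is from the outset a $T$-variety, and checking that it is toric amounts to checking that it is a normal $T$-variety with $c_T(Y)=0$.

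The first step is the field-theoretic dictionary. In the proof of Proposition \ref{prop-k(X)} it is recorded that the field of fractions of $\kk[X]^U$ equals $\kk(X)^U$, so $\kk(Y) = \kk(X)^U$, and the $T$-action on $\kk(Y)$ is the restriction of the $T$-action on $\kk(X)$ to the ($T$-stable) subfield $\kk(X)^U$. Taking $T$-invariants I would then obtain
$$\kk(Y)^T = \left(\kk(X)^U\right)^T = \kk(X)^U \cap \kk(X)^T = \kk(X)^B,$$
the last equality because $B = TU$, so that $B$-invariance coincides with simultaneous $T$- and $U$-invariance. The second step is to transfer normality: by Proposition \ref{prop-k(X)}, $X$ is normal if and only if $Y = X/\!/U$ is normal.

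Finally I would assemble the equivalence. By Theorem \ref{theo-char}, the $G$-variety $X$ is spherical if and only if it is normal and $\kk(X)^B = \kk$. On the other side, a toric variety is by definition a spherical $T$-variety, i.e. a normal $T$-variety of $T$-complexity zero; since $c_T(Y) = \operatorname{Trdeg}(\kk(Y)^T)$ and $\kk$ is algebraically closed, this is exactly the condition that $Y$ be normal with $\kk(Y)^T = \kk$. Combining the identification $\kk(Y)^T = \kk(X)^B$ with the normality transfer, the conditions ``$X$ spherical'' and ``$Y$ toric'' become literally the same pair of requirements, which proves both implications at once.

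I expect the only genuinely delicate point to be the field step: one must know that the field of fractions of $\kk[X]^U$ is all of $\kk(X)^U$ (not merely contained in it) and that the induced $T$-action is compatible, so that $(\kk(X)^U)^T$ really recovers $\kk(X)^B$. Both rest on the $\chara(\kk)=0$ representation theory of the unipotent group $U$ already invoked in Proposition \ref{prop-k(X)}; once these are granted, everything else is a formal matching of Theorem \ref{theo-char} (applied to $G$ and to the torus $T$) with Proposition \ref{prop-k(X)}.
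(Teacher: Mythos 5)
Your argument is correct. The paper's own proof is much shorter and purely geometric: after transferring normality via Proposition \ref{prop-k(X)}, it simply remarks that the image under $\pi : X \to X/\! /U$ of the dense $B$-orbit is a dense $T$-orbit, and conversely that the inverse image of the dense $T$-orbit contains a dense $B$-orbit. You instead pass through invariant function fields: $\kk(X/\! /U) = \operatorname{Frac}(\kk[X]^U) = \kk(X)^U$, hence $\kk(X/\! /U)^T = \kk(X)^B$ since $B = TU$, and then you match the two characterisations ``normal with trivial invariant field of the Borel'' coming from Theorem \ref{theo-char} applied to $(X,G)$ and to $(X/\! /U, T)$ (for a torus the Borel subgroup is the torus itself, so the paper's definition of toric as a spherical $T$-variety is exactly what you need). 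The two routes are two faces of the same fact, namely that complexity equals the transcendence degree of the invariant field; yours has the merit of making explicit the point the paper's converse direction leaves implicit --- that $\operatorname{Frac}(\kk[X]^U) = \kk(X)^U$, recorded inside the proof of Proposition \ref{prop-k(X)}, is precisely what guarantees that a dense $T$-orbit downstairs corresponds to a dense $B$-orbit upstairs --- at the modest cost of invoking the proposition $c_\Gamma(X) = \operatorname{Trdeg}(\kk(X)^\Gamma)$. I see no gaps.
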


\begin{proof}
If $X$ is normal so is $X/\! /U$ and the image of the dense $B$-orbit is a dense $T$-orbit.
Conversely if $X/\! /U$ is toric, then $X$ is normal and the inverse image of the dense $T$-orbit contains a dense $B$-orbit.
\end{proof}

Recall the following result on toric varieties.

\begin{lemma}
Let $Y$ be a toric $T$-variety. Then any irreducible $T$-stable subvariety is normal. 
\end{lemma}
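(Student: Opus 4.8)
The plan is to reduce to the affine case and to describe the irreducible $T$-stable subvarieties combinatorially via the defining fan. Write $N = \Hom(\G_m,T)$ for the cocharacter lattice and $M = \cX(T)$ for the character lattice, so that $Y$ is given by a fan $\Sigma$ in $N_\R = N \otimes_\Z \R$ and is covered by the affine toric open subsets $U_\sigma = \Spec \kk[\sigma^\vee \cap M]$ for $\sigma \in \Sigma$ (see \cite{fulton-toric}). Since normality is a local property, it suffices to fix an irreducible $T$-stable subvariety $Z$, choose a chart $U_\sigma$ meeting it, and prove that the nonempty irreducible $T$-stable subvariety $Z \cap U_\sigma$ of $U_\sigma$ is normal.

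On the affine chart $U_\sigma = \Spec \kk[S_\sigma]$ with $S_\sigma = \sigma^\vee \cap M$, I would invoke the orbit--cone correspondence: the irreducible $T$-stable closed subvarieties of $U_\sigma$ are exactly the orbit closures $V(\tau)$ attached to the faces $\tau \preceq \sigma$. The ideal of $V(\tau)$ in $\kk[S_\sigma]$ is the $\kk$-span of the characters $\chi^m$ with $m \in S_\sigma \setminus \tau^\perp$, so the coordinate ring of $V(\tau)$ is the semigroup algebra $\kk[S_\sigma \cap \tau^\perp]$, where $S_\sigma \cap \tau^\perp = (\sigma^\vee \cap \tau^\perp) \cap M$.

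It then remains to identify $V(\tau)$ as itself an affine toric variety, hence normal. The cone $\sigma^\vee \cap \tau^\perp$ is a rational polyhedral cone contained in the subspace $\tau^\perp \subset M_\R$; in fact it is the dual cone of the image of $\sigma$ in the quotient $N_\R / \Span_\R(\tau)$, taken with respect to the pairing between $M \cap \tau^\perp$ and $N/(N \cap \Span_\R(\tau))$. Consequently $S_\sigma \cap \tau^\perp$ is a \emph{saturated} affine semigroup in the lattice $M \cap \tau^\perp$: if $km \in S_\sigma \cap \tau^\perp$ for some integer $k > 0$ and some $m \in M \cap \tau^\perp$, then $m \in \sigma^\vee$ because the dual cone $\sigma^\vee$ is saturated, and $m \in \tau^\perp$ by assumption, so $m \in S_\sigma \cap \tau^\perp$. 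Saturation of this semigroup is precisely the condition that $V(\tau) = \Spec \kk[S_\sigma \cap \tau^\perp]$ is a normal affine toric variety for the quotient torus $T/T_\tau$ \cite{fulton-toric}. Since each $Z \cap U_\sigma$ is normal and normality is local, $Z$ is normal.

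The main content is the saturation step of the third paragraph: the essential point is that intersecting the saturated semigroup $\sigma^\vee \cap M$ with the linear subspace $\tau^\perp$ preserves saturation, equivalently that orbit closures of a toric variety are again toric. Everything else is formal, namely the locality of normality and the orbit--cone dictionary.
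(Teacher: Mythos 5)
The paper states this lemma without proof, presenting it as a recalled standard fact from toric geometry, so there is no argument of its own to compare against; your proof is the standard one and is correct. The only points worth making explicit are that $\sigma^\vee\cap\tau^\perp$ is a face of $\sigma^\vee$ (so that the span of the $\chi^m$ with $m\in S_\sigma\setminus\tau^\perp$ really is an ideal) and that $\sigma^\vee\cap\tau^\perp$ is full-dimensional in $(M\cap\tau^\perp)_\R$ because the image of $\sigma$ in $N_\R/\Span(\tau)$ is strongly convex, so that $S_\sigma\cap\tau^\perp$ generates the lattice $M\cap\tau^\perp$ and the saturation criterion applies in the correct lattice --- both are part of the orbit--cone dictionary you cite.
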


\begin{cor}
  Let $X$ be a spherical $G$-variety, then any closed $G$-stable
  subvariety $X'$ is again a spherical $G$-variety.
\end{cor}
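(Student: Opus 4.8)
The statement to prove is that the closed $G$-stable subvariety $X'$ of the spherical variety $X$ is both normal and of vanishing $B$-complexity. Of these two requirements only normality is delicate: since $X$ has finitely many $B$-orbits (Theorem \ref{theo-char}) and $X'$ is $B$-stable, $X'$ is a union of finitely many $B$-orbits, hence carries a dense $B$-orbit and $c_B(X') = 0$. By Theorem \ref{theo-char} it therefore suffices to prove that $X'$ is normal, and my plan is to reduce this question to the affine toric situation by means of the local structure theorem.

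First I would check normality locally. Let $Y$ be a closed $G$-orbit contained in $X'$. As $Y$ ranges over such orbits the charts $X_{Y,G} = GX_{Y,B}$ cover $X'$, because the closure of any orbit of a point of $X'$ lies in $X'$ and contains a closed orbit. Since $X_{Y,B}$ is affine and open in $X$, the intersection $X' \cap X_{Y,B}$ is open in $X'$, hence irreducible, and as normality is $G$-invariant it is enough to prove that each $X' \cap X_{Y,B}$ is normal. By Theorem \ref{thm-stru} there is a $P$-isomorphism $R_u(P)\times S\to X_{Y,B}$ with $S$ an affine $L$-spherical variety. Being $G$-stable, $X'$ is in particular $R_u(P)$-stable, so its preimage is of the form $R_u(P)\times S'$ with $S' = X'\cap S$ a closed, $L$-stable, irreducible subvariety of $S$. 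Since $R_u(P)$ is an affine space, $X'\cap X_{Y,B}$ is normal as soon as $S'$ is, and the whole problem reduces to the normality of the closed $L$-stable subvariety $S'$ of the affine $L$-spherical variety $S$.

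This is the heart of the argument, which I would settle through the $U$-quotient. Let $U_L$ be a maximal unipotent subgroup of $L$ and $T$ the maximal torus acting on $S /\! /U_L$. By the corollary characterising affine spherical varieties, $S /\! /U_L$ is a toric variety. Because $S' \hookrightarrow S$ is an $L$-equivariant closed immersion and we work in characteristic $0$, complete reducibility makes the restriction $\kk[S]^{U_L} \to \kk[S']^{U_L}$ surjective: the kernel of $\kk[S]\to\kk[S']$ is an $L$-submodule that splits off, and taking $U_L$-invariants is then exact. Consequently $S'/\! /U_L \to S /\! /U_L$ is a closed immersion whose image is $T$-stable, so $S' /\! /U_L$ is an irreducible $T$-stable subvariety of the toric variety $S /\! /U_L$, hence normal by the lemma on toric varieties above. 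Proposition \ref{prop-k(X)} then yields that $S'$ itself is normal. I expect this surjectivity of $U_L$-invariants to be the main obstacle, i.e. the fact that the $U$-quotient of the subvariety sits as a \emph{closed} $T$-stable subvariety; it is precisely here that the characteristic $0$ hypothesis is used.

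Combining the local statements, each $X' \cap X_{Y,B}$ is normal, hence so is $X' \cap X_{Y,G} = G\cdot(X'\cap X_{Y,B})$, and since these cover $X'$ the variety $X'$ is normal. Together with $c_B(X') = 0$ this shows, via Theorem \ref{theo-char}, that $X'$ is a spherical $G$-variety.
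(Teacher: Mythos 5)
Your proposal is correct and follows essentially the same route as the paper: reduce to the affine case via the local structure theorem, pass to the $U$-quotient, invoke the normality of irreducible $T$-stable subvarieties of toric varieties, and conclude with Proposition \ref{prop-k(X)}. You simply spell out in more detail the steps the paper leaves implicit (the covering by charts, the splitting $R_u(P)\times S'$, and the surjectivity of the restriction on $U$-invariants in characteristic $0$), all of which are accurate.
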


\begin{proof}
  We only have to prove that $X'$ is normal. By the local structure Theorem,
  we may assume $X$ affine. Then $X/\! /U$ is a normal affine toric
  variety and $X'/\! /U$ is a closed normal toric subvariety by the
  above lemma. The result follows.
\end{proof}

\begin{remark}
  One can also prove that a spherical variety has Cohen-Macaulay and even rational singularities (see \cite[Corollary 2.3.4]{survey}).
\end{remark}

\begin{example}
  In positive characteristic, the above result is not true anymore. Let $\chara(\kk) = p > 0$, let $G =\SL_2(\kk) \times \G_m^2$  and let $X = \kk^4$. Write $(x_1,x_2,y_1,y_2)$ for the coordinates in $X$ and write $x = (x_1,x_2)$ resp. $y = (y_1,y_2)$. Define a $\SL_2(\kk)$-action via the usual action $(g,x) \mapsto g\cdot x$ and $(g,y) \mapsto (F(g)^T)^{-1} \cdot y$ where $F$ denote the Frobenius map and $F(g)$ is the matrix obtained by applying $f$ to each coefficient. Define the $\G_m$ action as $(u,v) \cdot (x,y) =  (ux,vy)$ for $(u,v) \in \G_m^2$  and $(x,y) \in X$. Then $X$ is $G$-spherical. 

Now set $X' = V(x_1^py_1 + x_2^py_2)$. This is clearly $G$-stable and closed in $X$ but its singular locus contains $Z = V(x_1,x_2)$ which has codimension $1$ therefore $X'$ is not normal along $Z$ and is not $G$-spherical.
\end{example}

\subsection{Structure Theorem for toroidal varieties}

Recall that a spherical variety is {\bf toroidal} if $\odelta(X) = \Delta(X)$. For $X$ toroidal, we set
$$\Delta_X = \cup_{D \in \Delta(X)}D.$$
Note that if $\mathring{X}$ is the dense $B$-orbit, we have
$$\Delta_X = \overline{X \setminus \mathring{X}}.$$
Let $P_X$ be the stabiliser of $\mathring{X}$. It is also the stabiliser of $\Delta_X$ and $B \subset P_X$.

\begin{thm}
\label{str-toro}
  Let $X$ be spherical. The following conditions are
equivalent.

1. The variety $X$ is toroidal.

2. There exists a Levi subgroup $L$ of $P_X$ only depending on the open $G$-orbit of $X$ and a closed subvariety $Z$ of $X\setminus \Delta_X$
stable under $L$ such that the map
$$R_u(P_X)\times Z\to X\setminus\Delta_X$$
is a $P_X$-isomorphism. The group $(L,L)$ acts trivially on $Z$ which is a toric variety for a quotient of $L/(L,L)$. Any
$G$-orbit meets $Z$ along a unique $L$-orbit.
\end{thm}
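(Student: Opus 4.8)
The plan is to treat the two implications separately; the implication $(2)\Rightarrow(1)$ is immediate and the content lies in $(1)\Rightarrow(2)$. For $(2)\Rightarrow(1)$, suppose the decomposition holds. Since every $G$-orbit meets $Z\subset X\setminus\Delta_X$, no $G$-orbit can be contained in $\Delta_X=\bigcup_{D\in\Delta(X)}D$; in particular no color $D\in\Delta(X)$ contains a $G$-orbit, so $\Delta_Y(X)=\emptyset$ for every $G$-orbit $Y$ and hence $\odelta(X)=\Delta(X)$, i.e. $X$ is toroidal.

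For $(1)\Rightarrow(2)$ I would set $X_0=X\setminus\Delta_X$ and $P=P_X$, which stabilizes $\Delta_X$ and hence $X_0$; fix a Levi subgroup $L\supset T$ of $P$, depending only on $\mathring{X}$ through $P_X$. First I would handle the \emph{simple} case, where $X$ has a single closed $G$-orbit $Y$. Here the toroidal hypothesis gives that no color contains $Y$, while every $G$-stable prime divisor of a simple variety contains its closed orbit; thus the $B$-stable prime divisors not containing $Y$ are precisely the colors, and therefore $X_0=X\setminus\Delta_X$ equals the minimal $B$-chart $X_{Y,B}$. Theorem~\ref{thm-stru} then provides a closed, $L$-stable, affine $L$-spherical subvariety $S\subset X_{Y,B}$ with $R_u(P)\times S\to X_0$ an isomorphism. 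Since $S\subset X_0$ meets no color of $X$, every $(B\cap L)$-stable prime divisor of $S$ arises under the product decomposition from a $G$-stable — hence $L$-stable — prime divisor of $X$; so $S$ has no color as an $L$-spherical variety, which forces $(L,L)$ to act trivially and $S$ to be toric under a quotient of $L/(L,L)$. Taking $Z=S$, the orbit-intersection statement for $Y$ is part~2 of Theorem~\ref{thm-stru}, and for a general orbit it follows from the bijection between the $G$-orbits of the simple variety $X_{Y,G}$ and the $L$-orbits of $S$.

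In general I would cover $X_0$ by these simple charts. Writing $Y_1,\dots,Y_r$ for the closed $G$-orbits, one checks $X_0=\bigcup_j X_{Y_j,B}$: any $x\in X_0$ lies in the chart of any closed orbit $Y_j\subset\overline{Gx}$, since a $G$-stable divisor through $x$ would contain $\overline{Gx}$ and hence $Y_j$, while colors avoid $x$ as $x\in X_0$. On each piece the simple case yields $R_u(P)\times S_j\simeq X_{Y_j,B}$ with $S_j$ toric. The crux — and the step I expect to be the main obstacle — is the \emph{uniformity and gluing}: one must check that the parabolic attached to each chart is the same group $P=P_X$, so that a single Levi $L$ works for all $j$, and that the slices $S_j$ agree on overlaps and assemble into one closed $L$-stable toric subvariety $Z\subset X_0$ with $R_u(P)\times Z\simeq X_0$. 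The equality of parabolics comes from the chart-parabolic of $X_{Y_j,B}$ being the stabilizer of $X\setminus X_{Y_j,B}$, which consists of the colors together with $G$-stable divisors; as $G$ fixes each $G$-stable divisor, this stabilizer reduces to the stabilizer of the set of colors, namely $P_X$. Granting the gluing, $Z$ is toric under a quotient of $L/(L,L)$, and each $G$-orbit $Y'$ meets the $P$-stable set $Y'\cap X_0=R_u(P)\times(Y'\cap Z)$ in a slice $Y'\cap Z$ that is a single $L$-orbit, which is exactly statement~$(2)$.
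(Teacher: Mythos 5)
Your reduction of $(2)\Rightarrow(1)$ and your identification $X\setminus\Delta_X=\bigcup_j X_{Y_j,B}$ are fine, and your argument that the absence of colours in the slice forces $(L,L)$ to act trivially is essentially the paper's use of Lemma \ref{lemm-derive}. But there are two genuine gaps. First, the step you defer with ``granting the gluing'' is not a routine verification: the slices $S_j$ produced by Theorem \ref{thm-stru} are not canonical (they depend on the choice of the module $V$ and of the $L$-stable complement $E$, and can for instance be translated by elements of $R_u(P_X)$), so there is no reason the $S_j$ should agree on the overlaps of the charts $X_{Y_j,B}$. The paper sidesteps this entirely: it first checks that the single divisor $\Delta_X$ is Cartier --- a local statement, verified on the charts $X_{Y,G}$ via Lemma \ref{lemm-cartier} --- and then runs the construction of Theorem \ref{thm-stru} \emph{once}, with $\Delta_X$ in place of the full boundary $\mathcal{D}$, producing one slice $Z$ for all of $X\setminus\Delta_X$; no gluing is required.

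Second, the assertion that every $G$-orbit meets $Z$ along a \emph{unique} $L$-orbit is not ``part 2 of Theorem \ref{thm-stru}'' for non-closed orbits, and the ``bijection between the $G$-orbits of $X_{Y,G}$ and the $L$-orbits of $S$'' that you invoke is precisely what has to be proved. A priori one only knows that $R_u(P_X)(Z\cap Y)$ is dense in $Y$ and that $Z\cap Y$ contains the closure of some $L$-orbit $Z'$; one must exclude extra components or a strictly larger intersection. The paper does this by writing $Z'$ as an intersection of toric divisors $D_1,\dots,D_r$ of $Z$, showing that each $X_i=\overline{R_u(P_X)D_i}$ is a $G$-stable divisor of $X$ (because $D_i$ misses the dense $L$-orbit of $Z$), and then identifying $\overline{R_u(P_X)Z'}$ as an irreducible component of $\bigcap_i X_i$, hence as $\overline{Y}$, which yields $Y\cap Z=Z'$. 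Some argument of this kind is needed; without it the last clause of statement 2 is unproved.
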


\begin{proof}
  Assume that $X$ is toroidal. First remark that $\Delta_X$ is Cartier. Indeed this is a local condition and can be checked on the simple spherical varieties $X_{Y,G}$ for $Y$ any $G$-orbit. Then the restriction of $\Delta_X$ to $X_{Y,G}$ is Cartier by Lemma \ref{lemm-cartier}.

  Now proceed as in the proof of Theorem \ref{thm-stru} replacing $\D$ with $\Delta_X$, to obtain a variety $Z$ (called $S$ in the proof of Theorem \ref{thm-stru}).
  
The intersection of the dense $G$-orbit in $X$ with $X \setminus \Delta_X$ is the dense $B$-orbit $\mathring{X}$. The intersection of the dense $G$-orbit in $X$ with $Z$ is thus $\mathring{X} \cap Z$. It is a $B\cap L$-orbit and also a
$L$-orbit. By Lemma \ref{lemm-derive} we get that $(L,L)$ acts
trivially on $Z$ which has to be a toric variety under the action of a
quotient of $L/(L,L)$. 

Let $Y$ be a $G$-orbit in $X$. Then $Y$ is not contained in $\Delta_X$
therefore $Y\cap (X \setminus \Delta_X)$ is dense in $Y$ and 
$R_u(P_X)(Z\cap Y)$ is also dense in $Y$. But $Z\cap Y$ is the closure
of an $L$-orbit. The variety $Z$ is toric for some torus $T_Z$. Let
$Z'$ be the above orbit. The structure Theorem of spherical varieties applied
to toric varieties gives a $T_{Z'}$-variety $S_{Z'}$ and an isomorphism
$$T\times^{T_{Z'}}S_{Z'}\to Z.$$
The orbit $Z'$ therefore corresponds to a $T_{Z'}$-fixed point $s$ in
$S_{Z'}$. Consider the cone $\cC_{s}(S_{Z'})$ associated to $s$ (see for example \cite[Section 6]{jacopo}) 
and choose a basis $(\rho(\nu_D))$ (over $\Q$ of this cone) given by
$L_{Z'}$-stable divisors $D$. The affine chart gives that $s$ is the
intersection of these divisors therefore $Z'$ is the intersection of divisors
$D_1,\cdots D_r$ of $Z$ with $r={\rm codim}_{Z}(Z')$. Then
each divisor $X_i=\overline{R_u(P_X)D_i}$ is irreducible $B$-stable and
does not meet the dense $G$-orbit (this is true since $D_i$ does not meets the dense
$L$-orbit of $Z$). This implies that $X_i$ is $G$-stable. Now consider
$X'=\overline{R_u(P_X)Z'}$. It is a subvariety of codimension $r$ in
$X$ which is contained in the intersection of the $X_i$. Since
$\Delta_X$ contains no closed $G$-orbit the intersection of the $X_i$
has a dense open subset given by $(\bigcap_iX_i)\cap
(X\setminus\Delta_X)$. The variety $X'$ has to be an irreducible
components of the intersection of the $X_i$ and is thus $G$-stable. We
get $X'=\overline{Y}$ and $Y\cap (X\setminus\Delta_X)=R_u(P_X)Z'$ thus
$Y\cap Z=Z'$ proving the result.

Conversely, any $G$-orbit of $X$ meets $Z$ and thus is not contained
in $\Delta_X$ and therefore is not contained in any $B$-stable but not
$G$-stable divisor.
\end{proof}

\begin{remark}
  Note that $G(X\setminus\Delta_X)=X$.
\end{remark}

\begin{cor}
  \label{cor-sub-toro}
The irreducible $G$-stable subvarieties of a smooth toroidal variety are smooth, toroidal and transverse intersections of $G$-stable divisors.
\end{cor}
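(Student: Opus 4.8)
The plan is to reduce everything to the toric model furnished by the structure Theorem \ref{str-toro}. Set $X^\circ = X\setminus\Delta_X$. By Theorem \ref{str-toro} the multiplication map is a $P_X$-equivariant isomorphism $R_u(P_X)\times Z\to X^\circ$, where $Z$ is a toric variety under a torus $T_Z$ (a quotient of $L/(L,L)$) and every $G$-orbit meets $Z$ in a single $T_Z$-orbit. Since $X$ is smooth, so is $X^\circ$, and as $R_u(P_X)$ is an affine space this forces $Z$ to be a \emph{smooth} toric variety. First I would observe, using the Remark $G(X\setminus\Delta_X)=X$ after Theorem \ref{str-toro}, that $X^\circ$ meets every $G$-orbit of $X$; consequently, for any irreducible $G$-stable subvariety $X'$ we have $X'=G(X'\cap X^\circ)$ with $X'\cap X^\circ$ dense in $X'$.

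Next, let $X'=\overline{Y}$ with $Y$ its open $G$-orbit. Following the proof of Theorem \ref{str-toro}, $Z'=Y\cap Z$ is a $T_Z$-orbit, and taking closures in the product the isomorphism restricts to $X'\cap X^\circ\cong R_u(P_X)\times\overline{Z'}$, where $\overline{Z'}$ is the corresponding $T_Z$-orbit closure in $Z$. Here I would invoke the classical description of orbit closures in a smooth toric variety: $\overline{Z'}$ is itself a smooth toric variety and equals the transverse intersection $D_1\cap\cdots\cap D_r$ of the $T_Z$-invariant prime divisors $D_1,\dots,D_r$ of $Z$ that contain it, with $r=\codim_Z\overline{Z'}$ (smoothness means the rays of the cone of $\overline{Z'}$ extend to a basis). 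Putting $X_i=\overline{R_u(P_X)D_i}$, the proof of Theorem \ref{str-toro} already shows each $X_i$ is an irreducible $G$-stable prime divisor; the product structure then gives, over $X^\circ$, that $X'\cap X^\circ=X_1\cap\cdots\cap X_r$ is a transverse intersection and that $X'\cap X^\circ$ is smooth of codimension $r$.

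It remains to propagate these facts from the chart $X^\circ$ to all of $X'$. Since smoothness and transversality are local, $G$-equivariant conditions and $X'=G(X'\cap X^\circ)$, translating the smooth transverse model by elements of $G$ covers $X'$ by smooth opens on which $X'$ is the transverse intersection of the $G$-stable divisors $X_i$; hence $X'$ is smooth and $X'=X_1\cap\cdots\cap X_r$ transversally (each $X_i$ being itself smooth by the codimension-one case of the statement). For toroidality I would show that no element of $\Delta(X')$ contains a $G$-orbit, i.e. $\odelta(X')=\Delta(X')$: if a $B$-stable prime divisor $D'$ of $X'$ contained a $G$-orbit $Y''$, then $Y''\cap X^\circ\neq\emptyset$ forces $D'\cap X^\circ\neq\emptyset$, so under the isomorphism $D'$ is $R_u(P_X)$ times a $T_Z$-invariant divisor of the toric variety $\overline{Z'}$; such a divisor avoids the open $T_Z$-orbit, so $D'$ avoids the open $G$-orbit $Y$ of $X'$ and is therefore $G$-stable, not a color. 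Hence every color of $X'$ contains no $G$-orbit and $X'$ is toroidal.

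The main obstacle is the passage from the merely $P_X$-stable chart $X^\circ$ to the whole $G$-stable variety $X'$: one must ensure that $X^\circ$ meets every $G$-orbit of $X'$, which is exactly where the Remark $G(X\setminus\Delta_X)=X$ (a consequence of the toroidality of $X$) is used, so that the $G$-translates of the local model genuinely cover $X'$ and the smooth, transverse picture glues. The remaining ingredient, that orbit closures in a smooth toric variety are transverse intersections of invariant divisors, is standard.
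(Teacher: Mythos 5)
Your proof is correct and follows essentially the same route as the paper: both reduce to the smooth toric slice $Z$ of Theorem \ref{str-toro}, invoke the standard fact that orbit closures in a smooth toric variety are transverse intersections of invariant divisors, and propagate back to $X'$ via $X' = G\cdot(X' \cap (X\setminus\Delta_X))$. You give more detail than the paper on the toroidality of $X'$ and on identifying the $G$-stable divisors $X_i$ cutting it out, but the underlying argument is the same.
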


\begin{proof}
We need to consider the closure $Y$ of a $G$-orbit $G \cdot y$. The above Structure Theorem for toroidal varieties gives an isomorphism $X \setminus \Delta_X \simeq R_u(P_X) \times Z$ and any $G$-orbit meets $Z$ along a unique $L$-orbit. Furthermore, $Z$ is toric and smooth therefore $Y$ meets $Z$ along a smooth toric subvariety $Y'$ (see Lemma \ref{lem-toric} for a smoothness criterion for toric varieties). Then $R_u(P_X) \times Y'$ is an open subset of $Y$ with $G \cdot Y' = Y$ thus $Y$ is smooth and toroidal.

For the last assertion, $Y'$ being a toric subvariety of the smooth toric variety $Z$, it is a complete intersection of toric divisors and we get the result.
\end{proof}

\section{Divisors and Picard group}
\label{sec-pic}

In this section we describe, following results of Brion \cite{brion-nombre-car}, the Picard group and the group of Weil
divisors of a spherical variety. We identify globally generated and
ample divisors and give a description of the canonical divisor.
Recall that we assume $\chara(\kk)=0$.


\subsection{Simple spherical varieties}

Let us start with the following general result.

\begin{lemma}
Let $G$ be an affine normal $G$-variety
  containing a unique closed $G$-orbit $Y$. Then the restriction map
  $\pic(X)\to \pic(Y)$ is injective.
\end{lemma}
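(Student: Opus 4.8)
Since the restriction map is a group homomorphism, the plan is to show that its kernel is trivial: if $L \in \pic(X)$ satisfies $L|_Y \cong \cO_Y$, then $L \cong \cO_X$. As $X$ is affine, this amounts to producing a nowhere-vanishing global section of $L$. The whole argument will hinge on the elementary fact that, because $Y$ is the \emph{unique} closed $G$-orbit, every nonempty $G$-stable closed subset of $X$ contains $Y$: indeed such a subset contains an orbit of minimal dimension, which is then closed in $X$ and hence equals $Y$.

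First I would make $L$ equivariant. Exactly as in the proof of Lemma \ref{lemm-cartier}, after replacing $G$ by a suitable finite cover (this changes neither $X$ nor its orbit structure, so $Y$ stays the unique closed orbit and $G$ stays connected reductive) we may assume $L$ carries a $G$-linearization. Then $L|_Y$ is a $G$-linearized line bundle on the homogeneous space $Y$, so it corresponds to a character of an isotropy subgroup $G_y$; since $L|_Y$ is trivial as a line bundle, a nowhere-vanishing equivariant section is a unit of $\kk[G]$, hence a scalar times a character $\psi$ of $G$, and this character restricts to the given one on $G_y$. Twisting the linearization by $\psi^{-1}$ — which does not change the underlying bundle $L$ — I may assume $L|_Y \cong \cO_Y$ \emph{$G$-equivariantly}, so that the constant section $1$ is a nowhere-vanishing $G$-invariant element of $H^0(Y, L|_Y)$.

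Next I would lift this section to $X$. From the ideal-sheaf sequence $0 \to \cI_Y \otimes L \to L \to i_*(L|_Y) \to 0$ together with the vanishing $H^1(X, \cI_Y \otimes L) = 0$ (valid because $X$ is affine), the $G$-equivariant restriction map $H^0(X,L) \to H^0(Y, L|_Y)$ is surjective. Since $\chara(\kk) = 0$ and $G$ is reductive, the functor of $G$-invariants is exact, so $H^0(X,L)^G \to H^0(Y,L|_Y)^G$ is still surjective; hence I can choose $s \in H^0(X,L)^G$ with $s|_Y = 1$. Its zero locus $\div(s)$ is $G$-stable and closed, and does not meet $Y$ because $s|_Y = 1 \neq 0$. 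By the fact recorded above, $\div(s)$ must therefore be empty, so $s$ is nowhere vanishing and $L \cong \cO_X$.

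The main obstacle is purely equivariant bookkeeping: the conclusion needs an honest $G$-invariant (or at least semi-invariant) section, so that its zero divisor is $G$-stable and the uniqueness of the closed orbit can be applied. The two delicate points are thus the linearization step and the character twist guaranteeing that $L|_Y$ is \emph{equivariantly}, not merely abstractly, trivial; once a $G$-invariant section restricting to $1$ is produced, the rest is immediate. Note that sphericity of $X$ is never used.
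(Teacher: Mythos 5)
Your argument is correct and follows essentially the same route as the paper: linearize $L$ after passing to a finite cover, observe that a nowhere-vanishing section of $L|_Y$ is a $G$-semi-invariant, lift it (using surjectivity of $H^0(X,L)\to H^0(Y,L|_Y)$, which holds since $X$ is affine, together with reductivity in characteristic zero), and conclude that its zero locus, being closed and $G$-stable but disjoint from the unique closed orbit $Y$, is empty. The only cosmetic differences are that you twist the linearization by the character $\psi^{-1}$ so as to work with an honest invariant section rather than a semi-invariant one, and that you spell out the cohomological reason for the surjectivity of the restriction map, which the paper leaves implicit.
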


\begin{proof}
  Replacing $G$ by a finite cover we may assume that any line bundle
  $L$ on $X$ is $G$-linearised. If furthermore $L$ has a trivial
  restriction to $Y$, then there is a nowhere vanishing element
  $s\in H^0(Y,L\vert_Y)$. Since $H^0(Y,L\vert_Y)$ is a
  $G$-representation, this element has to be a $G$-eigenfunction (the
  composition $G\to Y\to \mathbb{A}^1$ defined by $g\mapsto s(g\cdot
  y)$ with $y\in Y$ is nowhere vanishing thus has to be a multiple of
  a character). But we have a surjective map $H^0(X,L)\to H^0(Y,L)$
  therefore we can lift $s$ to a section $s'\in H^0(X,L)^{(G)}$. The
  locus where $s'$ vanishes is then closed and $G$-stable therefore
  either empty or containing $Y$. The last case is impossible thus
  $s'$ is nowhere vanishing and $L$ is trivial
\end{proof}

\begin{remark}
  If $\chara(\kk)=p>0$, we cannot lift $s$ to a section $s'\in
  H^0(X,L)$ but only a power (in fact a $p^n$-th power) of $s$
  \emph{i.e.} there exists $s'\in H^0(X,L^{\otimes p^n})$ such that
  $s'\vert_Y=s^{p^n}$. We obtain that the kernel of the map
  $\pic(X)\to\pic(Y)$ is $p$-divisible.
\end{remark}

Let $X$ be a simple spherical $G$-variety with closed orbit $Y$.
Let $(\cC,\Delta)$ be the colored cone of $X$ (see \cite[Section 6]{jacopo}).

\begin{thm}
\label{thm-simple}
With the above notation, we have:

1. Any Cartier divisor on $X$ is linearly equivalent to 
$$\sum_{D\in \odelta(X)}n_DD \textrm{ with } n_D \in \Z.$$

2. The map $\cC^\perp \to \Z \odelta(X)$,
$\chi \mapsto \sum_{D \in \D_B \setminus 
  \D_X}\scal{\rho(\nu_D),\chi}D$
induces an exact sequence:
$$\cC^\perp\to \Z \odelta(X) \to \pic(X)\to 0.$$

3. A Cartier divisor is globally generated (resp. ample) if and
only if it is a linear combination $\sum_{D\in \odelta(X)}n_DD$ with $n_D\geq0$ (resp. $n_D>0$) for all $D$.
\end{thm}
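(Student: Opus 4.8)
The plan is to reduce everything to the behaviour of $B$-stable divisors on the minimal chart $X_{Y,B}$, exploiting that a simple spherical variety is controlled by its unique closed orbit; so I may assume $X=X_{Y,G}$ is simple. The guiding observations are three. First, the open $B$-orbit $\mathring X$ is homogeneous under the connected solvable group $B$, hence has trivial class group, and its complement is $\bigcup_{D\in\D(X)}D$; therefore every divisor class has a $B$-stable representative $\sum_{D\in\D(X)}n_DD$. Second, if $\div(f)$ is $B$-stable then $b\cdot f/f$ has trivial divisor for $b\in B$, hence is constant, so $f$ is a $B$-eigenfunction $f_\chi$ (here $\kk(X)^B=\kk$), and $\div(f_\chi)=\sum_D\scal{\rho(\nu_D),\chi}D$ by definition of $\rho$. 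Third, since every orbit closure contains the unique closed orbit $Y$, all $G$-stable divisors lie in $\D_Y(X)$, so $\D(X)\setminus\D_Y(X)=\odelta(X)$; under the colored–cone dictionary this is exactly $\D_B\setminus\D_X$, and $\cC^\perp=\{\chi:\scal{\rho(\nu_D),\chi}=0\ \forall D\in\D_X=\D_Y(X)\}$.

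For part 1, I would first prove $\pic(X_{Y,B})=0$. By the local structure theorem (Theorem \ref{thm-stru}) one has $X_{Y,B}\cong R_u(P)\times S$ with $S$ affine $L$-spherical and $S\cap Y\cong L/L_Y$ a torus; since $R_u(P)$ is an affine space, $\pic(X_{Y,B})=\pic(S)$, and the injectivity Lemma of this subsection, applied to $S$ (affine, $L$ reductive, unique closed $L$-orbit $S\cap Y$), gives $\pic(S)\hookrightarrow\pic(S\cap Y)=0$. Consequently a $B$-stable Cartier divisor $\delta$ restricts on $X_{Y,B}$ to a principal divisor, whose equation is forced to be a $B$-eigenfunction $f_\chi$; then $\delta-\div(f_\chi)$ is $B$-stable, linearly equivalent to $\delta$, and trivial on $X_{Y,B}$, hence supported on $X\setminus X_{Y,B}=\bigcup_{D\in\odelta(X)}D$. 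This proves part 1.

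For part 2, the map $\Z\odelta(X)\to\pic(X)$, $\sum n_DD\mapsto[\co_X(\sum n_DD)]$, is well defined because each $D\in\odelta(X)$ is an irreducible component of $\mathcal D=X_{Y,G}\setminus X_{Y,B}$, whose associated sheaf is Cartier by Lemma \ref{lemm-cartier}; surjectivity is exactly part 1. For the kernel, a class $\sum_{\odelta}n_DD$ is trivial iff it equals $\div(f_\chi)$ for some $\chi$; comparing coefficients along $\D_X=\D_Y(X)$ forces $\scal{\rho(\nu_D),\chi}=0$ there, i.e. $\chi\in\cC^\perp$, while along $\odelta(X)$ it gives $n_D=\scal{\rho(\nu_D),\chi}$. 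This identifies the kernel with the image of the stated map $\cC^\perp\to\Z\odelta(X)$, yielding the exact sequence.

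For part 3, the direction $(\Leftarrow)$ for global generation is immediate: each $\co_X(D)$ with $D\in\odelta(X)$ is globally generated by Lemma \ref{lemm-cartier}, so any nonnegative combination, being a tensor product of globally generated line bundles, is globally generated. The converse and the ample case are the crux, and here I would argue combinatorially by transporting the question to the toric slice. Using the $P$-isomorphism $R_u(P)\times S\cong X_{Y,B}$, the bundle $\co_X(\delta)$ is governed near $Y$ by its restriction to the affine toric variety $S$ (equivalently $S/\!/U$), where global generation and ampleness reduce to the standard toric criteria — nonnegativity, resp. strict positivity, of the coefficients against the rays of the colored cone — and then the exact sequence of part 2 lets one adjust the representative by an element of $\cC^\perp$ to make all $n_D\ge 0$ (resp. $>0$). \emph{The main obstacle is precisely this last reduction}: simple spherical varieties are in general non-complete, so ``ample'' must be read through the contraction supplied by $\co_X(\mathcal D)$ (Lemma \ref{lemm-cartier}); and because the global sections of a $G$-linearised bundle are \emph{not} spanned by their $B$-eigenvectors, one cannot read the coefficients off a single section and must instead route the whole equivalence through the toric variety $S$, where these difficulties disappear.
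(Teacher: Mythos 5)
Parts 1 and 2 of your proposal follow the paper's proof essentially verbatim: triviality of $\pic(X_{Y,B})$ via the local structure theorem and the injectivity lemma applied to the affine variety with closed orbit a torus, then identification of the kernel of $\Z\odelta(X)\to\pic(X)$ with the eigenfunctions of weight in $\cC^\perp$. Your ``$(\Leftarrow)$'' for global generation in part 3 (tensor products of the globally generated $\cO_X(D)$, $D\in\odelta(X)$, from Lemma \ref{lemm-cartier}) is also correct and even a little cleaner than the paper's version of the same idea.

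The rest of part 3 has a genuine gap. Your plan is to read off global generation and ampleness from the restriction of $\cO_X(\delta)$ to the toric slice $S\subset X_{Y,B}$, but this restriction carries no information at all: by your own part 1, $\pic(S)$ injects into $\pic(S\cap Y)=\pic(\text{torus})=0$, so $\cO_X(\delta)\vert_S$ is trivial for every $\delta$; moreover the divisors in $\odelta(X)$, whose coefficients $n_D$ are exactly what you need to control, are by definition disjoint from $X_{Y,B}$ and hence from $S$. There is no toric criterion to invoke, and ampleness is not a local property near $Y$ in any case. What the paper does instead is global: for the converse to global generation, one picks a $B$-eigenvector section $s$ with $s\vert_Y\not\equiv 0$ (this is possible because the sections vanishing on $Y$ form a $G$-submodule of $H^0(X,\delta)$, so the quotient, if nonzero, contains a highest-weight vector), writes its divisor as $\divi(f)+\delta$, and observes that this effective divisor avoids $Y$, hence is a nonnegative combination over $\odelta(X)$. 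For ampleness when all $n_D>0$, one uses the canonical section $\eta$ of $\delta$, notes $X_\eta=X_{Y,B}$ is affine, shows every $f\in\kk[X_{Y,B}]$ has poles only along $\odelta(X)$ so that $\eta^nf\in H^0(X,n\delta)$ for $n\gg 0$, and covers $X=GX_{Y,B}$ by the translates $X_{g\eta}$; this is the standard criterion for ampleness. Finally, ``ample $\Rightarrow$ all $n_D>0$'' follows by applying the globally generated case to $n\delta-\sum_{D\in\odelta(X)}D$ for $n$ large. None of these three steps is present in your proposal, and the route you sketch in their place cannot be repaired as stated.
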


\begin{proof}
  1. By the structure theorem for spherical varieties, there exists
  a parabolic subgroup $P$ and a closed subvariety $S$ of $X_{Y,B}$
  such that $X_{Y,B}$ is isomorphic to $R_u(P)\times S$ where $R_u(P)$
  is the unipotent radical of $P$. If $L$ is a Levi factor of
  $P$, then $S$ is $L$-spherical affine with a unique
  closed orbit $Y\cap S$ which is isomorphic to $L/L_Y$ such that
  $L_Y\supset (L,L)$. In particular $S\cap Y$ is a torus and
  $\pic(Y\cap S)$ is trivial. By the above lemma we get that
  $\pic(X_{Y,B})$ is trivial. 

So any divisor has support on the complement of $X_{Y,B}$. This is the
union of the divisors $D\in \odelta(X)$. By Lemma
\ref{lemm-cartier}, these divisors are Cartier.

2. Let $\delta=\sum_{D\in \odelta(X)}n_DD$ be linearly
equivalent to zero. There exists $f\in \kk(X)$ with 
$\divi(f)=\delta$. Since $\delta$ is a union of $B$-stable divisors,
the function $f$ has to be a $B$-eigenfunction. Furthermore, this function
does not vanish on $X_{Y,B}$ thus it does not vanish uniformly on $Y$
thus $\nu_D(f)=0$ (here $\nu_D$ is the valuation associated to $D$, see \cite[Section 4]{jacopo}) for divisors $D$ containing $Y$. In particular the
weight $\chi$ of $f$ lies in $\cC^\perp$ proving that $\divi (f)$ is
in the image of the left map of the above exact sequence.

Conversely, let $f\in \kk(X)^{(B)}$ be such that the weight $\chi$ of
$f$ is in $\cC^\perp$. The divisor $\divi(f)$ has to be a linear
combination of $B$-stable divisors not containing $Y$.

3. Let $\delta=\sum_{D\in \odelta(X)}n_DD$ be a Cartier
divisor and assume (by taking a covering of $G$ if necessary)
that $\delta$ is $G$-linearised. If all $n_D$ are non negative, then
$\delta$ is effective and we can consider the canonical section $\eta$
of this line bundle. We proved already that for $V$ the $G$-module spanned by
$\eta$ we have $V \otimes \cO_X \to \cO_X(\delta)$ is surjective (the
locus where the map is not surjective is closed and $G$-stable thus is
empty or contains $Y$. But this locus is contained in divisors not
containing $Y$).

If all $n_D$ are positive, we want to prove that $\delta$ is ample. It
is enough to prove that there exists a collection $(s_i)$ of sections
of $\delta$ which span $\delta$, such that $X_{s_i}$ is affine and
any element $f$ in $\kk[X_{s_i}]$ is of the form $f=f'/s_i^{n}$ for some
some $n$ and some $f'\in H^0(X,n\delta)$ \emph{i.e.} there are 
surjective morphism $(\oplus_nH^0(X,n\delta))_{(s_i)}\to \kk[X_{s_i}]$
giving an embedding on $X_{s_i}$. As above, let $\eta$ be the
canonical section of $\delta$. We have
$X_\eta=X_{Y,B}$. Since $X=X_{Y,G}=GX_{Y,B}$ is it
enough to prove our statement for $X_\eta=X_{Y,B}$, the statement will
follow for any $X_{g\eta}$ via the action of $g\in G$. We know that
$X_\eta$ is affine, let $f\in \kk[X_\eta] = \kk[X_{Y,B}]$. Since $f$ is
defined on this open set, the divisor of poles of $f$ is of the form
$$\divi_\infty(f)=\sum_{D \in \odelta(X),\ \nu_D(f)<0}\nu_D(f)D$$
thus for $n$ large enough $n\delta+\divi_\infty(f)$ is
effective. Therefore $\eta^nf$ is a global section of $n\delta$
proving the result.

Conversely, assume first that $\delta$ is globally generated, then the
exists $s\in H^0(X,\delta)$ with $s\vert_Y$ non trivial. We may choose
for $s$ a $B$-eigenfunction. We have $f\in \kk(X)^{(B)}$ such
that the effective divisor defined by $s$ is of the form
$\divi(f)+\delta$ and does not contain $Y$. Therefore it is a linear
combinaison of elements in $\odelta(X)$ with non negative
coefficients. 

If $\delta$ is ample, then the divisor 
$n\delta-\sum_{D\in \odelta(X)}D$
is globally generated for $n$ large enough and we may apply the
previous result.
\end{proof}

\begin{example}
  If $X$ is simple and the closed orbit $Y$ is complete, then $\cC$ is of maximal dimension and $\cC^\perp = 0$. In particular in this case
$\pic(X)=\Z\odelta(X).$
\end{example}

\begin{example}
For $X = G/P$ with $P$ a parabolic subgroup, we have $\pic(X)=\Z\odelta(X)$. Furthermore $\odelta(X)$ is the set of Schubert divisors and is indexed by the simple roots not in $P$.
\end{example}

\subsection{Weil divisors}

We now consider the group ${\rm Cl}(X)$ of Weil
divisors on a spherical variety $X$.

\begin{thm}
Let $X$ be a spherical variety.

1. Any Weil divisor on $X$ is linearly equivalent to 
$$\sum_{D \in \D(X)}n_DD \textrm{ with } n_D \in \Z.$$

2. The map $\X(X)\to \Z\D(X)$, 
$\chi\mapsto\sum_{D\in \D}\scal{\rho(\nu_D),\chi}D$ induces an
exact sequence: 
$$\X(X)\to \Z\D(X)\to {\rm Cl}(X) \to 0.$$
\end{thm}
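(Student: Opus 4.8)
The plan is to deduce both statements from a single geometric input---that the open $B$-orbit of $X$ has trivial divisor class group---combined with the excision sequence for Weil divisors and the fact that $B$-stable principal divisors are exactly the divisors of $B$-eigenfunctions. I will prove part 1 by the excision sequence and part 2 by identifying the kernel of $\Z\D(X)\to{\rm Cl}(X)$.

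For part 1, recall that $X$ spherical has a dense $B$-orbit $\mathring{X}$, and that $X\setminus\mathring{X}$ is a finite union of $B$-orbits whose codimension-one irreducible components are precisely the elements of $\D(X)$; any prime divisor not in $\D(X)$ therefore meets $\mathring{X}$ along a dense open subset. I would first establish the key lemma ${\rm Cl}(\mathring{X})=0$. Since $\mathring{X}\simeq B/B_x$ is homogeneous under the connected solvable group $B$ (and is smooth, so ${\rm Cl}=\pic$), this follows from the exact sequence $\cX(B)\to\cX(B_x)\to\pic(B/B_x)\to\pic(B)$ together with $\pic(B)=0$: writing $B=TU$ with $U=R_u(B)$, any character of $B_x$ is trivial on the unipotent group $B_x\cap U$ and hence factors through $B_x/(B_x\cap U)$, a closed subgroup of the torus $T=B/U$, so it extends to $T$ and thence to $B$. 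Thus $\cX(B)\to\cX(B_x)$ is surjective and $\pic(B/B_x)=0$. Granting this, the excision sequence $\bigoplus_{D\in\D(X)}\Z\,[D]\to{\rm Cl}(X)\to{\rm Cl}(\mathring{X})\to 0$ (see \cite[Proposition II.6.5]{hartshorne-book}) shows the left arrow is surjective, which is exactly statement 1.

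For part 2 it remains to identify the kernel of $\Z\D(X)\to{\rm Cl}(X)$. If $\delta=\sum_{D\in\D(X)}n_DD$ equals $\divi(f)$, then $\delta$ is $B$-stable, and since $B$ is connected this forces $f$ to be a $B$-eigenvector: indeed $f$ restricts to a unit on the single orbit $\mathring{X}$, and as $\cO(\mathring{X})^\times/\kk^\times$ is a discrete group on which connected $B$ acts trivially, the assignment $b\mapsto (b\cdot f)/f$ is constant in $\kk^\times$. Writing $\chi\in\X(X)$ for its weight, semi-invariance makes $f$ invertible on $\mathring{X}$, so $\divi(f)=\sum_{D\in\D(X)}\nu_D(f)\,D$ with $\nu_D(f)=\scal{\rho(\nu_D),\chi}$ by the very definition of $\rho(\nu_D)$. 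Conversely, every $\chi\in\X(X)$ is the weight of some $f\in\kk(X)^{(B)}$, whose divisor $\sum_{D}\scal{\rho(\nu_D),\chi}D$ is then principal. Hence the image of the stated map equals the kernel of $\Z\D(X)\to{\rm Cl}(X)$, and combined with the surjectivity from part 1 this yields the exact sequence.

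The main obstacle is the vanishing ${\rm Cl}(\mathring{X})=0$; everything else is a formal consequence of excision, the connectedness of $B$, and the definition of the pairing $\scal{\rho(\nu_D),\cdot}$. Once the character-extension argument above is secured (equivalently, the description of $\mathring{X}$ as an affine-space bundle over a torus, for which $\pic=0$), both statements follow.
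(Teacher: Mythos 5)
Your proof is correct and follows the same overall strategy as the paper: establish $\mathrm{Cl}(\mathring{X})=0$, deduce part 1 from excision, and identify the relations as divisors of $B$-eigenfunctions, whose coefficients are $\scal{\rho(\nu_D),\chi}$ by definition of $\rho$. The one genuine divergence is how you prove the key lemma. The paper invokes the local structure theorem (Theorem \ref{thm-stru}) applied to the chart $X_{Y,B}$ for $Y$ the open $G$-orbit, obtaining $\mathring{X}\simeq R_u(P)\times (L/L_Y)$, a product of copies of $\G_a$ and $\G_m$, whence $\pic(\mathring{X})=0$. You instead treat $\mathring{X}=B/B_x$ directly via the exact sequence $\cX(B)\to\cX(B_x)\to\pic(B/B_x)\to\pic(B)=0$ and show $\cX(B)\to\cX(B_x)$ is surjective by factoring a character of $B_x$ through $B_xU/U\subset T$ and extending from a closed subgroup of a torus. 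This is a valid and more self-contained route: it avoids the local structure theorem entirely, at the cost of citing the (standard, characteristic-zero) Picard sequence for homogeneous spaces; the paper's route gives the stronger structural statement about $\mathring{X}$ as an affine-space bundle over a torus, which it reuses elsewhere. Your justification that a rational function with $B$-stable divisor is a $B$-eigenvector (via the discreteness of $\cO(\mathring{X})^\times/\kk^\times$) is also more detailed than the paper's, which simply asserts this standard fact.
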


\begin{proof}
1. Consider the dense $B$-orbit $\mathring{X}$. We claim that ${\rm Cl}(\mathring{X})=0$. 
This is a consequence of the local structure Theorem. Indeed,
let $Y$ be the dense $G$-orbit. The open affine subspace $X_{Y,B}$ is
$X\setminus\cup_{D\in \D(X)}D$ since no divisor can contain $Y$. The
intersection of $Y$ with $X_{Y,B}$ is the dense $B$-orbit $\mathring{X}$. Now there exists $P$ a parabolic subgroup of $G$, a
Levi factor $L$ of $P$ and $S$ a closed $L$-subvariety such that 
$X_{Y,B}\simeq R_u(P)\times S$. We get $\mathring{X}=Y\cap X_{Y,B}\simeq
R_u(P) \times (Y\cap S)$. But $Y\cap S$ is
isomorphic to $L/L_Y$ a torus, thus 
$$\mathring{X}\simeq R_u(P)\times(L/L_Y)$$
which is a product of $\G_a$ and $\G_m$ thus $\pic(\mathring{X})$ is
trivial. Since $\mathring{X}$ is an orbit thus smooth the same holds for ${\rm
  Cl}(\mathring{X})$.
In particular, any Weil divisor has support on the complement of $\mathring{X}$
which is the union of the divisors $D\in \D(X)$. 

2. Let $\delta = \sum_{D\in \D(X)}n_DD$ be a Weil divisor linearly
equivalent to zero. Then there exists $f\in \kk(X)$ such that
$\divi(f)=\delta$. Since $\delta$ is a union of $B$-stable divisors,
the function $f$ is a $B$-eigenfunction.
Conversely, let $f\in \kk(X)^{(B)}$ of weight $\chi\in\X(X)$. The
divisor of $f$ has to be a linear combination of $B$-stable divisors.
\end{proof}

\begin{cor}
  A simple spherical variety $X$ with closed orbit $Y$ is locally factorial (respectively locally $\Q$-factorial)
  if and only if for any $D\in \D_Y(X)$, there exists $\chi \in \X(X)$ (respectively $\chi \in \X(X)_\Q$) 
  such that 
$$\scal{\rho(\nu_{D'}),\chi}=\left\{
  \begin{array}{ll}
    1 & \textrm{if $D'=D$}\\
    0 & \textrm{if $D'\neq D$.}\\
  \end{array}\right.$$
In other words $X$ is locally factorial (respectively locally $\Q$-factorial) if and only if the elements $(\rho(\nu_D))_{D\in \D_Y(X)}$ are part of a basis of
$\X(X)^\vee$ (respectively of $\X(X)_\Q$).
\end{cor}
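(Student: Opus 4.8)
The plan is to reduce the statement to a comparison of Cartier and Weil divisors along the closed orbit, and then to an elementary lattice criterion. Recall that a normal variety is locally factorial (resp.\ locally $\Q$-factorial) precisely when every Weil divisor is Cartier (resp.\ $\Q$-Cartier), equivalently when the natural injection $\pic(X)\to{\rm Cl}(X)$ is surjective (resp.\ surjective after tensoring with $\Q$). By Theorem \ref{thm-simple}.1 together with Lemma \ref{lemm-cartier}, every $D\in\odelta(X)$ is already Cartier and these generate the Cartier class group, while by the previous theorem every Weil divisor is linearly equivalent to $\sum_{D\in\D(X)}n_DD$. Since $\D(X)=\odelta(X)\sqcup\D_Y(X)$ for a simple variety (a $B$-stable divisor contains a $G$-orbit precisely when it contains the closed orbit $Y$, as all orbit closures contain $Y$), I would first observe that $X$ is locally factorial if and only if each $D_0\in\D_Y(X)$ is Cartier: the $\odelta(X)$-part is automatically Cartier, and Cartier divisors form a subgroup of the Weil divisors.

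Next I would translate ``$D_0$ is Cartier'' into the numerical condition. That $D_0\in\D_Y(X)$ is Cartier means its class lies in the image of $\pic(X)$, i.e.\ there exist integers $n_D$ ($D\in\odelta(X)$) and $f\in\kk(X)^{(B)}$ of weight $\chi\in\X(X)$ with
$$D_0-\sum_{D\in\odelta(X)}n_DD=\div(f)=\sum_{D\in\D(X)}\scal{\rho(\nu_D),\chi}D,$$
using the description of principal $B$-stable divisors from the previous theorem. Comparing the coefficient of each $D'\in\D_Y(X)$ on both sides (the sum over $\odelta(X)$ contributes nothing along $\D_Y(X)$) yields $\scal{\rho(\nu_{D'}),\chi}=\delta_{D_0,D'}$ for all $D'\in\D_Y(X)$; conversely, such a $\chi$ lets one solve for the remaining coefficients $n_D$ on $\odelta(X)$. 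Hence $D_0$ is Cartier exactly when a $\chi\in\X(X)$ with the stated values exists, which is the condition in the corollary.

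Finally I would record the lattice lemma: for $v_1,\dots,v_r$ in a free $\Z$-module $N$ of finite rank, the system ``for every $i$ there is $\chi\in N^\vee$ with $\scal{v_j,\chi}=\delta_{ij}$ for all $j$'' holds if and only if $v_1,\dots,v_r$ extend to a $\Z$-basis of $N$. Indeed, such $\chi$ force the $v_i$ to be linearly independent and the restriction map $N^\vee\to(\sum\Z v_i)^\vee$ to be surjective, i.e.\ the sublattice $\sum\Z v_i$ to be saturated; together these mean the $v_i$ are part of a basis, and the converse uses the dual basis. Applying this to $(\rho(\nu_D))_{D\in\D_Y(X)}$ in $N=\X(X)^\vee$ gives the factoriality statement. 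For the $\Q$-factorial case one repeats the argument with $\X(X)_\Q$ and a suitable multiple of $D_0$, where saturation is automatic, so the criterion weakens to $\Q$-linear independence of the $\rho(\nu_D)$.

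The main obstacle is the bookkeeping in the middle step: one must verify that the $\odelta(X)$-part genuinely decouples, so that Cartierness of a Weil divisor is detected solely by the coefficients along $\D_Y(X)$, and that the existence of $\chi$ may be checked one $D_0$ at a time. Once these reductions are secured, the result follows from the purely combinatorial lattice lemma above.
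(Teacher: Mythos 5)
Your proof is correct and takes exactly the route the paper intends: the corollary is stated there without proof as an immediate consequence of Theorem \ref{thm-simple} and the exact sequence $\X(X)\to \Z\D(X)\to {\rm Cl}(X)\to 0$, and your write-up is precisely that derivation (decoupling the automatically Cartier $\odelta(X)$-part using $\D(X)=\odelta(X)\sqcup\D_Y(X)$ for simple $X$, reading off the coefficients along $\D_Y(X)$ from $\div(f)$ for a $B$-eigenfunction $f$, and the standard lattice lemma). Nothing to add.
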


\begin{thm} \label{thm-Q-fact}
Let $X$ be a spherical variety.

(\i) The variety $X$ is locally factorial if and only if for any
$G$-orbit $Y$, the elements $(\rho({\nu_D}))_{D \in \D_Y(X)}$
form a part of a basis of
$\X(X)^\vee$.

(\i\i) The variety $X$ is locally $\Q$-factorial if and only if for
any $G$-orbit $Y$, the elements $(\rho({\nu_D}))_{D \in \D_Y(X)}$
form a linearly independent subset of
$\X(X)^\vee_\Q$.
\end{thm}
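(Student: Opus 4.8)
The plan is to reduce both assertions to the preceding corollary, which treats exactly the case of a \emph{simple} spherical variety. The key point is that local factoriality and local $\Q$-factoriality are local properties on $X$: a normal variety is locally (resp. $\Q$-) factorial if and only if each local ring $\cO_{X,x}$ is a UFD (resp. every Weil divisor is $\Q$-Cartier at $x$), so the property can be tested on any open cover. The $G$-charts $X_{Y,G}$, as $Y$ ranges over the $G$-orbits of $X$, provide such a cover by $G$-stable open simple spherical subvarieties, and restricting to each one is governed precisely by the corollary.

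First I would check that $X=\bigcup_Y X_{Y,G}$, the union being taken over all $G$-orbits $Y$. Given $x\in X$, the orbit closure $\overline{Gx}$ is a $G$-stable closed subvariety; since a spherical variety has finitely many $B$-orbits, hence finitely many $G$-orbits, $\overline{Gx}$ contains a minimal $G$-orbit, which is then closed, call it $Y$. By definition $X_{Y,G}=\{x'\in X\mid \overline{Gx'}\supset Y\}$, so $x\in X_{Y,G}$. Each $X_{Y,G}$ is $G$-stable, open, and a simple spherical variety with unique closed orbit $Y$, as recalled before Lemma \ref{lemm-cartier}.

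Next I would identify the combinatorial data of each chart with that of $X$. Since $X_{Y,G}$ is a dense open subset we have $\kk(X_{Y,G})=\kk(X)$, hence $\X(X_{Y,G})=\X(X)$; moreover the $B$-stable prime divisors of $X_{Y,G}$ containing the closed orbit $Y$ are exactly the restrictions of the divisors $D\in\D_Y(X)$, with the same valuations $\nu_D$ and therefore the same elements $\rho(\nu_D)\in\X(X)^\vee$. Applying the preceding corollary to the simple variety $X_{Y,G}$, the chart $X_{Y,G}$ is locally factorial if and only if $(\rho(\nu_D))_{D\in\D_Y(X)}$ is part of a basis of $\X(X)^\vee$, and locally $\Q$-factorial if and only if these elements are linearly independent in $\X(X)^\vee_\Q$. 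Since $X$ is locally (resp. $\Q$-) factorial if and only if each $X_{Y,G}$ is, taking the conjunction of these conditions over all $G$-orbits $Y$ gives statements (i) and (ii).

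The one step requiring care is the reduction itself rather than any computation: I must make sure the charts genuinely cover $X$ (handled by the minimal-orbit argument above) and that passing to the open subset $X_{Y,G}$ is compatible with all the relevant structures, i.e. that restriction identifies $\D_Y$, the weight lattice $\X$, and the maps $\rho(\nu_D)$ simultaneously. Once this compatibility is in place, the preceding corollary does all the real work at each chart, and no further argument is needed.
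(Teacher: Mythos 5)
Your proof is correct and follows the same route as the paper, which simply says that the property is local and reduces to the simple case treated by the preceding corollary. You have merely spelled out the details of that reduction (the covering by the charts $X_{Y,G}$ and the compatibility of $\X$, $\D_Y$ and $\rho(\nu_D)$ under restriction), which the paper leaves implicit.
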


\begin{proof}
Since this is local we may assume that $X$ is simple with closed
orbit $Y$ and apply the previous result.
\end{proof}

\begin{example}
The above result gives a necessary but not sufficient condition for smoothness. Indeed, let $X$ be a smooth quadric of dimension at least 3.
Let $\hat X$
  be the cone over $X$. Then $\hat X$ is a 
locally factorial but not smooth spherical variety.
\end{example}

For toroidal varieties local factoriality gives a sufficent smoothness condition. Recall the following result on toric varieties.

\begin{lemma}
  \label{lem-toric}
  Let $X$ be a toric variety, then $X$ is smooth if and only if it is
  locally factorial \emph{i.e.} if and only if any of its cone is spanned by a basis of the monoid $\X(X)^\vee$.
\end{lemma}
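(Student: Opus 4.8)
The plan is to reduce everything to a single affine chart and then run the standard toric dictionary, since both smoothness and local factoriality are local and a toric variety is covered by the affine toric opens $U_\sigma = \Spec \kk[\sigma^\vee \cap M]$ attached to the cones $\sigma$ of its fan, where I write $M = \X(X)$ and $N = \X(X)^\vee$. First I would note that a non--full--dimensional cone only splits off a free torus factor $\G_m^{\,n-\dim\sigma}$, which affects none of the three conditions, so I may assume $\sigma$ is full dimensional; set $n = \dim U_\sigma$, the rank of $M$. Then $U_\sigma$ has a unique closed orbit, the distinguished point $x_\sigma$, lying in the closure of every $T$-orbit. Since the singular locus is closed and $T$-stable, it is either empty or contains $x_\sigma$, so it suffices to test smoothness at $x_\sigma$.

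For the implication \emph{smooth} $\Rightarrow$ \emph{locally factorial} I would simply invoke the general fact that a regular local ring is a unique factorization domain (Auslander--Buchsbaum); this uses nothing toric.

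The substance is the converse. Here I would use that local factoriality of $U_\sigma$ is equivalent to the vanishing of ${\rm Cl}(U_\sigma)$: for a normal affine variety a trivial class group forces every localization to be a UFD, while conversely factoriality at $x_\sigma$ already gives ${\rm Cl}(\cO_{X,x_\sigma}) = 0$, and this local class group equals ${\rm Cl}(U_\sigma)$ because every $T$-invariant prime divisor passes through $x_\sigma$. I would then use the standard description of the class group of the affine toric variety $U_\sigma$ (see \cite{fulton-toric}), which reads
$$\X(X) \xrightarrow{\ \phi\ } \bigoplus_{D \in \D_Y(X)} \Z\, D \to {\rm Cl}(U_\sigma) \to 0, \qquad \phi(\chi) = \sum_{D} \langle \rho(\nu_D),\chi\rangle\, D,$$
where the $\rho(\nu_D) \in N$ are the primitive generators of the rays of $\sigma$. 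Vanishing of the class group forces $\phi$ to be surjective. A surjection onto $\Z^{\#\D_Y(X)}$ gives $\#\D_Y(X) \le n$, while full-dimensionality of $\sigma$ makes the $\rho(\nu_D)$ span $N_\Q$, so $\#\D_Y(X) \ge n$; hence $\phi$ is a surjection between free abelian groups of equal rank $n$, i.e. an isomorphism. Its dual exhibits the $\rho(\nu_D)$ as a $\Z$-basis of $N$, which is exactly the stated cone condition.

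Finally, the cone condition $\Rightarrow$ smooth is immediate: if the primitive ray generators form part of a basis of $N$, completing them to a full basis identifies $\kk[\sigma^\vee \cap M]$ with a polynomial-times-Laurent ring, so $U_\sigma \cong \A^k \times \G_m^{\,n-k}$ is smooth. This closes the cycle smooth $\Rightarrow$ locally factorial $\Rightarrow$ cone condition $\Rightarrow$ smooth. The one step deserving care, and the main obstacle, is the class-group computation together with the identification ${\rm Cl}(\cO_{X,x_\sigma}) = {\rm Cl}(U_\sigma)$; I would either cite \cite{fulton-toric} for the exact sequence or, to stay fully self-contained and avoid any circular dependence on the local structure theorem, reprove it directly from the localization sequence for the $T$-invariant divisors on $U_\sigma$.
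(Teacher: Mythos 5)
The paper does not actually prove this lemma: it is stated as a recalled standard fact from toric geometry (with \cite{fulton-toric} as the background reference), so there is no in-text argument to compare against. Your proof is the standard one and is correct: reduction to a full-dimensional affine chart $U_\sigma$ (splitting off the torus factor), testing smoothness at the distinguished point $x_\sigma$ via $T$-stability of the singular locus, Auslander--Buchsbaum for smooth $\Rightarrow$ factorial, the presentation $\X(X) \to \bigoplus_D \Z D \to {\rm Cl}(U_\sigma) \to 0$ for the converse, and the explicit identification $U_\sigma \cong \A^k \times \G_m^{n-k}$ to close the loop. The one step you rightly flag -- passing from factoriality at $x_\sigma$ to ${\rm Cl}(U_\sigma)=0$ -- is cleanly handled by the same device you already use for the singular locus: a $T$-invariant Weil divisor that is Cartier at $x_\sigma$ is Cartier everywhere (its non-Cartier locus is closed, $T$-stable, and misses $x_\sigma$, hence empty), and $\pic(U_\sigma)=0$ for $\sigma$ full-dimensional since any $T$-Cartier divisor is given by a single character and is therefore principal. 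With that observation made explicit, the proof is complete.
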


\begin{cor}
  For $X$ is toroidal, $X$ is smooth if and only if it is locally
  factorial.
  \end{cor}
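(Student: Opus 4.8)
The plan is to combine the Structure Theorem for toroidal varieties (Theorem \ref{str-toro}) with the toric smoothness criterion (Lemma \ref{lem-toric}), reducing the whole statement to the toric factor $Z$. One implication is formal and holds for any variety: a smooth variety is locally factorial, since its local rings are regular and regular local rings are factorial (Auslander--Buchsbaum). So the real content is the converse, and I would prove that if $X$ is toroidal and locally factorial then $X$ is smooth.

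For the converse I would first reduce to the open subset $X\setminus\Delta_X$. By the remark following Theorem \ref{str-toro} we have $G\cdot(X\setminus\Delta_X)=X$; since $G$ acts by automorphisms, the $G$-translates of $X\setminus\Delta_X$ form an open cover of $X$, each isomorphic to $X\setminus\Delta_X$. Hence $X$ is smooth as soon as $X\setminus\Delta_X$ is smooth. Now Theorem \ref{str-toro} gives a $P_X$-isomorphism $R_u(P_X)\times Z\to X\setminus\Delta_X$ with $Z$ a toric variety, and since $\chara(\kk)=0$ the unipotent radical $R_u(P_X)$ is isomorphic to an affine space. Thus $X\setminus\Delta_X$ is smooth if and only if $Z$ is smooth, and by Lemma \ref{lem-toric} the latter is equivalent to $Z$ being locally factorial.

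It therefore remains to transfer local factoriality from $X$ down to $Z$. Since $X\setminus\Delta_X$ is open in $X$ it is again locally factorial, and under the isomorphism with $R_u(P_X)\times Z$ so is $R_u(P_X)\times Z$. Factoriality of the local rings is stable in the affine-space variable: for a point $z\in Z$ the local ring $\mathcal{O}_{Z,z}$ controls the local rings of the product along the fiber, and $A$ is a unique factorization domain precisely when $A[x_1,\dots,x_N]$ is, so local factoriality of the product forces local factoriality of $Z$. This yields $Z$ locally factorial, hence smooth by Lemma \ref{lem-toric}, hence $X\setminus\Delta_X$ smooth, hence $X$ smooth, completing the converse.

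I expect the only genuinely delicate point to be this last descent of local factoriality through the projection $R_u(P_X)\times Z\to Z$; everything else is an immediate application of Theorem \ref{str-toro} and Lemma \ref{lem-toric}. A purely combinatorial alternative would be to match the characterization of local factoriality in Theorem \ref{thm-Q-fact} — the elements $\rho(\nu_D)$ for $D\in\D_Y(X)$ forming part of a basis of $\X(X)^\vee$ — directly against the cone condition of Lemma \ref{lem-toric}, identifying the $G$-stable divisors of $X$ with the torus-stable divisors of $Z$; I would fall back on this route if the ring-theoretic descent required more care.
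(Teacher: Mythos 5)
Your proposal is correct and follows essentially the same route as the paper: reduce to $X\setminus\Delta_X\simeq R_u(P_X)\times Z$ via Theorem \ref{str-toro} and $X=G(X\setminus\Delta_X)$, then invoke Lemma \ref{lem-toric} for the toric factor. The only difference is that you spell out the descent of local factoriality through the affine-space factor, which the paper leaves implicit.
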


\begin{proof}
  By the structure Theorem of toroidal varieties (Theorem \ref{str-toro}), the open subset
  $X\setminus \Delta_X$ is isomorphic to $R_u(P_X)\times Z$ with $Z$ a
  toric variety. Therefore the singularities of $X\setminus \Delta_X$
  are those of $R_u(P_X)\times Z$. This open subset is smooth
  if and only if it is locally factorial. But $X = G(X \setminus \Delta_X)$ concluding the proof.
  \end{proof}

\subsection{Picard group, the general case}

Let $X$ be a spherical variety.
Recall that any Weil divisor $\delta$ can be
written in the form 
$$\delta=\sum_{D\in \D(X)}n_DD.$$

\begin{lemma}
  \label{lem-cartier}
  The divisor $\delta$ is Cartier if and only if for any $G$-orbit $Y$ of
  $X$, there exists $\chi_{\delta,Y}\in \X(X)$ such that
  $\scal{\rho(\nu_D),\chi_{\delta,Y}}=n_D$ for all $D\in \D_Y(X)$.
  \end{lemma}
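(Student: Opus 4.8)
The plan is to exploit that being Cartier is a local condition and to reduce to the case of a simple spherical variety, where Theorem~\ref{thm-simple} already describes the Cartier divisors.

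First I would observe that the open charts $X_{Y,G}$, as $Y$ ranges over the finitely many $G$-orbits of $X$, form an open cover of $X$: for any $x$, the closure $\overline{Gx}$ contains a closed $G$-orbit $Y$ (it is a $G$-variety with finitely many orbits, so a minimal-dimensional one is closed in $\overline{Gx}$, hence in $X$), and then $x\in X_{Y,G}$. Since $\cO_X(\delta)$ restricts to the divisorial sheaf of $\delta|_{X_{Y,G}}$, the divisor $\delta$ is Cartier on $X$ if and only if $\delta|_{X_{Y,G}}$ is Cartier for every $Y$. As $X_{Y,G}$ is open and dense it shares the function field, the weight lattice $\X(X)$, and the valuations $\nu_D$ (hence the linear forms $\rho(\nu_D)$) with $X$; moreover restriction and closure give a bijection between $\D_Y(X)$ and the $B$-stable prime divisors of $X_{Y,G}$ through $Y$. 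Thus it suffices to treat the simple case $X=X_{Y,G}$, producing the single weight $\chi_{\delta,Y}$.

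So assume $X$ is simple with closed $G$-orbit $Y$. The key remark is that in a simple spherical variety every $G$-orbit closure contains $Y$, so a $B$-stable prime divisor avoids $Y$ precisely when it contains no $G$-orbit at all; that is, $\D(X)\setminus\D_Y(X)=\odelta(X)$. For the backward implication, given $\chi:=\chi_{\delta,Y}\in\X(X)$ with $\scal{\rho(\nu_D),\chi}=n_D$ for all $D\in\D_Y(X)$, pick $f\in\kk(X)^{(B)}$ of weight $\chi$ and set $\delta'=\delta-\divi(f)$. Since $\divi(f)=\sum_D\scal{\rho(\nu_D),\chi}D$, the coefficient of $\delta'$ along each $D\in\D_Y(X)$ is $n_D-n_D=0$, so $\delta'$ is a $\Z$-combination of divisors in $\odelta(X)$, each of which is Cartier by Lemma~\ref{lemm-cartier}; hence $\delta\sim\delta'$ is Cartier. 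For the forward implication, if $\delta$ is Cartier then Theorem~\ref{thm-simple}.1 gives $\delta=\sum_{D\in\odelta(X)}m_DD+\divi(f)$ for some $f\in\kk(X)^\times$. As both $\delta$ and the $\odelta$-sum are $B$-stable, $\divi(f)$ is $B$-stable, forcing $f\in\kk(X)^{(B)}$ of some weight $\chi\in\X(X)$; reading off the coefficient along $D\in\D_Y(X)$, where the $\odelta$-term contributes $0$, yields $n_D=\scal{\rho(\nu_D),\chi}$, so $\chi_{\delta,Y}=\chi$ works.

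The routine verifications (that the charts cover $X$ and that the divisorial data restrict correctly) are standard, so the point requiring the most care is the simple-case identity $\D(X)\setminus\D_Y(X)=\odelta(X)$ combined with the correct use of Theorem~\ref{thm-simple}: the content is precisely that modifying $\delta$ by a $B$-eigenfunction of an appropriate weight clears its components through the closed orbit and lands it on the Cartier divisors $\odelta(X)$, and that such a weight exists exactly when the prescribed values $n_D$ along $\D_Y(X)$ are realised by some $\chi\in\X(X)$. I expect the only genuine subtlety to be the bookkeeping of signs and of the passage between $X$ and its chart $X_{Y,G}$.
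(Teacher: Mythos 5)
Your proof is correct and follows exactly the paper's route: the paper's own argument is the one-line reduction ``this follows from the case of simple spherical varieties and the fact that the $X_{Y,G}$ cover $X$,'' and you have simply written out the details, including the identity $\D(X)\setminus\D_Y(X)=\odelta(X)$ in the simple case and the use of Theorem~\ref{thm-simple} together with Lemma~\ref{lemm-cartier} there. No gaps.
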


\begin{proof}
  This follows directly from the case of simple spherical varieties
  and the fact that the $X_{Y,G}$ form a covering of $X$ when $Y$
  runs over all $G$-orbits.
\end{proof}

\begin{defn}
  Let $X$ be a spherical variety.
  
  1. Set $\cC_X = \cup_Y \cox$ where the union runs over all $G$-orbits $Y$.

  2. Define $\PL(X)$ the set of piecewise linear functions on $\cC_X$ as the
  subgroup of functions $l$ on $\cC_X$ such that, for any $G$-orbit $Y$, $l\vert_\cox$ is the restriction of an element of $\X(G/H)$,

  3. Define $\LL(X) \subset \PL(X)$ as the subgroup of linear functions \emph{i.e.} of the form $(\chi\vert_\cox)_Y$ for some $\chi \in \X(G/H)$.
\end{defn}

\begin{remark}
  \label{rem-res}
  We shall describe a function $l$ in $\PL(X)$ or $\LL(X)$ as a collection $(l_Y)_Y$ of functions indexed by the $G$-orbits defined by $l_Y = l\vert_\cox$. Note that we have $(l_Z)\vert_{\cox}=l_Y$, for any $G$-orbit $Z$ with $Z\subset\overline{Y}$.

  Note that if maximal cones do not have maximal dimension in $\X(X)^\vee_\Q$, then the above elements $l_Y$ do not determine functions on $\X(X)^\vee_\Q$
\end{remark}

\begin{thm}
  \label{thm-pic-s-t}
Let $X$ be a spherical variety.

1. There is an exact sequence
$$\cC_X^\perp \to \Z\odelta(X) \to \pic(X) \to
\PL(X)/\LL(X)\to 0.$$

2. The group $\pic(X)$ is of finite rank.

3. $\pic(X)$ is free if there exists a complete $G$-orbit.
\end{thm}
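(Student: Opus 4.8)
The plan is to prove the three claims of Theorem \ref{thm-pic-s-t} in order, building the exact sequence first and then reading off finiteness and freeness from it. For part 1, I would start from the two building blocks already established: for \emph{simple} spherical varieties $X_{Y,G}$, Theorem \ref{thm-simple} gives the exact sequence $\cC^\perp \to \Z\odelta(X_{Y,G}) \to \pic(X_{Y,G}) \to 0$, and Lemma \ref{lem-cartier} characterizes when a Weil divisor $\delta = \sum_{D} n_D D$ is Cartier in terms of the existence, for each $G$-orbit $Y$, of a character $\chi_{\delta,Y} \in \X(X)$ restricting to the valuations $\rho(\nu_D)$ on the cone $\cox$. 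The idea is that the Cartier data of $\delta$ is \emph{exactly} a compatible collection of linear functions on the cones $\cox$, which glue to a piecewise linear function on $\cC_X$; this is what the term $\PL(X)/\LL(X)$ records.

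Concretely, I would define the fourth map $\pic(X) \to \PL(X)/\LL(X)$ by sending (the class of) a Cartier divisor $\delta$ to the collection $(\chi_{\delta,Y})_Y$ of local characters from Lemma \ref{lem-cartier}, viewed as an element $l = (l_Y)_Y$ of $\PL(X)$ via $l_Y = \chi_{\delta,Y}\vert_\cox$. One checks this is well-defined: the compatibility $(l_Z)\vert_\cox = l_Y$ for $Z \subset \overline{Y}$ noted in Remark \ref{rem-res} holds because on overlapping charts the $n_D$ agree, and changing $\delta$ by a principal divisor $\divi(f)$ with $f \in \kk(X)^{(B)}$ of weight $\chi$ shifts every $\chi_{\delta,Y}$ by the \emph{same} global $\chi$, i.e.\ by an element of $\LL(X)$. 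Surjectivity onto $\PL(X)/\LL(X)$ comes from the reverse reading of Lemma \ref{lem-cartier}: given $l = (l_Y)_Y \in \PL(X)$, the integers $n_D := \scal{\rho(\nu_D)}{l_Y}$ (for $D \in \D_Y(X)$) are consistent across orbits and assemble into a Cartier divisor mapping to $l$. The kernel of $\pic(X) \to \PL(X)/\LL(X)$ consists of Cartier classes whose local characters are globally linear, i.e.\ divisors supported on $\odelta(X)$; combined with the description of linear equivalence via $B$-eigenfunctions (weights in $\cC_X^\perp$), this identifies the left portion of the sequence with $\cC_X^\perp \to \Z\odelta(X) \to \pic(X)$, exactly paralleling Theorem \ref{thm-simple}.2 but with $\cC$ replaced by the union $\cC_X$ and hence $\cC_X^\perp$.

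For part 2, finiteness of the rank is immediate from the sequence: $\Z\odelta(X)$ is free of finite rank (there are finitely many $B$-stable divisors), and $\PL(X)/\LL(X)$ is finitely generated because $\PL(X)$ is a subgroup of a product of finitely many lattices $\X(X)$ indexed by the (finitely many) $G$-orbits. Thus $\pic(X)$ is squeezed between a finite-rank quotient of $\Z\odelta(X)$ and a subquotient of $\PL(X)/\LL(X)$, both of finite rank. For part 3, if some $G$-orbit $Y$ is complete then its cone $\cox$ has maximal dimension, so $\PL(X)/\LL(X)$ is \emph{torsion-free}: a piecewise linear function that is torsion modulo $\LL(X)$ must be linear on a full-dimensional cone, hence globally linear. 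One then argues that the image of $\Z\odelta(X)$ in $\pic(X)$ is already free and the extension by the torsion-free group $\PL(X)/\LL(X)$ stays free.

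The main obstacle I anticipate is the careful verification that the gluing map $\pic(X) \to \PL(X)/\LL(X)$ is both well-defined and surjective with the correct kernel — i.e.\ that the local Cartier characters $\chi_{\delta,Y}$ patch into a genuine piecewise linear function and that every element of $\PL(X)$ arises this way. This is where the subtlety flagged in Remark \ref{rem-res} bites: when a maximal cone is not full-dimensional, an element $l_Y$ need not determine a global character on $\X(X)^\vee_\Q$, so one must work with the cones $\cox$ directly rather than with ambient linear forms, and check that the freedom in extending $l_Y$ off its cone is absorbed into $\LL(X)$. The freeness argument in part 3 also requires attention, since one must rule out torsion coming from the left-hand map $\cC_X^\perp \to \Z\odelta(X)$ as well, using that the complete orbit forces the relevant cone to have full dimension.
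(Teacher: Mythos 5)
Your proposal follows essentially the same route as the paper: the map $\pic(X)\to\PL(X)/\LL(X)$ is built from the local Cartier characters of Lemma \ref{lem-cartier}, surjectivity and the kernel are identified exactly as in the paper, finiteness comes from $\PL(X)$ sitting inside a finite product of lattices, and freeness combines the torsion-freeness of $\PL(X)/\LL(X)$ with $\cC_X^\perp=0$ forced by the complete orbit. The only slight difference is that the paper observes $\PL(X)/\LL(X)$ is torsion-free unconditionally and uses the complete orbit solely to kill $\cC_X^\perp$, whereas you tie both to the full-dimensionality of the cone; this does not affect the argument.
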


\begin{proof}
1. By Lemma \ref{lem-cartier}, for any Cartier divisor $\delta$, we have a family $l_\delta = (l_\delta)_Y = (\chi_{\delta,Y})_Y$ of characters indexed by the $G$-orbits. Note that the restriction condition in Remark \ref{rem-res} is satisfied. Since the image of a rationally trivial Cartier divisor lies in $\LL(X)$, this defines a morphism $\pic(X) \to \PL(X)/\LL(X)$ and Lemma \ref{lem-cartier} proves that for $l \in \PL(X)$, the divisor $\sum_{D \in \D(X)} \scal{\rho(\nu_D),l} D$ is Cartier so that this map is surjective.

  The kernel of this map is given by divisors of the form $\delta = \sum_{D \in \D(X)} n_D D$ such that there exists $l \in \LL(X)$ with $n_D = \scal{\rho(\nu_D),l}$ for $D \in \cup_Y \D_Y(X)$. Let $f \in \kk(X)^{(B)}$ with character $l$, we have $\delta - \divi(f) \in \Z\odelta(X)$.

  If $\delta = \sum_{D \in \odelta(X)} n_D D$ is rationally trivial, there exists $f \in \kk(X)^{(B)}$ with $\divi(f) = \delta$. But if $\chi$ is the character of $f$, we have $\scal{\rho(\nu_D),\chi} = n_D$. In particular  for $D \in \cup_Y \D_Y(X) = \D(X) \setminus \odelta(X)$, we have $\scal{\rho(\nu_D),\chi} = n_D = 0$ thus $\chi \in \cC_X^\perp$.

  2. The abelian group $\PL(X)$ is a subgroup of a product of free finitely generated abelian groups thus it is a free finitely generated abelian group. Since $\LL(X)$ is a subgroup of $\PL(X)$ it is also finitely generated. With the above exact sequence, we get the result.

  3. Note that $\PL(X)/\LL(X)$ is torsion free: if $l \in \PL(X)$ is such that $nl \in \LL(X)$, then $l$ itself is linear thus in $\LL(X)$. The quotient $\PL(X) / \LL(X)$ is therefore torsion free. If $X$ contains a proper $G$-orbit, then $\cC_X$ is of maximal dimension. This gives $\cC_X^\perp = 0$ and the torsion free assertion.
  \end{proof}

\begin{remark}
If $X$ has no complete $G$-orbit then the Picard group may have torsion as shows the example
$X=G$ with $G$ semi-simple not simply connected (this is a spherical variety for the action of $G\times G$).
\end{remark}

\begin{thm}
  \label{thm-pic-gg}
  Let $X$ be a spherical variety. For $\delta$ a Cartier $B$-stable divisor let
$l_\delta$ be the image of its class under the map $\pic(X)\to \PL(X)/\LL(X)$. We have 
  $$\delta=\sum_{D \in D(X) \setminus \odelta(X)} \scal{\rho(\nu_D),l_\delta} D + \sum_{D \in \odelta(X)} n_D D.$$
  Then $\delta$ is globally generated iff
    for any $G$-orbit $Y$, there exists $\chi_Y \in \X(X)$ with
  \begin{itemize}
  \item[$\bullet$] $\chi_Y\vert_\cox=l_\delta\vert_\cox$ ;
  \item[$\bullet$] $\chi_Y\vert_{\cC_X\setminus\cox}\leq l_\delta\vert_{\cC_X\setminus\cox}$ and
  \item[$\bullet$] $\scal{\rho(\nu_D),\chi_Y}\leq n_D$ for all
    $D \in \odelta(X)$.
  \end{itemize}
\end{thm}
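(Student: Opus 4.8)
The plan is to reduce global generation, which is both a local and a $G$-invariant condition, to an orbit-by-orbit criterion, and then to translate the existence of a suitable nonvanishing section into the three conditions on $\chi_Y$. After replacing $G$ by a finite cover so that $\cO_X(\delta)$ is $G$-linearised (this affects neither global generation nor the combinatorial data), the base locus $\mathrm{Bs}(\delta)=\{x\in X : s(x)=0 \text{ for all } s\in H^0(X,\cO_X(\delta))\}$ is closed and $G$-stable, hence a union of orbit closures. Thus $\delta$ is globally generated if and only if $\mathrm{Bs}(\delta)$ contains no $G$-orbit, i.e. for every $G$-orbit $Y$ there is a section not vanishing at some point of $Y$.

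The key step is to upgrade this to the existence of a $B$-\emph{eigensection} not vanishing at the generic point of $Y$. Writing $V=H^0(X,\cO_X(\delta))$ and $V_Y=\{s\in V : s|_Y\equiv 0\}$, both are rational $G$-modules and $V_Y$ is a submodule, and a section nonvanishing somewhere on $Y$ exists precisely when $V/V_Y\neq 0$. Since $\chara(\kk)=0$, complete reducibility gives a $G$-stable complement, so a highest weight vector of the nonzero module $V/V_Y$ lifts to a $B$-eigensection $s\in V\setminus V_Y$. Because $Y$ is spherical, every proper $B$-stable closed subset of $Y$ misses the dense $B$-orbit; as the zero locus of the $B$-eigensection $s$ on $Y$ is $B$-stable and proper, $Y\not\subset\Supp(\divi(s)+\delta)$. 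Hence $\delta$ is globally generated iff for every $G$-orbit $Y$ there is $f\in\kk(X)^{(B)}$, of some weight $\chi$, with $\divi(f)+\delta\geq 0$ and $Y\not\subset\Supp(\divi(f)+\delta)$.

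I would then make this combinatorial. For each $\chi\in\X(X)$ there is an eigenfunction $f_\chi\in\kk(X)^{(B)}$, unique up to scalar since $\kk(X)^B=\kk$, with $\nu_D(f_\chi)=\scal{\rho(\nu_D),\chi}$ for every $D\in\D(X)$. Using the expression for $\delta$ in the statement, the coefficient of $D$ in $\divi(f_\chi)+\delta$ equals $\scal{\rho(\nu_D),\chi}+\scal{\rho(\nu_D),l_\delta}$ when $D\in\D(X)\setminus\odelta(X)$ and $\scal{\rho(\nu_D),\chi}+n_D$ when $D\in\odelta(X)$. The two requirements $\divi(f_\chi)+\delta\geq 0$ and $Y\not\subset\Supp$ say exactly that these coefficients are $\geq 0$ for all $D$ and vanish for $D\in\D_Y(X)$. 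Setting $\chi_Y=-\chi$ converts them into: $\scal{\rho(\nu_D),\chi_Y}=\scal{\rho(\nu_D),l_\delta}$ for $D\in\D_Y(X)$; $\scal{\rho(\nu_D),\chi_Y}\leq\scal{\rho(\nu_D),l_\delta}$ for $D\in\D(X)\setminus(\odelta(X)\cup\D_Y(X))$; and $\scal{\rho(\nu_D),\chi_Y}\leq n_D$ for $D\in\odelta(X)$. Since the $\rho(\nu_D)$ with $D\in\D_Y(X)$ generate $\cox$ and $l_\delta|_\cox$ is linear, the first family is $\chi_Y|_\cox=l_\delta|_\cox$; on each subordinate cone $\cC_Z\subset\cC_X\setminus\cox$ both $\chi_Y$ and $l_\delta$ are linear, so the inequalities at the ray generators $\rho(\nu_D)$, $D\in\D_Z(X)$, are equivalent, by convexity, to the pointwise inequality $\chi_Y|_{\cC_X\setminus\cox}\leq l_\delta|_{\cC_X\setminus\cox}$; and the last family is the colored condition $\scal{\rho(\nu_D),\chi_Y}\leq n_D$. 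This yields both implications of the theorem, the forward one from a g.g. $\delta$ producing the $\chi_Y$, the converse from the $\chi_Y$ producing via $f_{-\chi_Y}$ a section missing each orbit, so $\mathrm{Bs}(\delta)$ is empty.

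The hard part will be the second step: the passage from ``a section nonvanishing at one point of $Y$'' to ``a $B$-eigensection nonvanishing at the generic point of $Y$'' genuinely combines complete reducibility in characteristic $0$ with the sphericity of orbit closures. The remaining difficulty is purely bookkeeping with the Luna--Vust cone combinatorics: one must track which divisors $D$ lie in $\D_Y(X)$, in $\odelta(X)$, or in some other $\D_Z(X)$, and invoke the convexity argument to see that the pointwise inequality on $\cC_X\setminus\cox$ is equivalent to its validity at the distinguished points $\rho(\nu_D)$. The dictionary relating eigenfunctions, the numbers $\scal{\rho(\nu_D),\chi}$ and the piecewise-linear data is exactly that of Theorem~\ref{thm-simple} applied on each simple chart $X_{Y,G}$, and the rest is routine.
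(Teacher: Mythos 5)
Your proof is correct and follows essentially the same route as the paper's: reduce global generation to the existence, for each $G$-orbit $Y$, of a $B$-eigensection of $\cO_X(\delta)$ not vanishing identically on $Y$, and then translate the effectivity of $\divi(f)+\delta$ and the condition $Y\not\subset\Supp(\divi(f)+\delta)$ into the stated (in)equalities via the valuations $\rho(\nu_D)$. You merely make explicit two points the paper leaves implicit, namely the $G$-linearisation/base-locus reduction and the complete-reducibility argument producing the $B$-eigensection.
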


\begin{proof}
The divisor $\delta$ is globally generated if and only if for any $G$-orbit $Y$, there exists $s\in H^0(X,\delta)$ such that $s\vert_Y$ does not vanish everywhere. We may furthermore assume that $s$ is a $B$-eigenvector. We then have
$\delta=A+\divi(f)$ with $A$ effective $B$-stable and $f$ a $B$-eigenfunction. Let $\chi_Y$ be the weight of $f$. Note that $A$ is effective and does not
contain $Y$ in its support. This is equivalent to the inequalities $\scal{\rho(\nu_D),l_\delta}\geq\scal{\rho(\nu_D),\chi_Y}$ for all $D\in \D(X) \setminus \odelta(X)$ with equality for $D\supset Y$ and $\scal{\rho(\nu_D),\chi_Y} \leq n_D$ for all $D \in \odelta(X)$.
\end{proof}

If $X$ is complete, the element $l=(l_Y)\in \PL(X)$ completely determines for each closed $G$-orbit $Y$ an element $\chi_Y\in\X(G/H)$ such that $l_Y=\chi_Y$ so that the value $l_Y(\rho({\nu_D}))=\scal{\chi_Y,\rho({\nu_D})}$ is well defined even if $\rho({\nu_D})$ does not lie in $\cox$. This simplifies the statement of the above result.

\begin{defn}
An element $l=(l_Y)\in \PL(X)$ is called convex if $l_Y \geq (l_Z)\vert_\cox$ for any orbit $Y$ and any closed orbit $Z$. If for $Z \not\subset \overline{Y}$ the inequality is strict on $\cox \setminus \cC_Z(X)$, then $l$ is called strictly convex.
\end{defn}

\begin{cor}
\label{theo-pic}
Let $X$ be a complete spherical variety.

For $\delta$ a Cartier $B$-stable divisor let
$l_\delta=(l_Y)$ be the image of its class under the map $\pic(X)\to \PL(X)/\LL(X)$. We have
$$\delta=\sum_{D \in \D(X) \setminus \odelta(X)} \scal{\rho(\nu_D),l_\delta} D + \sum_{D \in \odelta(X)} n_D D.$$

1. The divisor $\delta$ is globally generated if and only if $l_\delta = (l_Y)$ is convex and the inequality $l_Y(\rho({\nu_D}))\leq n_D$ holds
for all closed $G$-orbits $Y$ and all $D \in \odelta(X)$.

2. The divisor $\delta$ is ample if and only if $l_\delta = (l_Y)$ is strictly convex and the inequality $l_Y(\rho({\nu_D}))< n_D$ holds
for all closed $G$-orbits $Y$ and all $D \in \odelta(X)$.
\end{cor}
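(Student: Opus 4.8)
The plan is to derive both statements directly from Theorem \ref{thm-pic-gg}, exploiting the simplification that completeness brings. The key preliminary observation is the one recorded just before the definition of convexity: when $X$ is complete, every closed $G$-orbit $Y$ corresponds to a full-dimensional colored cone $\cC_Y(X)$, so the restriction $l_\delta|_{\cC_Y(X)}$ extends uniquely to a genuine character $\chi_Y = l_Y \in \X(G/H)$. Thus in Theorem \ref{thm-pic-gg} the auxiliary characters attached to closed orbits are forced to equal the $l_Y$, and the problem of verifying the criterion over all orbits reduces, via convexity, to inequalities at closed orbits only.

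For statement 1, I would argue as follows. For the ``if'' direction, assume $l_\delta$ is convex and $l_Y(\rho(\nu_D)) \leq n_D$ for all closed $Y$ and $D \in \odelta(X)$. Given an arbitrary $G$-orbit $Y$, I choose (using completeness) a closed orbit $Z \subseteq \overline{Y}$, so that $\cC_Y(X)$ is a face of $\cC_Z(X)$, and I set $\chi_Y := l_Z$. The three conditions of Theorem \ref{thm-pic-gg} then hold: the equality $\chi_Y|_{\cC_Y(X)} = l_\delta|_{\cC_Y(X)}$ because $l_\delta$ agrees with the linear function $l_Z$ on $\cC_Z(X) \supseteq \cC_Y(X)$; the inequality $\chi_Y \leq l_\delta$ on each cone $\cC_{Y'}(X)$ because there $l_\delta = l_{Y'}$ and convexity gives $l_Z|_{\cC_{Y'}(X)} \leq l_{Y'}$; and $\scal{\rho(\nu_D),\chi_Y} = l_Z(\rho(\nu_D)) \leq n_D$ by hypothesis. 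Conversely, if $\delta$ is globally generated I apply Theorem \ref{thm-pic-gg} at each closed orbit $Z$; full-dimensionality of $\cC_Z(X)$ forces the resulting character to be $l_Z$, whence the third condition yields $l_Z(\rho(\nu_D)) \leq n_D$, and the second condition, read cone by cone, is exactly the convexity of $l_\delta$.

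For statement 2, the plan is to characterize ampleness as ``globally generated together with finiteness of the associated morphism'', and to match the two failure modes of finiteness with the two strictness conditions. For the ``if'' direction, strict convexity implies convexity, so by statement 1 the divisor $\delta$ is globally generated and defines a morphism $\phi_\delta$ from $X$ to a projective space; since $X$ is complete, $\phi_\delta$ is proper and $\delta$ is ample precisely when $\phi_\delta$ is finite. On each simple chart $X_{Y,G}$ around a closed orbit $Y$ the strict inequalities $l_Y(\rho(\nu_D)) < n_D$ make $\delta|_{X_{Y,G}}$ ample by Theorem \ref{thm-simple}(3), so $\phi_\delta$ is quasi-finite near each closed orbit; the strict convexity across distinct maximal cones produces sections separating distinct closed orbits, so no curve joining two orbits is contracted. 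Hence $\phi_\delta$ is quasi-finite, thus finite, and $\delta$ is ample. For the ``only if'' direction, an ample $\delta$ is globally generated, so statement 1 gives convexity and the non-strict inequalities; restricting to each $X_{Y,G}$ and invoking Theorem \ref{thm-simple}(3) upgrades the $\odelta(X)$-inequalities to strict ones, while a failure of strict convexity across two maximal cones would yield a complete curve on which $\delta$ has degree zero, contradicting ampleness.

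I expect the genuine obstacle to lie in the cross-cone strict convexity for statement 2: translating the combinatorial condition ``$l_Z|_{\cC_Y(X)} < l_Y$ on $\cC_Y(X) \setminus \cC_Z(X)$'' into the geometric statements that (in the ``if'' direction) $\phi_\delta$ separates the closed orbits and contracts no curve, and (in the ``only if'' direction) that its failure manufactures an explicit $\delta$-trivial complete curve to contradict the Nakai--Moishezon criterion. Matching these combinatorial walls with actual contracted curves is the delicate point; the local ampleness on each $X_{Y,G}$ and the reduction of statement 1 to closed orbits are, by contrast, routine once the completeness observation is in place.
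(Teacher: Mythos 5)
Your part~1 is correct and is exactly the paper's (unwritten) argument: the paper just says it ``follows directly'' from Theorem~\ref{thm-pic-gg}, and your choices $\chi_Y:=l_Z$ for a closed orbit $Z\subset\overline{Y}$ in one direction, and the forced equality $\chi_Z=l_Z$ on full-dimensional cones in the other, are the intended reading.

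For part~2 you use the same two ingredients as the paper --- Theorem~\ref{thm-simple}(3) on the simple charts $X_{Y,G}$ and separation of the closed orbits --- but a different gluing mechanism, and that is where your argument does not close. The paper does not go through finiteness of $\phi_\delta$: unpacking its ``reduction to the simple case'', the strict hypotheses produce for each closed orbit $Y$ a $B$-eigensection $s_Y$ of a large multiple $N\delta$ with $\divi(s_Y)=N\delta+\divi(f)$ ($f$ of weight $Nl_Y$) supported exactly on $\bigcup_{D\in\D(X)\setminus\D_Y(X)}D$, so that $X_{s_Y}=X_{Y,B}$ is affine, the translates $X_{gs_Y}$ cover $X$, and the surjectivity of $(\oplus_nH^0(X,nN\delta))_{(s_Y)}\to\kk[X_{Y,B}]$ is checked word for word as in the proof of Theorem~\ref{thm-simple}(3); this verifies the ampleness criterion directly and never mentions contracted curves. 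Your route instead needs ``$\phi_\delta$ contracts no complete curve'', and your two cases (curves lying in a single chart $X_{Y,G}$; curves joining two closed orbits) do not exhaust the possibilities: a contracted complete curve could meet a closed orbit $Y$, leave the open set $X_{Y,G}$, and never meet a second closed orbit. Ruling this out either requires the curve--divisor duality developed only in the next section of the paper (so it would be circular here) or the affine-cover argument above. Two further points. First, to get ampleness of $\delta\vert_{X_{Y,G}}$ from Theorem~\ref{thm-simple}(3) you must make \emph{all} coefficients on $\odelta(X_{Y,G})$ positive, and this set is larger than the restrictions of $\odelta(X)$: it also contains restrictions of divisors of $X$ containing $G$-orbits whose closure does not contain $Y$, whose normalized coefficients are of the form $l_Z(\rho(\nu_D))-l_Y(\rho(\nu_D))$; their positivity is exactly where strict convexity enters, not merely in separating orbits. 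Second, your ``only if'' direction is harder than necessary: ampleness restricts to open subsets, so applying Theorem~\ref{thm-simple}(3) on each chart already yields both the strict $\odelta(X)$-inequalities and strict convexity through the coefficients just described, with no need to manufacture a $\delta$-trivial curve.
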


\begin{proof}
  1. Follows directly from the previous result.
  
2. By the previous result, a large multiple of $\delta$
separates closed orbits (\emph{i.e.} for two distinct closed orbits,
there is a divisor linearly equivalent to $\delta$ containing one of
the orbits and not the other) if and only if  $l_Y\neq l_Z$ for $Y$ and
$Z$ two distinct closed orbits. The proof therefore reduces to the
case of a simple spherical variety $X$ with closed orbit $Y$ in
which case we may choose $l_Y=0$. The result follows from Theorem \ref{thm-simple}.
\end{proof}

In the next three corollaries, $X$ is a complete spherical variety.

\begin{cor}
    Any ample divisor is globally generated.
\end{cor}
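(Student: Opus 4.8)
The plan is to deduce the statement immediately from the two characterisations in Corollary \ref{theo-pic}, since these were formulated precisely so that ampleness and global generation become explicit combinatorial conditions on the same data. First I would reduce to the case of a $B$-stable Cartier divisor. Both ampleness and global generation depend only on the linear equivalence class of $\delta$, and by the description of $\pic(X)$ in Theorem \ref{thm-pic-s-t} every Cartier divisor class is represented by a $B$-stable divisor. Hence I may assume that $\delta = \sum_{D \in \D(X) \setminus \odelta(X)} \scal{\rho(\nu_D),l_\delta} D + \sum_{D \in \odelta(X)} n_D D$, where $l_\delta = (l_Y)$ is the image of its class under $\pic(X) \to \PL(X)/\LL(X)$, exactly as in the hypotheses of Corollary \ref{theo-pic}.

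Now suppose $\delta$ is ample. By part 2 of Corollary \ref{theo-pic}, the element $l_\delta$ is strictly convex and the strict inequalities $l_Y(\rho(\nu_D)) < n_D$ hold for every closed $G$-orbit $Y$ and every $D \in \odelta(X)$. The key observation is that these hypotheses are formally stronger than those appearing in part 1 of the same corollary. Indeed, strict convexity entails convexity: the extra requirement in the definition of strict convexity (strictness on $\cox \setminus \cC_Z(X)$ for $Z \not\subset \overline{Y}$) only reinforces the weak inequality $l_Y \geq (l_Z)\vert_\cox$ that defines convexity. Likewise, the strict numerical inequalities $l_Y(\rho(\nu_D)) < n_D$ immediately imply the weak ones $l_Y(\rho(\nu_D)) \leq n_D$. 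Thus the two conditions of part 1 are satisfied, and that criterion yields that $\delta$ is globally generated.

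There is essentially no obstacle here: the entire content has already been packaged into Corollary \ref{theo-pic}, and the argument is simply the remark that the ampleness criterion is a strengthening of the global generation criterion (strict convexity plus strict inequalities versus convexity plus weak inequalities). The only point meriting a word of justification is the reduction to a $B$-stable representative, which is routine given the preceding Picard group computations; everything else is the observation that $<$ implies $\leq$ and that strict convexity implies convexity.

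\begin{proof}
By Theorem \ref{thm-pic-s-t} we may replace $\delta$ by a linearly equivalent $B$-stable Cartier divisor and write $\delta = \sum_{D \in \D(X) \setminus \odelta(X)} \scal{\rho(\nu_D),l_\delta} D + \sum_{D \in \odelta(X)} n_D D$ with $l_\delta = (l_Y)$ its image in $\PL(X)/\LL(X)$. If $\delta$ is ample, then by part 2 of Corollary \ref{theo-pic} the function $l_\delta$ is strictly convex and $l_Y(\rho(\nu_D)) < n_D$ for every closed $G$-orbit $Y$ and every $D \in \odelta(X)$. Strict convexity implies convexity, and the strict inequalities imply $l_Y(\rho(\nu_D)) \leq n_D$. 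Hence the conditions of part 1 of Corollary \ref{theo-pic} hold, and $\delta$ is globally generated.
\end{proof}
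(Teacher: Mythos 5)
Your proof is correct and is exactly the intended argument: the paper states this corollary without proof as an immediate consequence of Corollary \ref{theo-pic}, since by definition strict convexity implies convexity and the strict inequalities $l_Y(\rho(\nu_D)) < n_D$ imply the weak ones. Your extra remark on reducing to a $B$-stable representative is a harmless (and correct) bit of bookkeeping already built into the hypotheses of Corollary \ref{theo-pic}.
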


\begin{cor}
\label{nef=gg}
Nef and globally generated line bundles agree.
\end{cor}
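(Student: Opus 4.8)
The plan is to prove the two containments separately: that globally generated line bundles are nef, which is general, and that nef line bundles are globally generated, which is where the combinatorics of Corollary \ref{theo-pic} enters.

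First I would dispatch the inclusion of globally generated classes into nef classes, valid on any complete variety. If $\delta$ is globally generated, its global sections define a morphism $\varphi\colon X\to\p^N$ with $\cO_X(\delta)\simeq\varphi^*\cO(1)$; since $\cO(1)$ is nef and nefness is stable under pullback, $\delta$ is nef. This requires nothing beyond the definitions.

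For the reverse inclusion I would combine Kleiman's criterion with the explicit descriptions from Corollary \ref{theo-pic}. The inclusion of ample classes into globally generated ones is exactly the preceding corollary, so it remains to show that every nef class is globally generated. The key observation is that the globally generated classes form a \emph{closed} cone in the finite-dimensional space $\pic(X)_\R$ (finite-dimensional by Theorem \ref{thm-pic-s-t}): by Corollary \ref{theo-pic} global generation of $\delta$ is characterized by convexity of the associated $l_\delta=(l_Y)$ together with the non-strict inequalities $l_Y(\rho(\nu_D))\le n_D$ for all closed $G$-orbits $Y$ and all $D\in\odelta(X)$, and each of convexity and these inequalities is a non-strict (hence closed) linear condition on the class. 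The ample classes satisfy the corresponding \emph{strict} versions, so the ample cone is the interior and the globally generated cone is its closure. Since the globally generated cone is closed and contains every ample class, it contains the closure of the ample cone; by Kleiman's criterion this closure is exactly the nef cone. Hence every nef class is globally generated, and together with the first step this gives the asserted equality.

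The main obstacle is the tacit use, in invoking that the nef cone is the closure of the ample cone, that $X$ actually carries an ample divisor, i.e. that $X$ is projective. For projective $X$ this is immediate and the closure computation is clean. The point that would require genuine care is the complete but non-projective case (already present among toric, hence spherical, varieties), where no ample divisor exists and ``closure of the ample cone'' degenerates; there one must argue directly that nefness is equivalent to convexity of $l_\delta$ together with the inequalities $l_Y(\rho(\nu_D))\le n_D$, which is most naturally done by testing $\delta$ against the $B$-stable curves of $X$ (whose intersection theory makes these inequalities the exact numerical translation of nefness) or chart by chart on the simple pieces $X_{Y,G}$ via Theorem \ref{thm-simple}. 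I expect the only technical check to be that the relevant conditions vary continuously on $\pic(X)_\R$, so that strict conditions relax to their non-strict counterparts in the limit; the rest of the argument is formal.
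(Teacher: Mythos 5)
Your argument is essentially identical to the paper's: the proof given there is precisely that Corollary \ref{theo-pic} exhibits the globally generated cone as the closure of the ample cone (non-strict versus strict versions of the same linear conditions), and that this closure is the nef cone by \cite[Theorem 1.4.23]{Lazarsfeld}. The caveat you raise about complete non-projective $X$ (where the ample cone is empty and one must test nefness directly against $B$-stable curves) is a fair point, but it applies equally to the paper's own proof, which tacitly invokes Kleiman's theorem in the projective setting.
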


\begin{proof}
By Corollary \ref{theo-pic}, the cone of globally generated line bundles is the closure of the ample cone. The latter is the nef cone
(see \cite[Theorem 1.4.23]{Lazarsfeld}).
\end{proof}

We write $N^1(X)$ for the group of divisors classes modulo numerical equivalence.

\begin{cor}
\label{pic=n1}
We have $\pic(X) = N^1(X)$.
\end{cor}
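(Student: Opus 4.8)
The plan is to use that the natural map $\pic(X)\to N^1(X)$ is surjective — numerical equivalence is coarser than linear equivalence, so the assignment of a line bundle to its numerical class descends to a surjection of class groups — and then to prove it is injective. Its kernel is exactly the group of numerically trivial classes, so, $\pic(X)$ being free of finite rank by Theorem \ref{thm-pic-s-t}(3) (the closed $G$-orbit of the complete variety $X$ is itself complete) and $N^1(X)$ being torsion-free, the whole statement reduces to showing that a numerically trivial Cartier $B$-stable divisor $\delta$ is linearly equivalent to $0$.

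The key reduction I would make is the elementary equivalence, valid on any complete variety, that $\delta$ is numerically trivial if and only if both $\delta$ and $-\delta$ are nef: $\delta\cdot C=0$ for every complete curve $C$ forces $\pm\delta$ to be nef, and conversely $\pm\delta$ nef forces $\delta\cdot C=0$ for all $C$. By Corollary \ref{nef=gg} this means that both $\delta$ and $-\delta$ are globally generated, and now I would feed this into the combinatorial description of Corollary \ref{theo-pic}(1). Global generation of $\delta$ forces the piecewise linear function $l_\delta\in\PL(X)$ to be convex, while global generation of $-\delta$ forces $l_{-\delta}=-l_\delta$ to be convex; a function that is convex together with its negative is linear, so $l_\delta\in\LL(X)$, i.e. the image of the class of $\delta$ under $\pic(X)\to\PL(X)/\LL(X)$ vanishes. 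Since $X$ has a complete $G$-orbit we have $\cC_X^\perp=0$, so the exact sequence of Theorem \ref{thm-pic-s-t}(1) identifies the kernel of $\pic(X)\to\PL(X)/\LL(X)$ with $\Z\odelta(X)$; hence, after subtracting a suitable principal divisor, I may assume $l_\delta=0$ and $\delta=\sum_{D\in\odelta(X)}n_DD$. Re-inserting $l_\delta=0$ into Corollary \ref{theo-pic}(1) yields $n_D\geq 0$ for all $D\in\odelta(X)$ from global generation of $\delta$ and $n_D\leq 0$ from global generation of $-\delta$; thus every $n_D=0$ and $\delta$ is trivial in $\pic(X)$.

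I expect the main point to be organising this reduction rather than any hard computation: the decisive ingredient is Corollary \ref{nef=gg}, which converts the numerical condition into the purely convex-geometric statement of Corollary \ref{theo-pic}. It is worth emphasising that this route deliberately avoids any appeal to projectivity of $X$ — complete spherical varieties, already complete toric ones, need not be projective, so one cannot simply invoke salience of the ample cone — and instead extracts everything from the convex description of globally generated (equivalently nef) divisors. Once injectivity is established, combining it with the definitional surjectivity gives $\pic(X)=N^1(X)$.
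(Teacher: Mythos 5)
Your argument is correct, but it is a genuinely different route from the one in the paper. The paper disposes of the corollary in one line: the kernel of $\pic(X)\to N^1(X)$ consists of numerically trivial classes, which form a torsion group modulo rational equivalence by a general fact from \cite[Section 19.3.3]{fulton}, while $\pic(X)$ is torsion free by Theorem \ref{thm-pic-s-t} since $X$ is complete; hence the kernel vanishes. You instead prove directly, inside the combinatorial framework of the paper, that a numerically trivial Cartier class is trivial: $\pm\delta$ nef, hence globally generated by Corollary \ref{nef=gg}, hence $l_\delta$ and $-l_\delta$ both convex, hence $l_\delta\in\LL(X)$, and then the residual coefficients on $\odelta(X)$ are squeezed to zero by the two sets of inequalities. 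This is a clean and self-contained argument (the step ``convex together with anti-convex implies linear'' does hold with the paper's definition, since it forces $l_Y=l_Z\vert_{\cC_Y(X)}$ for every orbit $Y$ and every closed orbit $Z$, and closed orbits have full-dimensional cones when $X$ is complete), and it has the merit of not invoking the deep external input from Fulton. One caveat: your closing claim that the route ``avoids any appeal to projectivity'' is not quite earned, because Corollary \ref{nef=gg} is itself proved in the paper by identifying the globally generated cone with the closure of the ample cone and quoting Kleiman's theorem from \cite[Theorem 1.4.23]{Lazarsfeld}, which is a projective statement; so the dependence on projectivity is merely displaced, not removed. Also, the opening appeal to torsion-freeness of $\pic(X)$ and of $N^1(X)$ is redundant in your scheme, since you end up proving outright that every numerically trivial class is linearly trivial.
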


\begin{proof}
  This follows from the fact that $\pic(X)$ is torsion free for $X$ complete (see Theorem \ref{thm-pic-s-t}) and the fact that the group of homologically trivial divisors modulo rational equivalence is a torsion group \cite[Section 19.3.3]{fulton}.
\end{proof}

\begin{example}
It is not true that ample implies very ample. Consider $X$ the double cover of $\p^3$ ramified along the coordinate divisors. This is a normal toric variety for the torus $\G_m^3$ preserving the coordinates. However, the pull-back of $\cO_{\p^3}(1)$ is ample but not very ample: one checks that it precisely defines the above 2-to-1 cover. Note that $X$ is not smooth. For smooth toric varieties ample and very ample line bundles are the same (see \cite{demazure} or \cite{EW}).
\end{example}

\begin{cor}
A spherical variety $X$ is quasi-projective if and only if there
exists a strictly convex $\Q$-valued function on $\cC_X$ linear on each cone $\cox$.
\end{cor}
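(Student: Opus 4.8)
The plan is to reduce the statement to the existence of an ample line bundle and then to the combinatorial ample criterion. Recall first that a variety of finite type over $\kk$ is quasi-projective if and only if it carries an ample line bundle, and that by the description of $\pic(X)$ (together with the classification of Weil divisors) every line bundle on $X$ is of the form $\cO_X(\delta)$ for a $B$-stable Cartier divisor $\delta$. Writing
$$\delta = \sum_{D \in \D(X)\setminus\odelta(X)} \scal{\rho(\nu_D),l_\delta}\, D + \sum_{D\in\odelta(X)} n_D D,$$
with $l_\delta \in \PL(X)$ its associated piecewise linear function (Lemma \ref{lem-cartier} and Theorem \ref{thm-pic-s-t}), the corollary amounts to the assertion that \emph{$X$ admits an ample $B$-stable Cartier divisor if and only if $\PL(X)_\Q$ contains a strictly convex element.}

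First I would establish necessity. If $\delta$ is ample, then a large multiple separates the finitely many closed $G$-orbits, so that $l_Y \neq l_Z$ for distinct closed orbits $Y,Z$; restricting to each $G$-chart $X_{Y,G}$ and invoking the ampleness criterion for simple spherical varieties (Theorem \ref{thm-simple}.3) upgrades this to strict convexity of $l_\delta$ on $\cC_X$, which is the sought function once integrality is forgotten.

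For the converse I would start from a strictly convex $l \in \PL(X)_\Q$ and scale it by a positive integer $N$ so that $Nl$ becomes integral and hence represents a Cartier divisor class via Theorem \ref{thm-pic-s-t}; choosing a $B$-stable representative $\delta$, I may then enlarge the coefficients $n_D$ for $D \in \odelta(X)$ until $l_Y(\rho(\nu_D)) < n_D$ holds for every closed orbit $Y$, which is possible precisely because these coefficients are left unconstrained by $l$. It remains to check that $\delta$ is ample. Global generation of a high multiple follows from the convexity of $l_\delta$ together with these inequalities, exactly as in Theorem \ref{thm-pic-gg}; restricting to each simple chart $X_{Y,G}$, strict convexity forces the inherited local coefficients to be strictly positive, so $\delta|_{X_{Y,G}}$ is ample by Theorem \ref{thm-simple}.3, while strict convexity across cones guarantees that distinct closed orbits are separated. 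Hence a large multiple of $\delta$ defines a locally closed immersion, $\delta$ is ample, and $X$ is quasi-projective.

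The hard part will be the sufficiency direction, namely upgrading strict convexity to genuine ampleness on the possibly non-complete $X$: one cannot quote Corollary \ref{theo-pic} directly, since it assumes $X$ complete, and must instead verify ampleness locally on the simple charts $X_{Y,G}$ through Theorem \ref{thm-simple}.3 and then glue using the separation of closed orbits encoded by strict convexity. Verifying that the local coefficients inherited on each chart are strictly positive, and that the resulting local ample bundles patch into a single global one, is the technical heart of the argument.
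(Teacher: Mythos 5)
The paper states this corollary without proof, so there is no argument to compare against line by line; your overall reduction --- quasi-projectivity is equivalent to the existence of an ample $B$-stable Cartier divisor, which is then matched against a strictly convex element of $\PL(X)_\Q$ via Theorem \ref{thm-simple} and Theorem \ref{thm-pic-gg} --- is certainly the intended route. However, the sufficiency direction as you sketch it has a genuine gap. You propose to ``verify ampleness locally on the simple charts $X_{Y,G}$ and then glue using the separation of closed orbits''. Ampleness is not a Zariski-local property: on a complete non-projective toric threefold every maximal chart is affine, so every line bundle is ample on every chart, yet no line bundle on the whole variety is ample. Knowing that $\delta\vert_{X_{Y,G}}$ is ample only says that sections of $\delta\vert_{X_{Y,G}}$ \emph{over the chart} embed the chart; it does not say that global sections of $n\delta$ restrict to enough sections there, and ``separating closed orbits'' is far too weak to bridge this. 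The missing step is precisely where strict convexity is used: for each closed orbit $Y$ choose $\chi_Y\in\X(X)_\Q$ with $\chi_Y=l$ on $\cox$, $\chi_Y<l$ on $\cC_X\setminus\cox$, and $\scal{\rho(\nu_D),\chi_Y}<n_D$ for all $D\in\odelta(X)$; then for suitable $n$ the divisor $n\delta-\div(f)$, with $f$ a $B$-eigenfunction of weight $n\chi_Y$, is effective with support exactly the union of the $B$-stable divisors not containing $Y$, so the corresponding $B$-eigensection $s_Y$ of $n\delta$ has non-vanishing locus the affine chart $X_{Y,B}$, and the surjectivity of $(\bigoplus_m H^0(X,m\delta))_{(s_Y)}\to\kk[X_{Y,B}]$ follows verbatim as in the proof of part 3 of Theorem \ref{thm-simple}. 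The $G$-translates $X_{g\cdot s_Y}$ then cover $X$ and exhibit the locally closed immersion. Without identifying $X_{s_Y}$ with $X_{Y,B}$ there is no argument.

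The necessity direction also has a smaller but real gap. A $G$-stable divisor containing a closed orbit $Z$ but not $Y$ does not meet the chart $X_{Y,G}$ at all, so restricting the ample $\delta$ to the simple charts and quoting Theorem \ref{thm-simple} only yields the strict inequalities $\scal{\rho(\nu_D),l_Z-l_Y}>0$ at the \emph{colors} $D$ not containing $Y$; it gives no control along the rays of $\cC_Z(X)$ lying inside the valuation cone, which are exactly the rays needed for strict convexity of $l_\delta$ in the sense of the paper's definition. To obtain those you must pair $\delta$ with the complete curves of type ($\chi$) living in the rank-one orbit closures attached to the walls between adjacent maximal cones (compare Proposition \ref{prop-dual}), where ampleness forces a strictly positive intersection number, and then chain across walls. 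Your opening observation, that ampleness forces $l_Y\neq l_Z$ for distinct closed orbits, is in any case much weaker than strict convexity and does not ``upgrade'' to it merely by restricting to charts.
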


In \cite{knop}, Knop gives a characterisation of affine spherical
varieties. In the following statement $\mathcal{V}$ is the valuation cone of $X$ (see \cite[Section 4]{jacopo}).

\begin{thm}
A spherical variety $X$ is affine if and only if $X$ is simple and
there exists $\chi\in\X$ such that $\chi\vert_{\mathcal{V}}\leq0$,
$\chi\vert_{\cC_X}=0$ and
$\chi\vert_{\rho(\odelta(X))}>0$.
\end{thm}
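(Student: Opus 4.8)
The plan is to convert affineness into the existence of a single $B$-semi\-invariant regular function and then to read the three conditions off its divisor, using throughout the dictionary between $B$-stable divisors and the cone data supplied by Theorem \ref{thm-simple} and Lemma \ref{lemm-cartier}. First I would dispose of the word \emph{simple}. If $X$ is affine then, $X$ being spherical, $\kk(X)^G\subseteq\kk(X)^B=\kk$, so $\kk[X]^G=\kk$ and the categorical quotient $X/\!/G=\Spec\kk[X]^G$ is a single point; since the quotient of an affine variety by a reductive group parametrises the closed $G$-orbits, $X$ has a unique closed $G$-orbit and is simple. Hence in both directions we may assume $X$ simple with closed orbit $Y$. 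Then every $G$-stable prime divisor contains $Y$, so $\odelta(X)=\D(X)\setminus\D_Y(X)$ is exactly the set of colors missing $Y$, while $\D_Y(X)$ supplies the generators of $\cC_X$. The dictionary I will use is that a $B$-eigenfunction $f\in\kk(X)^{(B)}$ of weight $\chi$ has $B$-stable divisor $\divi(f)=\sum_{D\in\D(X)}\scal{\rho(\nu_D),\chi}\,D$; thus $f$ is regular on $X$ precisely when this divisor is effective, its orders along $G$-stable divisors being recorded by $\mathcal{V}$ in the sign convention of \cite{jacopo}, and its orders along colors by the $\rho(\nu_D)$.

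For the implication ``affine $\Rightarrow$ conditions'' I would take for $\chi$ the weight of a semi\-invariant cutting out the affine chart $X_{Y,B}=X\setminus\bigcup_{D\in\odelta(X)}D$, whose complementary divisor is Cartier and globally generated by Lemma \ref{lemm-cartier}. The key input is that on an affine spherical variety the weight monoid $\{\chi:\kk[X]^{(B)}_\chi\neq0\}$ is finitely generated and its rational cone is cut out by ``$\le0$ on $\mathcal{V}$ and $\ge0$ on the colors''. Choosing $\chi$ in the relative interior of the face of this cone orthogonal to $\cC_X$ produces $f\in\kk[X]^{(B)}$ whose divisor is supported \emph{exactly} on $\odelta(X)$ with strictly positive multiplicities: then $\scal{\rho(\nu_D),\chi}=0$ for every $D\in\D_Y(X)$ gives (ii) $\chi|_{\cC_X}=0$, the positive multiplicities give (iii) $\chi|_{\rho(\odelta(X))}>0$, and membership in the weight monoid gives (i) $\chi|_{\mathcal{V}}\le0$.

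Conversely, given such a $\chi$, set $f=f_\chi\in\kk(X)^{(B)}$ of weight $\chi$. Conditions (ii) and (iii) force $\divi(f)=\sum_{D\in\odelta(X)}\scal{\rho(\nu_D),\chi}\,D\ge0$, so $f\in\kk[X]^{(B)}$ and $X_f=X_{Y,B}$ is affine with $G\cdot X_f=X$. The remaining, and main, task is to upgrade ``$X$ is the union of the affine translates $gX_f$'' to ``$X$ is affine'', and this is where hypothesis (i) does the real work. The plan is to let $M\subset\kk[X]$ be the finite-dimensional $G$-module spanned by the translates $G\cdot f$ and to study the $G$-morphism $\phi:X\to M^\vee$; one checks that $\phi$ is a locally closed immersion on $X_f$ and that $\phi^{-1}(0)=\emptyset$ because $G\cdot X_f=X$. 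The hard part is to prove $\phi$ proper, equivalently that $\phi(X)$ is closed in $M^\vee$: intuitively $\chi|_{\mathcal{V}}\le0$ forbids any $G$-invariant valuation of $\kk(X)$ from producing a point ``at infinity'' outside the image, so no orbit escapes. Establishing this closedness, hence finiteness of $\phi$ onto an affine image, is the crux; it is exactly Knop's argument in \cite{knop}, and the delicate bookkeeping of the colors $\odelta(X)$, which have no toric counterpart, is precisely what the valuation-cone hypothesis (i) is designed to control.
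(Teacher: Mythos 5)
First, a point of comparison: the paper itself gives no proof of this theorem --- it is quoted from Knop's work \cite{knop} --- so your attempt can only be judged on its own merits. The parts you get right are the easy reductions: affineness forces simplicity because closed orbits of a reductive group on an affine variety are separated by invariants and $\kk[X]^G\subseteq\kk(X)^B=\kk$; and, in the converse direction, conditions (ii) and (iii) alone already show that the eigenfunction $f_\chi$ has $\divi(f_\chi)=\sum_{D\in\odelta(X)}\scal{\rho(\nu_D),\chi}D\geq 0$, hence is regular, with $X_{f_\chi}=X_{Y,B}$ an affine $B$-chart whose $G$-translates cover $X$.

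Both remaining steps, however, have genuine gaps. For ``affine $\Rightarrow$ (i)'', your stated key input --- that the rational cone of the weight monoid of $\kk[X]$ is cut out by ``$\leq 0$ on $\mathcal{V}$ and $\geq 0$ on the colors'' --- is false: take $X=T$ a torus acting on itself, where $\mathcal{V}=\X(X)^\vee_\Q$ is the whole space while the weight monoid is all of $\X(X)$. Regularity of a $B$-eigenfunction only constrains $\chi$ against the valuations of the $B$-stable prime divisors actually present on $X$, i.e.\ against $\rho(\D(X))$ and the finitely many invariant valuations spanning $\cC_X\cap\mathcal{V}$ --- not against all of $\mathcal{V}$. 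The inequality $\chi\vert_{\mathcal{V}}\leq 0$ is precisely the nontrivial content of the necessity direction (it requires Knop's analysis of how an arbitrary $G$-invariant valuation evaluates on the simple $G$-module $\langle G\cdot f_\chi\rangle\subset\kk[X]$ and on products of such modules), and your argument never produces it. For sufficiency, the crux --- passing from ``$X$ is covered by the $G$-translates of an affine $B$-chart'' to ``$X$ is affine'' --- is exactly the step you defer to ``Knop's argument'', so it is not proved; moreover the mechanism you sketch does not work as stated: for $M=\langle G\cdot f\rangle$ the map $\phi:X\to M^\vee$ is in general nowhere near a locally closed immersion on $X_f$ (one can have $\dim M<\dim X$), so even a properness argument for $\phi$ would not yield affineness of $X$. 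As it stands, neither implication is established.
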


\section{Curves and divisors}

In this section, we study $B$-stable curves and the duality between curves and divisors. We essentially review results from \cite{brion-mori} and \cite{brion-aus}. As a byproduct of this study we give an explicit canonical divisor on spherical varieties in Section \ref{sec-can}.

\subsection{Curves}

Let $X$ be a $G$-variety and $C$ be a complete irreducible curve in $X$ (proper integral one-dimensional subscheme) stable under the action of $B$. Denote by $\pi : \Ct \to C$ the normalisation map. Then $B$ also acts on $\Ct$ and $\pi$ is $B$-equivariant. Since $B$ is solvable, it has at least one fixed point in $C$.

\begin{prop}
  \label{prop-NUT-P}
  Assume that $B$ acts non trivially on $C$, then $C$ is rational and $B$ has at most two fixed points in $\Ct$.

  1. If $B$ has one fixed point in $\Ct$, then there exists a minimal parabolic subgroup $P$ such that the kernel of the action of $B$ on $C$ is $R(P)$, the radical of $P$.
  
  2. If $B$ has two fixed points in $\Ct$, then there exists a character $\chi$ such that $B$ acts via $\chi$.
\end{prop}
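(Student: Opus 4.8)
The plan is to transfer the action to the smooth complete model $\Ct$ and to analyse the induced homomorphism $\phi \colon B \to \Aut(\Ct)$. First I would record that, because $\pi$ is $B$-equivariant and birational, the $B$-actions on $C$ and on $\Ct$ have the same kernel and the same fixed points, so it suffices to work on the smooth complete curve $\Ct$; in particular $\phi$ is nontrivial. If $\Ct$ had genus $\geq 2$ then $\Aut(\Ct)$ would be finite and the connected group $B$ could only act trivially; if $\Ct$ had genus $1$ then $\Aut^0(\Ct)$ would be an abelian variety, and the image of a connected affine algebraic group in an abelian variety is at once affine and complete, hence trivial, so again $\phi$ would be trivial. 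Thus $\Ct$ has genus $0$, i.e. $\Ct \simeq \p^1$ and $C$ is rational, and $\Aut(\Ct) = \PGL_2(\kk)$.

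Next, the image $H = \phi(B)$ is a connected solvable subgroup of $\PGL_2(\kk)$ of positive dimension; by the classification recalled in the proof of Proposition \ref{prop-UNT} it is conjugate to the diagonal torus $T_0$, the unipotent subgroup $U_0$, or the Borel $B_0 = T_0 U_0$. Since $B$ and $H$ have the same fixed locus on $\p^1$, and $T_0$ fixes two points while $U_0$ and $B_0$ fix exactly one, $B$ has at most two fixed points in $\Ct$. I would then exclude $H = U_0$: a surjection $B \to U_0 \simeq \G_a$ factors through the abelianisation $B/(B,B)$, and because $G$ is reductive one has $(B,B) = U$, so the map factors through the torus $T = B/U$ and must be trivial. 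Hence $H$ is $T_0$ (two fixed points) or $B_0$ (one fixed point), which already gives the dichotomy of the statement.

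If $B$ has two fixed points, then $H = T_0 \simeq \G_m$, so $\phi$ is a character $\chi \colon B \to \G_m$ and the action of $T_0$ fixing the two points is by scaling; thus $B$ acts via $\chi$, proving part 2. If $B$ has one fixed point, then $\phi \colon B \twoheadrightarrow B_0 = T_0 U_0$, and I would check that $\phi(T) = T_0$ and $\phi(U) = U_0$, so that by uniqueness of the Levi decomposition in $B_0$ the kernel splits as $(\ker\phi \cap T)(\ker\phi \cap U)$. The restriction $\phi|_U \colon U \to U_0 \simeq \G_a$ is nonzero and equivariant for conjugation by $T$, where $T$ acts on $U_0$ through the character $\chi = \phi|_T$; since the $T$-weights on $U/(U,U)$ are precisely the simple roots, $\chi$ must be a simple root $\alpha$ and $\phi|_U$ the projection onto the $U_\alpha$-factor. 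Taking $P = P_\alpha$, the unipotent part of the kernel is then $R_u(P_\alpha)$ and the toral part is the identity component of $\ker(\alpha\colon T\to\G_m)$, i.e. the radical of the Levi of $P_\alpha$, so the kernel is $R(P_\alpha)$, which is part 1.

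The main obstacle is this last identification of the kernel. The reduction to $\p^1$ and the fixed-point count are comparatively formal once the genus argument is in place, but in the one-fixed-point case one must show that the character by which $T$ acts is \emph{forced} to be a simple root — through the weight decomposition of $U/(U,U)$ — and then match the kernel exactly: on the unipotent side with $R_u(P_\alpha)$ and on the toral side with the radical of the Levi subgroup, so as to recover $R(P_\alpha)$ precisely.
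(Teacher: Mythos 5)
Your proof is correct and reaches all three conclusions, but it runs in the opposite direction from the paper's at two points, so a short comparison is worth recording. For rationality, the paper simply observes that $B$ has a dense orbit in $C$ and $B$ is a rational variety, so $C$ is unirational and hence rational; your genus argument (finite automorphism group in genus $\geq 2$, no nontrivial affine subgroups of an abelian variety in genus $1$) is longer but self-contained and does not use rationality of $B$. For the dichotomy and the identification of the kernel, the paper works with the stabiliser $B_z$ of a general point rather than with the image $\phi(B)\subset\PGL_2(\kk)$: it splits into cases according to whether $U\subset B_z$, and in the one-fixed-point case chooses $z$ to be $T$-fixed, so that $B_z=TH$ with $H=B_z\cap U$ normalised by $T$ and hence a product of root subgroups missing exactly one root $\a$; the fact that $\a$ is simple is then forced because $H$, being a group, would contain $U_\a=[U_\beta,U_\gamma]$ if $\a=\beta+\gamma$ were decomposable with $U_\beta,U_\gamma\subset H$. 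Your version replaces this by the observation that $\phi|_U$ factors through $U/(U,U)$, whose $T$-weights are the simple roots — an equivalent but slightly cleaner way to force $\chi$ to be simple, at the cost of having to justify the splitting of $\ker\phi$ into its toral and unipotent parts (which does go through, since $\ker\phi$ is normal in $B$ and a dimension count shows $(\ker\phi\cap T)(\ker\phi\cap U)$ already has the right dimension). Your exclusion of $\phi(B)=U_0$ via $\Hom(T,\G_a)=0$ is a nice shortcut that the paper does not need, since in its setup the case $U\subset B_z$ automatically lands in type~($\chi$). Both arguments share the same harmless imprecision at the very end, identifying $\ker(\a)\,R_u(P_\a)$ with the connected group $R(P_\a)$ up to a finite subgroup of $T$.
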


\begin{proof}
  Since $B$ acts non trivially, it has a dense orbit in $C$. But $B$ is rational so $C$ has to be rational. Now $\Ct$ is isomorphic to $\p^1$ and $B$ acts via automorphisms. Since the only automorphism of $\p^1$ fixing three points is trivial, $B$ has at most two fixed points.

  Consider $z \in C$ not fixed under the action of $B$. Then $B_z$, the stabiliser of $z$ is a closed subgroup of codimension $1$ in $B$. Let $U$ be the unipotent radical of $B$, then either $U \subset B_z$ or $U \cap B_z$ has codimension $1$ in $U$.

  Assume first that $U \subset B_z$. Then $U$ acts trivially on $z$ and since $U$ is normal in $B$, it acts trivially on any point of the $B$-orbit $B \cdot z$ and thefore $U$ acts trivially on $C$. In particular, the action of $B$ on $C$ factors through $B/U$ and thus through the action of a character $\chi : B \to \G_m$ (since $B/U$ is a torus and the kernel of its action on $C$ is a codimension one subgroup). Any non-trivial action of $\G_m$ on $\Ct \simeq \p^1$ fixes exactly two points, we are in case 2.

  Assume now that $H = B_z \cap U$ has codimension $1$ in $U$ thus $U$ acts non trivially on $\p^1$ and has therefore a unique fixed point. A maximal torus $T$ in $B$ acts on $\Ct \simeq \p^1$ with at least two fixed points so we can choose a $T$-fixed but not $B$-fixed point for $z$. In particular $T \subset B_z$ and $B_z = TH$. The subgroup $H$ is normalised by $T$ so it is the product of the root subgroups it contains. Let $\a$ be the unique root of $B$ not in $H$. If $\a$ is not simple, then $\a = \beta + \gamma$ with $U_\beta,U_\gamma \subset H$. This implies $U_\a \subset H$, a contradiction. In particular $\a$ is a simple root and $H = R_u(P)$, the unipotent radical of $P$, where $P$ is the minimal parabolic subgroup associated to $\a$. The kernel of the action of $B$ on $\Ct$, and then on $C$, is the intersection of the conjugates of $B_z = TH = TR_u(P)$ in $B$, this is the radical $R(P)$.
\end{proof}

Note that for $X$ spherical, the group $B$ never acts trivially on any curve $C$ so that the above proposition always applies.

\begin{defn}
A curve of type ($\chi$) is a $B$-stable curve with two $B$-fixed points.
\end{defn}

Assume that $\Ct$ has a unique $B$-fixed point $\xt$. Let $P(C)$ be the minimal parabolic subgroup defined by the above proposition. Let $x$ be the image of $\xt$ in $C$.

\begin{prop}
  \label{prop-unt}
  The curve $C$ is smooth and isomorphic to $\p^1$.
    
  Let $P = P(C)$. We have the following alternative.

  1. $P$ raises $x$ to $C$ so $C$ is stable under $P$. We have $C = P \cdot x$ and $C \subset G\cdot x$.

  2. $P$ raises $C$ to $P \cdot C$ with an edge of type {\rm (T)}. The map $P \times^B C \to P \cdot C$ is the normalisation map and is bijective.
    We have $P \times^B C \simeq \p^1 \times \p^1$ and $C \cap G \cdot x = \{x\}$.

  3. $P$ raises $C$ to $P \cdot C$ with an edge of type {\rm (N)}. The map $P \times^B C \to P \cdot C$ is two-to-one. We have $P \cdot C \simeq \p^2$ and $C \cap G \cdot x = \{x\}$.
\end{prop}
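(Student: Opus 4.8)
The plan is to study the proper $P$-morphism $\pi_P : P\times^B C \to X$, $[p,c]\mapsto p\cdot c$, with image $\overline{PC}$, together with its projection $P\times^B C\to P/B\cong\p^1$, splitting the analysis according to $\dim\overline{PC}$. First I would record three preliminary facts. Using Proposition \ref{prop-NUT-P}, $\Ct\cong\p^1$ and the normalization $\pi:\Ct\to C$ is $B$-equivariant and restricts to an isomorphism over the dense $B$-orbit $\cO\subset C$ (an orbit is smooth, and normalization is an isomorphism over the smooth locus), hence is bijective, $\xt$ being the only point over $x$. Next, $R(P)=\ker(B\curvearrowright C)$ is normal in $P$ and acts trivially not only on $C$ but on all of $PC$, since $r\cdot pc=p(p^{-1}rp)c=pc$ for $r\in R(P)$; thus the $P$-action on $\overline{PC}$ factors through $P/R(P)$, whose image in $\Aut(\p^1)=\PGL_2$ is all of $\PGL_2$ by simplicity. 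Finally, a unique $B$-fixed point on $\Ct$ forces $\rk(C)=\rk(\cO)=0$: the dense orbit is $\cO\cong\A^1$ on which $B$ acts by the affine action $u\mapsto\lambda u+c$, and such an action has no nonconstant $B$-eigenfunction.

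In the case $\dim\overline{PC}=1$, the variety $C$ is $P$-stable and $P$ acts on $\Ct\cong\p^1$ through the whole $\PGL_2$, hence transitively; via the bijection $\pi$ this makes $C$ a single $P$-orbit, so $C=P/P_x=P/B\cong\p^1$ (with $P_x=B$, since $P_x=P$ is impossible for a positive-dimensional orbit). This yields conclusion 1: $C$ is smooth, $P$ raises $\{x\}$ to $C$, and $C=P\cdot x\subset G\cdot x$.

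In the case $\dim\overline{PC}=2$, the parabolic $P$ raises $C$, so by Proposition \ref{prop-UNT} the edge $C\prec PC$ has type (U), (T) or (N). Type (U) preserves the rank, which would make $\overline{PC}$ a rank-$0$ spherical $\PGL_2$-variety; but such a variety has its open orbit a flag variety, hence dimension at most $1$, a contradiction, so only (T) and (N) survive. As $\overline{PC}$ is then a two-dimensional rank-$1$ spherical $\PGL_2$-variety, Example \ref{exam-sl2} identifies it, after normalization, with $\p^1\times\p^1$ in type (T) and with $\p^2$ in type (N). In type (T), $\pi_P$ is birational (Proposition \ref{prop-UNT}), hence is the bijective normalization map, and $P\times^B C$ is a $\p^1$-bundle over $P/B\cong\p^1$ with total space $\p^1\times\p^1$, forcing the fibre $C\cong\p^1$; here $C$ is a ruling meeting the closed orbit in the single point $x$, so $C\cap G\cdot x=\{x\}$. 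In type (N), $\overline{PC}\cong\p^2$, $\pi_P$ has degree $2$, and $C$ is the $B$-stable line tangent to the closed-orbit conic at $x$, whence $C\cong\p^1$ is smooth and $C\cap G\cdot x=\{x\}$.

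The main obstacle is precisely the smoothness of $C$ at $x$. The $B$-action alone does not force it: a cuspidal local ring such as $\kk+s^2\kk[[s]]$ is stable under the Borel of $\PGL_2$ acting at the fixed point, so $\pi$ could a priori be a non-normalizing bijection. Ruling this out genuinely requires the global identification of $\overline{PC}$ with $\p^2$ or $\p^1\times\p^1$ and the recognition of $C$ inside it as $P/B$, a ruling, or a line. I expect the fiddly bookkeeping to be the passage between the smooth Hirzebruch surface $P\times^B\Ct$, the contracted product $P\times^B C$, and $\overline{PC}$, together with the determination of $\deg\pi_P$, for which Corollary \ref{cor-TUN} is the natural tool.
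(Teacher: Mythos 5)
Your overall architecture is close to the paper's: both reduce to the rank-one group $S=P/R(P)$ acting on $\overline{PC}$, split according to whether $\dim\overline{PC}$ is $1$ or $2$, and identify the normalisation of $\overline{PC}$ via the classification of complete $\SL_2(\kk)$-embeddings in Example \ref{exam-sl2}. Your exclusion of type (U) by the rank count is a legitimate variant (the paper gets the same conclusion more directly: when the generic stabiliser is a torus, $P\cdot z\cong S/T_0$ already has three $B$-orbits, which is the definition of type (T)). But two steps of the statement are not actually proved by your argument.

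The first and most serious gap is the smoothness of $C$ at $x$, which you correctly single out as the crux but do not resolve. Knowing that the \emph{normalisation} of $\overline{PC}$ is $\p^1\times\p^1$ or $\p^2$ says nothing about $C$ itself: the normalisation is bijective, but its non-isomorphism locus is a closed $S$-stable subset, hence may be the entire closed orbit $P\cdot x$ --- precisely where $x$ lies --- in which case the image of a ruling or a line could acquire the cusp you describe. Moreover, the step ``$P\times^B C$ is a $\p^1$-bundle with total space $\p^1\times\p^1$, forcing $C\cong\p^1$'' is circular: $P\times^B C$ is by construction a $C$-bundle over $P/B$, and the paper identifies it with $\p^1\times\p^1$ only by invoking the uniqueness of the complete \emph{normal} embedding of $S/T_0$, which requires already knowing that $P\times^B C$ is normal, i.e.\ that $C$ is smooth. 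The paper's proof of smoothness is a separate local computation that your proposal has no substitute for: using Theorem \ref{sumihiro} one embeds $X$ $S$-equivariantly in $\p(M)$ with $M=\oplus_i M_{r_i}$ a sum of irreducible $\SL_2(\kk)$-modules, writes $z=[\sum_i\lambda_i u^{k_i}v^{r_i-k_i}]$, parametrises $B\cdot z$ by $t\mapsto[\sum_i\lambda_i u^{k_i}(v+tu)^{r_i-k_i}]$, and checks that this expression has a nonzero degree-one term, hence is immersive at $t=\infty$, i.e.\ at $x$ (with a further Veronese-projection argument in type (N) to get that $\overline{PC}$ itself, not just its normalisation, is $\p^2$).

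The second gap is the assertion $C\cap G\cdot x=\{x\}$. What your picture gives (granting the above) is only $C\cap P\cdot x=\{x\}$, the intersection with the closed orbit of the surface $\overline{PC}$; but $G\cdot x$ is the full $G$-orbit in $X$ and is in general much larger than $P\cdot x$. The paper bridges this with an argument you omit entirely: $C$ is fixed pointwise by $R(P)$, so $C\cap G\cdot x\subset(G\cdot x)^{R(P)}$, and one then shows $(G\cdot x)^{R(P)}=P\cdot x$ by noting that $g^{-1}R(P)g\subset G_x$ forces, after conjugating into the Borel $B\subset G_x$ and using the Bruhat decomposition $gh=bnb'$, that $n$ normalises a group containing $R(P)T$ and hence lies in $P$. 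Without this step, conclusion of parts 2 and 3 concerning $G\cdot x$ is unproved.
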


\begin{remark}
We will prove (Proposition \ref{prop-typeT-p1*p1}), using the Bia\l ynicki-Birula decomposition, that for $C$ raised by $P$ with an edge of type (T), the above morphism $P \times^B C \to P \cdot C$ is an isomorphism so $P \cdot C$ is isomorphic to $\p^1 \times \p^1$.
\end{remark}

\begin{proof}
  Let $z$ be a $T$-fixed but not $B$-fixed point in $C$. Such a point exists by the proof of the previous proposition. Let $R = R(P)$ be the radical of $P$. The group $R$ acts trivially on $z$ and on $C$. The group $S = P/R$ is simple of rank one and $S_z$ contains $B_z/R \supset TR/R$. So $S_z$ contains a maximal torus of $S$ and is a proper subgroup of $S$. We have three possiblitities: $S_z$ is a Borel subgroup of $S$ or $S_z$ is the maximal torus $T/(T \cap R)$ or $S_z$ is the normaliser of the maximal torus $T/(T \cap R)$.

  1. If $S_z$ is a Borel subgroup, then $P \cdot z = S \cdot z$ is isomorphic to $\p^1$ and  contains $B \cdot z$. In particular $C$ is isomorphic to $\p^1$, $C = P \cdot x$ and $C \subset G \cdot x$. This is case 1.

  2. If $S_z$ is the maximal torus $T/T \cap R$ of $S$, then $S_z = B_z/R$ so $P_z = B_z$ and the map $P \times^B C \to P \cdot C$ is birational over $z$ so we have an edge of type (T) or (U).

  3. If $S_z$ is the normaliser of maximal torus $T/T \cap R$ of $S$, then $B_z/R$ has index $2$ in $S_z$ so $B_z$ has index $2$ in $P_z$ and the map $P \times^B C \to P \cdot C$ is two-to-one over $z$ so we have an edge of type (N).

  We now prove that $C \cap G \cdot x = \{x\}$ for cases 2 and 3. We first prove that $C \cap P \cdot x = \{x\}$. Indeed $P \cdot x = S \cdot x$, and if $C$ contains some point $y \in S \cdot x$ with $y \neq x$, then $y$ is in the $B$-orbit of $z$ so $S_y$ and $S_z$ are conjugate. But $S_y$ and $S_x$ are also conjugate. This is imposssible since $S_x$ is a Borel subgroup of $S$ while $S_z$ is not. Now remark that $C \cap G \cdot x \subset (G \cdot x)^R$, so we only have to prove the equality $(G \cdot x)^R = P \cdot x$.

  The group $R$ being normal in $P$, we have $P \cdot x \subset (G \cdot x)^R$. Conversely, for $g \cdot x \in (G \cdot x)^R$, we have $g^{-1}Rg \subset G_x$. But $R$ is solvable and connected and $B \subset G_x$ is a Borel subgroup of $G_x$ so $g^{-1}Rg$ is conjugated in $G_x$ to a subgroup of $B$. So there exists $h \in G_x$ such that $(gh)^{-1}Rgh \subset B$. Write $gh = bnb'$ with $b,b' \in B$ and $n \in N_G(T)$. Since $B$ normalises $R$ we have $R \subset nBn^{-1}$ so $RT \subset nBn^{-1}$. This implies $n \in P$ thus $gh \in P$ and $g \cdot x = gh \cdot x \in P \cdot x$, thus $(G \cdot x)^R = P \cdot x$.

  We prove that, in cases 2 and 3, the curve $C$ is smooth. We only need to prove that the $B$-fixed point $x \in C$ is a smooth point. Note that $R$ acts trivially on $C$ so we consider $X$ as a $S$-variety with Borel subgroup $B/R$ and maximal torus $T/(T \cap R)$. By Theorem \ref{sumihiro}, we may assume that $X$ is isomorphic to $\p(M)$ where $M$ is a rational finite dimensional $S$-module. Write $M = \oplus_{r_i} M_{r_i}$ where $M_{r_i}$ is the irreducible $\SL_2(\kk)$-module of dimension $r_i+1$ and basis $u^k v^{r_i-k}$ such that the unipotent part of $B$ fixes $u$ and maps $v$ to $\{v + tu \ | \ t \in \kk \}$. For $m \in M \setminus \{0\}$, write $[m]$ for the corresponding point in $\p(M)$. Let $m,n \in M$ such that $z = [m]$ and $x = [n]$. Since $z$ is $T$-stable, we have $m = \sum_i \lambda_i u^{k_i}v^{r_i - k_i}$ such that $r_i - 2k_i$ is constant. Replacing $V$ by a submodule, we may assume that $\lambda_i \neq 0$ for all $i$. We then have
$B \cdot z =\{[\sum_i \lambda_i u^{k_i}(v+tu)^{r_i - k_i}] \ | \ t \in \kk \}$ and $x= [n]$ is the limit when $t$ goes to infinity so $n = \sum_i \lambda_i u^{r_i}$. Now locally at $x$, since $z$ is not $B$-stable, there exists an index $i$ with $k_i < r_i$ and the above expression in $t$ has a degree one term and is therefore smooth at $t = \infty$.

  Case 2: the normalisation of $\overline{S \cdot z}$ is the unique complete embedding of $\SL_2(\kk)/T_0$ (see Example \ref{exam-sl2}), it raises $C$ with an edge of type (T) and is isomorphic to $\p^1 \times \p^1$. The normalisation map is $S$-equivariant. It is an isomorphism onto $S \cdot z$ and bijective over the unique closed $S$-orbit $S \cdot x \simeq S/B \simeq \p^1$. Since $P \times^B C$ is a complete normal (since $C$ is smooth) embedding of $S \cdot z \simeq \SL_2(\kk)/T_0$ it is isomorphic to $\p^1 \times \p^1$ and is the normalisation of $P \cdot C$.

  Case 3: the normalisation of $\overline{S \cdot z}$ is the unique complete embedding of $\SL_2(\kk)/N_0$ (see Example \ref{exam-sl2}), it raises $C$ with an edge of type (N) and is isomorphic to $\p^2$. Since $P \times^B C$ is a complete normal (since $C$ is smooth) embedding of $S \cdot z \simeq \SL_2(\kk)/N_0$ it is isomorphic to $\p^2$. We therefore only need to prove that $\overline{S \cdot z}$ is smooth. Using notation as above we have $z = [v]$ with $v = \sum_i \lambda_i u^{k_i}v^{r_i - k_i}$. Furthermore $z$ is stable under the normaliser of $T/(T\cap R)$ therefore $k_i = r_i - k_i$ for all $i$. We may again assume that $\lambda_i \neq 0$ for all $i$. Now the projection $\p(V) \dasharrow \p(V_{r_1})$ is defined over $\overline{S \cdot z}$, so we only need that its image is smooth. But its image is the $k_1$-Veronese embedding of $\p^2$ and is therefore smooth.
\end{proof}

\begin{defn}
  Let $C$ be a $B$-stable curve with a unique $B$-fixed point $x$. 

  1. The curve $C$ is of type (U) if $P(C)$ raises $x$ to $C$ with an edge of type (U).

  2. The curve $C$ is of type (T) if $P(C)$ raises $C$ to $P(C) \cdot C$ with an edge of type (T).
  
  3. The curve $C$ is of type (N) if $P(C)$ raises $C$ to $P(C) \cdot C$ with an edge of type (N).
\end{defn}

For $C$ a curve of type (T) or (N) define
$$Q(C) = \{ g \in G \ | \ gP(C) \cdot C \subset P(C) \cdot C\}.$$
Then $Q(C)$ is a parabolic subgroup containing $P(C)$.

\begin{prop}
  \label{prop-GC-TN}
For $C$ of type {\rm (T)} or {\rm (N)}, the map $p : G \times^{Q(C)} P(C) \cdot C \to G \cdot C$ is birational. If furthermore $X$ is toroidal, then $p$ is an isomorphism.
\end{prop}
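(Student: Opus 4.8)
The plan is to realise $p$ as a proper surjection and then to analyse its fibres over the open $G$-orbit, the birationality reducing to a single group-theoretic inclusion.

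First I would record the basic structure. Writing $P=P(C)$, $R=R(P)$, $S=P/R$, $D=P(C)\cdot C$ and $Q=Q(C)$, the space $G\times^{Q}D$ fibres over $G/Q$ with fibre the complete surface $D$ ($\cong\p^2$ in type {\rm (N)} by Proposition \ref{prop-unt}, and with normalisation $\p^1\times\p^1$ in type {\rm (T)}), so it is complete, and $p$ is $G$-equivariant with image the closed irreducible set $\overline{G\cdot C}=\overline{G\cdot D}=:Y$; thus $p$ is proper and surjective. Let $z$ be the $T$-fixed, non-$B$-fixed point of Proposition \ref{prop-NUT-P}, lying in the open $P$-orbit $\mathring D=P\cdot z$ of $D$. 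Since $R$ fixes $C$ pointwise and is normal in $P$, it fixes $\mathring D$, hence $D$, pointwise, so $D\subseteq X^{R}$; combining this with $C\cap G\cdot x=\{x\}$ and $(G\cdot x)^{R}=P\cdot x$ from Proposition \ref{prop-unt} gives $D\cap G\cdot x=P\cdot x$ (the closed $S$-orbit), and therefore $\mathring D=D\cap\Omega$ where $\Omega:=G\cdot z$ is the dense $G$-orbit of $Y$.

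For birationality I would compute the fibre of $p$ over $z$. A class $[g,d]$ lies over $z$ iff $d=g^{-1}z\in D$, which forces $g^{-1}z\in D\cap\Omega=\mathring D=P\cdot z$ and hence $g\in G_zP$; passing to $G/Q$ and using $P\subseteq Q$ one gets $p^{-1}(z)\cong\{gQ:g^{-1}z\in D\}=G_z\cdot eQ\cong G_z/(G_z\cap Q)$. By $G$-equivariance the same holds over every point of $\Omega$, so \emph{$p$ is birational if and only if $G_z\subseteq Q$}, i.e. $G_z$ stabilises $D$. This inclusion is the main obstacle. To prove it I would use that $z$ is $T$-fixed, so $T\subseteq G_z$ and $T$ is a maximal torus of $G_z$; hence $G_z^{\circ}$ is generated by $T$ together with the root subgroups $U_\beta$ it contains. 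Now $T\subseteq B\subseteq P\subseteq Q$, and every root subgroup of $P$ lies in $P\subseteq Q$; the remaining generators $U_\beta\subseteq G_z$ must be shown to stabilise $D=\overline{P\cdot z}$, which I would extract from the rank-one geometry of $S$ acting on $D$ together with $D\subseteq X^{R}$, so that such $U_\beta$ preserve the irreducible component of $X^{R}$ through $z$. A disconnectedness check in type {\rm (N)} (where $S_z=N_S(T_S)$) then yields $G_z\subseteq Q$, and in characteristic $0$ the generically single-point fibre is reduced, giving birationality.

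For the toroidal case I would upgrade birational to isomorphism via Zariski's Main Theorem. The source is normal (smooth in type {\rm (N)}, and in type {\rm (T)} one works with the $Q$-equivariant normalisation $\p^1\times\p^1$ of $D$, the isomorphism statement retroactively showing $D$ itself smooth), while $Y=\overline{G\cdot C}$ is a closed $G$-stable subvariety of the spherical variety $X$, hence spherical and in particular normal. A proper birational morphism from a normal variety onto a normal variety is an isomorphism as soon as it is quasi-finite, so it suffices to exclude positive-dimensional fibres, i.e. to show that no point of $Y$ lies on a positive-dimensional family of $G$-translates of $D$. Here I would invoke the local structure theorem for toroidal varieties (Theorem \ref{str-toro}): over $X\setminus\Delta_X\cong R_u(P_X)\times Z$ with $Z$ toric, the trace of $D$ and of its translates is governed by the toric factor $Z$, where the translates through a given point form a finite set; since $G(X\setminus\Delta_X)=X$ this controls all fibres. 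The delicate point in this last step is matching the $G$-translates of $D$ with $T_Z$-stable data on $Z$, for which the transversality of $G$-stable subvarieties in Corollary \ref{cor-sub-toro} is the natural input.
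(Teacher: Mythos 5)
Your overall architecture is the same as the paper's: birationality reduces to the single inclusion $G_z\subseteq Q(C)$ (the paper sets $H=G_z$ and proves $H\subseteq Q(C)=P_J$), and the toroidal case is handled by Zariski's Main Theorem once the fibres are known to be finite. The fibre computation $p^{-1}(z)\cong G_z/(G_z\cap Q(C))$ is correct. But the inclusion $G_z\subseteq Q(C)$ is the entire content of the proposition, and the mechanism you propose for it does not work. You want each root subgroup $U_\beta\subseteq G_z$ with $\beta$ negative to stabilise $D=\overline{P\cdot z}$ because it ``preserves the irreducible component of $X^{R}$ through $z$''. A root subgroup $U_\beta\not\subseteq P$ does not normalise $R=R(P)$ (the normaliser of $R(P)$ is $P$ itself), so it has no reason to preserve $X^{R}$ or its components; and even if it did, the component through $z$ may be strictly larger than $D$. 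What is actually needed, and what the paper proves by a direct Lie-algebra computation on $\gh=\Lie(G_z)=\gt\oplus\bigl(\bigoplus_{\beta\in R^+\setminus\{\a\}}\g_\beta\bigr)\oplus\bigl(\bigoplus_{\gamma\in E}\g_\gamma\bigr)$, is that $E=R_I^-$ for a set $I$ of simple roots orthogonal to $\a$, whence $G_z\subseteq P_{I\cup\{\a\}}=Q(C)$. This computation is where the hypothesis that $C$ is of type (T) or (N) enters, twice: if $\g_{-\a}$ were forced into $\gh$ then $G_z$ would contain the Borel subgroup $s_\a(B)$ and $C$ would be of type (U); if some simple $-\gamma\in E$ were not orthogonal to $\a$ one would get $\g_\a\subseteq\gh$, contradicting that $z$ is not $B$-fixed. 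Your sketch defers all of this to an unspecified ``rank-one geometry'' and a ``disconnectedness check in type (N)'', so the key step is asserted rather than proved, and the route indicated for it is flawed.

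For the toroidal statement your ZMT strategy is sound, but quasi-finiteness is obtained more directly than by descending to the toric slice: toroidality guarantees (Knop's Lemma 6.4) that $G\cdot x$ is a divisor in $G\cdot C$, so the restriction $G\times^{Q(C)}P\cdot x\to G\cdot x$ is a $G$-equivariant dominant map between varieties of equal dimension onto a single orbit, hence finite everywhere; normality of $G\cdot C$ (a closed $G$-stable subvariety of a spherical variety) then yields the isomorphism. Note also that this version of ZMT only requires the \emph{target} to be normal, so your detour through the normalisation of $D$ in type (T) is unnecessary.
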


\begin{proof}
  Let $z$ be the unique $T$-fixed but not $B$-fixed point in $C$. Let $H = G_z$ and $\alpha$ be the simple root such that $P = P(C) = P_\a$ is the minimal parabolic subgroup associated to $\a$. We have $H \cap B = T R_u(P)$. The Lie algebra $\gh$ of $H$ is of the form
  $$\gh = \gt \oplus \left( \bigoplus_{\beta \in R^+ \setminus \{\a\}} \g_\beta \right) \oplus \left( \bigoplus_{\gamma \in E} \g_\gamma \right),$$
  for some subset $E \subset R^-$ (here $\gt$ is the Lie algebra of $T$, $\g_\beta$ are the root spaces in the Lie algebra $\g$ of $G$ and $R$ is the root system of $G$). We have $\gamma + \gamma' \in E$ for $\gamma,\gamma' \in E$ and $\gamma + \gamma' \in R$.

  We prove that indecomposable elements of $E$ are negatives of simple roots that are orthogonal to $\a$. Let $\gamma \in E$ be indecomposable such that $\gamma = \gamma' + \gamma''$ with $\gamma', \gamma'' \in R^-$. If $\gamma'$ and $\gamma''$ are different from $-\a$, then $\g_{-\gamma'},\g_{-\gamma''} \subset \gh$ thus $\g_{\gamma'} = [\g_{\gamma},\g_{-\gamma''}]$ and $\g_{\gamma''} = [\g_{\gamma},\g_{-\gamma'}]$ are contained in $\gh$, a contradiction to the indecomposability of $\gamma$. We thus have $\gamma' = -\a \neq \gamma''$ and $\g_{-\a} = [\g_\gamma,\g_{-\gamma''}] \subset \gh$. If $s_\a$ is the simple reflection associated to $\a$, we get that $s_\a(B)$ is contained in $H$ and $C = \overline{B \cdot z} \subset \overline{G \cdot z}$. But since the stabiliser $H$ of $z$ contains a Borel subgroup, the orbit $G \cdot z$ is projective and the curve is of type (U), a contradiction. So $\gamma$ is indecomposable in $R^-$ and $-\gamma$ is simple. If $\gamma$ is not orthogonal to $\a$, then $-\gamma + \a \in R^+ \setminus \{\a\}$ and $\g_\a = [\g_{-\gamma + \a},\g_\gamma]$ is contained in $\gh$ and $H$ contains $B$ a contradiction.

  Set $I = \{ \beta \in R^+ \ | \ \beta \textrm{ simple with } -\beta \in E\}$ and $J = I \cup \{\a\}$. Let $R_I$ and $R_J$ be the corresponding root systems. We have $R_J = R_I \cup \{-\a,\a\}$ and
  $$\gh = \gt \oplus \left( \bigoplus_{\beta \in R^+ \setminus R_J} \g_\beta \right) \oplus \left( \bigoplus_{\beta \in R_I} \g_\gamma \right).$$
  In particular $R_u(H) = R_u(P_J)$ the unipotent radical of the parabolic subgroup with root system $R_J$ and thus $H \subset P_J$. We have $P_\a H = P_J$ thus $P_J \cdot z \subset P_\a \cdot z$ so $P_J \cdot C \subset P_\a \cdot C$ and $P_J \subset Q(C)$. We also have $Q(C) \cdot z \subset (P_\a \cdot C) \cap G \cdot z = P_\a \cdot z$ thus $Q(C) \subset P_\a H$. Since $P_\a H = P_J$ we deduce that $H \subset Q(C) = P_J$. In particular the map $G \times^{Q(C)} P_\a \cdot z \to G \cdot z$ is an isomorphism and $\varphi$ is birational.

  If $X$ is toroidal, the same is true for any $G$-orbit so $G \cdot C$ is toroidal (this can be obtained from the Structure Theorem for toroidal varieties Theorem \ref{str-toro} as in the proof of Corollary \ref{cor-sub-toro}, see also \cite[Section 7]{jacopo}). Let $x$ be the $B$-fixed point in $C$. Since $C = B \cdot z \cup B \cdot x$, we have $G \cdot C = G \cdot z \cup G \cdot x$. The cone of $G \cdot z$ is therefore of codimension one in the cone of $G \cdot x$ and thus (see \cite[Lemma 6.4]{knop}) the orbit $G \cdot x$ is a divisor in $G \cdot C$. The morphism $G \times^{Q(C)} P \cdot x \to G \cdot x$ is therefore finite and since $G \cdot C$ is normal, the morphism $\varphi$ is an isomorphism.
\end{proof}

We finish with the behaviour of $B$-stable curves under morphisms.

\begin{prop}
  \label{prop-curv-map}
  Let $\varphi : X \to X'$ be a proper morphism between two $G$-varieties, let $C \subset X$ be a $B$-stable curve not contracted by $\varphi$.

  1. If $C$ is of type {\rm (}$\chi${\rm )}, {\rm (U)} or {\rm (N)}, then so is $\varphi(C)$.

  2. If $C$ is of type {\rm (T)}, then $\varphi(C)$ is of type {\rm (U)}, {\rm (T)} or {\rm (N)}.

  3. If $X'$ is toroidal and $\varphi$ birational, then $C$ and $\varphi(C)$ have the same type.
\end{prop}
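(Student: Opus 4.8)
\subsection*{Proof proposal}

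The plan is to encode the type of a $B$-stable curve in three invariants and to track each of them through $\varphi$. By Propositions~\ref{prop-NUT-P} and~\ref{prop-unt} the type of $C$ is recorded by: (i) the number of $B$-fixed points of $\Ct\simeq\p^1$, which is two for type~($\chi$) and one for types (U), (T), (N); (ii) the minimal parabolic $P(C)=P_\a$, equivalently the simple root $\a$ with $R(P_\a)$ equal to the kernel of the $B$-action on $C$; and (iii) in the one-fixed-point case, the stabiliser $S_z\subset S:=P(C)/R(P(C))$ of the unique $T$-fixed, non-$B$-fixed point $z\in C$, which is a Borel of $S$ for (U), a maximal torus $T_S$ for (T), and its normaliser $N_S$ for (N). First I observe that $\varphi(C)$ is again a curve carrying a non-trivial $B$-action: the dense $B$-orbit of $C$ maps onto a dense $B$-orbit of $\varphi(C)$, which is one-dimensional precisely because $C$ is not contracted. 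For the fixed points, $B$-equivariance gives that the kernel of the $B$-action on $C$ is contained in that on $\varphi(C)$. If $C$ has a single fixed point then $U_\a$ acts non-trivially on $C$, hence transitively on $\Ct\setminus\{\xt\}\simeq\A^1$; a $B$-equivariant surjection $\Ct\to\widetilde{\varphi(C)}$ cannot be constant on these $U_\a$-orbits, so $U_\a$ acts non-trivially on $\varphi(C)$ and $\varphi(C)$ also has one fixed point. If instead $C$ is of type~($\chi$), then $U$ acts trivially on $C$, hence on $\varphi(C)$, so by Proposition~\ref{prop-NUT-P} the action factors through a character and $\varphi(C)$ is again of type~($\chi$). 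Thus type~($\chi$) is preserved and types (U), (T), (N) stay among (U), (T), (N).

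Next, in the one-fixed-point case I would show $P(C)=P(\varphi(C))$. Writing the kernels as $R(P_\a)$ and $R(P_{\a'})$, the inclusion of kernels gives $R(P_\a)\subseteq R(P_{\a'})$; since $U_{\a'}\subseteq R_u(P_\a)\subseteq R(P_\a)$ whenever $\a'\neq\a$, while $U_{\a'}\not\subseteq R(P_{\a'})$, we must have $\a=\a'$. So $S$ is the same rank-one group for both curves, and $P_z\subseteq P_{\varphi(z)}$ (both containing $R$) yields $S_z\subseteq S_{\varphi(z)}$, with both proper and containing $T_S$. In the poset of proper subgroups of $S\simeq\PGL_2$ containing $T_S$ the only relations are $T_S\subset N_S$ and $T_S\subset(\text{a Borel})$, with the Borel and $N_S$ incomparable. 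Reading $S_z\subseteq S_{\varphi(z)}$ off this poset gives exactly statements 1 and 2: a Borel forces a Borel, $N_S$ forces $N_S$, while $T_S$ leaves all three possibilities open.

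For part 3, I would first note that $\varphi|_C:C\to\varphi(C)$ is always an \emph{isomorphism}. Indeed, using $B_z/R=B_S\cap S_z$, the degree of $Bz\to B\varphi(z)$ is $[B_S\cap S_{\varphi(z)}:B_S\cap S_z]$, and since $B_S\cap N_S=B_S\cap(\text{the opposite Borel})=T_S$ this index is $1$ in every case; as $C$ and $\varphi(C)$ are smooth rational, $\varphi|_C$ is an isomorphism, and in particular $B_z=B_{\varphi(z)}$. The type, however, is measured by the $P$-stabiliser, so $\varphi|_C$ being an isomorphism does not determine it: the task reduces to upgrading $S_z\subseteq S_{\varphi(z)}$ to an equality, i.e.\ to showing that $\varphi|_{P\cdot z}:P\cdot z\to P\cdot\varphi(z)$ is an isomorphism. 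Here the hypotheses are brought in: $\varphi$ is a proper birational morphism of normal (spherical) varieties, hence an isomorphism in codimension one over $X'$, and $X'$ is toroidal, so by Proposition~\ref{prop-GC-TN} and Theorem~\ref{str-toro} the orbits met by $\varphi(C)$ have a rigid structure. A type-(N) image would produce $n\in N_G(T)$ with $nz\neq z$ but $\varphi(nz)=\varphi(z)$, giving a $2{:}1$ behaviour of $\varphi$ along $O_z=G\cdot z$; a type-(U) image would force $\varphi(z)$ and $\varphi(x)$ into one $G$-orbit, i.e.\ $\varphi$ would merge the two orbits $O_x\subsetneq\overline{O_z}$ joined by $C$, across which the rank drops by one.

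The hard part is precisely this last exclusion in part 3, and in particular ruling out type~(U): the statements that $\varphi|_C$ is an isomorphism and that $\varphi$ is an isomorphism in codimension one do not by themselves forbid $\varphi$ from contracting a $G$-stable divisor and collapsing the wall crossed by $C$. To close this I would pass from the curve to the $G$-orbit level and use the rank bookkeeping $\rk(O_z)=\rk(O_x)+1$, comparing the cones of $O_z$ and $O_x$ in the valuation cone $\cV$ with the (uncolored) cones of the toroidal $X'$, and argue that the genuine valuation ray furnishing the type-(T) wall cannot lie in the relative interior of a single cone of $X'$; this would pin $S_{\varphi(z)}=T_S$ and hence type~(T), and a parallel rank/degree argument excludes type~(N). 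I expect this orbit- and cone-level bookkeeping, rather than the curve-level computation, to be the genuine obstacle.
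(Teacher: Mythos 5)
Your treatment of parts 1 and 2 is correct and takes a somewhat different route from the paper: where the paper argues case by case (transitivity of $P(C)$ for type (U), rigidity of $\PGL_2(\kk)$-morphisms out of $\p^2$ for type (N)), you encode the type in the stabiliser $S_z \subseteq S = P(C)/R(P(C))$ of the $T$-fixed non-$B$-fixed point, prove $P(C)=P(\varphi(C))$ by comparing kernels of the $B$-actions, and read off the conclusion from the inclusion $S_z \subseteq S_{\varphi(z)}$ in the poset of proper subgroups of $\PGL_2(\kk)$ containing a fixed maximal torus. That is a clean and uniform argument, and your observation that $\varphi|_C$ is always an isomorphism is correct — though, as you rightly note, it does not determine the type, which lives in the $P$-stabiliser rather than the $B$-stabiliser.

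The genuine gap is in part 3, and you have located it yourself without closing it: for $C$ of type (T) you must exclude $S_{\varphi(z)} = N_S$ and $S_{\varphi(z)} = B_S^-$, and your text only announces a plan (``I would pass to the orbit level and compare cones\dots'') while conceding that the cone-level bookkeeping is ``the genuine obstacle.'' The paper closes exactly this point as follows. By Proposition \ref{prop-GC-TN}, $G\cdot C \simeq G\times^{Q(C)}P(C)\cdot C$, and since $P(C)$ has exactly \emph{two} orbits in $P(C)\cdot C$ (its normalisation is the $\SL_2(\kk)/T_0$-embedding $\p^1\times\p^1$), $G$ has exactly two orbits in $G\cdot C$. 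Hence the codimension-one cone $\sigma$ of $\mathfrak{F}(X)$ attached to the dense orbit of $G\cdot C$ is a face of only one maximal cone, so it lies in a face of the valuation cone $\cV$; Knop's theorem on proper birational morphisms onto toroidal embeddings (\cite[Theorem 4.2]{knop}, or \cite[Theorem 8.15]{jacopo}) then forces $\sigma$ to appear in $\mathfrak{F}(X')$ as well, so $\varphi$ restricts to an isomorphism on $G\cdot C$ and the type is preserved. Note that it is this two-orbit count — not the rank bookkeeping you propose — that places $\sigma$ in the boundary of $\cV$ and makes Knop's criterion applicable; without it, your assertion that the relevant wall ``cannot lie in the relative interior of a single cone of $X'$'' has no starting point, and neither the $2{:}1$ heuristic for type (N) nor the orbit-merging heuristic for type (U) contradicts birationality of $\varphi$ on a proper $G$-stable subvariety. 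The missing ingredients are thus precisely Proposition \ref{prop-GC-TN} and the cited theorem of Knop.
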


\begin{proof}
  1. For type($\chi$), the Borel $B$ acts via a character and the same occurs for $\varphi(C)$. For type (U), the parabolic $P(C)$ acts transitively on $C$ and also on $\varphi(C)$. For type (N), the restriction of $\varphi$ to $P(C) \cdot C$ is a $\PGL_2(\kk)$-morphism with source $\p^2$. Such a morphism is either an isomorphism or trivial.

  2. If $C$ has a unique $B$-fixed point so does $\varphi(C)$ proving the claim.

  3. Let $C$ be of type (T) and $x$ be its $B$-fixed point. We prove that the restriction of $\varphi$ to $G \cdot C$ is an isomorphism. Let $\mathfrak{F}$ and $\mathfrak{F}'$ be the colored fans of $X$ and $X'$. Then $G \cdot x$ corresponds to a maximal cone in $\mathfrak{F}$ while $G \cdot C$ corresponds to a cone $\sigma \in \mathfrak{F}$ of codimension one. By Proposition \ref{prop-GC-TN}, we have an isomorphism $G \times^{Q(C)} P(C) \cdot C \to G \cdot C$ with $P(C) \subset Q(C)$. Since $P(C)$ has two orbits in $P(C) \cdot C$ (its normalisation is the $\SL_2(\kk)$-variety $\p^1 \times \p^1$), $G$ has two orbits in $G \cdot C$. In particular $\sigma$ is contained in a unique maximal cone of $\mathfrak{F}$ so is contained in a face of the valuation cone of $X$. Since $\varphi$ is proper and birational and $X'$ is toroidal, we get (see \cite[Theorem 4.2]{knop} or \cite[Theorem 8.15]{jacopo}) that $\sigma$ is also a cone of $\mathfrak{F}'$ proving our claim.
\end{proof}

\begin{example}
  Let $X = \p^1 \times\p^1$ be the unique $\SL_2(\kk)/T_0$ complete embedding, let $C = \p^1 \times \{0\}$ which is of type (T) and consider the following $\SL_2(\kk)$-morphisms:
    \begin{itemize}
  \item[1.] $\varphi : \p^1 \times \p^1 \to \p^1$, the first projection;
  \item[2.] $\varphi : \p^1 \times \p^1 \to \p^1 \times \p^1$, the identity and 
  \item[3.] $\varphi : \p^1 \times \p^1 \to \p^2$ the quotient by the involution $(x,y) \mapsto (y,x)$.
  \end{itemize}
  Then in case 1, the curve $\varphi(C)$ is of type (U), in case 2, the curve $\varphi(C)$ is of type (T) and in case 3, the curve $\varphi(C)$ is of type (N).   
\end{example}

\subsection{Duality between curves and divisors}

We describe the cone of effective curve and the duality between curves and divisors in connection with the exact sequence of Theorem \ref{thm-pic-s-t}. In this subsection we will assume that $X$ is complete.

Denote by $Z_1(X)$ the group of one-dimensional cycles on $X$ and by $A_1(X)$ its quotient by rational equivalence. Consider also the pairing $(C,D) \mapsto \deg(D\vert_C)$ between $Z_1(X)$ and the group of Cartier divisors on $X$. The orthogonal to the group of Cartier divisor is the subgroup of numerically trivial $1$-cycles and induces a stronger equivalence relation called numerical equivalence, the quotient by this relation is denoted by $N_1(X)$. We have a canonical surjective map $A_1(X) \to N_1(X)$. For $X$ complete, there groups have finite rank (see \cite[Section 19.1.4]{fulton}). In $N_1(X)_\Q = N_1(X) \otimes_\Z \Q$, denote by $NE(X)$ the cone spanned by the classes of effective curves.

\begin{defn}
Let $X$ be a spherical variety. A wall of the colored fan $\mathfrak{F}$ of $X$ is a codimension one cone in $\Lambda(X)^\vee$ not contained in the boundary of $\cV$. Note that a wall is a common face of two maximal colored cones.
\end{defn}

\begin{thm}
  Let $X$ be complete spherical variety.

  1. The space $N_1(X)_\Q$ is generated by classes $C_\mu$ indexed by the walls $\mu$ of $\mathfrak{F}(X)$ and classes $C_{D,Y}$ indexed by pairs where $Y$ is a closed orbit and $D \in \Delta(X) \setminus \Delta_Y(X)$.

  2. The cone $NE(X)$ is generated by the classes $C_\mu$ and $C_{D,Y}$.

\vskip 0.1 cm
  
  \noindent
  Write $\overline{N_1(X)}$ for the quotient of $N_1(X)$ by the subgroup spanned by the classes $C_\mu$.

\vskip 0.1 cm
  
  3. The image $\overline{C}_{D,Y}$ in $\overline{N_1(X)}$ of the class $C_{D,Y}$
  does not depend on $Y$ with $D \in \Delta(X) \setminus \Delta_Y(X)$. Furthermore $\overline{C}_{D,Y} = 0$ for $D \not \in \odelta(X)$.

  4. In $\overline{N_1(X)}_\Q$, the family $(\overline{C}_{D,Y})_{D \in \odelta(X)}$ is a basis and generates the image $\overline{NE(X)}$ of $NE(X)$.
\end{thm}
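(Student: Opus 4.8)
The backbone of the argument is the duality $N_1(X)_\Q = \pic(X)^\vee_\Q$ furnished by Corollary \ref{pic=n1}, combined with the exact sequence of Theorem \ref{thm-pic-s-t}. Since $X$ is complete its closed $G$-orbits are complete, so $\cC_X$ is of maximal dimension and $\cC_X^\perp = 0$; the sequence therefore collapses to a short exact sequence
$$0 \to \Z\odelta(X) \xrightarrow{g} \pic(X) \xrightarrow{h} \PL(X)/\LL(X) \to 0.$$
Tensoring with $\Q$ and applying $\Hom(-,\Q)$ (both exact operations) yields a short exact sequence
$$0 \to (\PL(X)/\LL(X))^\vee_\Q \xrightarrow{h^\vee} N_1(X)_\Q \xrightarrow{\Phi} \Q^{\odelta(X)} \to 0,$$
in which $\Phi = g^\vee$ is exactly the intersection pairing against the divisors of $\odelta(X)$, namely $\Phi(C) = (\deg(D|_C))_{D \in \odelta(X)}$ (these $D$ are Cartier by Lemma \ref{lemm-cartier}). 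The theorem will follow from computing $\Phi$ on the relevant $B$-stable curves and identifying $\ker\Phi$ with the span of the wall classes.

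First I would prove statements 1 and 2. Every effective complete curve degenerates, under the connected solvable group $B$ (apply Borel's fixed point theorem to its orbit closure in the Chow variety $\mathrm{Chow}_1(X)$), to an effective $B$-stable $1$-cycle algebraically — hence numerically — equivalent to it; thus $N_1(X)_\Q$ and the cone $NE(X)$ are generated by classes of $B$-stable curves. By the classification of Propositions \ref{prop-NUT-P} and \ref{prop-unt} each such curve is of type $(\chi)$, {\rm (U)}, {\rm (T)} or {\rm (N)}, and reading off its type together with the colored fan (via Propositions \ref{prop-GC-TN} and \ref{prop-curv-map}) sorts these curves into two families: those joining two closed $G$-orbits across a wall $\mu$, whose classes are the $C_\mu$, and those contained in a single color $D$ near a closed orbit $Y$, whose classes are the $C_{D,Y}$.

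Next comes the pairing computation. For a wall $\mu$, the curve $C_\mu$ joins the two closed orbits $Y_1,Y_2$ whose maximal cones share the face $\mu$, and its intersection with a Cartier divisor $\delta$ is given by the wall-crossing formula $\deg(\delta|_{C_\mu}) = \langle (l_\delta)_{Y_1} - (l_\delta)_{Y_2}, n_\mu\rangle$ (obtained by reduction to the toric slice of Theorem \ref{str-toro}), where $l_\delta = (l_Y)$ is the piecewise-linear datum of $\delta$ and $n_\mu$ a primitive normal of $\mu$. For $\delta \in \Z\odelta(X) = \im g = \ker h$ the datum $l_\delta$ lies in $\LL(X)$, i.e. is a single character, so $(l_\delta)_{Y_1} = (l_\delta)_{Y_2}$ and $\deg(\delta|_{C_\mu}) = 0$; hence every $C_\mu$ lies in $\ker\Phi$. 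For the color curves I would use the local structure theorem near $Y$ (Theorem \ref{thm-stru}) to identify $C_{D,Y}$ with a Schubert-type curve of the closed orbit $Y \cong G/Q$ lying in $D$ and in no other divisor of $\odelta(X)$. Computing $\deg(E|_{C_{D,Y}})$ for $E \in \odelta(X)$ then reduces to the classical duality between Schubert curves and Schubert divisors on $G/Q$: the result depends only on $D$ and $E$ (not on $Y$), vanishes whenever $D \notin \odelta(X)$, and the $\odelta(X)\times\odelta(X)$ matrix $(\deg(E|_{C_{D,Y}}))$ is invertible (diagonal after matching indices). So $\Phi(C_{D,Y})$ is independent of $Y$, is zero for $D \notin \odelta(X)$, and $\{\Phi(C_{D,Y})\}_{D\in\odelta(X)}$ is a basis of $\Q^{\odelta(X)}$.

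Finally I assemble the pieces. By the first step $N_1(X)_\Q$ is spanned by the $C_\mu$ and the $C_{D,Y}$; by the pairing computation the $C_\mu$ lie in $\ker\Phi$ while the $C_{D,Y}$ with $D\in\odelta(X)$ map onto a basis of $\Q^{\odelta(X)}$. Since $\Phi$ is surjective with $\dim\ker\Phi = \dim(\PL(X)/\LL(X))_\Q$, a dimension count forces $N_1(X)_\Q = \langle C_\mu\rangle \oplus \langle C_{D,Y} : D\in\odelta(X)\rangle$ and $\ker\Phi = \langle C_\mu\rangle$. Passing to $\overline{N_1(X)} = N_1(X)/\langle C_\mu\rangle$, the map $\Phi$ descends to an isomorphism $\overline{N_1(X)}_\Q \xrightarrow{\sim} \Q^{\odelta(X)}$; consequently $\overline{C}_{D,Y}$ is independent of $Y$, vanishes for $D\notin\odelta(X)$ (statement 3), and $(\overline{C}_{D,Y})_{D\in\odelta(X)}$ is a basis (statement 4); since $NE(X)$ is generated by the $C_\mu$ and $C_{D,Y}$ and the $C_\mu$ die in the quotient, $\overline{NE(X)}$ is generated by $(\overline{C}_{D,Y})_{D\in\odelta(X)}$. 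I expect the main obstacle to be the pairing computation of the third paragraph: matching the $B$-stable curves to walls and colors, establishing the wall-crossing formula by reduction to the toric slice, and pinning down the numbers $\deg(E|_{C_{D,Y}})$ — in particular their independence of $Y$, their vanishing for $D\notin\odelta(X)$, and the invertibility of the color matrix via Schubert calculus on the closed orbit.
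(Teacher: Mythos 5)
Your overall strategy runs in the opposite direction from the paper's, and this is where it breaks down. The paper never tries to realize the classes $C_\mu$ and $C_{D,Y}$ as actual curves in order to prove parts 1 and 2: it \emph{defines} them as linear forms on $\pic(X)=N^1(X)$ via the piecewise-linear data of Theorem \ref{thm-pic-gg}, namely $C_\mu\cdot\delta=(l_\delta\vert_{\mu^+}-l_\delta\vert_{\mu^-})/v$ and $C_{D,Y}\cdot\delta=n_D-\scal{\rho(\nu_D),l_Y}$, proves that a divisor class annihilated by all of them is linearly trivial (hence they span $N_1(X)$ by duality), and observes that nonnegativity against all of them is exactly the global generation criterion, so that duality with the nef cone gives part 2. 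Parts 3 and 4 then follow from explicit identities: for $D\notin\odelta(X)$ one picks a closed orbit $Z$ with $D\in\Delta_Z(X)$ and writes $C_{D,Y}$ as a combination of the $C_{\mu_i}$ along a path of walls from the cone of $Y$ to the cone of $Z$; the same device shows $\overline{C}_{D,Y}=\overline{C}_{D,Z}$; and independence comes from $C_{D,Y}\cdot D'=\delta_{D,D'}$ for $D,D'\in\odelta(X)$ together with the freeness of $\Z\odelta(X)$ in $\pic(X)$. The geometric realization of these classes by $B$-stable curves is deferred to Proposition \ref{prop-dual} and Theorem \ref{thm-CD-NUT} and is deduced \emph{from} the present theorem, not used to prove it.

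By instead degenerating effective curves to $B$-stable ones you take on the burden of showing that every irreducible $B$-stable curve has class a nonnegative combination of the $C_\mu$ and $C_{D,Y}$, and your ``sorting'' step simply asserts this; it is in fact the hardest point of the whole section, requiring the later analysis of rational equivalences between $B$-stable curves (carried out fully only for toroidal $X$) and the Bia\l ynicki-Birula machinery of Theorem \ref{thm-CD-NUT}. Your concrete proposal for the color curves is moreover incorrect: the curve representing $\overline{C}_{D,Y}$ is not a Schubert-type curve of the closed orbit lying in $D$ --- it meets $D$ transversally in a single point and is disjoint from the other colors (already for $X=G/P$ the curve dual to a Schubert divisor is $P_\a\cdot x$ through the base point, not a curve inside the divisor), and in general it is of type (T) or (N) and not contained in any closed orbit. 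Consequently Schubert calculus on $Y\cong G/Q$ does not produce the matrix $(\deg(E\vert_{C_{D,Y}}))$. Finally, your dimension count cannot close on its own: to conclude $\ker\Phi=\langle C_\mu\rangle$ you must already know that $C_{D,Y}\in\langle C_\mu\rangle$ for $D\notin\odelta(X)$ (these classes also lie in $\ker\Phi$), which is precisely the wall-path argument you have not supplied.
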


\begin{proof}
Recall from Corollary \ref{pic=n1}, that $\pic(X) = N^1(X)$. Recall also from Theorem \ref{thm-pic-gg} that any Cartier divisor $\delta$ is modulo linear equivalence of the form
$$\delta = \sum_{D \in \D(X) \setminus \odelta(X)} \scal{\rho(\nu_D),l_\delta} D + \sum_{D \in \odelta(X)} n_D D,$$
where $l_\delta = (l_Y)$ is the image of its class under the map $\pic(X)\to \PL(X)/\LL(X)$. Since $X$ is complete, we may consider $l_\delta$ as a function on $\Lambda(X)^\vee$. The divisor $\delta$ is globally generated if and only if $l_\delta$ is convex and $\scal{\rho(\nu_D),l_Y} \leq n_D$ for all $D \in \odelta(X)$.

Let $\mu$ be a wall of $\mathfrak{F}(X)$, we define the class $C_\mu$. The orthogonal $\mu^\perp$ of $\mu$ in $\Lambda(X)$ is one dimensional and we pick a primitive vector $v \in \mu^\perp$. Let $\mu^+$ and $\mu^-$ be the two maximal cones having $\mu$ as face. We pick $v$ so that $v\vert_{\mu^+} > 0$. The difference $l_\delta\vert_{\mu^+} - l_\delta\vert_{\mu^-}$ vanishes on $\mu$ since $l_\delta$ is continuous so $l_\delta\vert_{\mu^+} - l_\delta\vert_{\mu^-}$ is a multiple of $v$. It only depends on the class of $\delta$ in $\pic(X)$ (picking another representative changes $l_\delta$ by a linear function). This defines a linear form $C_\mu$ on $N^1(X) = \pic(X)$ via
$$C_\mu \cdot \delta = (l_\delta\vert_{\mu^+} - l_\delta\vert_{\mu^-})/v.$$

Let $Y$ be a closed orbit and $D \in \Delta(X) \setminus \Delta_Y(X)$. Writing $n_D = \scal{\rho(\nu_D),l_\delta}$ for $D \in \D(X) \setminus \odelta(X)$ so that
$$\delta = \sum_{D \in \D(X)} n_D D,$$
we define the linear form on $N^1(X) = \pic(X)$ via (with notation as above)
$$C_{D,Y} \cdot \delta = n_D - \scal{\rho(\nu_D),l_Y}.$$
Note that this only depends on the class of $\delta$ in $\pic(X)$. 

We prove that these classes span $N_1(X)$. By duality, it suffices to prove that a divisor $\delta$ vanishing on all these forms is linearly trivial. If $C_\mu \cdot \delta = 0$ for any wall $\mu$, then $l_\delta\vert_{\mu^+} = l_\delta\vert_{\mu^-}$ for all walls $\mu$ so that $l_\delta$ is linear and equal to the character of a $B$-semiinvariant function $f_\delta$.
For $D \in \odelta(X)$ and any closed orbit $Y$, we have $D \in \Delta(X) \setminus \Delta_Y(X)$ so that $C_{D,Y} \cdot \delta = 0$. This gives $n_D = \scal{\rho(\nu_D),l_Y} = \scal{\rho(\nu_D),l_\delta}$. We deduce 
$$\delta = \sum_{D \in \D(X)} \scal{\rho(\nu_D),l_\delta} D = \div(f_\delta).$$
In particular $\delta$ is linearly trivial and the classes $(C_\mu)_\mu$ and $(C_{D,Y})_{D,Y}$ span $N_1(X)$.
Furthermore, a divisor $\delta$ has non negative value on all classes $(C_\mu)_\mu$ and $(C_{D,Y})_{D,Y}$ if and only if $l_\delta$ is convex and $\scal{\rho(\nu_D),l_Y} \leq n_D$ for all $D \in \odelta(X)$ so if and only if $\delta$ is globally generated. These classes therefore span the cone $NE(X)$ of effective curves.

Let $D \in \Delta(X) \setminus \Delta_Y(X)$ with $D \not \in \odelta(X)$. Then there exists a closed orbit $Z$ with $D \in \Delta_Z(X)$ and we have $C_{Y,D} \cdot \delta = n_D - \scal{\rho(\nu_D),l_Y} = \scal{\rho(\nu_D),l_Z} - \scal{\rho(\nu_D),l_Y}$. Choosing a path from the cone of $Y$ to the cone of $Z$ and the corresponding walls $(\mu_i)$, the linear form $C_{D,Y}$ can be expressed as a linear combinaison of the linear forms $C_{\mu_i}$. In particular $\overline{C}_{D,Y} = 0$.

Now assume $D \in \odelta(X)$ and let $Y$ and $Z$ be closed orbits.
Then $\cdot (C_{D,Y} - C_{D,Z}) \cdot \delta = \scal{\rho(\nu_D),l_Z} - \scal{\rho(\nu_D),l_Y}$ and as above $C_{D,Y} - C_{D,Z}$ is a linear combinaison of the $(C_\mu)_\mu$. In particular $\overline{C}_{D,Y} = \overline{C}_{D,Z}$.

Finally the classes $(\overline{C}_{D,Y})_{D \in \odelta(X)}$ span $\overline{N_1(X)}$. Since for $D,D' \in \odelta(X)$ we have $C_{D,Y} \cdot D' = \delta_{D,D'}$ and $C_\mu \cdot D' = 0$. Since the divisors in $\odelta(X)$ form a free family in $\pic(X) = N^1(X)$ (see Theorem \ref{thm-pic-s-t}) and since $\overline{N_1(X)}$ is torsion free, assertion 4 follows.
\end{proof}

\begin{remark}
  According to Theorem \ref{thm-pic-s-t}, we have an exact sequence
  $$\Z\odelta(X) \to \pic(X) = N^1(X) \to \PL(X)/\LL(X) \to 0.$$
The classes $(C_\mu)$ with $\mu$ a wall of $\mathfrak{F}(X)$ are dual to $\PL(X)/L(X)$ while the classes $(\overline{C}_{D,Y})_{D \in \odelta(X)}$ are dual to $\Z\odelta(X)$.  
\end{remark}

\begin{prop}
  \label{prop-dual}
  Let $X$ be a complete spherical variety.

  1. Any effective cycle is rationally equivalent to an effective $B$-stable cycle.

  2. For any two distinct closed orbits, the following are equivalent
  \begin{itemize}
  \item[a.] There exists a 
    curve $C_{YZ}$ of type {\rm (}$\chi${\rm )} meeting $Y$ and $Z$.
  \item[b.] The cones $\cC_Y(X) \cap \cV$ and $\cC_Z(X) \cap \cV$ share the same wall $\mu$.
  \end{itemize}
  In that case $C_{YZ}$ is unique, isomorphic to $\p^1$ and its class in $N_1(X)$ is equal to $C_\mu$.

  3. For $D \in \odelta(X)$, any class $\overline{C}_{D,Y}$ is represented by an irreducible $B$-stable curve of type {\rm (U)}, {\rm (N)} or {\rm (T)}.
\end{prop}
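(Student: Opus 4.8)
The plan is to establish the three assertions in order, using throughout the description of $N_1(X)$ and of the classes $C_\mu$ and $C_{D,Y}$ from the previous theorem, together with the global generation criterion of Corollary~\ref{theo-pic} and the classification of $B$-stable curves (Propositions~\ref{prop-NUT-P} and \ref{prop-unt}).

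\medskip
\noindent\textbf{Part 1.} Here I would argue by specialization under a one-parameter subgroup. Fix a regular, anti-dominant (with respect to $B$) one-parameter subgroup $\lambda:\G_m\to T$, so that under the conjugation action $U$ is contracted and generic points flow into the $B$-fixed locus as $t\to 0$. For an irreducible closed $Z\subset X$, completeness of $X$ guarantees that the flat limit $Z_0=\lim_{t\to 0}\lambda(t)\cdot Z$ exists; realizing $\{(t,\lambda(t)\cdot z)\}$ as a family over $\A^1$ exhibits $Z_0$ as an effective cycle rationally equivalent to $Z$. The substance is that $Z_0$ is $B$-stable, which follows from the standard Bialynicki--Birula analysis: regularity of $\lambda$ identifies the relevant attracting configuration as $B$-invariant. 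Linearity then extends the conclusion to arbitrary effective cycles.

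\medskip
\noindent\textbf{Part 2.} Both implications hinge on the $G$-orbit $O$ whose closure contains the two distinct closed orbits and on the common face $\cC_Y(X)\cap\cC_Z(X)$. For $(a)\Rightarrow(b)$: since each closed orbit is a $G/P$ with a unique $B$-fixed point, the two $B$-fixed points of a type~$(\chi)$ curve $C$ are $x_Y$ and $x_Z$; as $B$ acts on $C$ through a character, $U$ acts trivially and the dense $B$-orbit $\mathring C$ has rank one, forcing $\rk(O)=1$, so by the orbit--cone correspondence $\cC_Y(X)\cap\cC_Z(X)$ is a codimension-one face, i.e.\ a wall $\mu$. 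For $(b)\Rightarrow(a)$: when $\mu$ is a shared wall the orbit $O$ has rank one, and I would apply the local structure theorem (Theorem~\ref{thm-stru}) around $O$ to reduce to a rank-one affine spherical slice; there the two closed orbits are joined by a $B$-stable $\p^1$ on which $B$ necessarily acts through a character (rank one forces two $B$-fixed points), giving a curve of type~$(\chi)$. Smoothness and the isomorphism with $\p^1$ come from Proposition~\ref{prop-unt}, and uniqueness follows because such a curve is determined by its two $B$-fixed points inside the rank-one slice. Finally $[C]=C_\mu$ is obtained by computing $\deg(\delta\vert_C)$ for a Cartier $\delta$ from the $B$-eigenfunction representatives of $\delta$ near $x_Y$ and $x_Z$ and matching with $C_\mu\cdot\delta=(l_\delta\vert_{\mu^+}-l_\delta\vert_{\mu^-})/v$.

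\medskip
\noindent\textbf{Part 3.} By Part~1 the effective class $\overline{C}_{D,Y}$ is represented by an effective $B$-stable $1$-cycle, hence by a non-negative combination $\sum a_i\,[C_i]$ of irreducible $B$-stable curves. Each $C_i$ is of type $(\chi)$, $(U)$, $(N)$ or $(T)$; by Part~2 the type~$(\chi)$ components have class $C_\mu$, which vanishes in $\overline{N_1(X)}$. Since the previous theorem shows that $(\overline{C}_{D',Y})_{D'\in\odelta(X)}$ is a basis generating the simplicial cone $\overline{NE(X)}$, the class $\overline{C}_{D,Y}$ spans an extremal ray; consequently every component with $\overline{[C_i]}\neq 0$ must lie on that ray, and such a $C_i$ is of type $(U)$, $(N)$ or $(T)$. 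The normalization $\overline{[C_i]}=\overline{C}_{D,Y}$ (rather than a proper multiple) is then pinned down by the pairing $C_{D',Y}\cdot D=\delta_{D'D}$ together with $C_\mu\cdot D=0$, i.e.\ by testing against $D$ and the colors.

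\medskip
\noindent\textbf{Main obstacle.} The delicate points are the exact identification of classes: proving $Z_0$ is genuinely $B$-stable in Part~1, establishing $[C]=C_\mu$ in Part~2, and showing the representing curve in Part~3 realizes $\overline{C}_{D,Y}$ on the nose. Each reduces, via the local structure theorem and the rank-one ($\SL_2$/$\PGL_2$) picture underlying the curve types, to a Bialynicki--Birula computation; marshalling these carefully is where the real work lies, whereas the cone-theoretic bookkeeping is formal.
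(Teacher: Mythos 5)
Your Part 1 is where the proposal breaks down. The flat limit $Z_0=\lim_{t\to 0}\lambda(t)\cdot Z$ under a single one-parameter subgroup is fixed by $\lambda(\G_m)$ (and by $T$ for generic $\lambda$), but there is no mechanism making it $U$-stable, and in general it is not. Concretely, take $X=\p^2$ with $B\subset\GL_3(\kk)$ upper triangular, so that the unique $B$-stable line is $V(x_3)$, and let $Z=V(x_1-x_2)$: for any regular $\lambda=\mathrm{diag}(t^{a_1},t^{a_2},t^{a_3})$ the limit of $\lambda(t)\cdot Z$ is $V(x_1)$ or $V(x_2)$ according to the sign of $a_1-a_2$, never $V(x_3)$. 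The identity $\lim_t\lambda(t)u\cdot Z=\bigl(\lim_t\lambda(t)u\lambda(t)^{-1}\bigr)\cdot Z_0$ (when the conjugates converge) only says that all $U$-translates of $Z$ have the \emph{same} limit, not that $Z_0$ is $u$-fixed; ``Bia\l ynicki--Birula analysis'' does not supply the missing step. The paper's proof (Vust's argument) is genuinely inductive on $\dim B$: one degenerates along a chain of codimension-one subgroups $\Gamma\subset B$, and the key point at each step is that $B\times^{\Gamma}C$ is \emph{closed} in $(B/\Gamma)\times X$, so that after compactifying $B/\Gamma$ inside $\p^1$ the fiber over the $B$-fixed point of $\p^1$ is an effective cycle, rationally equivalent to $C$, and stable under one more dimension of $B$. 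Some argument of this inductive type (or a Borel-fixed-point argument on the closure of $B\cdot[Z]$ in the Chow variety together with rational chain connectedness of that closure) must replace your single $\G_m$-specialization.

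In Part 2 the direction (b)$\Rightarrow$(a) carries the real content and your sketch does not construct the curve. The chart $X_{Y,G}$ of the local structure theorem is a \emph{simple} spherical variety with unique closed orbit $Y$, so no affine slice attached to one closed orbit can contain the other; the proposed reduction to ``a rank-one affine spherical slice'' joining $Y$ and $Z$ does not parse. The paper instead works with the whole rank-one variety $X_\mu=\overline{O}$ and proves that $C_{YZ}=X_\mu^U$ is an irreducible smooth curve of type $(\chi)$ through both fixed points; this requires connectedness of $U$-fixed loci (a Bertini induction), Pauer's identity $(G\cdot x)^U=B\cdot x$ for irreducibility, and the isomorphism $\kk[W]^{U^-}\simeq\kk[W^U]$ for smoothness -- and it is exactly what makes the uniqueness claim precise. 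By contrast, your (a)$\Rightarrow$(b) (rank-one forced by the character action, then the orbit--cone correspondence), the computation $[C_{YZ}]=C_\mu$ via weights at the two fixed points, and the cone-theoretic argument in Part 3 do follow the paper's lines; note only that testing against $D$ in Part 3 gives the class of the irreducible representative as a positive integer multiple of $\overline{C}_{D,Y}$ rather than pinning it to be exactly $\overline{C}_{D,Y}$.
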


\begin{proof}
  1. We follow a proof by Vust written in \cite[Lemma 6.1]{MJ}. We proceed by induction on the dimension of $B$ so we may assume that any effective curve is rationaly equivalent to an effective curve $C$ stable under a codimension one closed subgroup $\Gamma$ of $B$. We first prove that the image $B \times^\Gamma C$ of the map $B \times C \to (B/\Gamma) \times X$ defined by $(b,x) \mapsto (b\cdot \Gamma,bx)$ is closed. Let $Z$ be the closure and assume that there is a point $z \in Z$ not in $B \times^\Gamma C$. Note that $Z$ is of pure dimension 2. By letting $B$ act, we may assume that $z \in (\Gamma/\Gamma) \times X = F$. The intersection of $F$ with $Z$ is one dimensional and its intersection with $B \times^\Gamma C$ is $(\Gamma/\Gamma) \times C$ which is closed in $F$. In particular $Z \cap F$ must contain another one dimensional irreducible component, say $C'$, containing $z$. In particular $C' \setminus B \times^\Gamma C$ is one dimensional. But $B \cdot (C' \setminus B \times^\Gamma C)$ has to be two dimensional and contained in $Z \setminus B\times^\Gamma C$, a contradiction.

    Now let $j:B/\Gamma \to \p^1$ be a $B$-equivariant embedding and let $Y$ be the closure of the image of the composition $B \times^\Gamma C \to (B/\Gamma) \times X \to \p^1 \times X$. The variety $Y$ is of dimension 2. Since $B/\Gamma$ is not complete, there exists a point $t_0 \in \p^1$ fixed by $B$ (and therefore not in the image of $j$). Let $t_1 = j(\Gamma/\Gamma)$ and denote by $\pi : Y \to \p^1$ the restriction of the first projection. The fibers $\pi^{-1}(t_0)$ and $\pi^{-1}(t_1)$ are curves in $Y$ and so are their images $p_2(\pi^{-1}(t_0))$ and $p_2(\pi^{-1}(t_1))$ in $X$. These two curves are linearly equivalent. Since $t_0$ is $B$-fixed, the curve $p_2(\pi^{-1}(t_0))$ is $B$-stable. But since $B \times^\Gamma C$ is closed in $(B/\Gamma) \times X$, we have $p_2(\pi^{-1}(t_1)) = C$ proving the result.
    
    2. (a. $\Rightarrow$ b.) Consider $G \cdot C_{YZ}$ which is closed in $X$, irreducible and $G$-stable. It meets $Y$ and $Z$ and therefore contains both. Since furthermore $B$ acts on $C_{YZ}$ via a character, the rank of $G \cdot C_{YZ}$ is one. The combinatorial description of orbits in terms of colored cones (see \cite[Section 7]{jacopo}) implies that the cones $\cC_Y(X) \cap \cV$ and $\cC_Z(X) \cap \cV$ share the same wall $\mu$.

    (b. $\Rightarrow$ a.) Let $X_\mu$ be the closure of the orbit associated to $\mu$. It contains $Y$ and $Z$ and has rank one. We prove that $X_\mu^U$ is an irreducible $B$-stable curve, smooth, of type ($\chi$) and passing through $y$ and $z$.

    The locus $X_\mu^U$ is $B$-stable and any $B$-orbit in $X_\mu^U$
    is a $T$-orbit with $T$ a maximal torus. Since $X_\mu$ has rank
    one, this orbit is of dimension at most one. Furthermore, we have
    $y,z \in X_\mu^U$.

    We prove that $X_\mu^U$ is connected. We actually prove that for
    $W$ a connected $U$-variety $W^U$ is connected. By Theorem
    \ref{thm-chow-lemma}, we may assume that $W$ is projective and
    equivariantly embedded in $\p(V)$ where $V$ is a $G$-module. We
    proceed by induction on the dimension. For $\dim W = 0$ this is
    obvious. For $\dim W = 1$, if $U$ acts trivially then $W^U = W$
    and we are done. Otherwise $U$ has a dense orbit and the
    normalisation of $W$ is isomorphic to $\p^1$ with $U$ acting via
    a maximal unipotent subgroup of $\textrm{PGL}_2(\kk)$. Therefore
    $W^U$ is reduced to one point and the result follow. Assume now
    $\dim W >2$. Then intersecting with $\p(H)$ where $H$ is a
    $U$-stable linear subspace of $V$ containing $V^U$ and using
    Bertini's Theorem (see \cite[Corollary II.7.9]{hartshorne-book}), we are
    reduced to the case of $W \cap \p(H)$ which is connected and can
    be chosen of dimension one less except if $W = W^U$.

We now prove that $X_\mu^U$ is irreducible. The above argument implies that $X_\mu^U$ meets the three orbits of $X_\mu$ and that
$X_\mu = G \cdot X_\mu^U$. Let $x$ be a point in $X_\mu^U$ and in the
dense $G$-orbit. Following \cite{pauer}, we prove that $(G \cdot x)^U
= B \cdot x$. Embbed $G$-equivariantly $X_\mu$ in $\p(V)$ where $V$ is a
$G$-module. Then $x = [v]$ with $v \in V^U$. Decompose $V$ as a direct
sum of irreducible $G$-modules $V = \oplus_\lambda V_\lambda$, then
$v$ is the sum of highest weight vectors $v = \sum_\lambda v_\lambda$. Let $P_\lambda$ be the parabolic subgroup fixing
$v_\lambda$, we have $G_v = \cap_\lambda \ker(\lambda :
P_\lambda \to \G_m)$ and $N_G(G_x) = \cap_\lambda 
P_\lambda$. We deduce that $N_G(G_v)/G_v$ is a torus and
$N_G(G_v)  = BG_v$. Let $g \in G$ such that $g \cdot x
\in (G \cdot x)^U$. Then $g \cdot v \in \sum_\lambda \kk v_\lambda$
thus $g \in \cap_\lambda P_\lambda = N_G(G_v) = B
G_v$. We get $g \cdot v \in B \cdot v$ and $g \cdot x \in B
\cdot x$. In particular $X_\mu^U$ is irreducible as the closure of $(G
\cdot x)^U = B \cdot x$.

Therefore $X_\mu^U$ is an irreducible $B$-stable curve meeting $Y$ and
$Z$ in the two $B$-fixed points $y$ and $z$ with $y \in Y$ and $z \in
Z$. It is therefore of type ($\chi$). Following \cite[Folgerung
  1.6]{pauer}, we prove that $X_\mu^U$ is smooth and therefore
isomorphic to $\p^1$. We only need to check the smoothness in $y$ and
$z$. We check this in $y$, the proof for $z$ works the same way. Since
$X_\mu$ is a spherical variety, by the structure theorem
(Theorem \ref{thm-stru}) we may replace $X_\mu$ by an affine spherical variety
$W$ containing $y$.  Note that we have a dense $U$-orbit in $W$. Let
$U^-$ be the opposite unipotent subgroup, we prove that the
restriction $\kk[W]^{U^-} \to \kk[W^U]$ is an isomorphism. Indeed the
restriction morphism $f : \kk[W] \to \kk[W^U]$ is surjective and
$B$-equivariant. In particular $f(\kk[W]^{U^-}) = f(T \cdot
\kk[W]^{U^-}) = T \cdot f(\kk[W]^{U^-}) = B \cdot f(\kk[W]^{U^-}) = f(B \cdot
\kk[W]^{U^-})$. Writing $\kk[W]$ as a sum of finite dimensional
$G$-module, we have that $\kk[W]$ is generated by $B \cdot \kk[W]^{U^-}$
thus $f(\kk[W]^{U^-}) = f(\kk[W]) = \kk[W^U]$. Now since $W = G \cdot
(W^U) = \overline{U^-B} \cdot W^U = \overline{U^-B \cdot W^U} = \overline{U^- \cdot W^U}$, the restriction map $\kk[W]^{U^-} \to \kk[W^U]$ is injective.

This implies $W^U = \spec(\kk[W^U]) \simeq \spec(\kk[W]^{U^-}) =
W/\!/U^-$. The last quotient is normal so $W^U$ is normal of dimension $1$
thus smooth.

Now let $C$ be another curve of type ($\chi$) meeting $Y$ and $Z$. Then $G \cdot C$ is a complete spherical $G$-variety of rank one and contains $Y$ and $Z$. It has to be $X_\mu$ and since $C$ is fixed by $U$, we get $C \subset X_\mu^U$ and this is an equality since $X_\mu^U$ is an irreducible curve.

Finally, let $\chi$ be the character through which $B$ acts on $C_{YZ}$ with fixed points $y \in Y$ and $z \in Z$. For $\delta$ a Cartier divisor, let $\delta_y$ and $\delta_z$ be the weights of $T$ acting on the fibers of the corresponding line bundle at $y$ and $z$. Then for $l_\delta = (l_Y)$, we have $\delta_y - \delta_z = l_Y - l_Z$ and $\chi = v$ where $v$ is the primitive vector vanishing on the wall $\mu$. Since $C_{YZ} \cdot \delta = (\delta_y - \delta_z)/\chi$ (see Lemma \ref{lemm-num-equiv-courbes}) we get $C_{YZ} \cdot \delta = C_\mu \cdot \delta$ and $C_\mu$ is the class of $C_{YZ}$ in $N_1(X)$.

3. By 1 above, the cone $NE(X)$ is generated by $B$-stable irreducible curves and their images generate $\overline{NE}(X)$ in $\overline{N_1(X)}$. Since the latter is generated by the classes $\overline{C}_{D,Y}$, we get that the classes $\overline{C}_{D,Y}$ are represented by $B$-stable irreducible curves. These curves must be of type (U), (T) or (N) since they cannot be of type ($\chi$): a curve of type ($\chi$) meets two closed $B$-orbits and thus by 2. its class is equal to some $C_\mu$.
\end{proof}

\begin{cor}
  Let $X$ be a spherical variety, then any irreducible complete $B$-stable curve on $X$ is isomorphic to $\p^1$.
\end{cor}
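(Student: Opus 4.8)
The plan is to run through the classification of $B$-stable curves established above and to isolate the single delicate case, namely curves of type $(\chi)$, which I would settle by a reduction to a rank-one spherical variety. First I would observe that, since $X$ is spherical, $B$ acts non-trivially on $C$, so Proposition \ref{prop-NUT-P} applies: $C$ is rational, its normalisation $\Ct$ is $\p^1$, and $B$ has either one or two fixed points on $\Ct$ (at least one because $B$ is solvable and $C$ is complete). If $B$ has a unique fixed point, then by definition $C$ is of type $(U)$, $(T)$ or $(N)$, and Proposition \ref{prop-unt} asserts directly that $C$ is smooth and isomorphic to $\p^1$. Thus the whole content lies in the remaining case, where $C$ has two $B$-fixed points, i.e. $C$ is of type $(\chi)$ and $B$ acts on $C$ through a character $\chi$.

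For a curve of type $(\chi)$ the unipotent radical $U$ of $B$ lies in the stabiliser of a general point $c\in C$ (this is the case $U\subset B_z$ in the proof of Proposition \ref{prop-NUT-P}); since $U$ is normal in $B$ it acts trivially on the dense orbit $B\cdot c$, hence on all of $C$, so $C\subseteq X^U$. I would then replace $X$ by the closed $G$-stable subvariety $X'=\overline{G\cdot C}=\overline{G\cdot c}$, which is again spherical, and show that $\rk(X')=1$. Indeed, let $\Omega=G\cdot c$ be the open orbit of $X'$; for any weight $\lambda\in\Lambda(X')$ choose $f\in\kk(X')^{(B)}$ of weight $\lambda$. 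As $\divi(f)$ is $B$-stable it cannot contain $\Omega$, so $f(c)\in\kk^\times$ and $f$ restricts to a nowhere-vanishing $B$-eigenfunction on $B\cdot c\cong\G_m$; therefore $\lambda$ is a multiple of $\chi$. Hence $\Lambda(X')=\Z\chi$ and $\rk(X')=1$, the inequality $\rk(X')\geq\rk(C)=1$ being automatic.

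Finally, with $X'$ of rank one I would invoke the argument of Pauer \cite{pauer} used in the proof of Proposition \ref{prop-dual}: applying the local structure Theorem \ref{thm-stru} at a $B$-fixed point $x\in C$ one reduces to an affine spherical model $W$ on which the restriction $\kk[W]^{U^-}\to\kk[W^U]$ is an isomorphism, so that $W^U\cong W/\!/U^-$ is normal of dimension $\rk(X')=1$, hence a smooth curve. Consequently $(X')^U$ is smooth of dimension one along $C$, and since $C\subseteq (X')^U$ is an irreducible complete curve it is an irreducible component of it; thus $C$ is smooth, and being a smooth complete rational curve it is isomorphic to $\p^1$. The main obstacle is exactly this type-$(\chi)$ case: a $\G_m$-stable rational curve through a fixed point may very well be singular (a cusp $t\mapsto(t^2,t^3)$), and what excludes this here is sphericity, entering through the rank-one reduction and the normality of $W/\!/U^-$; the bookkeeping needed to guarantee that the relevant slice genuinely has rank one (rather than zero) at points of $C$ is the step that requires the most care.
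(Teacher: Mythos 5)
Your overall architecture is exactly that of the paper, whose proof of this corollary simply cites Proposition \ref{prop-unt} for curves with a unique $B$-fixed point and the argument inside the proof of Proposition \ref{prop-dual} (irreducibility and normality of $X_\mu^U\simeq W/\!/U^-$) for curves of type ($\chi$); your final smoothness step is a faithful reconstruction of the latter. The problem is your rank-one reduction. You claim that for $f\in\kk(X')^{(B)}$ of weight $\lambda$, the fact that the $B$-stable divisor $\divi(f)$ cannot contain the open $G$-orbit $\Omega$ forces $f(c)\in\kk^\times$. This does not follow: $c$ lies in $\Omega$, but only in a one-dimensional $B$-orbit, which in general is far from the open $B$-orbit of $X'$, so $c$ may perfectly well lie on a colour, along which $f$ vanishes or has a pole. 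Concretely, take $G=\SL_2(\kk)$ acting on $X'=\p(\kk\oplus\kk^2)=\p^2$ (trivially on the first summand) and $C=\p(\kk\oplus\kk e_1)=\{x_2=0\}$: this $C$ is of type ($\chi$), it is itself a colour of $X'$, and the $B$-eigenfunction $x_2/x_0$ has divisor $C-\{x_0=0\}$, hence vanishes identically on $C$. Since a $B$-eigenfunction of given weight is unique up to scalar (because $\kk(X')^B=\kk$), you cannot repair this by choosing $f$ more carefully; the step fails, and with it your proof that $\Lambda(X')=\Z\chi$, which is what later guarantees that the slice $W^U\simeq W/\!/U^-$ has dimension one.

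The conclusion $\rk(X')=1$ is nevertheless correct — it is precisely the assertion ``the rank of $G\cdot C_{YZ}$ is one'' made in the proof of Proposition \ref{prop-dual} — but it requires a different justification. Since $c$ is $U$-fixed one has $U\subset G_c$, so $B\cap G_c=S\ltimes U$ with $S$ of codimension $\dim B\cdot c=1$ in $T$; pulling a $B$-eigenfunction back to $G$, it is left and right $U$-invariant, and evaluating on the open Bruhat cell $Uw_0TU$ shows its weight is killed by $w_0(S)$, so that $\Lambda(G\cdot c)$ has rank at most $\dim T/S=1$. (Equivalently, quote Pauer's computation already used in the paper, which shows $(G\cdot c)^U=B\cdot c$ and that $G\cdot c$ is horospherical of rank $\dim B\cdot c$; or simply invoke Proposition \ref{prop-dual} directly, after noting that the two $B$-fixed points of a type-($\chi$) curve lie in two \emph{distinct} complete $G$-orbits because a complete $G$-orbit carries a unique $B$-fixed point.) With the rank-one statement secured by one of these means, the remainder of your argument — $\dim(X')^U\leq 1$, $C$ a component of $(X')^U$, and smoothness at the two $B$-fixed points via $W^U\simeq W/\!/U^-$ — goes through as in the paper.
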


\begin{proof} 
See Proposition \ref{prop-dual} for type ($\chi$) and Proposition \ref{prop-unt} for other types.
\end{proof}

\subsection{Curves modulo rational and numerical equivalences}

Recall that $Z_1(X)$ is the group of one-dimensional cycles and that $A_1(X)$ is the quotient of $Z_1(X)$ modulo rational equivalence. A general result of Fulton, MacPherson, Sottile and Sturmfels \cite{FMSS} ensures that $B$-stable curves generate the Chow group of curves and furthermore that any rational equivalence is given by $[\div(f)] = 0$ for $f \in \kk(W)^{(B)}$ a $B$-semiinvariant rational function on a $B$-stable rational surface $W$.

Note that the pairing $(C,D) \mapsto \deg(D\vert_C)$ between $Z_1(X)$ and the group of Cartier divisors on $X$ induces a map $\a_X : \pic(X) \to A_1(X)^\vee$. As in $N_1(X)$, we define $AE(X)$ to be the cone in $A_1(X)_\Q = A_1(X) \otimes_\Z \Q$ generated by classes of effective curves.

\begin{prop}
For $X$  complete and spherical $\a_X : \pic(X) \to A_1(X)^\vee$ is an isomorphism.
\end{prop}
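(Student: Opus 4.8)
The plan is to factor $\alpha_X$ through the numerical quotient and show each factor is an isomorphism. Write $q : A_1(X) \to N_1(X)$ for the canonical surjection and $\beta : \pic(X) \to N_1(X)^\vee$ for the map induced by the intersection pairing. Since $(C,D) \mapsto \deg(D|_C)$ depends on the curve only through its class in $N_1(X)$, we have $\alpha_X = q^\vee \circ \beta$. It therefore suffices to prove that $\beta$ is an isomorphism and that $q$ is an isomorphism, for then $q^\vee$ is also an isomorphism and so is the composite.

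I would first treat $\beta$. Injectivity is immediate from Corollary \ref{pic=n1}: since $\pic(X) = N^1(X)$, any class in $\ker\beta$ is a numerically trivial divisor, hence zero. For surjectivity I would compare two short exact sequences. On the one hand, Theorem \ref{thm-pic-s-t} gives, with $\cC_X^\perp = 0$ (as a complete $X$ contains a complete $G$-orbit, whose cone is maximal-dimensional),
$$0 \to \Z\odelta(X) \to \pic(X) \to \PL(X)/\LL(X) \to 0.$$
On the other hand, the theorem describing $N_1(X)$ above yields the filtration $0 \to \langle C_\mu \rangle \to N_1(X) \to \overline{N_1(X)} \to 0$, whose dual is
$$0 \to \overline{N_1(X)}^\vee \to N_1(X)^\vee \to \langle C_\mu\rangle^\vee \to 0.$$
By the Remark following that theorem the classes $(C_\mu)_\mu$ are dual to $\PL(X)/\LL(X)$ and the classes $(\overline{C}_{D,Y})_{D\in\odelta(X)}$ are a basis dual to $\Z\odelta(X)$; concretely $C_{D,Y}\cdot D' = \delta_{D,D'}$ and $C_\mu \cdot D' = 0$ for $D,D' \in \odelta(X)$. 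Hence $\beta$ carries $\Z\odelta(X)$ isomorphically onto the annihilator $\overline{N_1(X)}^\vee$ of $\langle C_\mu\rangle$ and induces the identity on the quotients $\PL(X)/\LL(X) \xrightarrow{\sim} \langle C_\mu\rangle^\vee$, so $\beta$ is an isomorphism by the five lemma. All groups involved are free of finite rank, by Theorem \ref{thm-pic-s-t} and the torsion-freeness of $\overline{N_1(X)}$.

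The main obstacle is to prove that $q : A_1(X) \to N_1(X)$ is an isomorphism, that is, that rational and numerical equivalence agree for $1$-cycles on a complete spherical variety; surjectivity is the definition of $N_1(X)$, so the content is $\ker q = 0$. For this I would invoke the theorem of Fulton--MacPherson--Sottile--Sturmfels \cite{FMSS}: the group $A_1(X)$ is generated by classes of $B$-stable curves, and every rational equivalence among them is realised by a $B$-semiinvariant rational function on a $B$-stable rational surface. By Proposition \ref{prop-dual}, each such curve is either of type ($\chi$), with class some $C_\mu$, or of type {\rm (U)}, {\rm (T)} or {\rm (N)}, whose class lifts a class $\overline{C}_{D,Y}$ with $D \in \odelta(X)$. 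I would then show that the only relations these surfaces impose are precisely the numerical ones recorded in the presentation of $N_1(X)$ above — equivalently, that the surface relations are all accounted for by the wall classes $C_\mu$. This forces $\rk A_1(X) = \rk N_1(X)$, which together with torsion-freeness makes $q$ an isomorphism.

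Assembling the pieces, with $\beta$, $q$, and hence $q^\vee$ all isomorphisms, $\alpha_X = q^\vee \circ \beta$ is an isomorphism. I expect the delicate step to be the verification in the third paragraph that the FMSS surface relations introduce no equivalence beyond the numerical one; this is exactly where the completeness of $X$ and the explicit combinatorics of the colored fan $\mathfrak{F}(X)$ must be used, via the identification of the wall classes $C_\mu$ with the genuine type-($\chi$) curves $X_\mu^U$ produced in Proposition \ref{prop-dual}.
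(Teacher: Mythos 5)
Your factorisation $\a_X = q^\vee \circ \beta$ with $q : A_1(X) \to N_1(X)$ the canonical surjection is a legitimate starting point, and the injectivity of $\beta$ from Corollary \ref{pic=n1} is fine. But the heart of the matter is exactly the step you defer: proving that $q^\vee$ is an isomorphism is equivalent to proving that rational and numerical equivalence of $1$-cycles agree up to torsion (i.e. $\ker q$ is torsion), and that statement is essentially the content of the proposition itself. Your plan --- invoke \cite{FMSS} to generate $A_1(X)$ by $B$-stable curves and then ``show that the only relations these surfaces impose are precisely the numerical ones'' --- is not an argument but a restatement of what must be proved. In the paper this classification of rational equivalences is carried out only in the \emph{later} subsection on equivalences, only for toroidal $X$, and it requires the big-cell machinery (Proposition \ref{prop-luna}, the Bia\l ynicki-Birula decomposition); indeed the corollary following the proposition explicitly postpones the torsion-freeness of $\overline{A_1(X)}$ to that later analysis. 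A secondary issue: your five-lemma argument for the surjectivity of $\beta$ needs the dualities $\Z\odelta(X) \simeq \overline{N_1(X)}^\vee$ and $\PL(X)/\LL(X) \simeq \langle C_\mu\rangle^\vee$ \emph{integrally}, whereas the theorem you quote only gives a basis of $\overline{N_1(X)}_\Q$ over $\Q$ and the remark about duality is stated informally.

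The paper takes a different and self-contained route that avoids classifying rational equivalences altogether. For $X$ smooth and projective, the $T$-action has finitely many fixed points, so $X$ has a cellular decomposition by \cite{BB}; hence $\pic(X) \to H^2(X,\Z)$ and $A_1(X) \to H_2(X,\Z)$ are isomorphisms and the claim is Poincar\'e duality. In general one chooses a smooth projective toroidal resolution $\varphi : \Xt \to X$ and compares $\a_X$ with $\a_{\Xt}$ via $\varphi^*$ and the (injective) transpose $(\varphi_*)^T$: injectivity of $\a_X$ follows at once, and surjectivity reduces to showing that a class $\deltat \in \pic(\Xt)$ vanishing on all curves contracted by $\varphi$ lies in $\varphi^*\pic(X)$, which is done combinatorially using Theorem \ref{thm-pic-s-t} and the explicit contracted curves of type $(\chi)$ and the classes $\overline{C}_{D,Y}$ from Proposition \ref{prop-dual}. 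If you want to salvage your approach, you would have to first establish the full description of rational equivalences between $B$-stable curves; as written, the third paragraph of your proposal is a gap, not a proof.
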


\begin{proof}
    For $X$ smooth projective, since $T$ acts with finitely many fixed points, $X$ has a cellular decomposition (see \cite{BB}). Therefore $\pic(X) \to H^2(X,\Z)$ and $A_1(X) \to H_2(X,\Z)$ are isomorphisms and the result follows from Poincar\'e duality.

In general, we can find a smooth projective toroidal $G$-variety $\Xt$ and a birational $G$-morphism $\varphi : \Xt \to X$ (use \cite[Corollary 3.3.8]{survey} and the equivariant Chow-Lemma). The map $\varphi_*$ is surjective over $\Q$ and we consider the commutative diagram
$$\xymatrix{\pic(X) \ar[r] \ar[d] & A_1(X)^\vee \ar[d]\\
  \pic(\Xt) \ar[r] & A_1(\Xt)^\vee, \\}$$
with vertical maps $\varphi^*$ and the transpose $(\varphi_*)^T$ of $\varphi_*$. Then $(\varphi_*)^T$ is injective and since $\a_\Xt$ is an isomorphism, we get that $\a_X$ is injective.

Let $u \in A_1(X)^\vee$ and let $\deltat \in \pic(\Xt)$ such that $\a_\Xt(\deltat) = (\varphi_*)^T(u)$. Then $\deltat$ vanishes on the curves contracted by $\varphi$. We prove that this implies $\deltat \in \varphi^*\pic(X)$. This would imply the surjectivity of $\a_X$ since $(\varphi_*)^T$ is injective.

We are left to prove $\deltat \in \varphi^*\pic(X)$. If $X$ is toroidal, then the fan $\mathfrak{F}(\Xt)$ of $\Xt$ is a subdivision of the fan $\mathfrak{F}(X)$ of $X$ (see \cite[Section 7]{jacopo}). For any wall of $\mathfrak{F}(\Xt)$ which is not a wall $\mu$ of $\mathfrak{F}(X)$, there is a curve $C_\mu$ of type $(\chi)$ contracted by $\varphi$ (see Proposition \ref{prop-dual}). By Theorem \ref{thm-pic-s-t} the embedding $\varphi^*\pic(X) \subset \pic(\Xt)$ is defined by the equations $\deltat \cdot C_\mu = 0$ proving the claim.
For $X$ spherical there exists a unique minimal pair $(X',\pi)$ with $X'$ toroidal and $\pi : X' \to X$ birational. For $Y$ a closed orbit in $X$, there exists a unique closed orbit $Y'$ in $X'$ with $\pi(Y') = Y$ and $\pi$ restricts to a proper morphism $\pi_Y : X'_{Y',G} \to X_{Y,G}$. Choose a prime $B$-stable divisor $D'$ in $X'$ with $Y' \not \subset D'$ but $Y \subset \pi(D')=D$. Let $C' \subset X'$ be a $B$-stable curve spanning the ray of $\overline{C}_{D',Y'}$. Then since $D = \pi(D') \supset Y$ we get $\overline{C}_{D,Y} = 0$ so $C'$ is contracted by $\pi$. By Theorem \ref{thm-pic-s-t} the embedding $\pi^*\pic(X) \subset \pic(X')$ is defined by the equations $\delta' \cdot C_{D,Y} = 0$ for $D$ and $Y$ as above. The result follows from this and the fact that $\varphi$ factors through $\pi$.
\end{proof}

Denote by $\overline{A_1(X)}$ the quotient of $A_1(X)$ by the classes of curves of type ($\chi$).

\begin{cor}
  For $X$ spherical complete, we have an exact sequence
  $$0 \to A_1(X)_{tors} \to A_1(X) \to N_1(X) \to 0.$$
  This induces an isomorphism $\overline{A_1(X)} \to \overline{N_1(X)}$ and both are torsion free.
\end{cor}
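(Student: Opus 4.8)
The plan is to derive everything from the isomorphism $\alpha_X\colon\pic(X)\to A_1(X)^\vee$ just established, from the description of $N_1(X)$ and $\overline{N_1(X)}$ in the preceding theorem, and from Proposition \ref{prop-dual}. First I would set up the exact sequence. The natural map $A_1(X)\to N_1(X)$ is surjective because numerical equivalence is coarser than rational equivalence, and its kernel is the group of numerically trivial classes, i.e. the kernel of the adjoint $\beta_X\colon A_1(X)\to\pic(X)^\vee$ of the intersection pairing. Since $X$ is complete spherical it has finitely many $B$-orbits, hence finitely many irreducible $B$-stable curves, and these generate $A_1(X)$ by \cite{FMSS}; thus $A_1(X)$ is finitely generated. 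Torsion classes are automatically numerically trivial, so $A_1(X)_{tors}\subseteq\ker\beta_X$. For the reverse inclusion I would use that $\beta_X$ factors as $\alpha_X^\vee\circ\ev$, where $\ev\colon A_1(X)\to A_1(X)^{\vee\vee}$ is the canonical evaluation and $\alpha_X^\vee$ is the dual of $\alpha_X$; as $\alpha_X$ is an isomorphism so is $\alpha_X^\vee$, whence $\ker\beta_X=\ker\ev=A_1(X)_{tors}$. This yields the exact sequence and shows $N_1(X)=A_1(X)/A_1(X)_{tors}$ is free.

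For the second assertion, write $V_\chi\subseteq A_1(X)$ for the subgroup generated by the classes of curves of type $(\chi)$ and $W\subseteq N_1(X)$ for the subgroup generated by the $C_\mu$. By Proposition \ref{prop-dual}.2 every curve of type $(\chi)$ has class some $C_\mu$ and every $C_\mu$ is the class of such a curve, so $A_1(X)\to N_1(X)$ carries $V_\chi$ onto $W$; hence it descends to a surjection $\overline{A_1(X)}\to\overline{N_1(X)}$. Since $\overline{N_1(X)}$ is free with basis $(\overline{C}_{D,Y})_{D\in\odelta(X)}$ by the previous theorem, it suffices to prove this surjection is injective, equivalently (as the source and target have the same rank) that $\overline{A_1(X)}$ is torsion free, equivalently that $A_1(X)_{tors}\subseteq V_\chi$.

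To obtain $A_1(X)_{tors}\subseteq V_\chi$ I would argue as follows. Every irreducible $B$-stable curve is of type $(\chi)$, $(U)$, $(T)$ or $(N)$ by Proposition \ref{prop-NUT-P}, so $A_1(X)=V_\chi+V_{UTN}$, where $V_{UTN}$ denotes the subgroup generated by the curves of the last three types. The composite $\rho\colon V_{UTN}\hookrightarrow A_1(X)\to\overline{N_1(X)}$ is surjective, since by Proposition \ref{prop-dual}.3 each basis vector $\overline{C}_{D,Y}$ is represented by a $B$-stable curve of type $(U)$, $(T)$ or $(N)$. Granting that $\rho$ is in fact an isomorphism, any torsion class $\tau$ may be written $\tau=v+w$ with $v\in V_\chi$ and $w\in V_{UTN}$; its image in $\overline{N_1(X)}$ equals $\rho(w)$ and is also $0$ because $\tau$ is numerically trivial, so $w=0$ and $\tau=v\in V_\chi$. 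Then $\overline{A_1(X)}\to\overline{N_1(X)}$ is an isomorphism and both groups are free.

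The main obstacle is therefore the injectivity of $\rho$: that two combinations of curves of type $(U)$, $(T)$, $(N)$ which are numerically equivalent modulo the $C_\mu$ are already rationally equivalent modulo curves of type $(\chi)$. This cannot be detected by intersection numbers, since any such pairing factors through $N_1(X)$ and is thus blind to the very torsion we must control; it has to be extracted instead from the description of rational equivalences recalled before the statement, namely that every relation among $B$-stable curves arises from $\div(f)=0$ for a $B$-semiinvariant rational function $f$ on a $B$-stable (rational) surface (cf. \cite{FMSS} and the proof of Proposition \ref{prop-dual}.1). Examining these relations surface by surface, where the $B$-stable curves are exactly the $B$-stable divisors of the surface and the type $(\chi)$ curves account for the fibre classes, is the technical heart of the argument, and is where I expect the real work to lie.
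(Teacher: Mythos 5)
Your argument matches the paper's own proof: the exact sequence is extracted from the isomorphism $\alpha_X\colon \pic(X)\to A_1(X)^\vee$ (the paper factors it through $\pic(X)\to N^1(X)\to N_1(X)^\vee\to A_1(X)^\vee$, you use the evaluation into the double dual of the finitely generated group $A_1(X)$ — the same content), and the torsion-freeness of $\overline{A_1(X)}$ is reduced to the classification of rational equivalences among $B$-stable curves. The step you flag as the technical heart is deferred in exactly the same way by the paper (``We will describe all rational equivalences in the next section. This will in particular prove that $\overline{A_1(X)}$ is torsion free''), so your proposal is essentially identical in structure and scope to the published proof.
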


\begin{proof}
The isomorphism $\a_X : \pic(X) \to A_1(X)^\vee$ factors through
$$\pic(X) \to N^1(X) \to N_1(X)^\vee \to A_1(X)^\vee.$$
In particular, we recover the fact that the map $\pic(X) \to N^1(X)$ is an isomorphism and the transpose of $A_1(X) \to N_1(X)$ is also an isomorphism. The kernel of this map is therefore the torsion subgroup. We will describe all rational equivalences in the next section. This will in particular prove that $\overline{A_1(X)}$ is torsion free.
\end{proof}

\subsection{Equivalences}

In this subsection, we describe linear and numerical equivalences between $B$-stable curves.

We start with a general result on action of tori. Let $C$ be a complete curve endowed with a non trivial action of a torus $T'$. Let $\pi : \Ct \to C$ be its normalisation. The curve $C$ is rational so $\Ct \simeq \p^1$. Let $0$ and $\infty$ be the $T'$-fixed points in $\Ct$ and $x$ and $y$ their images in $C$. We may identify $C \setminus \{x,y\} = \Ct \setminus\{0,\infty\}$ with $\kk^\times$ so that $T'$ acts via a character $\chi$. 

\begin{lemma}
  \label{lemm-num-equiv-courbes}
  Let $L$ be a $T'$-linearised line bundle on $C$, and let $L_x$ and $L_y$ be the weights of $T'$ on the stalks at $x$ and $y$.

  Then $L_x - L_y$ is a multiple of $\chi$ and the degree $(L \cdot C)$ of $\pi^*L$ is given by
  $$(L \cdot C) = (L_x - L_y)/\chi.$$
\end{lemma}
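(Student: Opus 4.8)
The plan is to pull everything back to the normalization $\Ct\cong\p^1$ and reduce to a weight computation for a $T'$-equivariant line bundle on $\p^1$. First I would observe that $T'$ lifts uniquely to $\Ct$ by functoriality of normalization, so that $\pi$ is $T'$-equivariant; the nontrivial $T'$-action on $\Ct\cong\p^1$ has exactly the two fixed points $0$ and $\infty$, and their images $x,y$ are $T'$-fixed in $C$. Writing $M=\pi^*L$, the fiber of $M$ at $0$ (resp. at $\infty$) equals the fiber of $L$ at $x$ (resp. at $y$) as a $T'$-module, so the fiber weights of $M$ are exactly $L_x$ and $L_y$; and by definition $(L\cdot C)=\deg M$. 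Since $T'$ acts nontrivially we have $\chi\neq 0$. Thus the statement reduces to showing that for a $T'$-equivariant line bundle $M$ on $\p^1$ with $T'$ acting through $\chi$ on $\G_m=\p^1\ssm\{0,\infty\}$, one has $L_x-L_y=(\deg M)\chi$ in $\cX(T')$; both assertions then follow, the second by dividing by $\chi\neq0$.

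Next I would produce semiinvariant local frames. On the chart $U_0=\p^1\ssm\{\infty\}=\Spec\kk[z]$, where $T'$ acts by $t\cdot z=\chi(t)z$, the restriction $M|_{U_0}$ is free of rank one since $\operatorname{Pic}(\mathbb{A}^1)=0$, and its module of sections is an $\cX(T')$-graded free $\kk[z]$-module; choosing a homogeneous generator yields a $T'$-semiinvariant frame $\epsilon_0$. Evaluating the semiinvariance relation at the fixed point $0$ then shows that the weight of $\epsilon_0$ equals the fiber weight $L_x$. Symmetrically, on $U_\infty=\p^1\ssm\{0\}=\Spec\kk[w]$ with $w=1/z$ I obtain a semiinvariant frame $\epsilon_\infty$ of weight $L_y$.

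Then I would compute the transition function on the overlap $\G_m$, where $\epsilon_\infty=g\,\epsilon_0$ for some $g\in\kk(\p^1)^\times$. Since both frames are semiinvariant, $g$ must be a $T'$-semiinvariant rational function on $\G_m$, hence a monomial $g=c\,z^k$ (here $\chi\neq0$ guarantees that the semiinvariant functions are exactly the monomials). Comparing weights, $L_y=L_x-k\chi$, that is $L_x-L_y=k\chi$. Finally, reading $\epsilon_0=c^{-1}w^{k}\epsilon_\infty$ near $\infty$ shows $\div(\epsilon_0)=k[\infty]$, so $\deg M=k$. Combining gives $L_x-L_y=(\deg M)\chi=(L\cdot C)\chi$, which is the claim.

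The main obstacle I expect is the existence of the semiinvariant frames $\epsilon_0,\epsilon_\infty$ when $T'$ has rank greater than one: this is precisely where I use that an equivariant line bundle on the affine space $U_0$ carrying a linear torus action is equivariantly trivial, equivalently that a rank-one free graded module over the graded ring $\kk[z]$ admits a homogeneous basis. Everything else is a direct bookkeeping of weights, and the hypothesis $\chi\neq0$ is needed only to identify the semiinvariant functions on $\G_m$ with the monomials and to divide by $\chi$ at the very end.
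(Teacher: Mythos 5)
Your proof is correct. It shares the paper's first step --- pulling $L$ back along the $T'$-equivariant normalisation $\pi:\Ct\simeq\p^1\to C$ so that everything reduces to a weight computation for an equivariant line bundle on $\p^1$ --- but the computation on $\p^1$ is carried out differently. The paper notes that the $T'$-action on $\p^1$ lifts to a linear action on $\kk^2$ with weights $\chi$ and $0$, that the difference $L_x-L_y$ is unchanged when the linearisation is twisted by a character, and then just reads off the two fibre weights for the resulting standard linearisation of $\cO_{\p^1}(n)$. You instead keep the given linearisation and argue via equivariant local frames: semiinvariant trivialisations on the two affine charts exist (graded Nakayama over $\kk[z]$, where $\chi\neq0$ makes the weight spaces of $\kk[z]$ one-dimensional), their weights are exactly the fibre weights $L_x$ and $L_y$, and the transition function is forced to be a monomial $c\,z^k$ whose exponent simultaneously records the degree of $\pi^*L$ and the weight difference. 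Your route is slightly more self-contained --- no appeal to the classification of linearisations nor to an explicit model of the bundle --- at the price of the equivariant-trivialisation step on $\A^1$, which you correctly single out as the only point needing care; both arguments are subject to the same harmless sign conventions coming from the choice of identification of $C\setminus\{x,y\}$ with $\kk^\times$.
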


\begin{proof}
Since $L_x$ and $L_y$ are also the weights of $\pi^*L$ at $0$ and $\infty$, we may assume that $C = \p^1$, $x = 0$ and $y = \infty$. Furthermore, the action of $T'$ comes from an action on $\kk^2$ with weights $\chi$ and $0$. Since the difference $L_x - L_y$ does not depend on the choice of a linearisation, we may assume that with the above action $L = \cO_{\p^1}(n)$. In that case $(L \cdot C) = n$ and $L_0 = 0$ while $L_\infty = n \chi$ proving the result.
\end{proof}

\begin{prop}
  Let $X$ be a normal $G$-variety and $C$, $C'$ be $B$-stable curves.

  1. If $C$ and $C'$ are of type {\rm ($\chi$)} and meet the same connected components of $X^B$, then they are numerically equivalent.

  2. If $C$ and $C'$ are of type {\rm (U)}, meet the same connected component of $X^B$ and $P(C) = P(C')$, then they are numerically equivalent.
\end{prop}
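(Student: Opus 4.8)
The plan is to deduce both statements from the weight formula of Lemma \ref{lemm-num-equiv-courbes}, applied to the maximal torus $T$. Numerical equivalence of the complete curves $C,C'$ means $(L\cdot C)=(L\cdot C')$ for every line bundle $L$ on $X$, and since $(L^{\otimes n}\cdot C)=n(L\cdot C)$ I may replace $L$ by a power; so, replacing $G$ by a finite cover as elsewhere in these notes, I may assume $L$ is $G$-linearised, hence $T$-linearised. For such an $L$ and a $T$-fixed point $p$, the weight $L_p$ of $T$ on the fibre at $p$ is locally constant on $X^T$, hence constant on each connected component. As $X^B\subseteq X^T$, every component of $X^B$ lies in a single component of $X^T$; thus two $B$-fixed points lying in the same component of $X^B$ carry the same weight $L_p$. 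This is the one standing fact I will use repeatedly.

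For part~1, write the two $B$-fixed points of $C$ as $x,y$ and those of $C'$ as $x',y'$. Since both curves are of type $(\chi)$ for the \emph{same} character $\chi$, the torus $T$ acts on $\Ct\simeq\p^1$ and on $\wt{C'}\simeq\p^1$ through $\chi|_T$, one fixed point being the source and the other the sink for a generic one-parameter subgroup. The hypothesis that $C$ and $C'$ meet the same components of $X^B$ lets me match these ends: after relabelling, $x,x'$ lie in one common component $F$ and $y,y'$ in the other, $F'$. (The matching is forced in the correct orientation: were the source of $C'$ to sit in $F'$, the formula below would give $(L\cdot C')=-(L\cdot C)$, impossible for $L$ ample on a quasi-projective invariant neighbourhood furnished by Theorem \ref{sumihiro}.) Lemma \ref{lemm-num-equiv-courbes} then yields $(L\cdot C)=(L_x-L_y)/\chi$ and $(L\cdot C')=(L_{x'}-L_{y'})/\chi$; since $L_x=L_{x'}$ and $L_y=L_{y'}$ by the first paragraph, these agree for every $L$.

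For part~2, put $P=P(C)=P(C')=P_\a$. By Proposition \ref{prop-unt} each curve equals $P\cdot x\simeq P/B\simeq\p^1$ for its unique $B$-fixed point $x$, and its two $T$-fixed points are $x$ and $x^-:=s_\a\cdot x$; here $T$ acts on $P/B$ through $-\a$, the tangent weight at the base point $x$. The same holds for $C'$ with $x'$ and $x'^-:=s_\a\cdot x'$, the character being again $-\a$ precisely because $P(C')=P_\a$. Now $x,x'$ lie in one component of $X^B$, hence in one component $\wt{F}$ of $X^T$, so $L_x=L_{x'}$; since $s_\a$ normalises $T$ it carries $\wt{F}$ into a single component of $X^T$ containing both $x^-$ and $x'^-$, whence $L_{x^-}=L_{x'^-}$. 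Applying Lemma \ref{lemm-num-equiv-courbes} with character $-\a$ gives $(L\cdot C)=(L_x-L_{x^-})/(-\a)$ and the identical expression for $C'$, so $(L\cdot C)=(L\cdot C')$ for all $L$.

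The entire content sits in Lemma \ref{lemm-num-equiv-courbes}; the work is purely in identifying the two $T$-fixed points, locating their $X^T$-components, and checking that the characters coincide. For type~(U) the character is the root $\a$, so $P(C)=P(C')$ settles this at once; for type~$(\chi)$ it is built into the hypothesis, the only delicate point being the orientation, i.e.\ which of the two ends is the source, which I pin down by positivity against an ample class. I expect this orientation-and-component bookkeeping to be the sole place requiring care. A harmless wrinkle, already absorbed into the statement of Lemma \ref{lemm-num-equiv-courbes}, is that in the generality of an arbitrary normal $G$-variety the curve $C$ need not be smooth: the degree and the relevant weights are read off on the normalisation $\Ct\simeq\p^1$, so any singularities of $C$ play no role.
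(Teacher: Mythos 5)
Your proof is correct and follows essentially the same route as the paper's: both parts reduce to Lemma \ref{lemm-num-equiv-courbes} together with the local constancy of the weight of a $T$-linearised line bundle on the fixed-point locus, and part 2 uses the reflection $s_\a$ exactly as in the text. The only differences are cosmetic: you make explicit the matching and orientation of the fixed points in part 1 (which the paper leaves implicit, along with the tacit assumption that the two curves carry the same character $\chi$), and in part 2 you compare $L_{n_\a\cdot x}$ with $L_{n_\a\cdot x'}$ via connectedness of $n_\a\cdot\wt{F}$ inside $X^T$ rather than via the identity $L_{n_\a\cdot x}=s_\a(L_x)$.
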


\begin{proof}
  1. The group $B$ acts via a character so any maximal torus $T$ acts non trivially on $C$ and $C'$. For any line bundle $L$, there exists an integer $n$ so that $L^{\otimes n}$ is $T$-linearised. Note also that for a linearised line bundle $L$, the weights of $B$ on the stalks of $L$ at $B$-fixed points are constant on connected components of $X^B$. The assertion now follows from the previous lemma.

  2. Set $P = P(C) = P(C')$ be the minimal parabolic associated to $C$ and $C'$. Let $\a$ be the corresponding root (the only simple root whose opposite is also a root of $P$). Let $s_\a$ be the corresponding simple reflection and $n_\a$ be a representative in $N_G(T)$. Denote by $x$ and $x'$ the $B$-fixed points in $C$ and $C'$. Then $n_\a \cdot x$ and $n_\a \cdot x'$ are the other $T$-fixed points in $C$ and $C'$ and $T$ acts on $C$ and $C'$ via the character $\a$. Since $L_{n_\a \cdot x} = s_\a(L_x)$, the above lemma yields the equalities
  $$(L \cdot C) = (L_x - L_{n_\a \cdot x})/\a = (L_x - s_\a(L_x))/\a = \scal{\a^\vee,L_x}.$$
  Since $L_x = L_{x'}$ because $x$ and $x'$ are in the same connected component of $X^B$, the result follows.
\end{proof}

\begin{prop}
    \label{prop-rel1}
    Let $X$ be a projective $G$-variety, let $C$ be a curve of type {\rm ($\chi$)} with $B$-fixed points $x$ and $y$ and let $P$ be a minimal parabolic subgroup.

    Then in $A_1(X)$, the difference $[P \cdot x] - [P \cdot y]$ is equal to a multiple of $[C]$.

    \vskip 0.1 cm

 Note that if $P$ fixes $x$ (resp. $y$), then $P \cdot x$ (resp. $P \cdot y$) is a point and its class is trivial in $A_1(X)$ if not then $P \cdot x$ and $P \cdot y$ are curves of type {\rm (U)}.
\end{prop}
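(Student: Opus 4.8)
The plan is to realise $[P\cdot x]$ and $[P\cdot y]$ as push-forwards of two disjoint sections of a ruled surface built from $C$, and then to exploit the elementary structure of the group of $1$-cycles on a ruled surface over $\p^1$.

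First I would record the geometry. Since $C$ is of type ($\chi$), Proposition \ref{prop-NUT-P} shows that $C$ is rational, that its normalisation $\nu:\Ct\to C$ is isomorphic to $\p^1$, and that $B$ acts on $\Ct$ and on $C$ through a character $\chi$, the two $B$-fixed points $\widetilde{x},\widetilde{y}\in\Ct$ lying over $x,y$. I would then form the contracted product $W=P\times^B\Ct$ with its first projection $q:W\to P/B\simeq\p^1$. Because $B$ is solvable, hence special, $q$ is a Zariski-locally trivial $\p^1$-bundle over $\p^1$, so $W$ is a smooth projective ruled surface; consequently $A_1(W)=\pic(W)$ is freely generated by the fibre class $[F]$ of $q$ together with the class of any section, and any two sections differ by a multiple of $[F]$. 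Since $\widetilde{x}$ and $\widetilde{y}$ are $B$-fixed, the assignments $\sigma_{\widetilde{x}},\sigma_{\widetilde{y}}:P/B\to W$, $pB\mapsto[p,\widetilde{x}]$ and $pB\mapsto[p,\widetilde{y}]$, are well-defined disjoint sections of $q$, so in $A_1(W)$ I obtain $[\sigma_{\widetilde{x}}]-[\sigma_{\widetilde{y}}]=m\,[F]$ for some $m\in\Z$.

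Next I would push this relation forward along the proper multiplication morphism $\pi:W\to X$, $[p,\widetilde{c}]\mapsto p\cdot\nu(\widetilde{c})$, whose image is $P\cdot C$. On the fibre of $q$ over $eB$ the map $\pi$ restricts to $\nu$, which is birational onto $C$, whence $\pi_*[F]=[C]$. To compute $\pi_*[\sigma_{\widetilde{x}}]$, I note that the stabiliser $P_x$ of $x$ in $P$ contains $B$ (as $x$ is $B$-fixed) and is therefore a parabolic subgroup with $B\subseteq P_x\subseteq P$; since $P$ is a minimal parabolic, either $P_x=P$ or $P_x=B$. If $P_x=P$ then $P$ fixes $x$, so $P\cdot x$ is a point and $\pi_*[\sigma_{\widetilde{x}}]=0=[P\cdot x]$; if $P_x=B$ then $\pi$ restricts to the isomorphism $\sigma_{\widetilde{x}}\simeq P/B\to P\cdot x\simeq P/B$, so $\pi_*[\sigma_{\widetilde{x}}]=[P\cdot x]$. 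In either case $\pi_*[\sigma_{\widetilde{x}}]=[P\cdot x]$, and likewise $\pi_*[\sigma_{\widetilde{y}}]=[P\cdot y]$. Applying $\pi_*$ to $[\sigma_{\widetilde{x}}]-[\sigma_{\widetilde{y}}]=m\,[F]$ then yields $[P\cdot x]-[P\cdot y]=m\,[C]$, which is the assertion.

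The only non-formal input is the structure of the ruled surface $W$: that it is smooth over $\p^1$ and that its two boundary sections differ by a multiple of the fibre class. Once that is in place, the argument reduces to bookkeeping with the contracted product and to the dichotomy $P_x\in\{B,P\}$, which is exactly what controls the degree of $\pi$ on each section (and guarantees degree one when $P\cdot x$ is a curve). I expect the main point worth stating carefully to be this degree computation for the sections; the passage to the normalisation $\Ct$ is what lets the argument run for an arbitrary projective $G$-variety, where $C$ itself need not be smooth.
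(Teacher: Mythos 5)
Your proof is correct and follows essentially the same route as the paper: both form the ruled surface $P\times^B\Ct$ over $\p^1$ (the paper first reduces to the semisimple rank one group $S=P/R(P)$ and writes it as $G\times^B\Ct$, which is the same surface), observe that the two $B$-fixed points give sections whose classes differ by a multiple of the fibre class, and push forward along the proper multiplication map. Your added care about Zariski-local triviality via specialness of $B$ and the explicit degree-one check on the sections are welcome refinements of the same argument, not a different method.
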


\begin{proof}
  Let $S = P/R(P)$ where $R(P)$ is the radical of $P$. Since $U$ acts trivially on $C$ so does $R(P)$ and we have an $S$-action on $C$ such that $P \cdot C = S \cdot C$. Replacing $X$ with $S \cdot C$, we may assume that $G$ has semisimple rank one.

  Let $\Ct$ be the normalisation of $C$ and consider $W = G \times^B \Ct$ which is a smooth projective ruled surface over $G/B \simeq \p^1$. Write $p : W \to G/B$ and $\pi : W \to P \cdot C$ for the canonical morphisms. We may identify $\Ct$ with the fiber of $p$ over $B/B$. The curves $W_x = G \times^B x$ and $W_y = G \times^B y$ are $B$-stable sections of $p$ both of type (U). The morphism $\pi$ maps $W_x$ (resp. $W_y$) onto $P \cdot x$ (resp $P \cdot y$). Now by \cite[Section V.2]{hartshorne-book}, there exists an integer $n$ with $[W_x] - [W_y] = n [C]$. Mapping to $P \cdot C$ via $\pi_*$, we get the result.
\end{proof}

\begin{remark}
  1. In \cite[Proposition 2.4]{brion-mori} a more precise relation is given: if $\chi$ is the character acting on $C$ and $\a$ is the simple root associated to $P$, then
  $$[P \cdot x] - [P \cdot y] = \scal{\chi,\a^\vee}[C].$$

  2. Any rational equivalence coming from a surface $W \subset X$ which is an $\SL_2(\kk)/H$-embedding with $H$ containing a maximal unipotent subgroup comes from one of the above relations. Indeed $\SL_2(\kk)/H$ contains the curve $B/H$ whose closure is of type ($\chi$).
\end{remark}

From the geometry of curves of type (T) and (N) we deduce the following.

\begin{prop}
  \label{prop-rel2}
  Let $X$ be a complete $G$-spherical variety and let $C$ be a $B$-stable irreducible curve that is not of type {\rm ($\chi$)}. Let $P$ be the associated minimal parabolic subgroup and let $x \in C$ be the unique $B$-fixed point.

  1. If $C$ is of type {\rm (N)}, then $2[C] = [P \cdot x]$ in $A_1(X)$ and $P \cdot x$ is of type {\rm (U)}.

  2. If $C$ is of type {\rm (T)}, then there exists a unique $B$-stable curve $C'$ of type {\rm (T)} contained in $P \cdot C$ such that $[C] + [C'] = [P \cdot x]$ in $A_1(X)$ and $P \cdot x$ is of type {\rm (U)}.
\end{prop}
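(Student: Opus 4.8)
The plan is to reduce both relations to the semisimple rank-one group $S = P/R(P)$ acting on $P\cdot C$, and to read them off from the Chow group of an explicit rational surface. Since $R(P)$ acts trivially on $C$ by Proposition \ref{prop-NUT-P}, the variety $P\cdot C$ carries an action of $S$. I would first record that $P\cdot x$ is the closed $S$-orbit and is of type {\rm (U)}: as $x$ is $B$-fixed while $P$ raises $C$, the stabiliser $S_x$ is a proper subgroup of $S$ containing the image of $B$, hence a Borel subgroup $B_S$ (in rank one $B_S$ is maximal), so $P\cdot x = S\cdot x \simeq S/B_S \simeq \p^1$ and the orbit map $P\times^B\{x\} = P/B \to P\cdot x$ is an isomorphism, i.e. the edge $x \prec P\cdot x$ is of type {\rm (U)}. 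Note also that $P\cdot C$ is closed in $X$, being the image of the proper morphism $P\times^B C \to X$ from a complete surface.

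For type {\rm (N)}, Proposition \ref{prop-unt} identifies $P\cdot C$ with $\p^2$ on which $S$ acts so that the closed orbit $P\cdot x$ is the conic, while $C$ is the $T$-invariant line joining the two $T$-fixed points $x$ and $z$ of $C$. In $A_1(\p^2)\simeq \Z$ a smooth conic equals twice a line, so $[P\cdot x] = 2[C]$. Pushing this relation forward along the closed immersion $P\cdot C \simeq \p^2 \hookrightarrow X$ yields $2[C] = [P\cdot x]$ in $A_1(X)$.

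For type {\rm (T)}, Proposition \ref{prop-unt} together with the Remark following it (Proposition \ref{prop-typeT-p1*p1}) gives $P\cdot C \simeq \p^1\times\p^1$ with $S$ acting diagonally, the closed orbit $P\cdot x$ being the diagonal and $C$ being one of the two rulings. Writing $f_1,f_2$ for the ruling classes, $[C]=f_1$ and $[P\cdot x]=[\text{diagonal}]=f_1+f_2$. I would let $C'$ be the second ruling through $x$; it is again $B$-stable, contained in $P\cdot C$, and of type {\rm (T)} (the involution of $\p^1\times\p^1$ exchanging the factors interchanges $C$ and $C'$ and preserves their type). Then $[C]+[C']=f_1+f_2=[P\cdot x]$, and pushing forward to $A_1(X)$ along $P\cdot C \hookrightarrow X$ (or along the birational normalisation $P\times^B C \to X$, each relevant curve mapping birationally onto its image in characteristic zero) gives the desired relation.

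For uniqueness I would classify the $B$-stable irreducible curves of $\p^1\times\p^1$: the complement of the dense $B$-orbit consists exactly of the diagonal and the two rulings through the $B$-fixed point, the diagonal being of type {\rm (U)} and the two rulings of type {\rm (T)}; moreover the class $[C']=[P\cdot x]-[C]=f_2$ already singles out $\{\infty\}\times\p^1$, so $C'$ is the unique $B$-stable type-{\rm (T)} curve in $P\cdot C$ satisfying the relation. The main point requiring care is precisely this type-{\rm (T)} classification guaranteeing uniqueness, together with the bookkeeping that the line, the diagonal and the rulings push forward with multiplicity one onto $C$, $C'$ and $P\cdot x$; the remaining identifications are routine once $P\cdot C$ is described as $\p^2$ or $\p^1\times\p^1$ via Proposition \ref{prop-unt}.
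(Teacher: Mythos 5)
Your argument is correct and follows the same route as the paper's (very terse) proof: identify $P\cdot C$ with $\p^2$ in the {\rm (N)} case and with $\p^1\times\p^1$ in the {\rm (T)} case via Propositions \ref{prop-unt} and \ref{prop-typeT-p1*p1}, read off the relations $[\text{conic}]=2[\text{line}]$ and $[\text{diagonal}]=f_1+f_2$ in the Chow group of that surface, and push forward along the closed embedding into $X$. You supply details the paper leaves implicit (that $P\cdot x$ is the closed $S$-orbit of type {\rm (U)}, and the classification of $B$-stable curves in $\p^1\times\p^1$ giving uniqueness of $C'$), but the underlying approach is identical.
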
 

\begin{proof}
These relations are true in $P \cdot C$ which is isomorphic to $\p^2$ in case 1. (resp. $\p^1 \times \p^1$ in case 2.) with $P \cdot x$ isomorphic to a conic (resp. the diagonal $\p^1$) and $C$ isomorphic to a line tangent to the conic (resp. to $\p^1 \times \{0\}$). In case 2., the curve $C'$ is $\{0\} \times \p^1$.
\end{proof}

We now describe all possible rational equivalences between $B$-stable curves.

\begin{prop}
    If $X$ is toroidal, any equivalence relation between $B$-stable curve is obtained from Proposition \ref{prop-rel1} or Proposition \ref{prop-rel2} or involves only curves of type {\rm ($\chi$)} or is of the form
    $$C - C' = \gamma$$
    where $\gamma$ is a linear combinaison of curves of type {\rm ($\chi$)} and $C$ and $C'$ satisfy one of the following cases:
    \begin{itemize}
    \item[1.] $C$ and $C'$ are disjoint of type {\rm (T)} and $P(C) = P(C')$;
    \item[2.] $C$ and $C'$ are of type {\rm (T)} and $P(C) \neq P(C')$;
    \item[3.] $C$ and $C'$ are of type {\rm (U)} and $P(C)$ and $P(C')$ correspond to orthogonal simple root.
    \end{itemize}  
\end{prop}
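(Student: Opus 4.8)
The plan is to use the result of Fulton--MacPherson--Sottile--Sturmfels \cite{FMSS} recalled at the beginning of this subsection: every rational equivalence between $B$-stable curves is of the form $[\div(f)]=0$ for some $B$-semiinvariant rational function $f\in\kk(W)^{(B)}$ on a $B$-stable surface $W\subset X$. Replacing $W$ by its normalisation $\wt W$ (which changes neither $\kk(W)$ nor the supporting $B$-stable curves, the normalisation map being finite and $B$-equivariant), I may assume $W$ normal. Since $X$ is spherical, $W$ has finitely many $B$-orbits, hence a dense one; thus $W$ is a normal surface with a dense $B$-orbit, the components of $\div(f)$ lie among the finitely many $B$-stable prime divisors of $W$, and these push forward to $B$-stable curves of $X$. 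The whole problem therefore reduces to determining, for each such surface, the $B$-stable curves carrying $\div(f)$ and their types.

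First I would split the analysis according to $r=\rk(W)\in\{0,1,2\}$, using the identity $c_U(W)=c_B(W)+\rk(W)=\rk(W)$, so that the generic $U$-orbit in $W$ has dimension $2-r$. If $r=0$, then $U$ has a dense orbit and $\kk(W)^{(B)}=\kk(W)^B=\kk$, so there is no nonconstant $f$ and no relation. If $r=2$, then $U$ acts trivially, $W$ is a toric surface for a quotient of $T$, and $B$ acts through a character on every $B$-stable curve of $W$; hence $\div(f)$ involves only curves of type {\rm ($\chi$)}. This leaves the decisive case $r=1$.

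For $r=1$ the lattice $\Lambda(W)$ has rank one, so $f$ is, up to a scalar and a power, the unique semiinvariant, and the relation $\div(f)=0$ is essentially determined by $W$. Here I would read off the boundary curves by distinguishing those of type {\rm ($\chi$)} (two $B$-fixed points) from the special ones of type {\rm (U)}, {\rm (T)}, {\rm (N)} (a single $B$-fixed point). Collecting the type-{\rm ($\chi$)} part into a cycle $\gamma$, the relation becomes (a combination of special curves) $=\gamma$. To control the special curves I would pass to the minimal $G$-stable subvariety $G\cdot W$, which is again toroidal by Theorem \ref{str-toro} (by the reasoning used in Corollary \ref{cor-sub-toro} and in the proof of Proposition \ref{prop-GC-TN}), and use the local models $P(C)\cdot C$: namely $\p^1$ for type {\rm (U)}, $\p^1\times\p^1$ for type {\rm (T)}, and $\p^2$ for type {\rm (N)} (Proposition \ref{prop-unt}), together with the normal form $G\cdot C\simeq G\times^{Q(C)}(P(C)\cdot C)$ of Proposition \ref{prop-GC-TN}. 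A special curve of type {\rm (N)}, or a type-{\rm (T)} curve together with its companion curve in the associated $\p^1\times\p^1$, is then absorbed into a relation of the form of Proposition \ref{prop-rel2}, while a single {\rm ($\chi$)}-curve relative to a minimal parabolic reproduces Proposition \ref{prop-rel1}. The remaining possibility is that $\div(f)$ pairs exactly two special curves $C,C'$ of equal dimension, giving $C-C'=\gamma$; examining $P(C)$ against $P(C')$ and, when the curves are of type {\rm (U)}, the mutual position of the two associated simple roots through the induced rank-two parabolic action on $W$, yields precisely the three listed cases.

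The main obstacle is exactly this rank-one classification: proving that a normal rank-one $B$-stable surface carries at most two special boundary curves, and that, after discarding the configurations already covered by Propositions \ref{prop-rel1} and \ref{prop-rel2}, the surviving pairs $(C,C')$ can only be two disjoint type-{\rm (T)} curves for a common $P$, two type-{\rm (T)} curves for distinct parabolics, or two type-{\rm (U)} curves whose parabolics correspond to orthogonal simple roots. Ruling out, for instance, a pair consisting of a type-{\rm (N)} curve and another special curve, or a mixed type-{\rm (T)}/type-{\rm (U)} pair, and establishing the orthogonality in case 3, requires a careful local study at the two closed $B$-orbits of $\overline W$ (completing $W$ if necessary), for which the $\SL_2$-local pictures of Proposition \ref{prop-unt} and the behaviour of curve types under the morphism $G\times^B\wt C\to G\cdot C$ (Proposition \ref{prop-curv-map}) are the natural tools.
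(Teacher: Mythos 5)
Your reduction via \cite{FMSS} and the trichotomy on $r=\rk(W)$ reproduces the paper's opening moves (the paper phrases the same trichotomy in terms of the isotropy group $H=B_w$ of the dense $B$-orbit of the surface $W$: a maximal torus contained in $H$, $U\subset H$, or both $T_H=H\cap T$ and $U_H=H\cap U$ of codimension one), and your treatment of $r=0$ and $r=2$ is correct. But the entire content of the proposition lies in the case $r=1$, and there your proposal stops exactly where the proof has to begin: you assert that the surviving pairs $(C,C')$ ``can only be'' the three listed configurations and then concede that establishing this requires a careful local study. That study is not a routine application of Propositions \ref{prop-unt}, \ref{prop-GC-TN} and \ref{prop-curv-map}; it is the heart of the argument and it is missing from your proposal.

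Concretely, the paper's mechanism is the following. Since $U_H\supset(U,U)$, one has ${\rm Lie}(U_H)=V\oplus\bigoplus_{\beta\in R^+\setminus S}\g_\beta$ with $V$ a $T_H$-stable hyperplane of $\bigoplus_{\a\in S}\g_\a$, and everything hinges on whether $V$ is $T$-stable. If it is, a single simple root $\a$ is involved: either $H$ contains the connected centre of the Levi and the relation is one of Propositions \ref{prop-rel1} or \ref{prop-rel2}, or else one shows --- using toroidality so that $G\times^{P_J}\overline{P_J\cdot w}\to X$ is an isomorphism, and the projection of the valuation cone from an invariant valuation \cite{brion-pauer} --- that the ambient rank-two variety has exactly two codimension-one $G$-orbits with nontrivial $U$-action, producing exactly two type (T) curves with the same minimal parabolic: this is case 1. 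If $V$ is not $T$-stable, two distinct simple roots $\a,\a'$ appear with $n\chi=\a-\a'$, one reduces to semisimple rank two, and the dichotomy $G_w\subset B$ versus $G_w\not\subset B$ yields cases 2 and 3; the orthogonality in case 3 comes from identifying $G/G_w$ with the quotient of $\SL_2(\kk)\times\SL_2(\kk)$ or $\PGL_2(\kk)\times\PGL_2(\kk)$ by its diagonal. Without some version of this analysis your argument does not rule out, for instance, a type (N) curve paired with another special curve, nor does it explain where the orthogonality in case 3 comes from; as written, the proposal is an outline of the easy reductions together with a restatement of the hard classification rather than a proof of it.
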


\begin{proof}
  We first use a general result of Fulton, MacPherson, Sottile and Sturmfels \cite{FMSS} which ensures that $B$-stable curves generate the Chow group of curves (we already proved this) and furthermore that any rational equivalence is given by $[\div(f)] = 0$
  for $f \in \kk(W)^{(B)}$ a $B$-semiinvariant rational function on a $B$-stable rational surface $W$. We therefore consider a $B$-stable rational surface $W$ in $X$. Since $X$ is spherical, $W$ has a dense $B$-orbit $B \cdot w$. Since $\overline{G \cdot w}$ is spherical and toroidal, we may replace $X$ with $\overline{G \cdot w}$ and assume that $G \cdot w$ is dense in $X$. Let $H = B_w$ be its stabiliser.

  If a maximal torus of $B$ is contained in $H$, then $U$ has a dense orbit in $W$ so that $\kk(W)^{(B)} = \kk$ and the relation $[\div(f)] = 0$ is trivial.
  
  If $U$ is contained in $H$, then $U$ acts trivially on a dense subset of $W$ and therefore on $W$. In particular $U$ acts trivally on any $B$-stable curve in $W$ so that any $B$-stable curve in $W$ is of type ($\chi$). We obtain relations as above with no left hand term.

  We are left with the situation where $T_H = H \cap T$ and $U_H = H \cap U$ are both codimension one subgroups of $T$ and $U$ respectively. We have $H = T_H U_H$ and $T_H$ is the kernel of a character $\chi : T \to \G_m$ uniquely determined up to sign. Note that the group $\kk(W)^{(B)}/\kk$ is the character group of $T/T_H$ and therefore of rank one so that $W$ gives rise to a unique rational equivalence.

  Since $U_H$ contains the group $(U,U)$, we have at the Lie algebra level
  $${\rm Lie}(U_H) = V \oplus \bigoplus_{\beta \in R^+ \setminus S} \g_\beta,$$
  where $V$ is a $T_H$-stable codimension one subspace of $\oplus_{\a \in S}\g_a$ where $S$ is the set of simple roots.

{\bf Case 1}: $V$ is $T$-stable. Then 
$${\rm Lie}(U_H) = \bigoplus_{\beta \in R^+ \setminus \{\a\}} \g_\beta$$
for some simple root $\a$. If $P$ is the minimal parabolic subgroup corresponding to $\a$, then $R_u(P) \subset U_H \subset H$ therefore $R_u(P)$ acts trivially on $W$ so $P \cdot W = L \cdot W$ with $L$ a Levi subgroup of $P$ and any Borel subgroup of $L$ has a dense ofbit in $L \cdot W$. If $H$ contains the connected center of $L$, then $L$ acts via $L/R(L)$ which is isomorphic $\SL_2(\kk)$ of $\PGL_2(\kk)$ and the equivalence comes from Proposition \ref{prop-rel1} or Proposition \ref{prop-rel2}.

If $H$ does not contain the connected center of $L$, then $\chi$ is not a multiple of the simple root $\a$. We prove that there exists a set $I$ of simple roots orthogonal to $\a$ such that
$${\rm Lie}(G_w) = {\rm Lie}(T_H) \oplus \left(\bigoplus_{\beta \in R^+ \setminus R_J} \g_\beta \right)\oplus \left(\bigoplus_{\beta \in R_I}\g_\beta \right),$$
with $J = I \cup \{\a\}$. Indeed, choose a finite dimensional $G$-module $E$ and a point $e \in E$ such that $G_w$ is the stabiliser of the line $\kk e$. Decompose $E$ as a direct sum of simple $G$-modules $V_\lambda$ with highest weight $\lambda$. The set of $R_u(P)$-fixed points in each $V_\lambda$ is a simple $L$-module with highest weight $\lambda$. These weights are $\lambda,\lambda - \a, \cdots, \lambda - \scal{\a^\vee,\lambda}\a$ all with multiplicity one. Since $\chi$ is not proportional to $\a$, each eigenspace of $R_u(P)T_H$ in $V_\lambda$ is one-dimensional and $T$-stable. Therefore $T$ normalises $H$ and the claim follows as in the proof of Proposition \ref{prop-GC-TN}.

Note that $R_u(G_w) = R_u(P_J)$ thus $R_u(P_J) \subset G_w \subset P_J$. Since $X$ is toroidal, so is $W$ and the map $G \times^{P_J} \overline{P_J \cdot w} \to X$ is an isomorphism. In particular $\overline{P_J \cdot w} = \overline{L \cdot x} = L \cdot W$ is a toroidal $L$-variety. We may therefore assume that $G$ has semisimple rank one. In that case $H \subset T$ is the kernel of $\chi$.

Since $H \subset B$, we have an isomorphism $G \times^B \overline{B \cdot x} \to \overline{G \cdot x} = X$. Thus $W = \overline{B \cdot x}$ is normal and $X = G \times^B W = X$. Note that $T$ has a dense orbit in $B/H$ thus of dimension $2$ and the rank of $G/H$ is at least $2$. Since $U^-H/H$ has codimension at most $2$, the variety $G/H$ is spherical of rank $2$ thus $X$ is spherical of rank two. We consider codimension one $G$-orbits in $X$ with a non trivial $U$-action. Such a codimension one $G$-orbit corresponds to an invariant valuation $\nu \in \cV$ and by \cite[Theorem 3.6]{brion-pauer} its valuation cone is obtained from $\cV$ via projection from $\nu$. Note that a $G$-spherical variety with a trivial $U$-action is horospherical and therefore has a linear cone of valuations (cf. \cite[Corollary 11.7]{jacopo} or \cite{knop}). In particular, we see that there are exactly two codimension one $G$-orbits with a non trivial $U$-action obtained via projection from non-linear edges of the two dimensional valuation cone $\cV$ (since $X$ is toroidal complete of rank $2$, its cone is simplicial). In particular $X$ contains exactly two $L$-stable codimension one subvarieties with a non trivial $U$-action. These varieties have an open orbit isomorphic to $L/T$. Therefore in $W$, the complement of $B \cdot w$ contains exactly two curves $C$ and $C'$ with open orbit isomorphic to $B/T$. As in the proof of Proposition \ref{prop-unt} this implies that both curves are of type (T). The others curve are acted on trivially by $U$ so are of type ($\chi$).

We now prove that there is a relation of the form $C - C' = \gamma$ with $\gamma$ a linear combinaison of curves of type ($\chi$). The surface $W$ contains $B/H \simeq U \times T/\ker \chi$ as a $T$-stable open subset. Its generic isotropy group is $\ker\chi \cap \ker \a$. Moding out, we may assume that this intersection is trivial so that $\chi$ and $\a$ form a basis of the character group of $T$. Consider $\chi$ as a rational function on $W$ with zeros and poles outside $B/H$. Since $C$ contains $B/T \simeq U$ as $T$-stable open subset, the generic isotropic group of $T$ in $C$ is $\ker\a$. The same holds for $C'$. In particular, the fan of the toric surface $W$ contains two opposite half lines corresponding to $C$ and $C'$ and the values at $\chi$ of the associated cocharacters are $1$ and $-1$. We get a relation of the first form.

{\bf Case 2}: $V$ is not $T$-stable. Then there exists distinct simple roots $\a$ and $\a'$ and a positive integer $n$ such that $n \chi = \a - \a'$ and we have
$${\rm Lie}(U_H) = \ell \oplus \left(\bigoplus_{\beta \in R^+ \setminus \{\a,\a'\}} \g_\beta \right),$$
where $\ell$ is a line in $\g_\a \oplus \g_{\a'}$. Let $P$ be the parabolic subgroup of semisimple rank two associated to $\{\a,\a'\}$ and let $L$ be its Levi subgroup containing $T$. Then $H \supset R(P)$ thus $\overline{L \cdot w} = L \cdot W$ is acted on trivially by $R(P)$. Let $\lambda$ be a cocharacter with $\scal{\lambda,\a} = 0 = \scal{\lambda,\a'}$ and $\scal{\lambda,\beta} < 0$ for the other simple roots $\beta$. Then $C_G(\lambda(\G_m)) = L$ and $w \in X^{\lambda(\G_m)}$. Moreover, the weights of $\lambda$ in ${\rm Lie}(G)/{\rm Lie}(G_w)$ are non-negative therefore $\overline{L \cdot w} = X^\lambda$ the ``sink'' of the Bia\l ynicki-Birula decomposition (see Subsection \ref{bb-decomp}). The $L$-variety $\overline{L \cdot w}$ is spherical and toroidal by Proposition \ref{prop-luna}. Since the connected center of $L$ acts trivially we may assume that $G$ is semisimple of rank $2$. If the isotropy group $G_w$ is contained in $B$, then we may argue as in case 1 to get a relation of the second form. If the isotropy group $G_w$ is not contained in $B$, then one checks that $G/G_w$ is isomorphic to the quotient of $\SL_2(\kk) \times \SL_2(\kk)$ or $\PGL_2(\kk) \times \PGL_2(\kk)$ by its diagonal subgroup. We get a relation of the third form.
\end{proof}

  \begin{cor}
On a toroidal variety, any rational equivalence between $B$-stable curves is given in Propositions \ref{prop-rel1} and \ref{prop-rel2} above or is of the form
  $$n C - n' C' = \gamma$$
with $n,n' \in \{0,1\}$, where $C$ and $C'$ are of type {\rm (U)}, {\rm (T)} or {\rm (N)} and $\gamma$ is a linear combinaison of  curves of type {\rm ($\chi$)}.
  \end{cor}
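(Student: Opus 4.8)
The plan is to deduce the corollary directly from the preceding Proposition, which already lists all possible rational equivalences between $B$-stable curves on a toroidal $X$; the only work is to repackage each of its cases into one of the two alternatives of the statement, namely a relation of Proposition \ref{prop-rel1} or \ref{prop-rel2}, or a binomial normal form $nC - n'C' = \gamma$ with $n,n' \in \{0,1\}$ and $\gamma$ a combination of curves of type ($\chi$).

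First I would clear the two immediate cases. A relation involving only curves of type ($\chi$) has the shape $\gamma = 0$; writing it as $0\cdot C - 0\cdot C' = \gamma$ places it in the binomial form with $n = n' = 0$. The relations $C - C' = \gamma$ produced in Cases 1, 2 and 3 of the Proposition are already in the binomial form with $n = n' = 1$, the curves being of type (T) in Cases 1, 2 and of type (U) in Case 3, with $\gamma$ a combination of type ($\chi$) curves.

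Next I would account for the mixed coefficients $n \neq n'$, which come from Proposition \ref{prop-rel1}. There the relation reads $[P\cdot x] - [P\cdot y] = \gamma$ with $\gamma$ a multiple of the class of a type ($\chi$) curve, and by the Remark following that Proposition each of $P\cdot x$ and $P\cdot y$ is either a curve of type (U) or, when $P$ fixes the corresponding point, a point of trivial class. Hence the relation is exactly $nC_1 - n'C_2 = \gamma$ with $C_1 = P\cdot x$, $C_2 = P\cdot y$ of type (U) and $n, n'$ equal to $1$ or $0$ according as $P$ moves or fixes $x$, $y$. The relations of Proposition \ref{prop-rel2} are the genuine exceptions: the type (N) relation $2[C] = [P\cdot x]$ and the type (T) relation $[C] + [C'] = [P\cdot x]$ cannot be brought to a binomial $nC - n'C' = \gamma$ because of the coefficient $2$ and the third curve respectively, so they are retained as the first alternative. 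This is also why a type (N) curve never actually appears in the binomial form, even though the statement allows it for uniformity.

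The main — and only mild — obstacle is to make sure that no relation forces a coefficient outside $\{0,1\}$ in the binomial alternative. This is secured by the preceding Proposition: the $B$-stable rational surface $W$ carrying a given relation has $\kk(W)^{(B)}/\kk$ of rank one, so $W$ contributes a single primitive relation, and the explicit rank-one and toric descriptions worked out there show that the coefficients occurring outside the Proposition \ref{prop-rel2} cases are only $0$ and $1$.
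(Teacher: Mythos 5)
Your proposal is correct and matches the paper's intent: the paper gives no separate proof of this corollary, treating it as an immediate repackaging of the case analysis in the preceding Proposition, which is exactly what you do (type ($\chi$)-only relations as $n=n'=0$, Cases 1--3 as $n=n'=1$, Proposition \ref{prop-rel1} absorbed with $n,n'\in\{0,1\}$ according to whether $P$ fixes the points, and Proposition \ref{prop-rel2} retained as the stated exception). Your closing remark that the coefficient bound is secured by the rank-one statement $\kk(W)^{(B)}/\kk\simeq\Z$ for each contributing surface $W$ is the right justification and is implicit in the paper's Proposition.
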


\begin{cor}
  \label{cor-edges}
  Assume that $X$ is toroidal.

  In the quotient $\overline{N_1(X)}$ of $N_1(X)$ by curves of type {\rm ($\chi$)}, the class of any curve of type {\rm (T)} or {\rm (N)} spans a ray of the cone $\overline{NE(X)}$.

  A curve $C$ of type {\rm (U)} with associated minimal parabolic subgroup $P$ not spanning a ray of the cone $\overline{NE(X)}$ is contained in a $P$-surface with trivial $R(P)$-action which is isomorphic to the $P/TR(P)$-embedding $\p^1 \times \p^1$.
\end{cor}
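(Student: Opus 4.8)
My plan rests on the fact that $\overline{NE(X)}$ is a \emph{simplicial} cone. By part~4 of the theorem describing $NE(X)$ and $\overline{N_1(X)}$, the family $(\overline{C}_{D,Y})_{D\in\odelta(X)}$ is a basis of $\overline{N_1(X)}_\Q$ and generates $\overline{NE(X)}$; moreover the proof of that theorem records $\overline{C}_{D,Y}\cdot D'=\delta_{D,D'}$ and $C_\mu\cdot D'=0$ for $D,D'\in\odelta(X)$, so the colors $D'\in\odelta(X)$ form the dual basis. Hence the extremal rays of $\overline{NE(X)}$ are exactly the $\Q_{\geq0}\overline{C}_{D,Y}$, and for any effective $B$-stable curve $C$ one has
$$\overline{[C]}=\sum_{D\in\odelta(X)}(C\cdot D)\,\overline{C}_{D,Y},\qquad C\cdot D\geq0.$$
Therefore $\overline{[C]}$ spans a ray if and only if $C\cdot D\neq0$ for exactly one color $D$, and the entire statement reduces to counting the colors met by $C$.

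For $C$ of type (T) or (N) I would reduce to the rank-one subvariety $G\cdot C$. Since $C\subset G\cdot C$, for each color $D$ we have $C\cdot D=C\cdot(D\cap G\cdot C)$, where $D\cap G\cdot C$ is a color of $G\cdot C$; conversely each color of $G\cdot C$ extends (by closure, using toroidality so that $\odelta=\Delta$) to a color of $X$. By Proposition \ref{prop-GC-TN}, which is an isomorphism here because $X$ is toroidal, $G\cdot C\simeq G\times^{Q(C)}P(C)\cdot C$, so the colors of $G\cdot C$ are those of the $\SL_2$- (or $\PGL_2$-)variety $P(C)\cdot C$ of Example \ref{exam-sl2}: in type (N) the model is $\p^2$ with a single color, which $C$ meets once; in type (T) the model is $\p^1\times\p^1$ with two colors, namely the two rulings, one of which is $C$ itself, so $C$ meets the other ruling exactly once and meets itself with multiplicity zero. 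In both cases $C$ meets exactly one color, so $\overline{[C]}$ spans a ray.

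For $C$ of type (U) with minimal parabolic $P$ and $B$-fixed point $x$, we have $C=P\cdot x\subset G\cdot x$, and Lemma \ref{lemm-num-equiv-courbes} gives $C\cdot D=\scal{\a^\vee}{D_x}$ with $\a$ the simple root of $P$. If exactly one of these numbers is nonzero then $C$ already spans a ray and there is nothing to prove; the content is the remaining case, where $C$ meets at least two colors. To locate the surface I would invoke the classification of rational equivalences on toroidal varieties (the Corollary following the classification Proposition): modulo curves of type~($\chi$) the available relations are $\overline{[P\cdot x]}=\overline{[P\cdot y]}$ of Proposition \ref{prop-rel1}, the relation $2\overline{[C]}=\overline{[P\cdot x]}$ of type (N) and $\overline{[C]}+\overline{[C']}=\overline{[P\cdot x]}$ of type (T) from Proposition \ref{prop-rel2}, and relations $n\overline{[C]}=n'\overline{[C']}$. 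Since type (T) and (N) classes span rays by the first part, the type-(N) relation makes its $P\cdot x$ extremal (it is twice a ray), so the \emph{only} way a type-(U) class is a genuine sum of two distinct rays is through the type-(T) relation, which comes from a surface $P\cdot C_1\simeq\p^1\times\p^1$ in which the type-(U) curve is the diagonal $P\cdot x_1$.

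It remains to realise $C$ itself as such a diagonal. The plan is to argue geometrically: the non-extremality of $C$ (meeting two colors) should be turned, via the local structure theorem (Theorem \ref{str-toro}) and the Bia\l ynicki--Birula decomposition for a one-parameter subgroup of the connected centre of a Levi $L$ of $P$, into the existence of a type (T) curve $C_1$ with $P(C_1)=P$ whose associated surface $W=P\cdot C_1$ has $C$ as its closed $S$-orbit. Then $R(P)$, being normal in $P$ and acting trivially on $C_1$ (Proposition \ref{prop-NUT-P}), acts trivially on $W$; and by Proposition \ref{prop-unt} together with the isomorphism statement for type (T) curves (valid since $X$ is toroidal), $W$ is the $P/TR(P)$-embedding $\p^1\times\p^1$ with $C=P\cdot x_1$ its diagonal. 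The main obstacle is precisely this last construction: passing from the numerical fact that $C$ meets two colors to the actual type (T) curve $C_1$ sharing the minimal parabolic $P$ and whose $\p^1\times\p^1$ contains $C$ as diagonal, and verifying that $R(P)$ acts trivially on the resulting surface.
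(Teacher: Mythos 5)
Your reduction of the first assertion to ``$C\cdot D\neq 0$ for exactly one $D\in\odelta(X)$'' is correct and consistent with part~4 of the theorem (the cone $\overline{NE(X)}$ is simplicial with ray generators dual to the colors). But the color count you then perform for curves of type (T) or (N) does not establish this. First, under the isomorphism $G\cdot C\simeq G\times^{Q(C)}P(C)\cdot C$ the colors of $G\cdot C$ are \emph{not} just those of the rank-one fibre $P(C)\cdot C$: parabolic induction also contributes the pullbacks of the Schubert divisors of $G/Q(C)$ (these happen to pair trivially with $C$, since $C$ is contracted to a point in $G/Q(C)$, but they must be accounted for). Second, the sentence ``each color of $G\cdot C$ extends by closure to a color of $X$'' is backwards: $G\cdot C$ is already closed in $X$, so its colors have codimension $1+\codim_X(G\cdot C)$ in $X$ and are not divisors of $X$. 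What you actually need is control of the \emph{restriction} map: a color $D$ of $X$ restricts to an effective Cartier divisor $D|_{G\cdot C}=\sum m_iE_i$ with $E_i\in\odelta(G\cdot C)$, and $C\cdot_XD=\sum m_i(C\cdot E_i)$. Your fibre computation shows that exactly one $E^*$ satisfies $C\cdot E^*>0$, but it does not rule out that \emph{two distinct} colors $D_1,D_2$ of $X$ both contain $E^*$ in their restriction, which would give $C\cdot D_1>0$ and $C\cdot D_2>0$ and destroy extremality. This is exactly the point the paper avoids by deriving the corollary instead from its classification of all rational equivalences between $B$-stable curves on a toroidal variety (the proposition immediately preceding the corollary): in that list no relation ever writes a type (T) or (N) class as a positive sum of two distinct effective classes.

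For the type (U) statement you correctly identify the classification of equivalences as the relevant input, but you then explicitly defer ``the main obstacle'': producing the actual $P$-stable surface, with trivial $R(P)$-action and isomorphic to the $P/TR(P)$-embedding $\p^1\times\p^1$, that contains \emph{the given curve} $C$ as its diagonal, rather than merely a curve numerically equivalent to $C$. That construction is the entire content of the second assertion, and it is supplied by the proof of the preceding proposition: there, every elementary relation is produced from an explicit $B$-stable surface $W=\overline{B\cdot w}$, and the only elementary relation decomposing a type (U) class as a sum of two distinct extremal classes is $[C_1]+[C_1']=[P\cdot x_1]$ arising from the surface $\overline{P\cdot C_1}\simeq\p^1\times\p^1$ of Proposition \ref{prop-rel2}, in which the decomposable type (U) curve is identified as the curve $P\cdot x_1$ lying in that surface. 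Without carrying out this identification (or an equivalent Bia\l ynicki--Birula argument as in the proof of Theorem \ref{thm-CD-NUT}), the second half of the corollary is not proved. In short: the dual-basis framework is sound, but both halves of the argument have genuine gaps at the decisive steps, and the paper's own (implicit) proof proceeds through the exhaustive list of rational equivalences rather than through intersection numbers with colors.
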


\begin{remark}
  In \cite[Corollaire 3.8]{brion-mori} the curves of type (U) were claimed to span edges of the cone $\overline{NE(X)}$. This is clearly not the case for the $\SL_2(\kk)/T_0$ embedding $\p^1 \times \p^1$ but the above says that this is essentially the only possibility.
  
  More precisely, the $\SL_2(\kk)/T_0$ embedding $\p^1 \times \p^1$ has no curve of type ($\chi$), two curves $C$ and $C'$ of type (T) given by $\{0\} \times \p^1$ and $\p^1 \times \{0\}$ and a curve $C''$ of type (U) given by the diagonal embedding of $\p^1$. We obviouly have $[C''] = [C] + [C']$ in $\overline{N_1(X)} = N_1(X) = A_1(X)$ so that $C''$ is not spanning an edge of the cone $NE(X)$ (which is spanned by $[C]$ and $[C']$).
\end{remark}

\subsection{Big cells}

In this subsection we present general results of Luna \cite{luna-aus} and use them to compare the type of curves and the corresponding dual divisors. We prove the following result in the next two paragraphs.

\begin{thm}
  \label{thm-CD-NUT}
  Let $X$ be a complete $G$-spherical variety, let $P$ be a minimal parabolic subgroup of $G$ and let $D \in \odelta(X)$.

  If $P$ raises $D$ with an edge of type {\rm (U)}, {\rm (T)} resp. {\rm (N)} then there exists a $B$-stable curve $C$ of type {\rm (U)}, {\rm (T)} resp. {\rm (N)} whose class in $\overline{N_1(X)}$ is $\overline{C}_{D,Y}$.
\end{thm}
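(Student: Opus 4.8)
The plan is to reduce the statement to an explicit local computation and then to read off the type of the dual curve from the \emph{same} rank-one datum that governs the type of the edge raising $D$. First I would reformulate the goal. Since $D\in\odelta(X)$ contains no $G$-orbit, I can pick a closed orbit $Y$ with $Y\not\subset D$, so that $D\in\Delta(X)\setminus\Delta_Y(X)$ and the class $\overline C_{D,Y}$ is defined. The family $(\overline C_{D',Y})_{D'\in\odelta(X)}$ is the basis of $\overline{N_1(X)}_\Q$ dual to $\odelta(X)\subset\pic(X)$, so a $B$-stable curve $C$ represents $\overline C_{D,Y}$ precisely when $C\cdot D'=\delta_{D,D'}$ for every $D'\in\odelta(X)$. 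By Proposition \ref{prop-dual} we already know that $\overline C_{D,Y}$ is represented by \emph{some} irreducible $B$-stable curve of type {\rm (U)}, {\rm (T)} or {\rm (N)}; thus the whole content of the theorem is to \emph{identify its type} with the type of the edge $D\prec PD=X$. Because everything is local around the orbits, I would pass to the simple spherical chart $X_{Y,G}$, and, to have smoothness and the cleanest incidence geometry, fix a toroidal resolution $\varphi:\Xt\to X$, noting that the strict transform $\Dt$ of $D$ is again a color and that $P$ raises $\Dt$ with the \emph{same} type as it raises $D$: since $D$ is not $G$-stable its generic point avoids the (necessarily $G$-stable) exceptional locus of $\varphi$, so $\varphi$ is an isomorphism near the dense $B$-orbit of $D$ and the relevant stabilisers agree.

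Next I would invoke Luna's big cell (the result established just below) to produce, near $Y$, an explicit $P$-stable model in which $D$ appears as a coordinate divisor and in which the minimal parabolic $P=P_\a$ acts through its rank-one quotient $\PGL_2(\kk)$. In this model the image $H$ of the isotropy group of a general point of the dense $B$-orbit of $D$ in $\Aut(P/B)\simeq\PGL_2(\kk)$ is exactly the subgroup whose conjugacy class ($ZU_0$, $N_0$ or $T_0$) defines the type {\rm (U)}, {\rm (T)} or {\rm (N)} of the edge, as in Proposition \ref{prop-UNT}. I would then define the candidate curve $C$ as the closure of the one-dimensional $B$-suborbit meeting $D$ transversally inside this big cell. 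Its minimal parabolic is again $P(C)=P_\a$, and the very same subgroup $H$ now describes how $P$ acts on $C$: the trichotomy of Proposition \ref{prop-unt} for $C$ is governed by this identical $\PGL_2(\kk)$-datum, so $C$ is of type {\rm (U)}, {\rm (T)} or {\rm (N)} in exact correspondence with the type of the edge raising $D$.

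It then remains to check that $C$ really represents $\overline C_{D,Y}$, i.e. that $C\cdot D'=\delta_{D,D'}$ for all $D'\in\odelta(X)$: transversality in the big cell gives $C\cdot D=1$, disjointness from the remaining coordinate divisors gives $C\cdot D'=0$ for $D'\neq D$, and the $G$-stable divisors contribute only through walls, hence vanish in $\overline{N_1(X)}$. Finally I would transport $C$ back along $\varphi$: types {\rm (U)} and {\rm (N)} are preserved by $\varphi_*$ (Proposition \ref{prop-curv-map}.1), while for type {\rm (T)} I would arrange $C$ to lie where $\varphi$ is an isomorphism so that Proposition \ref{prop-curv-map}.3 applies.

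I expect the main obstacle to be precisely the type-{\rm (T)} case. A priori a type-{\rm (T)} curve can fold to type {\rm (N)} under $\varphi_*$ (Proposition \ref{prop-curv-map}.2), so the delicate point is to show that the big-cell construction places $C$ inside a $P$-surface isomorphic to $\p^1\times\p^1$ that survives the resolution intact, rather than being collapsed onto a $\p^2$. Controlling this — together with verifying that the big cell faithfully reproduces the global intersection numbers with the colors in $\odelta(X)$ — is where the real work lies; the {\rm (U)} and {\rm (N)} cases, by contrast, should follow directly from the stability of their types under $\varphi_*$ and from the rank-one matching above.
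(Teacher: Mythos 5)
Your overall strategy — reduce to a rank-one situation via Luna's big cell and read off both the edge type and the curve type from the same $\PGL_2$-datum — is indeed the strategy of the paper, and your reformulation of the goal via the dual-basis criterion $C\cdot D'=\delta_{D,D'}$ for $D'\in\odelta(X)$ is a legitimate (even slightly cleaner) way to identify the class. But the central step of your argument is asserted where it needs to be proved. You claim that the image $H$ of the isotropy group of a general point of the dense $B$-orbit of $D$ in $\Aut(P/B)$ is ``the very same subgroup'' that governs the trichotomy of Proposition \ref{prop-unt} for the candidate curve $C$. These are isotropy data of \emph{different} points (a general point of $D$ for the edge type, versus the $T$-fixed non-$B$-fixed point $z\in C$ for the curve type), and nothing in your sketch bridges them. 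The paper's bridge is precise: for a cocharacter $\lambda$ adapted to $P$, the sink $X^\lambda$ of the Bia{\l}ynicki-Birula decomposition is an embedding of $S/H$ for the rank-one group $S=P/R_u(P^-)$; the classification of such embeddings (Example \ref{exam-sl2}) according to $H=T_S$, $H=N_S$ or $H\supset U_S$ simultaneously produces the dual curve of the right type inside $X^\lambda$ and, via a cartesian square relating $S\times^{B_S}D_\lambda\to S\cdot D_\lambda$ to $S\times^{B_S}D^\lambda\to S\cdot D^\lambda$, identifies the type of the edge raising $D$. Without some version of this, your ``identical rank-one datum'' claim is exactly the statement to be proved.

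Two further points. First, the detour through a toroidal resolution is absent from the paper and is what creates your acknowledged unresolved obstacle (a type-(T) curve folding to type (N) under $\varphi_*$); the paper works directly on the complete variety $X$, so this difficulty never arises, and you should drop the resolution. Second, you do not address the genuine subtlety in the type-(U) case: by Corollary \ref{cor-edges} a type-(U) curve need not span an extremal ray, so one must rule out that the curve $C\subset X^\lambda$ sits in a $P^-$-stable surface isomorphic to the $S/T_S$-embedding $\p^1\times\p^1$; the paper excludes this by observing that such a surface would lie in $X^{\lambda(\G_m)}$ and meet $X^\lambda$, forcing it into the connected component $X^\lambda$, a contradiction. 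Your dual-basis criterion could in principle sidestep this, but only if you actually compute $C\cdot D'=0$ for all other colors $D'$, which your appeal to ``disjointness from the remaining coordinate divisors'' does not establish.
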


\begin{remark}
The above result is probably well known to the specialists but we could not find any reference for it. Some special cases especially for sober and wonderful embeddings are given in \cite{luna-aus}. It seems to appear for the first time in this form.
\end{remark}

\subsubsection{Bia\l ynicky-Birula decomposition}
\label{bb-decomp}
Recall an easy version of the Bia\l ynicki-Birula decomposition \cite{BB}, see also \cite[Section 3.1]{brion-equiv-CH}. Let $X$ be a variety with an action of a one-dimensional torus $T_0 \simeq \G_m$. For $Y \subset X^{T_0}$ a subvariety, define $X_Y = \{x \in X \ | \ \lim_{t \to 0} t \cdot x \textrm{ exists and lies in } Y \}$. Then $X_Y$ is a locally closed $T_0$-stable subvariety. Furthermore, let $\gamma_Y : X_Y \to Y$ be the map defined by $x \mapsto \lim_{t \to 0} t\cdot x$. Then $\pi_Y$ is a $T$-equivariant morphism. Note that for $X$ complete, for any $x \in X$, the limit $\lim_{t \to 0} t \cdot x$ exists and $X$ is the disjoint union of locally closed varieties $X_Y$ where $Y$ runs over all components of $X^{T_0}$. 

We present a result due to Luna (see \cite{luna-aus} and also \cite[Proof of Proposition 7.1]{brion-equiv-CH}) on the Bia\l ynicki-Birula decomposition for spherical varieties. Let $P$ be a minimal parabolic subgroup and let $\lambda : \G_m \to T$ be a cocharacter of the maximal torus such that $\scal{\lambda,\beta} > 0$ for the simple roots $\beta$ with $U_\beta \subset R(P)$ and $\scal{\lambda,\a} = 0$ for the last simple root $\a$. Such a cocharacter is called \textbf{adapted} to $P$. Let $T_0 = \lambda(\G_m)$ and consider the associated decomposition $X = \bigcup X_Y$ indexed by the connected components $Y$ of $X^{T_0}$. It is a finite decomposition and since the $X_Y$ are locally closed, there exists a $Y_0$ such that $X_{Y_0}$ is open in $X$, the big cell. Set $X_\lambda = X_{Y_0}$, $X^\lambda = Y_0$ and write $\gamma_\lambda : X_\lambda \to X^\lambda$ for the map $\gamma_Y$.

Denote by $P^-$ the minimal parabolic opposite to $P$ and by $R_u(P^-)$ its radical. Note that $P = \{g \in G \ | \ \lim_{t \to 0} \lambda(t)g\lambda(t)^{-1} \textrm{ exists} \}$ and that all the limits take value in $G^\lambda = C_G(T_0) = P \cap P^-$ which is the Levi subgroup $L$ of $P$ associated to $T$. Denote again by $\gamma_\lambda : P \to G^\lambda$ the map $g \mapsto \lim_{t \to 0} \lambda(t) g \lambda(t)^{-1}$. One easily checks that $X_\lambda$ is $P$-stable and $\gamma_\lambda(p \cdot x) = \gamma_\lambda(p) \cdot \gamma_\lambda(x)$ for $x \in X_\lambda$ and $p \in P$.

\begin{prop}
  \label{prop-luna}
With the above notation we have:

1. $R_u(P^-)$ acts trivially on $X^\lambda$.

2. $X^\lambda$ is $L$-spherical.

3. If $X$ is toroidal, then so is $X^\lambda$. 
\end{prop}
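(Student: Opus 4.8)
The plan is to treat the three assertions in turn, using the linear model furnished by Sumihiro's theorem for the first two and an orbit--divisor dictionary for the last. For assertion 1, I would first reduce to a linear situation: by Theorem \ref{sumihiro} every point of $X^\lambda$ has a $G$-stable quasi-projective neighbourhood embedding $G$-equivariantly into some $\p(V)$, and replacing $V$ by the submodule generated by the affine cone over $X$ we may assume $V$ is spanned by that cone. Decompose $V=\bigoplus_i V_i$ into $T_0$-weight spaces with weights $m_1>\dots>m_r$, so that $\lambda(t)$ acts on $V_i$ by $t^{m_i}$. A one-line computation gives $\lim_{t\to 0}\lambda(t)\cdot[v]=[v_{\min}]$, the projection of $v$ onto the lowest occurring weight space; hence the open cell $X_\lambda$ consists of the points whose lowest occurring weight is the global minimum $m_r$, and so $X^\lambda=Y_0\subset\p(V_r)$, i.e. every $y\in X^\lambda$ is represented by a vector of weight $m_r$. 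Since $\lambda$ is adapted to $P$, the Lie algebra of $R_u(P^-)$ is $\bigoplus_{\beta\in R^-\setminus\{-\alpha\}}\g_\beta$ and each such $\beta$ satisfies $\langle\lambda,\beta\rangle<0$; thus $R_u(P^-)$ strictly lowers the $T_0$-weight, so for $\xi\in\Lie R_u(P^-)$ and $v$ of minimal weight $m_r$ one must have $\xi\cdot v=0$. Therefore $R_u(P^-)$ fixes every vector of weight $m_r$, hence fixes $X^\lambda$ pointwise, which is assertion 1.

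For assertion 2, recall that $B=(B\cap L)\,R_u(P)$ with $B_L:=B\cap L$ a Borel subgroup of the reductive Levi $L=G^\lambda$. Under the homomorphism $\gamma_\lambda:P\to L$ one has $\gamma_\lambda(R_u(P))=\{e\}$ and $\gamma_\lambda|_{B_L}=\mathrm{id}$, so $\gamma_\lambda(B)=B_L$. The retraction $\gamma_\lambda:X_\lambda\to X^\lambda$ is dominant and $P$-equivariant, and since $X$ is $G$-spherical it has a dense $B$-orbit $B\cdot x_0$, which we may assume lies in the open cell $X_\lambda$. Then $\gamma_\lambda(B\cdot x_0)=B_L\cdot\gamma_\lambda(x_0)$ is dense in $X^\lambda$, so $X^\lambda$ carries a dense $B_L$-orbit. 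It remains to check normality: as a connected component of the fixed locus $X^{T_0}$ of the torus $T_0$ acting on the normal variety $X$, it is normal in characteristic $0$ (see \cite[Proof of Proposition 7.1]{brion-equiv-CH} and \cite{luna-aus}). A normal $L$-variety with a dense Borel orbit is $L$-spherical, giving assertion 2.

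For assertion 3, the plan is to transport the colour data of $X$ to $X^\lambda$ through the big cell. First I would set up a dictionary: every $G$-orbit of $X$ meeting $X_\lambda$ meets $X^\lambda$ in a single $L$-orbit (by the same retraction argument as in assertion 2, applied to the orbit), and intersection with $X^\lambda$ gives a bijection between the $B$-stable prime divisors of $X$ meeting $X^\lambda$ and the $B_L$-stable prime divisors of $X^\lambda$, under which $G$-stable divisors correspond to $L$-stable divisors and the incidence ``$D\supset Y$'' corresponds to ``$D\cap X^\lambda\supset Y\cap X^\lambda$.'' Granting this dictionary, toroidality transfers at once: $X$ toroidal means $\odelta(X)=\Delta(X)$, that is, no non-$G$-stable $B$-stable prime divisor contains a $G$-orbit; translating through the dictionary, no non-$L$-stable $B_L$-stable prime divisor of $X^\lambda$ contains an $L$-orbit, which is precisely the statement that $X^\lambda$ is toroidal.

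The main obstacle is this divisor--orbit dictionary underlying assertion 3. One must verify that intersection with $X^\lambda$ is codimension-preserving on the $B$-stable divisors meeting the cell (equivalently, that no such boundary divisor is contracted by $\gamma_\lambda$) and that it respects $G$- versus $L$-stability as well as orbit incidences. This is exactly where the finer geometry of the Bia{\l}ynicki-Birula retraction enters --- that $\gamma_\lambda:X_\lambda\to X^\lambda$ behaves like an affine fibration compatible with the $G$-orbit stratification --- and I would rely on the analyses of Luna \cite{luna-aus} and Brion \cite[Proof of Proposition 7.1]{brion-equiv-CH} to secure it. Assertions 1 and 2, by contrast, are essentially formal consequences of the weight computation and of the homomorphism property of $\gamma_\lambda$.
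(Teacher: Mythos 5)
Your argument for assertion 1 is correct and is a genuinely different route from the paper's: you linearise via Sumihiro's theorem, observe that $X^\lambda$ lives in the projectivisation of the minimal $T_0$-weight space of (the span of the cone over) $X$, and kill ${\rm Lie}\,R_u(P^-)$ on minimal-weight vectors because it strictly lowers the $\lambda$-weight; the paper instead stays on the variety and argues that the closure of $\{\lambda(t)^{-1}g\cdot x\}$ acquires two $T_0$-fixed points and hence collapses to $\{x\}$. Both are valid. The gaps are in assertions 2 and 3.

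For assertion 2 the dense-orbit half is fine, but the normality of $X^\lambda$ rests on a false general principle. It is not true that a connected component of the fixed locus of a torus acting on a normal variety is normal in characteristic $0$: take $X=\{xy=zw\}\subset\A^4$ (a normal hypersurface by Serre's criterion) with $\G_m$ acting by $t\cdot(x,y,z,w)=(tx,t^{-1}y,z,w)$; the reduced fixed locus is the connected curve $\{x=y=0,\ zw=0\}$, which has a node at the origin. What saves $X^\lambda$ is not that it is a component of $X^{T_0}$ but that it is the sink of the \emph{open} cell: covering $X_\lambda$ by $T_0$-stable affine open subsets $V$ saturated under $\gamma_\lambda$, the grading of $\kk[V]$ is concentrated in non-negative degrees, so $\kk[V\cap X^\lambda]=\kk[V]_0=\kk[V]^{T_0}$, and torus invariants in a normal domain form a normal ring. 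This is the paper's argument, and it cannot be replaced by the general statement you invoke (citing the very sources to which the proposition is attributed does not close the gap).

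For assertion 3 you do not actually give a proof: everything is delegated to a divisor--orbit ``dictionary'' that you explicitly leave to the references, and the one implication you need from it is exactly the non-trivial point. Moreover only one direction is required, and it is short. Given a $(B\cap L)$-stable prime divisor $D'$ of $X^\lambda$ containing a closed $L$-orbit $Y'$, set $D=\overline{\gamma_\lambda^{-1}(D')}$, a $B$-stable prime divisor of $X$ with $D\cap X^\lambda=D'$. The missing idea is this: the $(B\cap L)^-$-fixed point $y\in Y'$ is also fixed by $R_u(P^-)$ (by assertion 1), hence by $B^-$; since $D$ is closed, $B$-stable and contains $y$, it contains $\overline{B\cdot y}=\overline{BB^-\cdot y}\supset G\cdot y$, so $D$ contains a $G$-orbit, toroidality of $X$ forces $D$ to be $G$-stable, and therefore $D'=D\cap X^\lambda$ is $L$-stable. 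No bijectivity, codimension-preservation, or incidence statements are needed, and the affine-fibration properties of $\gamma_\lambda$ that you flag as the main obstacle can be avoided entirely.
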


\begin{proof}
  1. We have $P^- = \{g \in G \ | \ \lim_{t \to 0} \lambda(t)^{-1}g\lambda(t) \textrm{ exists} \}$ and $R_u(P^-) = \{g \in G \ | \ \lim_{t \to 0} \lambda(t)^{-1}g\lambda(t) = 1 \}$. In particular for $g \in R_u(P^-)$ and $x \in X^\lambda$, we have $\lim_{t \to 0} \lambda(t)^{-1} g \cdot x = \lim_{t \to 0} (\lambda(t)^{-1} g \lambda(t)) \lambda(t)^{-1} \cdot x = \lim_{t \to 0} (\lambda(t)^{-1} g \lambda(t)) \cdot x = x$. So the closure $C$ of $\{ \lambda(t)^{-1} g \cdot x \ | \ t \in \G_m \}$ contains two points in $X^\lambda$ thus two $T_0$-fixed points thus the all of $C$ is fixed and equal to $\{x\}$. We get $g \cdot x = x$.

  2. We first prove that $X^\lambda$ is normal. Since $X_\lambda$ is covered by affine $T_0$-stable open subsets $V$, we only need to prove that $V \cap X^\lambda$ is normal. The action of $T_0$ induces a grading $\kk[V] = \oplus_{n \geq 0} \kk[V]_n$ (with $n \geq 0$ since $V \subset X_\lambda$). But $X$ being normal, so is the open subset $V$. The algebra $\kk[V]$ is therefore normal and so is $\kk[V]_0 = \kk[V \cap X^\lambda]$.

Now as remarked right before the proposition, the map $\gamma_\lambda$ is $L$-equivariant. The group $B_L = B \cap L$ is a Borel subgroup of $L$ and $\gamma_L(B) = B_L$. The dense $B$-orbit in $X$ is contained in $X_\lambda$ and its image is a dense $B_L$-orbit in $X^\lambda$.  

3. Let $D'$ be a $B^\lambda$-stable prime divisor in $X^\lambda$ and let $D = \overline{\gamma_\lambda^{-1}(D')}$. This is a $B$-stable divisor with $D \cap X^\lambda = D'$. If $D'$ contains a closed $L$-orbit $Y'$, then $D$ contains $Y'$. Let $y$ be the $(B^\lambda)^-$-fixed point in $Y'$, then $y$ is also $R_u(P^-)$-fixed thus $y$ is $B^-$-fixed. But $D$ being $B$-stable it contains $B \cdot y = BB^- \cdot y$ which is dense in $G \cdot y$ so $D$ contains a $G$-orbit and is $G$-stable. It follows that $D'$ is $L$-stable.
\end{proof}

Before proving Theorem \ref{thm-CD-NUT} we apply the above to improve Proposition \ref{prop-unt}.

\begin{prop}
\label{prop-typeT-p1*p1}
For $C$ of type {\rm (T)} and $P = P(C)$, the map $P \times^B C \to P \cdot C$ is an isomorphism and $P \cdot C$ is isomorphic to $\p^1 \times \p^1$.
\end{prop}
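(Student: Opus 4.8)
The plan is to reduce the statement to the normality of $Y := P \cdot C$. By Proposition \ref{prop-unt} we already know that $P \times^B C \cong \p^1 \times \p^1$ and that the map $\nu : P \times^B C \to Y$ is the \emph{normalisation} morphism, which moreover is bijective. Hence, once $Y$ is shown to be normal, $\nu$ is automatically an isomorphism and we get $Y \cong \p^1 \times \p^1$. Since $C$ is complete, $Y$ is closed and contained in the closed $G$-stable subvariety $\overline{G \cdot z}$, which is again spherical; replacing $X$ by $\overline{G \cdot z}$ we may therefore assume that $z$ lies in the dense (hence smooth) $G$-orbit. Here, as in Proposition \ref{prop-unt}, $z$ is the unique $T$-fixed, non $B$-fixed point of $C$, with $B$-fixed point $x$, and $\mathfrak{h} = \mathrm{Lie}(G_z)$.

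Next I would feed $Y$ into the Bia\l ynicki-Birula machinery of Proposition \ref{prop-luna}. Let $\alpha$ be the simple root attached to $P = P(C)$ and choose a cocharacter $\lambda$ adapted to the \emph{opposite} minimal parabolic $P^-$, so that $\langle \lambda, \alpha \rangle = 0$ while $\langle \lambda, \beta \rangle < 0$ for the other simple roots; set $T_0 = \lambda(\G_m)$ and $L = C_G(T_0)$, the common Levi of $P$ and $P^-$. Because $\langle \lambda, \alpha \rangle = 0$ the torus $T_0$ is central in $L$, hence contained in $R(P)$, and $R(P)$ acts trivially on $Y$ by the analysis in the proof of Proposition \ref{prop-unt}; thus $Y \subset X^{T_0}$ and, being irreducible, $Y$ lies in a single connected component. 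Using the description of $\mathfrak{h}$ obtained in the proof of Proposition \ref{prop-GC-TN}, the weights of $T_0$ on $T_z X = \g/\mathfrak{h}$ are all $\geq 0$, so all points near the smooth point $z$ flow to the component of $z$; since that neighbourhood is dense, this component is the big cell $X^\lambda$, and in particular $z \in X^\lambda$, whence $Y \subset X^\lambda$. Applying Proposition \ref{prop-luna} to $P^-$ (the statement being symmetric under $B \leftrightarrow B^-$) then shows that $X^\lambda$ is a normal $L$-spherical variety.

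Finally, since $R_u(P) \subset R(P)$ also acts trivially on $Y$ and $P = L\,R_u(P)$, we have $Y = L \cdot C$, an irreducible closed $L$-stable subvariety of the $L$-spherical variety $X^\lambda$. By the fact that closed $G$-stable subvarieties of a spherical variety are again spherical (applied with $L$ in place of $G$), $Y$ is $L$-spherical, and in particular normal; combined with the first paragraph this yields $Y = P \cdot C \cong \p^1 \times \p^1$. The main obstacle is exactly the placement of $z$ in the \emph{open} Bia\l ynicki-Birula cell: this is what forces the choice of $\lambda$ adapted to $P^-$ rather than to $P$, and it is in order to guarantee that the weights of $T_0$ on $T_z X$ have a single sign that one must first pass to $\overline{G \cdot z}$, making $z$ a smooth point of the dense orbit so that $T_z X = \g/\mathfrak{h}$.
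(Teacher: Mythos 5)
Your proposal is correct and follows essentially the same strategy as the paper: reduce to normality of $P\cdot C$ via Proposition \ref{prop-unt}, replace $X$ by $G\cdot C$, and place $P\cdot C$ inside the big cell $X^\lambda$ for a cocharacter adapted to $P^-$, so that normality follows from sphericity. The one point of divergence is the final step: the paper invokes Richardson's theorem to show $L\cdot z$ is a component of $(G\cdot z)^{T_0}$ and hence that $X^\lambda = \overline{L\cdot z} = P\cdot C$ exactly, getting normality directly from Proposition \ref{prop-luna}; you instead show only the inclusion $P\cdot C \subset X^\lambda$ by a tangent-weight computation at the smooth point $z$ (using the explicit form of $\mathfrak{h}$ from Proposition \ref{prop-GC-TN}) and then quote the corollary that closed $L$-stable subvarieties of an $L$-spherical variety are spherical, hence normal. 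Both routes work; yours trades the citation of Richardson for a Bia\l ynicki-Birula openness argument at a smooth fixed point of a possibly singular complete variety, which is fine but deserves the small caveat that one should work in a $T_0$-stable open subset of the smooth locus to justify that the cell through the component of $z$ is open near $z$.
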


\begin{proof}
  We proved in Proposition \ref{prop-unt} that $P \times^B C$ is the normalisation of $P \cdot C$ and isomorphic to $\p^1 \times \p^1$ so we only need to prove that $P \cdot C$ is normal.

  The $G$-variety $G \cdot C$ is spherical thus we may replace $X$ with $G \cdot C$. Let $\lambda$ be a cocharacter adapted to $P^-$ the minimal parabolic subgroup opposite to $P$. Then $X^\lambda$ is a spherical $L$-variety. Let $z$ be the $T$-fixed non $B$-fixed point in $C$. Then $G \cdot z$ is dense in $X$ and $R_u(P)$ acts trivially on $z$. We get that $P^-L \cdot z$ is dense in $X$ and since $X_\lambda$ is $P^-$-stable, the orbit $L \cdot z$ meets $X_\lambda$. By \cite[Theorem A]{richardson}, we have that $L \cdot z$ is an irreducible component of $(G \cdot z)^{\lambda(\G_m)}$ therefore $X^\lambda = \overline{L \cdot z} = P \cdot C$. Now $X^\lambda$ is a spherical variety by the above proposition therefore normal and we are done.
\end{proof}

\subsubsection{Proof of Theorem \ref{thm-CD-NUT}} Let $X$ be a complete $G$-spherical variety and let $D \in \odelta(X)$. Note that there always exists a minimal parabolic subgroup $P$ raising $D$ to $X$. Let $\lambda : \G_m \to T$ be a cocharacter adapted to $P$.
Let $T_0 = \lambda(\G_m)$ and apply Proposition \ref{prop-luna} to the action of $T_0$ on $X$.

By Proposition \ref{prop-luna}, the variety $X^\lambda$ has a trivial action of $R_u(P^-)$ and is $S$-spherical for $S = P/R_u(P^-)$. Let $B_S$, $T_S$ and $U_S$ be $B/R_u(P^-)$, $T/(T \cap R_u(P^-))$ and $U/R_u(P^-)$. Denote by $B_S^-$ the Borel subgroup opposite to $B_S$ relative to $T_S$ and $L$ the Levi subgroup of $P$ relative to $T$.

Note that $P \cdot D = X$ meets $X_\lambda$ and since this last subset is $L$-stable, $D_\lambda = D \cap X_\lambda$ is not empty. We get that $D^\lambda = D \cap X^\lambda = \gamma_\lambda(D_\lambda)$ is not empty.

If $D^\lambda = X^\lambda$, then it contains a $B_S^-$-fixed point $y$ and since $R_u(P^-)$ acts trivially on $X^\lambda$ it fixes $y$ thus $y$ is $B^-$-fixed. In particular $BB^- \cdot y = B \cdot y \subset D$ and thus $G \cdot y \subset D$, a contradiction to $D \subset \odelta(X)$. Therefore $D^\lambda$ is a $B_S$-stable divisor of $X^\lambda$ containing no $S$-orbit (by the same argument).

Since $S$ has semisimple rank one and $X^\lambda$ is an embedding of $S/H$ for some spherical subgroup $H$ of $S$, we have three possible cases (see Example \ref{exam-sl2}): either $H = T_S$ or $H = N_S$ the normaliser of $T_S$ or $H$ contains $U_S$.

If $H = T_S$, then $D^\lambda$ is a $B_S$-stable curve of type (T). Since the square
$$\xymatrix{S \times^{B_S} D_\lambda \ar[r] \ar[d] & S \cdot D_\lambda \ar[d] \\
S \times^{B_S} D^\lambda \ar[r] & S \cdot D^\lambda }$$
is a cartesian product, the divisor $D$ is raised by $P$ with an edge of type (T). Furthermore, we have a $B_S^-$-stable curve $C$ of type (T) meeting $D^\lambda$ transversaly in one point. This curve is also $R_u(P^-)$-stable and therefore $B^-$-stable. By Corollary \ref{cor-edges}, the class of this curve generates an edge of $\overline{NE(X)}$. Since $C \cdot D = 1$ the class of $C$ must coincide with $C_{D,Y}$.
If $H = N_S$, then the same argument works as well.
Assume that $H$ contains $U_S$, then $X^\lambda$ contains a closed $S$-orbit $C$ isomorphic to $S/B$. This curve is $B_S^-$-stable and therefore also $B^-$-stable. It is of type (U) and meets $D^\lambda$ in one point. The above argument will work as soon as the class of $C$ spans an edge of $\overline{NE(X)}$. If this is not the case, then there exists a $P^-$-surface $W$ with a trivial $R_u(P^-)$-action and isomorphic to the $S/T_S$-embedding $\p^1 \times \p^1$. But then $W \subset X^{\lambda(\G_m)}$ and $W$ meets $X^\lambda$ along $C$. Therefore $W$ and $X^\lambda$ would be in the same connected component of $X^{\lambda(\G_m)}$ which has to be $X^\lambda$, a contradiction. 

  \section{The canonical divisor}
\label{sec-can}
  
Let $X$ be a spherical variety. Since $X$ is normal, we can define its canonical sheaf $\omega_X$ by $\omega_X = j_*\omega_{X^{sm}}$ where $j : X^{sm} \to X$ is the inclusion of the smooth locus. The sheaf $\omega_X$ is isomorphic to $\cO_X(K_X)$ for some Weil-divisor $K_X$. Any such Weil-divisor is called \textbf{ a canonical divisor of $X$}. In this section we describe an explicit $B$-stable canonical divisor.

\subsection{Computing $K_X$} We give an algorithm for computing a canonical divisor.

\begin{prop}
  There exists a canonical divisor $K_X$ of $X$ such that
$$-K_X = \sum_{D\in \D(X) \setminus \Delta(X)} D + \sum_{D\in \Delta(X)} a_D D$$
with $a_D$ a non negative integer.
\end{prop}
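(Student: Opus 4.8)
The plan is to exhibit a $B$-eigen rational differential form $\omega$ of top degree whose divisor is a canonical divisor, and then to compute its order $\nu_D(\omega)$ along each $D \in \D(X)$. By Theorem~\ref{thm-stru} applied to the open $G$-orbit, the dense $B$-orbit satisfies $\mathring{X} \simeq R_u(P) \times (L/L_Y) \simeq \G_a^m \times \G_m^r$, which carries the translation-invariant, nowhere-vanishing top form $\omega_0 = (dx_1 \wedge \cdots \wedge dx_m) \wedge (t_1^{-1}\,dt_1 \wedge \cdots \wedge t_r^{-1}\,dt_r)$. Being a left-invariant volume form on a homogeneous space under $B$, it is a $B$-eigenvector, and it extends to a rational top form $\omega$ on $X$ with $\omega_X \simeq \cO_X(\div(\omega))$; hence $K_X := \div(\omega)$ is a canonical divisor. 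Since $\omega$ has neither zero nor pole on $\mathring{X}$, its divisor is supported on $\bigcup_{D \in \D(X)} D$, so that $-K_X = \sum_{D \in \D(X)} (-\nu_D(\omega))\, D$. It then remains to prove $\nu_D(\omega) = -1$ for $G$-stable $D$ (that is, $D \in \D(X) \setminus \Delta(X)$) and $\nu_D(\omega) \le 0$ for the colors $D \in \Delta(X)$; integrality of $a_D := -\nu_D(\omega)$ is automatic.

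For a $G$-stable prime divisor I would reduce to the toric case. The valuation $\nu_D$ is $G$-invariant, hence corresponds to a ray of the colored fan lying in the valuation cone; passing to the minimal toroidal model $\pi : X' \to X$ used in the previous section keeps this ray, so $\nu_D$ is realized by a $G$-stable divisor $D'$ of $X'$ with $\nu_{D'}(\omega) = \nu_D(\omega)$. By Theorem~\ref{str-toro}, $X' \setminus \Delta_{X'} \simeq R_u(P_{X'}) \times Z$ with $Z$ toric, and this open set contains the generic points of all $G$-stable divisors, which are precisely the $R_u(P_{X'}) \times D_i^Z$ for $D_i^Z$ the torus-invariant divisors of $Z$. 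On this product $\omega$ restricts to $\omega_0$, the wedge of the translation-invariant form on the affine space $R_u(P_{X'})$ (no zeros or poles) and the invariant volume form $\bigwedge_j t_j^{-1}\,dt_j$ on $Z$, whose divisor on a toric variety is $-\sum_i D_i^Z$ with every coefficient equal to $-1$ (see \cite{fulton-toric}). Thus $\nu_{D'}(\omega) = -1$ and $a_D = 1$ for every $G$-stable divisor.

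The colors are the main obstacle. They depend only on the open $G$-orbit $G/H$, being the prime components of $(G/H) \setminus \mathring{X}$, so the toric argument is unavailable because $\nu_D$ is \emph{not} a $G$-invariant valuation. For a color $D$ there is a minimal parabolic $P = P_\a$ with $\overline{P \cdot D} = X$ raising the dense $B$-orbit of $D$, of type (U), (N) or (T) by Proposition~\ref{prop-UNT}. I would analyze the order of $\omega$ transversally to $D$ in the induced rank-one model: choosing a general point $z$ of the dense $B$-orbit of $D$ and the opposite root subgroup $U_{-\a} \subset P$, the curve $t \mapsto u_{-\a}(t)\cdot z$ meets $\mathring{X}$ for $t \neq 0$ and degenerates into $D$ at the boundary value, reducing the computation to an $\SL_2$- or $\PGL_2$-embedding as in Example~\ref{exam-sl2}. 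The key point to check is that $\omega_0$, written in local coordinates $(t, \text{coordinates along } D)$, has at worst a pole and never a zero along $\{t = 0\} = D$, i.e. $\nu_D(\omega) \le 0$. This sign is exactly where the non-invariance of the color valuation must be controlled by hand via the rank-one reduction, and is the crux of the argument; once it is established one obtains $a_D = -\nu_D(\omega) \ge 0$, which completes the proof.
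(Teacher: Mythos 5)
There is a genuine gap, and you have in fact located it yourself: everything in the proposition except the coefficient $1$ on the $G$-stable divisors is concentrated in the inequality $\nu_D(\omega)\le 0$ for the colors $D\in\Delta(X)$, and your proposal does not prove it. The rank-one reduction along $t\mapsto u_{-\a}(t)\cdot z$ is only sketched; no local computation is carried out in the three cases (U), (N), (T) of Proposition \ref{prop-UNT}, and you explicitly defer ``the crux of the argument''. Since the valuation of a color is not $G$-invariant, nothing forces a rational $B$-eigenform extended from the open $B$-orbit to be pole-like rather than zero-like along a color, so this step cannot be waved through. (Your treatment of the $G$-stable divisors, by contrast, is sound and essentially identical to the paper's: reduce to the toroidal local model $R_u(P_X)\times Z$ of Theorem \ref{str-toro} and use the toric fact that the invariant volume form has a simple pole along each invariant divisor.)

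The paper avoids the problem at the colors entirely by working with vector fields rather than differential forms. After reducing to $X$ smooth and toroidal (harmless for a statement about Weil divisors), it chooses a basis of $\gp_X\oplus\gt_1$, where $T_1$ is a complement to the kernel of the $T$-action on $Z$, and wedges the corresponding fundamental vector fields to obtain a \emph{global regular} section $\sigma$ of the anticanonical sheaf $\omega_X^\vee$. Because $R_u(P_X)T_1$ acts on the dense $B$-orbit with open orbit and finite stabiliser, $\sigma$ is nonvanishing there, so its divisor of zeros is an effective representative of $-K_X$ supported on $\bigcup_{D\in\D(X)}D$; the nonnegativity of every $a_D$, colors included, is then automatic, and only the coefficient $1$ along the $G$-stable divisors requires the toric computation. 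To salvage your route you would have to actually compute the order of $\omega$ along each color via the rank-one models, which is substantially more work than switching from the rational form $\omega$ to the regular anticanonical section $\sigma$.
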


\begin{proof}
Replacing $X$ by the union of $G$-orbits of codimension at most one,
we may assume that $X$ is toroidal and smooth.
By the Structure Theorem for toroidal varieties (Theorem \ref{str-toro}), we have an isomorphism
$X\setminus\Delta_X\simeq(P_X)_u\times Z$ with
$Z$ a toric variety for a quotient of $L/[L,L]$. Let $T_0$ be the kernel of the action of $T$ on $Z$ and let $T_1$ be a subtorus of $T$ such that $T = T_0T_1$ and $T_0 \cap T_1$ is finite. The group $R_u(P)T_1 \subset B$ has an open orbit in $X \setminus \Delta_X$ and its generic isotropy subgroup is finite.

Let $\gp_X$ and $\gt_1$ be the Lie algebras of $P_X$ and $T_1$ and let $\sigma$ be the exterior product of the elements in a basis of $\gp_X \oplus \gt_1$. Then $\sigma$ is a global section of the anticanonical sheaf $\omega_X^\vee$ and the support of the vanishing locus of sigma is the complement of the open $B$-orbit in $X$. This implies that we have a canonical divisor of the form
$$K_X = - \sum_{D \in \D(X)} a_D D$$
 with $a_D \geq 0$. We are left to prove $a_D = 1$ for $D \not\in \Delta(X)$. We need to prove that $\sigma$ vanishes at order one for $D \not \in \Delta(X)$. This can be checked on $X \setminus \Delta_X \simeq R_u(P) \times Z$ and follows from the corresponding fact for toric varieties: the exterior power of a basis of $\gt_1$ vanishes at order one on the codimension one $T_1$-orbits in $Z$.
\end{proof}

\begin{thm}
  Let $X$ be a complete toroidal spherical variety. An anticanonical divisor of $X$ is 
  $$-K_X = \sum_{D \in \D(X)} a_D D$$
with 
  $$a_D = \left\{
  \begin{array}{cl}
  1 & \textrm{if } D \in \D(X) \setminus \Delta(X), \\
  1 & \textrm{if } D \in \Delta(X) \textrm{ is raised with an edge of type {\rm (T)} or {\rm (N)},} \\
  2 \scal{\a^\vee,\rho - \rho_I} & \textrm{if } D \in \Delta(X) \textrm{ is raised with an edge of type {\rm (U)},} \\
  \end{array}\right.$$
where $\alpha$ is the simple root associated to $P(C)$ with $C$ a $B$-stable curve of type {\rm (U)} representing $\overline{C}_{D,Y}$, where $G \cdot C = G/P_I$ for some set of simple roots $I$ and $\rho$ (resp. $\rho_I$) is the half-sum of positive roots in the root system $R$ (resp. $R_I$).
\end{thm}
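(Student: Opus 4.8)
The plan is to pin down each color coefficient $a_D$ by pairing $-K_X$ with the $B$-stable curve $C$ produced by Theorem~\ref{thm-CD-NUT}, whose class in $\overline{N_1(X)}$ is $\overline{C}_{D,Y}$ and whose type matches the edge raising $D$. Since $X$ is toroidal we have $\Delta(X)=\odelta(X)$, and by the preceding Proposition we already know $-K_X=\sum_{D'\in\D(X)\setminus\Delta(X)}D'+\sum_{D\in\Delta(X)}a_DD$ with $a_D\ge 0$, so only the color coefficients remain. Because the basis $(\overline{C}_{D,Y})_{D\in\odelta(X)}$ is dual to $\odelta(X)$, one has $D'\cdot C=\delta_{D,D'}$ for colors $D'$, and hence the color part of $(-K_X)\cdot C$ is exactly $a_D$. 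As every complete $B$-stable curve is smooth rational (the corollary following Proposition~\ref{prop-dual}) and $a_D$ is unchanged under a smooth toroidal resolution, I would first reduce to $X$ smooth, complete and toroidal, so that adjunction is available along the smooth closed orbits, which by Corollary~\ref{cor-sub-toro} are transverse intersections of $G$-stable divisors.

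For a color $D$ raised with an edge of type {\rm (U)} I take $C$ of type {\rm (U)}, so that $C$ lies in the closed orbit $Y=G\cdot C=G/P_I$. Writing $D_1,\dots,D_r$ for the $G$-stable divisors containing $Y$ (the remaining $G$-stable divisors miss $Y\supset C$), Corollary~\ref{cor-sub-toro} gives $K_Y=(K_X+D_1+\cdots+D_r)|_Y$, and therefore
$$a_D=(-K_X)\cdot C-\sum_{D'\in\D(X)\setminus\Delta(X)}(D'\cdot C)=(-K_Y)\cdot C=(-K_{G/P_I})\cdot C.$$
Since $-K_{G/P_I}$ is the line bundle attached to $2\rho-2\rho_I$ and $C$ is the Schubert curve of $\alpha$, this equals $\scal{2\rho-2\rho_I,\a^\vee}=2\scal{\a^\vee,\rho-\rho_I}$, giving the type {\rm (U)} formula.

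For a color $D$ raised with an edge of type {\rm (T)} or {\rm (N)} I would exploit Proposition~\ref{prop-rel2}. Let $x$ be the $B$-fixed point of $C$; then $\widetilde C=P\cdot x$ is a curve of type {\rm (U)} inside the closed flag orbit $G\cdot x=G/P'$, with $[\widetilde C]=2[C]$ in type {\rm (N)} and $[\widetilde C]=[C]+[C']$ in type {\rm (T)}, where $C'$ is the second type {\rm (T)} curve in $P\cdot C$. As the $G$-stable intersection numbers are additive in the curve class, subtracting the $G$-stable contribution from $(-K_X)\cdot\widetilde C$ and applying the type {\rm (U)} computation to $\widetilde C$ yields $2a_D=2\scal{\a^\vee,\rho-\rho_{I'}}$ in type {\rm (N)}, and $a_D+a_{D''}=2\scal{\a^\vee,\rho-\rho_{I'}}$ in type {\rm (T)} (with $D''$ the color dual to $C'$); since a representative $n_\alpha\in N_G(T)$ interchanges $C$ and $C'$, hence $D$ and $D''$, one has $a_D=a_{D''}$ and again $2a_D=2\scal{\a^\vee,\rho-\rho_{I'}}$. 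Here $R_{I'}$ is the root system of a Levi of $P'=G_x$. Finally, by the analysis of the isotropy Lie algebra in Proposition~\ref{prop-GC-TN} the indecomposable roots occurring in $G_z$ are negatives of simple roots orthogonal to $\alpha$; tracing this through the description $G\cdot C\simeq G\times^{Q(C)}(P\cdot C)$ one finds that the Levi root system $R_{I'}$ of the closed-orbit stabiliser is precisely this orthogonal set, so $\scal{\a^\vee,\rho_{I'}}=0$ and $a_D=\scal{\a^\vee,\rho}=1$.

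The model computations on $G/P_I$, on $\p^1\times\p^1$ and on $\p^2$ are routine; the two reduction steps are where the real work lies. First, making adjunction rigorous requires the transversality and local-complete-intersection structure of the closed orbits, which is what forces the passage to a smooth toroidal model together with a check, via Lemma~\ref{lemm-cartier}, that the $G$-stable boundary divisors are Cartier near the closed orbit. Second, and more delicate, is the orthogonality $\alpha\perp R_{I'}$: this is exactly what forces the type {\rm (T)} and {\rm (N)} coefficients to equal $1$ rather than the a priori larger value $\scal{\a^\vee,\rho-\rho_{I'}}$, and it has to be read off from the precise shape of the isotropy computed in Proposition~\ref{prop-GC-TN}. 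I expect this orthogonality, and the bookkeeping of which $G$-stable divisors meet each curve, to be the main obstacle.
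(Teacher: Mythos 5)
Your overall strategy — express $a_D$ as $-\deg\omt_X\vert_C$ with $\omt_X=\omega_X(\sum_{D'\in\D(X)\setminus\Delta(X)}D')$, then evaluate by adjunction along the $G$-stable subvariety $G\cdot C$ — is the paper's, and your type (U) branch coincides with it. For types (T) and (N) you take a genuinely different route: instead of restricting $\omt_{G\cdot C}$ to the fiber $P(C)\cdot C$ of $G\times^{Q(C)}P(C)\cdot C\to G/Q(C)$ and computing $\omt_{\p^1\times\p^1}=\cO(-1,-1)$, $\omt_{\p^2}=\cO(-1)$ directly, you push the class of $C$ into the closed orbit via Proposition \ref{prop-rel2} and reuse the flag-variety formula, with the orthogonality $I'\perp\a$ (read off from the isotropy analysis of Proposition \ref{prop-GC-TN}, which does give $G_x=P_{I}$ with $I\perp\a$) forcing the answer to be $1$. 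The type (N) branch of this works: $2[C]=[P\cdot x]$ pins down $2a_D=2\scal{\a^\vee,\rho-\rho_{I'}}=2$.

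The type (T) branch has a genuine gap. The relation $[C]+[C']=[P\cdot x]$ only yields $a_D+a_{D''}=2$ (indeed only $a_D+\lambda a_{D''}=2$ until you have checked that $\overline{[C']}$ equals $\overline{C}_{D'',Y}$ on the nose rather than a positive multiple), and your tie-breaker — that $n_\a\in N_G(T)$ interchanges $C$ and $C'$ and hence $D$ and $D''$ — is false: $n_\a$ does not normalise $B$, so $n_\a\cdot C$ is not $B$-stable, and on the model $P\cdot C\simeq\p^1\times\p^1$ it sends $\p^1\times\{0\}$ to $\p^1\times\{\infty\}$, not to $\{0\}\times\p^1$. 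There is no ambient symmetry of $X$ exchanging the two colors, so $a_D=a_{D''}$ cannot be asserted this way. The repair is exactly the step you bypassed: by adjunction $\omt_{G\cdot C}=\omt_X\vert_{G\cdot C}$ restricts on the fiber $P(C)\cdot C\simeq\p^1\times\p^1$ to $\omega_{\p^1\times\p^1}(\textrm{diagonal})=\cO(-2,-2)\otimes\cO(1,1)=\cO(-1,-1)$, whose degree on each ruling is $-1$; this computes $a_D$ and $a_{D''}$ separately and gives $1$ for both without any symmetry argument.
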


\begin{proof}
  By the above Proposition, there exists such an anticanonical divisor and $a_D = 1$ for $D \in \D(X) \setminus \Delta(X)$. Note that since $X$ is toroidal, we have $\Delta(X) = \odelta(X)$. Let $D \in \odelta(X)$ and let $C$ be a $B$-stable curve with class equal to $\overline{C}_{D,Y}$ in $\overline{N_1(X)}$. Set $\omt_X = \omega_X(\sum_{D \in \D(X) \setminus \Delta(X)}D)$. We have $\omt_X = \cO_X(-\sum_{D \in \Delta(X)}a_D D)$ and $a_D = - \deg\omt_X\vert_C$.

  Let $Y$ be an irreducible $G$-stable subvariety of $X$. Then by Corollary \ref{cor-sub-toro}, $Y$ is smooth toroidal and a complete intersection of $G$-stable divisors. Let $\mathcal{A}(Y)$ be the set of these divisors. The $G$-stable divisors of $Y$ are given by $\D(Y) \setminus \Delta(Y) = \{ D\cap Y \ | \ D \in \D(X) \setminus (\Delta(X) \cup \mathcal{A}(Y))$. Since for $D \in \D(X) \setminus (\Delta(X) \cup \mathcal{A}(Y))$, we have $\cO_Y(D \cap Y) = \cO_X(D)\vert_Y$ we obtain by the adjunction formula the equality
  $$\begin{array}{lll}
    \omt_Y & = \omega_Y \otimes \cO_Y\left(\sum_{D' \in \D(Y) \setminus \Delta(Y)} D'\right) & \\
    & =
    \omega_X\left(\sum_{D \in \mathcal{A}(Y)}D\right) \otimes \cO_X
    \left(\sum_{D \in \D(X) \setminus (\Delta(X) \cup \mathcal{A}(Y)} D\right)\vert_X
    & = \omt_X\vert_Y.\\
  \end{array}
  $$
  Applying this to $Y = G \cdot C$ we get $a_D = - \deg\omt_{G \cdot C}\vert_C$. If $C$ has type (U) then there exists a parabolic subgroup $I$ such that $G \cdot C \simeq G/P_I$. The variety $G \cdot C$ has no $G$-stable divisor so $\omt_{G \cdot C} = \omega_{G \cdot C}$ and the result follows. If $C$ has type (T) or (N), then by Proposition \ref{prop-GC-TN}, we have $G \cdot C \simeq G \times^{Q(C)} P(C) \cdot C$. Since $C$ is identified with a fiber of the map $G \times^{Q(C)} P(C) \cdot C \to G/Q(C)$, we get that $\deg\omt_{G \cdot C}\vert_C = \deg\omt_{P(C) \cdot C}\vert_C$ where $P(C) \cdot C$ is seen as a toroidal $L$-variety with $L$ a Levi subgroup of $P(C)$. In type (T), we have $P(C) \cdot C \simeq \p^1 \times \p^1$ thus $\omt_{P(C) \cdot C}$ is isomorphic to $\cO(-2,-2) \otimes \cO(1,1)$ while $C$ is $\{0\} \times \p^1$. We get $a_D = 1$. In type (N), we have $P(C) \cdot C \simeq \p^2$ thus $\omt_{P(C) \cdot C}$ is isomorphic to $\cO(-3) \otimes \cO(2)$ while $C$ is a line. We get $a_D = 1$. 
\end{proof}

\begin{remark}
  \label{rem-can}
1. The above result gives an algorithm to compute a $B$-stable canonical divisor for any spherical variety. Indeed, let $X$ be a spherical variety, then embbed $X$ in $\overline{X}$ a complete spherical variety and let $\pi : \Xt \to \overline{X}$ be a toroidal resolution (such a resolution always exists, see for example \cite[Corollary 3.3.8]{survey}). Let $K_\Xt$ be a canonical divisor, since any spherical variety has rational singularities (see \cite[Corollary 2.3.4]{survey}), then $K_{\overline{X}} = \pi_*K_\Xt$ and $K_X =  K_{\overline{X}}\vert_X = \pi_*K_\Xt\vert_X$. 

2. The above result has an especially nice form for simple toroidal varieties since we have a unique closed orbit. We recover this way a result of Luna \cite{luna-aus}.
\end{remark}

\begin{example}
  In this example, we use the algorithm described in Remark \ref{rem-can} to compute the canonical divisor of the variety $X = \IG(2,2n+1)$ of isotropic line in $\CC^{2n+1}$ endowed with an antisymmetric bilinear form $\omega$ of maximal rank.

  Let $K$ be the kernel of the the form, it has dimension one and $\CC^{2n+1} = K \oplus V$ with $V$ of dimension $2n$ with a symplectic form $\omega\vert_V$ (thus non-degenerate). Fix $(e_i)_{i \in [1,2n]}$ a basis of $\CC^{2n}$ such that the form is given by $\omega(e_i,e_j) = \delta_{i,2n+1-i}$.

  The variety $X$ is spherical for the action of $G = \Sp(V,\omega\vert_V)$ and has three orbits $Y$, $Z$ and $\U = X \setminus (Y \cup Z)$. The first two are closed and defined as follows:
  $$Y = \{ \ell \in X \ | \ \ell \subset V \} \simeq \IG(2,2n) \textrm{ and } Z = \{ \ell \in X \ | \ K \subset \ell \} \simeq \p^{2n-1}.$$
  We have $\pic(X) = \Z$ and two $B$-stable divisors defined, with $\ell_2 = \scal{e_1,e_2}$, by
  $$D_Y = \overline{\{ \ell \in \U \ | \ \ell\cap V \subset e_1^\perp \}} \textrm{ and } D_Z = \overline{\{ \ell \in \U \ | \ V \cap (\ell+K) \cap \ell_2^\perp \neq 0 \}}.$$
A toroidal resolution $\varphi : \Xt \to X$, is obtained by blowing-up $Y$ and $Z$ in $X$:
  $$\Xt = \{ (p,\ell,\pi) \in \IG(1,2n+1) \times X \times \IG(3,2n+1) \ | \ p \subset \ell \subset \pi, p \subset V \textrm{ and } K \subset \pi\}.$$
  This variety has three $G$-orbits $\U$ and the exceptional divisors $E_Y$ and $E_Z$ over $Y$ and $Z$ which are both isomorphic to the incidence variety of isotropic points and lines in $V$. Write $\Dt_Y$ and $\Dt_Z$ for the strict transforms of $D_Y$ and $D_Z$ in $\Xt$. It is easy to check that the $B$-stable curves dual to $\Dt_Y$ and $\Dt_Z$ are
  $$\begin{array}{l}
  C_Y = \{(p,\ell,\pi) \in \Xt \ | \ p \subset e_1^\perp, l = \ell_2, \pi = \ell_2 + K \} \textrm{ and } \\
  C_Z = \{(p,\ell,\pi) \in \Xt \ | \ V \cap \pi \cap \ell_2^\perp \neq 0, p = e_1, \ell = K + e_1 \}.
  \end{array}$$
  We get $G \cdot C_Y = E_Y \simeq E_Z = G \cdot C_Z$ and by the above algorithm:
  $$-K_\Xt = E_Y + E_Z + (2n - 2) \Dt_Y + 2 \Dt_Z \textrm{ and } -K_X = (2n-2)D_Y + 2 D_Z.$$
Since $D_Y$ and $D_Z$ are equivalent to the same ample generator $H$ of $\pic(X)$, we get
$$-K_X = 2n H.$$
We refer to \cite{pasquier}, \cite{boris} or \cite{gpps} for more details on the geometry of these varieties and on the geometry of smooth Fano horospherical varieties of Picard rank one.
\end{example}

\addtocontents{toc}{\protect\setcounter{tocdepth}{2}}

\providecommand{\bysame}{\leavevmode\hbox to3em{\hrulefill}\thinspace}
\providecommand{\MR}{\relax\ifhmode\unskip\space\fi MR }
\providecommand{\MRhref}[2]{%
  \href{http://www.ams.org/mathscinet-getitem?mr=#1}{#2}
}
\providecommand{\href}[2]{#2}

\end{document}